\documentclass[10pt]{amsart}
\usepackage[T1]{fontenc}
\usepackage{mathtools}
\usepackage{xcolor}
\usepackage{amsmath,amssymb}
\usepackage{amsthm}
\usepackage{thmtools}
\usepackage{bm}
\usepackage{tikz}
\usepackage{url}
\usepackage[backend=biber,style=alphabetic, maxbibnames=99, maxalphanames=6, url=false, doi=false]{biblatex}
\newcommand{\titletext}{Grothendieck weights and K-theoretic positivity for matroids}
\usepackage{geometry}
\usepackage{hyperref}
\hypersetup{
  hidelinks,
  colorlinks=true,
  linkcolor=blue,
  citecolor=magenta,
  pdftitle={\titletext},
  pdfauthor={Yiyu Wang}
}
\usepackage{cleveref}

\usepackage{newpxtext}

\title{\titletext}
\author{Yiyu Wang}
\address{Department of Mathematics, The Ohio State University, 231 W. 18th Ave., Columbus, OH 43210}
\email{wang.20315@osu.edu}
\date{August 9, 2026}

\declaretheorem[name=Theorem, numberwithin=section, refname={Theorem,Theorems}]{theorem}
\renewcommand{\thetheorem}{\ifnum\value{section}=1 \Alph{theorem}\else\thesection.\arabic{theorem}\fi}

\declaretheorem[name=Lemma, sibling=theorem, refname={Lemma,Lemmas}]{lemma}
\declaretheorem[name=Proposition, sibling=theorem, refname={Proposition,Propositions}]{proposition}
\declaretheorem[name=Corollary, sibling=theorem, refname={Corollary,Corollaries}]{corollary}
\declaretheorem[name=Conjecture, sibling=theorem, refname={Conjecture,Conjectures}]{conjecture}
\declaretheorem[name=Remark, style=remark, sibling=theorem, refname={Remark,Remarks}]{remark}
\declaretheorem[name=Definition, style=definition, sibling=theorem, refname={Definition,Definitions}]{definition}

\numberwithin{equation}{section}

\crefname{section}{Section}{Sections}
\crefname{subsection}{Subsection}{Subsections}
\crefname{equation}{Equation}{Equations}
\crefname{appendix}{Appendix}{Appendices}
\Crefname{appendix}{Appendix}{Appendices}

\DeclareMathOperator{\Hom}{Hom}
\DeclareMathOperator{\GW}{GW}
\DeclareMathOperator{\rk}{rk}

\DeclareMathOperator{\td}{td}
\DeclareMathOperator{\ch}{ch}

\DeclareMathOperator{\Star}{Star}
\DeclareMathOperator{\cone}{cone}
\DeclareMathOperator{\face}{face}
\DeclareMathOperator{\link}{link}
\DeclareMathOperator{\Loop}{Loop}

\renewcommand{\k}{{\Bbbk}}
\newcommand{\R}{{\mathbb{R}}}
\newcommand{\Q}{{\mathbb{Q}}}
\newcommand{\Z}{{\mathbb{Z}}}
\newcommand{\PP}{{\mathbb{P}}}

\begin{document}

\begin{abstract}
  We introduce a method for studying \(K\)-theoretic positivity on permutohedral toric varieties through the topology of spaces arising in tropical geometry. The key ingredient is the theory of Grothendieck weights developed by the author.

  We prove two positivity results using this method. The first result is the positivity of the Euler characteristics of tautological bundles associated with an arbitrary matroid and twisted by a nef line bundle. This gives numerical evidence for a conjectural vanishing theorem.

  The second result generalizes the external activity complex of Berget--Fink, originally defined for a pair of matroids, to the case of any tuple of matroids with no common loop. We deduce a formula for its \(\Z^k\)-graded \(K\)-polynomial in terms of exterior powers of the dual tautological quotient classes of the matroids. After a change of variables, its coefficients alternate in sign. We also prove the Cohen--Macaulayness of each such complex using the vanishing theorems for combinatorial geometries developed by Eur--Fink--Larson. This proof is new even in the case of a pair of matroids. As an application, we interpret certain Chern numbers of tautological quotient classes as counts of facets, partially answering a question of Berget--Eur--Spink--Tseng.

\end{abstract}

\maketitle
\setcounter{tocdepth}{1}
\tableofcontents

\section{Introduction}
\subsection{Background}

Speyer's 20-year-old tropical \(f\)-vector conjecture \cite{speyer2008tropical} (see also \cite[Conjecture~1.1]{fink2024omega}) states that, if one subdivides a rank \(r\) hypersimplex into matroid base polytopes, the number of interior faces of dimension \(n-i\) is bounded by
\[
  \frac{(n-i-1)!}{(r-i)!(n-r-i)!(i-1)!}.
\]

To bound these face numbers, Fink--Shaw--Speyer \cite{fink2024omega} introduced the \(\omega\)-invariant of a matroid. Let \(E=\{1,2,\ldots,n\}\) denote the ground set of \(\mathsf{M}\), \(X_E\) denote the permutohedral toric variety, and \(\mathcal{Q}_\mathsf{M}\) denote the tautological quotient bundle defined in \cite{BergetEurSpinkTseng2023}. The \emph{\(\omega\)-invariant} of a rank \(r\) matroid \(\mathsf{M}\) is defined to be \cite[page 7]{berget2025externalactivitycomplexpair}
\[
  \omega(\mathsf{M})=(-1)^{c}\chi\left(X_E,\bigwedge^{n-r} \mathcal{Q}_\mathsf{M}^\vee \cdot \bigwedge^r\mathcal{Q}_\mathsf{M}^\vee\right),
\]
where \(c\) is the number of connected components of \(\mathsf{M}\), and \(\vee\) denotes dualization.

There are two different proofs of the positivity of the \(\omega\)-invariant. Berget--Fink \cite[Theorem~A]{berget2025externalactivitycomplexpair} constructed a Cohen--Macaulay complex, which they called the \emph{external activity complex of a pair of matroids}, whose \(\Z^2\)-graded \(K\)-polynomial is equal to
\[
  \chi\left(X_E,\lambda_{-U_1}(\mathcal{Q}_{\mathsf{M}_1}^\vee) \lambda_{-U_2}(\mathcal{Q}_{\mathsf{M}_2}^\vee)\right).
\]
Here and throughout, \(\lambda_u(\mathcal E)\) denotes the total exterior power of a class \(\mathcal E\).
For \(\mathsf M_1=\mathsf M_2=\mathsf M\), one of the coefficients of this \(K\)-polynomial recovers \(\omega(\mathsf M)\), and Cohen--Macaulayness gives its nonnegativity.

On the other hand, Eur--Fink--Larson \cite[page~4]{eur2025vanishingtheoremscombinatorialgeometries} used vanishing theorems on wonderful varieties to prove the same positivity. They rewrote the \(\omega\)-invariant as
\[
  \omega(\mathsf{M})=(-1)^{r-n+\dim P(\mathsf{M})}\chi\left(X_E,\lambda_{-1}(\mathcal{Q}_\mathsf{M}^\vee) \mathcal{L}_{-P(\mathsf{M})}^{-1}\right),
\]
where \(P(\mathsf M)\) is the matroid base polytope of \(\mathsf M\), and deduced the positivity from vanishing theorems.

In this paper, we combine these two approaches using the theory of \emph{Grothendieck weights} developed by the author in \cite{wang2026grothendieckweights}. We reprove and generalize the results of \cite{berget2025externalactivitycomplexpair}---the \(K\)-polynomial identity and the Cohen--Macaulayness of the external activity complex---from pairs to \(k\)-tuples of matroids with no common loop, for any \(k\geq2\), using the Cohen--Macaulayness of tropical degenerations established in \cite{eur2025vanishingtheoremscombinatorialgeometries}. Our method can translate a \(K\)-positivity problem on permutohedral toric varieties into a topological question. To demonstrate this method, we also prove a positivity result for Euler characteristics of tautological bundles twisted by a nef line bundle on \(X_E\).

\subsection{Main results}

Our first result establishes positivity for tautological bundles under nef twists. Fix a field \(\k\).  For a positive integer \(m\), write \([m]=\{1,\ldots,m\}\).  Let \(E=[n]\), and let \(X_E\) be the permutohedral toric variety on \(E\), a smooth projective toric variety of dimension \(n-1\).  Write \(K(X_E)\) for its Grothendieck ring.  For a matroid \(\mathsf M\) on \(E\), let \(\mathcal S_{\mathsf M},\mathcal Q_{\mathsf M}\in K(X_E)\) be its tautological subbundle and quotient classes, respectively, as defined in \cite{BergetEurSpinkTseng2023}.

\emph{Nef} torus-invariant line bundles on \(X_E\) correspond to lattice \emph{generalized permutohedra}. If \(P\) is a lattice generalized permutohedron, denote the corresponding nef line bundle on \(X_E\) by \(\mathcal L_P\).

\begin{restatable}{theorem}{thmENefTautologicalPositivity}
  \label{thm:nef-tautological-positivity}
  Let \(\mathsf M\) be a loopless matroid on \(E\), and let \(\mathcal L\) be a nef line bundle on \(X_E\). Then
  \[
    \frac{1}{1+u}\chi\left(
      X_E,
      \lambda_u(\mathcal S_{\mathsf M}^{\vee})
      \lambda_{-1}(\mathcal Q_{\mathsf M}^{\vee})\mathcal L
    \right)
    \in\Z_{\geq0}[u].
  \]
\end{restatable}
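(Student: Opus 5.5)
We aim to reduce the statement to the Eur--Fink--Larson vanishing theorems by realizing the relevant \(K\)-class geometrically. The first step is to expand both exterior powers. Since \(\mathcal S_{\mathsf M}\) has rank \(n-r\) and \(\mathcal Q_{\mathsf M}\) has rank \(r\), write
\[
  \chi\bigl(X_E,\lambda_u(\mathcal S_{\mathsf M}^\vee)\lambda_{-1}(\mathcal Q_{\mathsf M}^\vee)\mathcal L\bigr)=\sum_{i=0}^{n-r}u^i\sum_{j=0}^{r}(-1)^j\chi\bigl(X_E,\textstyle\bigwedge^i\mathcal S_{\mathsf M}^\vee\bigwedge^j\mathcal Q_{\mathsf M}^\vee\mathcal L\bigr).
\]
Two facts must then be established. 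First, this polynomial vanishes at \(u=-1\). Second, its quotient by \(1+u\) has nonnegative coefficients.

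For the vanishing at \(u=-1\), the short exact sequence \(0\to\mathcal Q^\vee\to\mathcal O^{E}\to\mathcal S^\vee\to0\) (in \(K\)-theory, \([\mathcal S^\vee]+[\mathcal Q^\vee]=[\mathcal O^{n}]\)) gives
\[
  \lambda_{-1}(\mathcal S^\vee)\lambda_{-1}(\mathcal Q^\vee)=\lambda_{-1}(\mathcal O^n)=0.
\]
Hence the polynomial is divisible by \(1+u\). Nonnegativity is the real content. Using \(\lambda_u(\mathcal S^\vee)=(1+u)^n\lambda_u(\mathcal Q^\vee)^{-1}\) formally, the quotient equals
\[
  \sum_i(-u)^i\cdots,
\]
and can be rewritten as
\[
  \chi\bigl(\lambda_u(\mathcal S^\vee)\lambda_{-1}(\mathcal Q^\vee)\mathcal L\bigr)/(1+u)=\sum_{i}u^i\,\chi\bigl(X_E,\sigma_i\cdot\lambda_{-1}(\mathcal Q^\vee)\mathcal L\bigr),
\]
where \(\sigma_i=\sum_{a\le i}(-1)^{i-a}\bigwedge^a\mathcal S^\vee\).

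Next, we use the Grothendieck weights of \cite{wang2026grothendieckweights}. The plan is to interpret \(\lambda_{-1}(\mathcal Q_{\mathsf M}^\vee)\), after pushforward from the wonderful/augmented setting, as the class of the structure sheaf of a tropical degeneration, namely the Bergman fan of \(\mathsf M\) realized as a Cohen--Macaulay subscheme. Following the Eur--Fink--Larson rewriting of \(\omega\), \(\lambda_{-1}(\mathcal Q^\vee)\) should be identified with a Koszul-type class supported on the Bergman fan, twisted by \(\mathcal L_{-P(\mathsf M)}^{-1}\) up to sign. The coefficient of \(u^i\) then becomes an Euler characteristic of a nef twist of a sheaf on the Bergman-fan variety (or the wonderful variety \(W_{\mathsf M}\)). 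By the EFL vanishing theorems, higher cohomology of such nef-twisted sheaves vanishes, so each coefficient equals \(h^0\), which is nonnegative.

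Concretely, the steps in order are as follows.
\begin{enumerate}
\item Identify \(\lambda_{-1}(\mathcal Q^\vee_{\mathsf M})\) with \(\pm[\mathcal O_{\Sigma_{\mathsf M}}]\) twisted, via Grothendieck weights on the Bergman fan.
\item Restrict \(\mathcal S_{\mathsf M}^\vee\) to this locus, where it becomes a bundle generated by global sections. Use the resulting Koszul/Eagon--Northcott resolution to identify \(\sigma_i\) with genuine exterior-power sheaves of a quotient of rank one less, accounting for the factor \((1+u)\).
\item Apply the EFL vanishing for \(\bigwedge^i\)(tautological) \(\otimes\) nef on \(W_{\mathsf M}\) to get \(\chi=h^0\ge0\).
\end{enumerate}

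The main obstacle is the first step: making precise that \(\lambda_{-1}(\mathcal Q^\vee)\mathcal L\) corresponds, through the Grothendieck weight formalism, to an honest sheaf (not merely a virtual class) on a space where the vanishing theorems apply, with the correct sign \((-1)^{r-1}\) absorbed. I would first try the case where \(\mathsf M\) is realizable, where \(\lambda_{-1}(\mathcal Q^\vee)\) is the Koszul class of the section cutting out the wonderful compactification. I would then pass to arbitrary matroids by valuativity of both sides in \(\mathsf M\) together with the topological interpretation of the weights.
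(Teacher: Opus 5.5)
The divisibility by $1+u$ is fine: $[\mathcal S_{\mathsf M}^\vee]+[\mathcal Q_{\mathsf M}^\vee]=n[\mathcal O_{X_E}]$ forces the polynomial to vanish at $u=-1$. Two small corrections: $\mathcal S_{\mathsf M}$ has rank $r$ and $\mathcal Q_{\mathsf M}$ has rank $n-r$. Also, $\lambda_{-1}(\mathcal Q_{\mathsf M}^\vee)$ is exactly the class $\Delta_{\mathsf M}$ whose weight is the indicator of $\Sigma_{\mathsf M}$, with no sign and no $\mathcal L_{-P(\mathsf M)}^{-1}$ twist; that twist belongs to the $\omega$-rewriting.

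The genuine gap is Step~3. In the realizable case, your Steps~1--2 give exactly the identity displayed in the introduction. The log Euler sequence $0\to\Omega^1_{W_L}(\log D_L)\to\mathcal S_L^\vee|_{W_L}\to\mathcal O_{W_L}\to0$ makes the coefficient of $u^p$ in the quotient equal to $\chi\bigl(W_L,\Omega^p_{W_L}(\log D_L)\otimes\mathcal L_P|_{W_L}\bigr)$. To replace this by $h^0$, you need $H^q\bigl(W_L,\Omega^p_{W_L}(\log D_L)\otimes\mathcal L_P|_{W_L}\bigr)=0$ for all $q>0$, all $p$, and all nef $\mathcal L_P$. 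No such theorem is available. This is \cref{conj:twisted-logarithmic-vanishing}, and the statement you are proving is offered as numerical evidence for it. Only $p=0$ (Eur--Fink--Larson), $P=\{0\}$ (Liu), and realizable polymatroids in characteristic zero are known. So your plan assumes something strictly stronger than the claim.

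Independently, the passage from realizable to arbitrary matroids fails. Valuativity transfers linear identities, not inequalities. Writing the indicator of a non-realizable base polytope as an integer combination of realizable ones necessarily uses a negative coefficient. The degeneration $\operatorname{ind}_w\mathsf M$ exists for every matroid, but it carries no analogue of $\Omega^p(\log D)$, so it does not rescue the argument.

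The missing idea is to bypass cohomology vanishing entirely and show that each coefficient is a sum of Euler characteristics of contractible spaces.
\begin{enumerate}
\item On $\sigma_{\mathcal F}$, the Grothendieck weight of $\lambda_u(\mathcal S_{\mathsf M}^\vee)\lambda_{-1}(\mathcal Q_{\mathsf M}^\vee)$ is $u^rT_{\operatorname{in}_{\mathcal F}\mathsf M}(1+u^{-1},0)$. This vanishes off $\Sigma_{\mathsf M}$, and on $\Sigma_{\mathsf M}$ Whitney's broken-circuit theorem turns it into $\sum_{I\in\operatorname{BC}(\operatorname{in}_{\mathcal F}\mathsf M)}u^{|I|}$.
\item The weight of $\mathcal L$ on $\sigma_{\mathcal G}$ is $|\face_{\mathcal G}(P)\cap M|$.
\item The product rule then gives $\sum_I a_Iu^{|I|}$ with $a_I=\sum_{m\in P\cap M}\chi\bigl(C_{I,m}^b\bigr)$. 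Here $C_{I,m}=(B_I+w)\cap N_m$, where $B_I\subseteq|\Sigma_{\mathsf M}|$ is the locus where $I$ lies in the broken-circuit complex of the initial matroid, and $N_m$ is the normal cone of the face of $P$ containing $m$ in its relative interior.
\item $B_I$ is tropically convex, being an intersection of pulled-back relaxed Bergman fans. $N_m$ is tropically convex, being cut out by inequalities $x_i\le x_j$.
\item Hence $C_{I,m}$ is contractible when nonempty and deformation retracts onto its bounded part $C_{I,m}^b$, so $a_I\ge0$.
\item The factor $1+u$ comes from $B_I=B_{I\cup\{n\}}$ for $I\subseteq E\setminus\{n\}$, since $n$ lies in no broken circuit.
\end{enumerate}
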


The geometric meaning of this result suggests a stronger vanishing conjecture.  When \(\mathsf{M}\) is realizable, suppose that \(L\subseteq\k^E\) realizes \(\mathsf M\), let \(W_L\subseteq X_E\) be its wonderful compactification, and let \(D_L\) be the boundary divisor. By \cite[Theorem~7.10]{BergetEurSpinkTseng2023}, \(W_L\) is the zero locus of a regular section of \(\mathcal Q_{L}\). Consequently, we can identify \([\lambda_{-1}\mathcal{Q}_L^\vee]\) with \([\mathcal{O}_{W_L}]\). Dualizing \cite[Theorem~8.8]{BergetEurSpinkTseng2023} gives the logarithmic Euler sequence
\[
  0\longrightarrow\Omega^1_{W_L}(\log D_L)
  \longrightarrow\mathcal S_L^\vee|_{W_L}
  \longrightarrow\mathcal O_{W_L}\longrightarrow0.
\]
For any lattice generalized permutohedron \(P\), this sequence gives
\begin{equation*}
  \frac{1}{1+u}\chi\left(
    X_E,
    \lambda_u(\mathcal S_{\mathsf M}^{\vee})
    \lambda_{-1}(\mathcal Q_{\mathsf M}^{\vee})\mathcal L_P
  \right)
  =\sum_{p\geq0}
  \chi\left(
    W_L,
    \Omega^p_{W_L}(\log D_L)\otimes\mathcal L_P|_{W_L}
  \right)u^p.
\end{equation*}

\cref{thm:nef-tautological-positivity} states that each \(\chi\left( W_L, \Omega^p_{W_L}(\log D_L)\otimes\mathcal L_P|_{W_L} \right)\) is nonnegative. This suggests the following conjecture.

\begin{conjecture}\label{conj:twisted-logarithmic-vanishing}
  Let \(L\subseteq\k^E\) realize a loopless matroid, and let \(P\) be a lattice generalized permutohedron.  Then, for every \(p\geq0\) and \(q>0\),
  \[
    H^q\left(
      W_L,
      \Omega^p_{W_L}(\log D_L)\otimes\mathcal L_P|_{W_L}
    \right)=0.
  \]
\end{conjecture}

The case \(p=0\) is \cite[Theorem~A(1)]{eur2025vanishingtheoremscombinatorialgeometries}, and the case \(P=\{0\}\) is \cite[Theorem~A(i)]{liu2025vanishingwonderful}. Thus \cref{conj:twisted-logarithmic-vanishing} is a common generalization of two known vanishing theorems.
When \(P\) is a \emph{realizable} polymatroid over a field of characteristic zero, the conjecture follows from \cite[Corollary~9.20 and Remark~9.31]{larson2026aspects}.

We next state our second result. We generalize the external activity complex of a pair of matroids in \cite{berget2025externalactivitycomplexpair} to arbitrary \(k\)-tuples with no common loop. We state its \(K\)-theoretic positivity result first.

For a tuple of matroids \(\bm{\mathsf{M}}=(\mathsf{M}_1,\ldots,\mathsf{M}_k)\) on \(E\) with no common loop, meaning that no element of \(E\) is a loop of every \(\mathsf M_j\), set
\[
  \mathcal K_{\bm{\mathsf M}}(V_1,\ldots,V_k)
  =
  \chi\left(
    X_E,
    \prod_{j=1}^k
    \lambda_{-V_j}(\mathcal Q_{\mathsf M_j}^{\vee})
  \right),
\]
where \(V_1,\ldots, V_k\) are indeterminates.
For a polynomial \(f\) and a monomial \(m\), we write \([m]f\) for the coefficient of \(m\) in \(f\).

\begin{restatable}{theorem}{thmAPositivity}
  \label{thm:A-positivity-K-poly}
  Let \(k\geq2\), and let \(\bm{\mathsf M}\) be a tuple of \(k\) matroids on
  \(E\) with no common loop.  For all \(d_1,\ldots,d_k\geq0\),
  \[
    (-1)^{d_1+\cdots+d_k-c}
    [U_1^{d_1}\cdots U_k^{d_k}]\,
    \mathcal K_{\bm{\mathsf M}}
    (1-U_1,\ldots,1-U_k)
    \geq0.
  \]
\end{restatable}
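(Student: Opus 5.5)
The plan is to realize \(\mathcal K_{\bm{\mathsf M}}(1-U_1,\ldots,1-U_k)\) as the multigraded \(K\)-polynomial of a Cohen--Macaulay \(\Z^k\)-graded module, namely the Stanley--Reisner ring of the generalized external activity complex of \(\bm{\mathsf M}\). Here \(c\) denotes the number of connected components attached to the tuple, i.e.\ to the matroid union or the associated polymatroid. Granting this, the sign statement is standard. For a Cohen--Macaulay module of codimension \(c'\) over a polynomial ring, the numerator of the Hilbert series, written in the variables \(U_j\) with \(t_j=1-U_j\), is the \(K\)-polynomial. Its expansion around \(U=0\) has homogeneous pieces of total degree \(d\) with sign \((-1)^{d-c'}\). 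This follows by reducing modulo a regular sequence of linear forms in each multidegree to an Artinian module, whose Hilbert series has nonnegative coefficients; equivalently one uses the alternating-sign property of multidegree expansions of Cohen--Macaulay modules, as in Knutson--Miller.

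\textbf{Step 1: rewrite the Euler characteristic.} Using the Grothendieck weights of \cite{wang2026grothendieckweights}, I would express
\[
\chi\Bigl(X_E,\prod_j\lambda_{-V_j}(\mathcal Q_{\mathsf M_j}^\vee)\Bigr)
\]
as a weighted count over cones of the permutohedral fan. The weight of each cone is determined by the Bergman fans of the \(\mathsf M_j\). Each factor \(\lambda_{-V_j}(\mathcal Q^\vee_{\mathsf M_j})\) is the class of \(\mathcal O_{W}\) deformed by \(V_j\). The product then corresponds to a \(K\)-theoretic intersection of the tropical linear spaces, scaled in the \(k\) directions. Next I would substitute \(V_j=1-U_j\) and expand \(\lambda_{-1+U_j}\) as a polynomial in the \(U_j\). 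The coefficient of \(U^d\) then becomes an Euler characteristic of a complex of faces, namely a link or star in the product of Bergman fans.

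\textbf{Step 2: identify the coefficients.} The aim of this step is to identify \((-1)^{|d|-c}[U^d]\) with a reduced Euler characteristic, up to sign, of a space that is topologically a wedge of spheres of a single dimension, so that the signed coefficient counts spheres. The space in question is a link in a tropical degeneration. Its Cohen--Macaulayness is exactly the input from \cite{eur2025vanishingtheoremscombinatorialgeometries}: their vanishing theorems give Cohen--Macaulayness of tropical degenerations of wonderful varieties and their Minkowski-sum analogues. The no-common-loop hypothesis is what makes the relevant fan pure of the expected dimension, namely that of the union, so the degeneration is nonempty.

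\textbf{Main obstacle.} The hard step will be matching the multigraded expansion with an honest shellable or Cohen--Macaulay object for \(k\ge3\). For pairs, Berget--Fink had an explicit Gröbner degeneration; for general \(k\), one must show that the Grothendieck-weight expansion of the product of \(\lambda_{-V_j}\) agrees term by term with the Hilbert series of the Stanley--Reisner ring of the generalized complex. I would first try induction on \(k\), merging \(\mathsf M_{k-1}\) and \(\mathsf M_k\) through their union and using multiplicativity of \(\lambda\). Cohen--Macaulayness would then be checked via Reisner's criterion, verifying links cone by cone with the vanishing theorems. The exponent \(c\) enters as the codimension, or equivalently the number of factors of \((1-U)\) split off by lineality.
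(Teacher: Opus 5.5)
Your first paragraph has the right reduction, and it is how the paper concludes: \cref{thm:A-positivity-K-poly} is deduced from \cref{thm:B-CM-Delta}. Your justification of the sign, however, fails for this grading. With \(\deg x_i^{(j)}=e_j\) there need not be any multihomogeneous linear nonzerodivisor. For example, \(\k[x,y]/(xy)\) with \(\deg x=e_1\), \(\deg y=e_2\) is Cohen--Macaulay of depth one, yet every form of degree \(e_1\) or \(e_2\) is a zero divisor. For a Stanley--Reisner ring, argue in the fine grading instead. The coefficient of \(\prod_{v\in A}x_v\) in \(\mathcal K_{\widetilde E}(\Delta;\bm 1-\bm x)\) is \(-\widetilde\chi(\link_\Delta(\widetilde E\setminus A))\) when \(\widetilde E\setminus A\in\Delta\), and \(0\) otherwise. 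Reisner's criterion then forces the sign \((-1)^{|A|-c}\), and coarsening to \(\Z^k\) only adds terms of the same sign. Also, \(c\) is not a number of connected components. It is \((k-1)n-\rho(E)\), with \(\rho\) the diagonal Dilworth polymatroid, which is the codimension of \(\k[\Delta]\); your later remark about codimension is the correct reading.

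The genuine gap is that everything that makes the theorem true is left as your ``main obstacle'': the complex for \(k\) matroids, the identity \(\mathcal K(\Delta;\bm V)=\mathcal K_{\bm{\mathsf M}}(\bm V)\), and Cohen--Macaulayness. The route you propose for it does not work. You cannot merge \(\mathsf M_{k-1},\mathsf M_k\) by matroid union at the level of classes:
\begin{itemize}
  \item \(\lambda_{-V_{k-1}}\) and \(\lambda_{-V_k}\) use different variables, so multiplicativity of \(\lambda\) does not apply.
  \item Even at \(V_{k-1}=V_k\), the class \(\mathcal Q^\vee_{\mathsf M_{k-1}}+\mathcal Q^\vee_{\mathsf M_k}\) has rank \(2n-r_{k-1}-r_k\), so it is not the dual tautological quotient of any matroid on \(E\).
  \item In any case, the \(\Z^k\)-grading is lost.
\end{itemize}

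The paper handles all \(k\) at once:
\begin{itemize}
  \item It defines \(\Delta_{\bm w}(\bm{\mathsf M})\) on \([k]\times E\) through the loop sets of \(\operatorname{in}_{u-w_j}\mathsf M_j\).
  \item It proves a new \(k\)-fold equivariant product rule and uses the weights \(\prod_{i\in\Loop(\operatorname{in}_{\mathcal F}\mathsf M_j)}(1-V_jt_i)\).
  \item It identifies fine coefficients with \(-\widetilde\chi\) of links by M\"obius inversion. The key point is that \(\bigcap_j(w_j+\Sigma_{\mathsf M_j,T_j})\) is tropically convex, hence empty or contractible.
\end{itemize}

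For Cohen--Macaulayness, the missing idea is to take the direct sum \(\bigoplus_j\mathsf M_j\) on \([k]\times E\) and push its tropical initial degeneration to \((\PP^{k-1})^n\) along \(\sum_i\Delta_{S_i}\). The image is \(\operatorname{MultiProj}\k[\Delta]\), which \cite{eur2025vanishingtheoremscombinatorialgeometries} shows is Cohen--Macaulay. Even this only controls links of faces meeting every \(S_i=[k]\times\{i\}\), since those are the affine charts. The remaining links, including \(\Delta\) itself, are invisible to the multiprojective scheme, so ``checking links cone by cone with the vanishing theorems'' cannot reach them. The paper shows these links are contractible, using a nerve argument and a deformation retraction in the \(\bar e_i\)-direction.
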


Here \(c\) is an invariant of the tuple that will be defined below. In the case \(k=2\), it is the corank of the \emph{diagonal Dilworth truncation} matroid defined in \cite{berget2025externalactivitycomplexpair}. For \(k>2\), the \emph{diagonal Dilworth polymatroid} is the correct generalization.
For \(A\subseteq E\), define
\[
  \rho(A)
  =
  \min_{A=A_1\sqcup\cdots\sqcup A_s}
  \sum_{q=1}^s
  \left(\sum_{j=1}^k\rk_{\mathsf M_j}(A_q)-1\right),
  \qquad
  \rho(\varnothing)=0,
\]
where the parts \(A_q\) are nonempty.  We prove in \cref{prop:rho-is-polymatroid} that \(\rho\) is the rank function of an integral polymatroid, which we call the \emph{diagonal Dilworth polymatroid} of \(\bm{\mathsf M}\). The integer \(c\) in \cref{thm:A-positivity-K-poly} is
\[
  c=(k-1)n-\rho(E).
\]

The positivity in \cref{thm:A-positivity-K-poly} is a consequence of the following stronger result.  For a sufficiently generic tuple of vectors \(\bm w\), we construct the external activity complex \(\Delta_{\bm w}(\bm{\mathsf M})\) on the ambient set \(\widetilde E=[k]\times E\); see \cref{def:k-fold-external-activity-complex}.  Equip its Stanley--Reisner ring with the \(\Z^k\)-grading in which the variable indexed by \((j,i)\in\widetilde E\) has degree \(e_j\), and denote the corresponding \(K\)-polynomial by \(\mathcal K(\Delta_{\bm w}(\bm{\mathsf M});V_1,\ldots,V_k)\).
\begin{restatable}{theorem}{thmBExternalActivity}
  \label{thm:B-CM-Delta}
  Let \(k\geq2\), let \(\bm{\mathsf M}\) be a tuple of \(k\) matroids on
  \(E\) with no common loop, and let \(\bm w\) be sufficiently generic.
  \begin{enumerate}
    \item One has
      \[
        \mathcal K(\Delta_{\bm w}(\bm{\mathsf M});V_1,\ldots,V_k)
        =\mathcal K_{\bm{\mathsf M}}(V_1,\ldots,V_k).
      \]
      In particular, the left-hand side is independent of \(\bm w\) and
      is valuative in each of \(\mathsf M_1,\ldots,\mathsf M_k\).
    \item The complex \(\Delta_{\bm w}(\bm{\mathsf M})\) is
      Cohen--Macaulay over \(\k\) of dimension
      \(
        n+\rho(E)-1.
      \)
  \end{enumerate}
\end{restatable}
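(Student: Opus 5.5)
We will prove both parts by realizing \(\Delta_{\bm w}(\bm{\mathsf M})\) as the support of a Gröbner (initial) degeneration of a variety, and comparing that variety with a tropical object on \(X_E\) via the theory of Grothendieck weights \cite{wang2026grothendieckweights}.

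First we treat the realizable case, which fixes the geometry. Suppose \(L_j\subseteq\k^E\) realizes \(\mathsf M_j\), possibly after extending \(\k\). Consider the closure \(Y\subseteq\prod_j\PP^{E}\) (affine cones \(\subseteq(\k^E)^k\)) of the set
\[
  \{(t\cdot\ell_1,\ldots,t\cdot\ell_k): t\in(\k^\times)^E,\ \ell_j\in L_j\},
\]
the \(k\)-fold analogue of the Berget--Fink variety. It is \(\Z^k\)-multigraded, and its multidegree/\(K\)-class can be computed on \(X_E\): the variety is the image of the total space of \(\bigoplus_j\mathcal S_{L_j}\) (or its dual presentation) over \(X_E\). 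Pushing forward gives the \(K\)-polynomial
\[
  \chi\Bigl(X_E,\prod_j\lambda_{-V_j}(\mathcal Q_{L_j}^\vee)\Bigr),
\]
since the Koszul complexes of the quotients cut out the subbundles.

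The no-common-loop hypothesis guarantees that the torus acts with generic finite stabilizer, so the map is birational onto its image with rational singularities. Its dimension is \(n+\rho(E)\): the matroid intersection/Dilworth-type formula computes the rank of the \(k\)-fold diagonal, exactly as in the \(k=2\) Dilworth truncation. The Gröbner degeneration with respect to a generic weight \(\bm w\) has as its initial ideal the Stanley--Reisner ideal of \(\Delta_{\bm w}(\bm{\mathsf M})\). This is verified by a universal Gröbner basis consisting of circuit-type forms of the polymatroid \(\rho\), generalizing the pair case. Flat degeneration preserves \(K\)-polynomials, which gives (1) in the realizable case.

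For arbitrary matroids we cannot use \(Y\). Instead we express both sides as Grothendieck weights. The Stanley--Reisner ring of \(\Delta_{\bm w}\) decomposes along the fan structure: each face of \(\Delta_{\bm w}\) is supported over a cone of the permutohedral fan, determined by the flags on which \(\bm w\) selects external-activity data. Its \(K\)-polynomial is therefore a sum over cones of local contributions. The class \(\prod_j\lambda_{-V_j}(\mathcal Q_{\mathsf M_j}^\vee)\) has an equivariant localization, which is a product over \(j\) of the Koszul classes of the tropical linear spaces \(\operatorname{trop}(\mathsf M_j)\). Both sides are valuative in each \(\mathsf M_j\). The left side is valuative because the count of faces of \(\Delta_{\bm w}\) in each fixed flag is an indicator-type function of base polytopes. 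Once this is shown, equality reduces to a spanning class, e.g. Schubert matroids, which are realizable. Independence of \(\bm w\) then follows.

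For (2), we identify \(\Delta_{\bm w}(\bm{\mathsf M})\), via its Grothendieck weight, with a tropical degeneration in the sense of \cite{eur2025vanishingtheoremscombinatorialgeometries}. Concretely, its links are joins of Bergman-type complexes of the matroids restricted and contracted along flags. We then verify Reisner's criterion through a shelling-free argument. The vanishing theorems of Eur--Fink--Larson give \(H^q=0\) for \(q>0\) of \(\mathcal O\) twisted by nef classes on the tropical degeneration, and the Cohen--Macaulayness of tropical degenerations proved there transfers to every link. The dimension count \(n+\rho(E)-1\) comes from Step 2, together with purity: every facet has size equal to the top coefficient degree.

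We expect the main obstacle to be this transfer. One must show that the local cohomology of the Stanley--Reisner ring equals sheaf cohomology on a tropical degeneration of \(\prod_j W_{\mathsf M_j}\) intersected with a diagonal. One must also show that the diagonal intersection stays Cohen--Macaulay; this is where no-common-loop is used. We would attempt this first by induction on \(k\), intersecting one matroid at a time with a generic torus translate.
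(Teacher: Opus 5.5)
Your proposal does not yet prove either part. Its load-bearing steps are asserted rather than derived, and one of them is false as stated.

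\textbf{Part (1).} The step ``no common loop gives generic finite stabilizers, so the map is birational onto its image with rational singularities'' is a non sequitur, and birationality fails in general. The source $\mathrm{Tot}(\bigoplus_j\mathcal S_{L_j})$ has dimension $n-1+\sum_j\rk\mathsf M_j$. The image has dimension $n+\rho(E)$, which is often strictly smaller. For example, take $k=2$ and $\mathsf M_1=\mathsf M_2=\mathsf U_{n,n}$: then $\rho(E)=n$, the image is all of $(\k^E)^2$, and the fibers are copies of $X_E$.

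What the $K$-polynomial formula actually needs is $R\pi_*\mathcal O=\mathcal O_Y$. That is a collapsing statement requiring a vanishing theorem you have not supplied. The claim that the $\bm w$-initial ideal equals $I_\Delta$, ``verified by circuit-type forms of $\rho$'', names no Gr\"obner basis and proves nothing, yet it is the crux of your realizable case.

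Passing to non-realizable tuples also requires multivaluativity of $\mathcal K(\Delta_{\bm w}(\bm{\mathsf M});\bm V)$ before the identity is known. But $F\in\Delta$ is defined by nonemptiness of $\bigcap_j\bigl(w_j+\Sigma_{\mathsf M_j,E\setminus F^{(j)}}\bigr)$, which is not visibly valuative. In the paper, valuativity is a consequence of (1), not an input.

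The paper avoids realizability altogether. The equivariant $k$-fold product rule, applied to the weights $\prod_{i\in\Loop(\operatorname{in}_{\mathcal F}\mathsf M_j)}(1-V_jt_i)$, writes each fine coefficient $[m]\Psi$ as a signed count of bounded cells $D_{\bm{\mathcal F}}$ with loop label $m$. The union $Y_m$ of all cells whose label divides $m$ equals $\bigcap_j\bigl(w_j+\Sigma_{\mathsf M_j,\operatorname{supp}_j(m)}\bigr)$. It is therefore tropically convex and contractible, and it retracts onto its bounded part. This gives $\sum_{r\mid m}[r]\Psi=\mathbf 1_{m^\vee\in\Delta}$. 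M\"obius inversion then yields $[m]\Psi=-\widetilde\chi(\link_\Delta(m^\vee))$ when $m^\vee\in\Delta$, and $0$ otherwise. These are exactly the fine coefficients of $\mathcal K_{\widetilde E}(\Delta;\bm 1-\bm x)$.

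\textbf{Part (2).} You name the obstacle but not the way around it, and the assertion that links are joins of Bergman-type complexes is unsupported. Two ideas are missing.

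First, you should not intersect $\prod_jW_{\mathsf M_j}$ with a diagonal or induct on $k$. After reducing to loopless $\mathsf M_j$ by replacing loops with coloops, take the single matroid $\mathsf N=\bigoplus_j\mathsf M_j$ on $\widetilde E$. Push $W=\operatorname{ind}_{\widetilde w}\mathsf N$ forward along $f_P\colon X_{\widetilde E}\to X_P\simeq(\PP^{k-1})^n$, where $P=\sum_i\Delta_{S_i}$. By \cref{thm:efl-cm}(2), the image $Y$ is Cohen--Macaulay. \cref{lem:relaxed-bergman-completion} applied to $\mathsf N$, with $\ker q=\operatorname{im}\delta$ absorbing the existential $u$ in the definition of $\Delta$, identifies $Y$ with $\operatorname{MultiProj}\k[\Delta]$. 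The dimension $\rho(E)$ of $Y$, and hence $\dim\Delta=n+\rho(E)-1$, comes from its Hilbert polynomial. That computation uses the dragon Hall--Rado formula and the maximality of the Dilworth truncation, not a fiber count.

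Second, Cohen--Macaulayness of a multiprojective scheme controls, through the affine charts $U_{\tau_F}$, only the links of faces $F$ meeting every $S_i$. It says nothing about the irrelevant locus. The paper shows separately that the link of a face missing some $S_i$ is contractible:
\begin{itemize}
  \item Let $\mathcal U_G$ be the set of $u$ at which no element of $G^{(j)}$ is a loop of $\operatorname{in}_{u-w_j}\mathsf M_j$; it is tropically convex. By the nerve theorem, the link of $F$ is homotopy equivalent to $\bigcup_{v\notin F}\mathcal U_{F\cup\{v\}}$.
  \item Flowing in the $\bar e_i$-direction until $i$ first becomes a nonloop of some $\operatorname{in}_{u-w_j}\mathsf M_j$ deformation retracts both this union and the contractible set $\mathcal U_F$ onto the same subset.
\end{itemize}
Only with both ingredients does Reisner's criterion yield Cohen--Macaulayness.
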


As an application, we interpret certain Chern numbers as facet counts.

\begin{restatable}{theorem}{thmDChernMonomials}
  \label{thm:D-Chern-monomials}
  Let \(n\geq2\) and \(k\geq\max\{2,n-1\}\), let \(\bm{\mathsf M}=(\mathsf M_1,\ldots,\mathsf M_k)\) be a \(k\)-tuple of connected matroids on \(E=[n]\), and let \(\bm w\) be sufficiently generic. Let \(\bm l=(l_1,\ldots,l_k)\) be a tuple of nonnegative integers with
  \(l_1+\cdots+l_k=n-1\).  Then
  \[
    \int_{X_E}\prod_{j=1}^k c_{l_j}(\mathcal Q_{\mathsf M_j})
    =
    \#\left\{
      F\text{ a facet of }\Delta_{\bm w}(\bm{\mathsf M}):
      |F\cap(\{j\}\times E)|=\rk(\mathsf M_j)+l_j\text{ for every }j\in[k]
    \right\}.
  \]
\end{restatable}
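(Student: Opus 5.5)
The plan is to deduce Theorem~\ref{thm:D-Chern-monomials} from Theorem~\ref{thm:B-CM-Delta}. The link between facet counts and the $K$-polynomial is the $h$-vector of a Cohen--Macaulay complex. This requires the complex to be pure of the right dimension, and requires the multigraded $K$-polynomial to detect facets of each multidegree.

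\textbf{Step 1 (dimension bookkeeping).} Since each $\mathsf M_j$ is connected and $n\ge 2$, no $\mathsf M_j$ has a loop, so the tuple has no common loop. I will show that for $k\ge n-1$ the minimum defining $\rho(E)$ is attained by the trivial partition. Splitting $E$ into $s$ parts changes the sum by
\[
\sum_q\Big(\sum_j \rk_{\mathsf M_j}(A_q)-1\Big)-\Big(\sum_j\rk(\mathsf M_j)-1\Big).
\]
By connectivity, $\sum_q\rk_{\mathsf M_j}(A_q)\ge \rk(\mathsf M_j)+1$ whenever $s\ge 2$. The change is therefore at least $k-(s-1)\ge k-(n-1)\ge 0$. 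Hence $\rho(E)=\sum_j r_j-1$ with $r_j=\rk\mathsf M_j$, and $\dim\Delta_{\bm w}=n+\sum r_j-2$. Each facet then has $n+\sum_j r_j-1=\sum_j(r_j+l_j)$ vertices whenever $\sum l_j=n-1$, which is consistent with the target. Also $c=(k-1)n-\sum r_j+1$.

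\textbf{Step 2 (fine $h$-polynomial via shelling or Gröbner).} For a Cohen--Macaulay complex, a $\Z^k$-graded $K$-polynomial in the variables $V_j$, after the substitution $V_j=1-U_j$, gives
\[
\mathcal K(\Delta;1-U)=\Big(\prod_j U_j^{n}\Big)\cdot\text{(a quotient by a regular sequence of linear forms)}.
\]
Here one uses multigraded linear forms, which exist over an infinite field. This requires $\Delta$ to be balanced-like with respect to the color classes $\{j\}\times E$. I expect the facets to have color sizes at least $r_j$, since they come from external activity via a generic initial degeneration. The cleaner route is to use the construction of $\Delta_{\bm w}$ as an initial complex or as a union indexed by bases: each facet $F$ should contain exactly $r_j$ "basis" elements of color $j$. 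The monomial $\prod_j U_j^{|F\cap(\{j\}\times E)|-r_j}$ then records the excess. The lowest coefficients of the multigraded $h$-polynomial count facets by this multidegree when the relevant degree is extremal, that is when total degree $n-1$ is top. The top-degree part of the $h$-polynomial, the coefficient $\prod U_j^{n-?}$, is the sum of facets with given color profile, by the standard fact that in the alternating formula only facets contribute in the top total degree. My first attempt at this step is the direct formula
\[
\mathcal K(\Delta;V)=\sum_{G\in\Delta}\prod_{(j,i)\in G}(1-V_j)\prod_{(j,i)\notin G}V_j ,
\]
extracting coefficients of degree $\sum_j(|\widetilde E_j|-\dots)$ in the $U$'s. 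Faces of maximal size contribute only from facets.

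\textbf{Step 3 (compare with Chern classes).} On the other side, I will expand $\lambda_{-(1-U_j)}(\mathcal Q^\vee_{\mathsf M_j})$. Its leading behaviour in $U_j$ is governed by Chern characters: the top-$U$-degree term of $\chi(X_E,\prod_j\lambda_{-(1-U_j)}\mathcal Q^\vee_j)$, of total degree $\sum_j(r_j\text{-related})+(n-1)$, should equal $\int_{X_E}\prod_j c_{l_j}(\mathcal Q_{\mathsf M_j})$. This uses Hirzebruch–Riemann–Roch and the identity $\lambda_{-y}(\mathcal E^\vee)$ with $y=1-U$. In the limit only the top Chern data survive, as in the usual argument that $\lambda_{y}$ specializes to total Chern classes in the leading order. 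Equating the coefficients via Theorem~\ref{thm:B-CM-Delta}(1), and fixing the sign with $(-1)^{\sum d_j-c}$ using the value of $c$ from Step 1, gives the formula.

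\textbf{Main obstacle.} I expect Step 2 to be the main obstacle: showing that the chosen coefficient counts exactly the facets with color profile $(r_j+l_j)_j$, and not a signed combination of lower faces. Purity from Cohen--Macaulayness, together with the dimension equality of Step 1, should force the faces of that size to be facets. The careful sign and degree matching with the Riemann–Roch leading term is where errors are likely.
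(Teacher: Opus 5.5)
Your skeleton is the same as the paper's: pass to the complex via \cref{thm:B-CM-Delta}(1), use \(\rho(E)=\sum_j r_j-1\), read off facets at an extremal degree, and compare with Hirzebruch--Riemann--Roch. Step~1 is exactly \cref{lem:rank-connected-tuple}. The gap is that Steps~2--3 never fix the correct coefficient, and the candidates you do name are the wrong ones.

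Your face formula has \(V_j\) and \(1-V_j\) interchanged. With the convention of \cref{def:simplicial-k-polynomial},
\[
  \mathcal K(\Delta;1-U)=\sum_{G\in\Delta}\prod_{j=1}^k(1-U_j)^{|G^{(j)}|}\,U_j^{\,n-|G^{(j)}|},
  \qquad G^{(j)}=\{i:(j,i)\in G\}.
\]
So facets are seen in the \emph{lowest} total \(U\)-degree, at the monomial \(U^{\bm d}\) with \(d_j=n-r_j-l_j\). They are not seen at the top degree \(\sum_j(r_j+l_j)\), nor at the excess monomial \(\prod_jU_j^{l_j}\).

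On the geometric side, \(\lambda_{U_j-1}(\mathcal Q_{\mathsf M_j}^\vee)\) has \(U_j\)-degree at most \(n-r_j\). Hence the top-degree coefficient of \(\mathcal K_{\bm{\mathsf M}}(1-U)\) is \(\chi\bigl(X_E,\prod_j\det\mathcal Q_{\mathsf M_j}^\vee\bigr)\), not the Chern number you want.

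A concrete check: take \(n=3\), \(k=2\), \(\mathsf M_1=\mathsf M_2=\mathsf U_{2,3}\), \(\bm l=(1,1)\). Two generic translates of the tropical line meet, so \(\Delta\) is the full simplex on \(\widetilde E\) and \(\mathcal K(\Delta;1-U)=1\). Both sides of the theorem equal \(1\), but \([U_1U_2]\) and \([U_1^3U_2^3]\) are both \(0\). This also shows that your factorization \(\mathcal K(\Delta;1-U)=\prod_jU_j^n\cdot(\cdots)\) is false.

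The missing idea is the one numerical coincidence that makes the two sides meet. First discard the trivial case \(l_j>n-r_j\), where both sides vanish. For \(d_j=n-r_j-l_j\), Step~1 gives \(|\bm d|=kn-\sum_jr_j-(n-1)=(k-1)n-\rho(E)=c\).

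\textbf{Complex side.} A face \(G\) contributes only in total degree \(\geq kn-|G|\geq kn-(\dim\Delta+1)=c\), with equality only when \(|G|=\dim\Delta+1\). Such faces are automatically facets, and each contributes \(\prod_jU_j^{n-|G^{(j)}|}\) with coefficient \(1\). So \([U^{\bm d}]\) counts facets with \(|G^{(j)}|=r_j+l_j\). This uses only \(\dim\Delta=n+\rho(E)-1\); no purity, shelling, regular sequence or balancedness is needed. The paper phrases this as \([m]\Psi=-\widetilde\chi(\link_\Delta(m^\vee))=1\) for a facet \(m^\vee\).

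\textbf{Geometric side.} One has \([U^a]\ch\bigl(\lambda_{U-1}(\mathcal Q^\vee)\bigr)=c_{\rk\mathcal Q-a}(\mathcal Q)+(\text{higher Chow degree})\). So the Chow degree of \([U^{\bm d}]\) is at least \(\sum_j(n-r_j-d_j)=n-1=\dim X_E\). Hirzebruch--Riemann--Roch then retains only the leading Chern classes and \(\td_0=1\), giving \(\int_{X_E}\prod_jc_{l_j}(\mathcal Q_{\mathsf M_j})\). The sign is \((-1)^{|\bm d|-c}=1\).
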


Although \(\Delta_{\bm w}(\bm{\mathsf{M}})\) depends on \(\bm w\), the multidegree distribution counted in \cref{thm:D-Chern-monomials} does not. The facets appearing in \cref{thm:D-Chern-monomials} can equivalently be indexed by the integer bases of the diagonal Dilworth polymatroid of \(\bm{\mathsf M}\).
The nonnegativity of these Chern numbers follows directly from the fan displacement rule, since the Chern classes \(c_i(\mathcal Q_{\mathsf M_j})\) are represented by nonnegative Minkowski weights.  When \(\mathsf M_1=\cdots=\mathsf M_k=\mathsf M\), the theorem partially answers \cite[Question~1.4]{BergetEurSpinkTseng2023} for products of column Schur classes of \(\mathcal Q_\mathsf M\).

\subsection{Grothendieck weights and a prototype}

The proofs of the preceding results, as well as the construction of \(\Delta_{\bm w}(\bm{\mathsf M})\), rely on the theory of Grothendieck weights on the permutohedral toric variety developed in \cite{wang2026grothendieckweights}.  We extend this theory from nonequivariant to equivariant \(K\)-theory; see \cref{thm:equivariant-product-rule}.

We first briefly introduce the notion of Grothendieck weight.
Let \(\Sigma_E\) be the braid fan of \(X_E\), and set
\[
  x_\sigma=[\mathcal O_{V(\sigma)}]\in K(X_E)
  \qquad(\sigma\in\Sigma_E),
\]
where \(V(\sigma)\) is the closure of the torus orbit corresponding to \(\sigma\).  Grothendieck weights play in \(K\)-theory the role that Minkowski weights play in Chow theory.  A \emph{Grothendieck weight} on \(\Sigma_E\) is a function \(g:\Sigma_E\to\Z\) that annihilates every linear relation among the classes \(x_\sigma\): if
\[
  \sum_{\sigma\in\Sigma_E}c_\sigma x_\sigma=0
  \quad\text{in \(K(X_E)\),}
\]
then
\[
  \sum_{\sigma\in\Sigma_E}c_\sigma g(\sigma)=0.
\]
The Euler pairing identifies \(K(X_E)\) with the ring of Grothendieck
weights by sending \(\alpha\in K(X_E)\) to
\[
  g_\alpha(\sigma)=\chi(X_E,\alpha x_\sigma).
\]
In particular, \(g_\alpha(\{0\})=\chi(X_E,\alpha)\).

As in the theory of Minkowski weights, Grothendieck weights satisfy a \emph{\(K\)-balancing condition} that describes when a function \(g\) is a Grothendieck weight, and a \emph{product rule} that determines how two Grothendieck weights multiply in \(K(X_E)\).

The main ingredient in this paper is the product rule, which translates a problem in \(K\)-theory to a topological question. To state the product rule, choose \(w\in \R^n/\R(1,1,\ldots,1)\) to be a sufficiently generic vector. At the zero cone, the product rule says that, for any two Grothendieck weights \(g_1,g_2\),
\[
  (g_1g_2)(\{0\})
  =
  \sum_{\substack{\sigma,\tau\in\Sigma_E\\
      (\sigma+w)\cap\tau\neq\varnothing\\
  \sigma\cap\tau=\{0\}}}
  (-1)^{\dim\sigma+\dim\tau-n+1}
  g_1(\sigma)g_2(\tau).
\]

The summation above can be rewritten as follows. Pick two copies of \(\Sigma_E\), and move one of them by a sufficiently generic vector \(w\). Let \((\Sigma_E+w)\wedge \Sigma_E\) denote the common refinement of these two complexes. A cell of \((\Sigma_E+w)\wedge \Sigma_E\) takes the form \((\sigma^\circ+w)\cap\tau^\circ\), and the cell is \emph{bounded} if and only if \(\sigma\cap\tau=\{0\}\). Therefore, the product rule sums over all \emph{bounded} cells of \((\Sigma_E+w)\wedge \Sigma_E\). Note that the dimension of the cell is exactly \(\dim \sigma+\dim \tau-n+1\). The product rule for Minkowski weights sums over \emph{vertices} of \((\Sigma_E+w)\wedge \Sigma_E\).

Using the product rule, we can often translate a \(K\)-positivity problem into a question about a topological invariant, usually the Euler characteristic, of a subcomplex of \((\Sigma_E+w)\wedge\Sigma_E\).

Let us discuss a prototype example. Suppose we aim to prove the following statement.
\begin{restatable}[{Corollary of \cite[Theorem~B]{eur2025vanishingtheoremscombinatorialgeometries}}]{corollary}{corCAntiample}
  \label{thm:C-example-application}
  Let \(\mathsf M\) be a loopless matroid of rank \(r\) on \(E=[n]\), and
  let \(\mathcal L\) be an ample line bundle on \(X_E\).  Then
  \[
    (-1)^{r-1}\chi_\mathsf M(\mathcal L^{-1})\geq0.
  \]
\end{restatable}

Let \(\Sigma_\mathsf M\) be the Bergman fan of \(\mathsf{M}\), regarded as a subfan of \(\Sigma_E\), and let \(\Delta_\mathsf M\in K(X_E)\) be the class whose Grothendieck weight is the indicator function
\[
  g_{\Delta_\mathsf M}(\sigma)
  =
  \begin{cases}
    1, & \sigma\in\Sigma_\mathsf M,\\
    0, & \sigma\notin\Sigma_\mathsf M.
  \end{cases}
\]
The canonical Euler characteristic of the matroid \(K\)-ring satisfies
\[
  \chi_\mathsf M(\alpha)
  =\chi(X_E,\Delta_\mathsf M\alpha);
\]
see \cite[Section~1.5]{larson2024krings}.  Applying the product rule above gives
\[
  \chi_\mathsf M(\alpha)
  =\sum_{\tau\in\Sigma_E}c_\tau g_\alpha(\tau),
  \qquad
  c_\tau
  =
  \sum_{\substack{\sigma\in\Sigma_\mathsf M\\
      (\sigma+w)\cap\tau\neq\varnothing\\
  \sigma\cap\tau=\{0\}}}
  (-1)^{\dim\sigma+\dim\tau-n+1}.
\]

To prove \cref{thm:C-example-application}, we need to determine the sign of \(c_\tau\). Using ampleness and Ehrhart reciprocity, we only need to show that \((-1)^{n+r-\dim \tau}c_\tau\geq 0\); for more details, see \cref{sec:ample-k-positivity}.

The formula for \(c_\tau\) suggests considering the complex which consists of cells \((\sigma^\circ+w)\cap\tau^\circ\) with \(\sigma\in\Sigma_\mathsf{M}\) and \(\tau\) fixed. To make it closed, we need to consider
\[
  P_\tau^b=\bigcup_{\sigma\in \Sigma_\mathsf{M},\gamma\leq \tau}(\sigma^\circ+w)\cap \gamma^\circ
\]
where each intersection in the union is required to be bounded. Taking the Euler characteristic, we obtain the following formula:
\[
  \chi(P_\tau^b)=\sum_{\gamma\leq \tau}c_\gamma.
\]
The M\"obius inversion formula gives
\[
  c_\tau=\sum_{\gamma\leq \tau}(-1)^{\dim\tau-\dim\gamma}\chi(P_\gamma^b).
\]

In this way, we translate an Euler characteristic in \(K(X_E)\) to an Euler characteristic of some complex. In \cref{sec:ample-k-positivity}, we show that each \(P_\gamma^b\) is either empty or contractible, and the above summation can be interpreted as the reduced Euler characteristic of a Cohen--Macaulay simplicial complex. The Cohen--Macaulayness and Reisner's criterion then give the sign.

The proof of \cref{thm:B-CM-Delta} follows the same strategy.  An equivariant refinement of the rule retains the fine grading and expresses the Euler characteristic as an alternating sum over bounded intersections of translated cones.  Grouping terms and applying M\"obius inversion identifies each fine coefficient with the negative reduced Euler characteristic of a link of \(\Delta_{\bm w}(\bm{\mathsf M})\), so the desired signs follow from its Cohen--Macaulayness.  To prove this property, we realize \(\operatorname{MultiProj}\k[\Delta_{\bm w}(\bm{\mathsf M})]\) as the scheme-theoretic image in \((\PP^{k-1})^n\) of a tropical initial degeneration associated with \(\bigoplus_j\mathsf M_j\).  The results of \cite{eur2025vanishingtheoremscombinatorialgeometries} show that this image is Cohen--Macaulay, and an analysis of affine charts lifts this property to \(\Delta_{\bm w}(\bm{\mathsf M})\).

\subsection{Comparison with the method of Berget--Fink}

For pairs of matroids, Berget--Fink work first in Chow theory.  They view the Chern classes \(c_i(\mathcal Q_{\mathsf M_1})\) and \(c_j(\mathcal Q_{\mathsf M_2})\) as Minkowski weights and apply the fan displacement rule to generic translates of their supports.  As \(i\) and \(j\) vary, these intersections determine tropical cells indexed by certain faces of \(\Delta_{\bm w}(\mathsf M_1,\mathsf M_2)\).  Berget--Fink recover the finely graded \(K\)-polynomial from the compactly supported Euler characteristics of these cells; see \cite[Section~6]{berget2025externalactivitycomplexpair}.

The relation with Grothendieck weights is visible in the classes being multiplied. The splitting principle gives, for \(0\leq i\leq \rk(\mathcal Q_{\mathsf M_j})\),
\begin{equation}\label{eq:lambda-chern-leading-term}
  \left[U_j^{\rk(\mathcal Q_{\mathsf M_j})-i}\right]
  \ch\left(
    \lambda_{U_j-1}(\mathcal Q_{\mathsf M_j}^{\vee})
  \right)
  =
  c_i(\mathcal Q_{\mathsf M_j})
  +\text{terms of Chow degree greater than \(i\)}.
\end{equation}
Thus the Chern-cycle Minkowski weight used by Berget--Fink is the lowest-Chow-degree part of the corresponding coefficient of the \(\lambda\)-class whose Grothendieck weight enters our product rule.  In this sense, their fan-displacement calculation is the Chow-theoretic shadow of the Grothendieck-weight product rule.  Berget--Fink reconstruct the full \(K\)-polynomial from all of the resulting Chern-cycle cells, whereas the equivariant Grothendieck-weight product rule performs the multiplication directly in \(K\)-theory and retains the fine grading.  Its higher version also applies to tuples of arbitrary size.

When the selected Chern degrees sum to \(\dim X_E=n-1\), this comparison becomes an equality of numerical invariants.  Hirzebruch--Riemann--Roch shows that neither the higher-Chow-degree terms in the expansion above nor the positive-degree part of the Todd class can contribute.  The corresponding coefficient of the \(K\)-polynomial is therefore the integral of the product of the leading Chern classes.  This is the mechanism behind \cref{thm:D-Chern-monomials}.

\subsection{Outline of the paper}

In \cref{sec:preliminaries}, we fix the permutohedral notation and recall the required facts about matroids, Grothendieck weights, matroid \(K\)-rings, and Cohen--Macaulay complexes. In \cref{sec:equivariant-gw}, we develop equivariant Grothendieck weights, prove the equivariant product rule, state its higher analogue on the braid fan, and compute the weights of tautological classes. In \cref{sec:nef-tautological-positivity}, we prove positivity for tautological bundles under nef twists predicted by \cref{conj:twisted-logarithmic-vanishing} and deduce \cref{thm:C-example-application}. In \cref{sec:k-fold-positivity}, we define the diagonal Dilworth polymatroid and the external activity complex for a tuple, identify the relevant \(K\)-polynomial coefficients with Euler characteristics of links, and reduce \cref{thm:A-positivity-K-poly} to Cohen--Macaulayness. In \cref{sec:k-fold-point-complex-cm}, we prove this Cohen--Macaulayness and the facet--basis correspondence, completing the proofs of \cref{thm:A-positivity-K-poly,thm:B-CM-Delta}. In \cref{sec:Chern-Dilworth}, we prove \cref{thm:D-Chern-monomials}. Finally, \cref{sec:ample-k-positivity} gives an alternative proof of \cref{thm:C-example-application} and constructs \(\mathcal C_w(\mathsf M)\), while \cref{sec:proof-higher-product-rule} proves the higher product rule stated in \cref{sec:equivariant-gw}.

\subsection*{Acknowledgements}
The author would like to thank Eric Katz and Matt Larson for their helpful discussions and comments on an earlier draft.
\subsection*{AI disclosure}
The proofs of \cref{lemma:equivariant-grassmannian-comparison,lem:k-fold-loop-reduction,lem:k-fold-first-wall-retraction} were completed with the assistance of generative AI.  The overall mathematical framework and the central ideas of this paper were developed independently by the author, building on the author's earlier work \cite{wang2026grothendieckweights}.  The author has verified all AI-assisted arguments and takes full responsibility for the contents of the paper.

\section{Preliminaries}\label{sec:preliminaries}

All schemes and Stanley--Reisner rings are taken over \(\k\) unless otherwise specified.

\subsection{Fans and permutohedral notation}
\label{subsec:permutohedral-fan}

Let \(N\) be a lattice, let \(M=\Hom(N,\Z)\), and let \(\Sigma\) be a fan in \(N_\R\).  For a cone \(\sigma\in\Sigma\), set
\[
  N_\sigma=N\cap\operatorname{span}_\R(\sigma),
  \qquad
  N(\sigma)=N/N_\sigma,
  \qquad
  M(\sigma)=\sigma^\perp\cap M.
\]
Thus \(M(\sigma)=\Hom(N(\sigma),\Z)\) is the dual lattice of \(N(\sigma)\), where \(\sigma^\perp\subseteq M_\R\) denotes the real annihilator of \(\sigma\).
For \(\gamma\in\Sigma\), let
\[
  \pi_\gamma:N_\R\longrightarrow N(\gamma)_\R
  =N_\R/(N_\gamma)_\R
\]
be the quotient map.  Thus the star fan of \(\gamma\) is
\[
  \Star_\Sigma(\gamma)
  =
  \{\pi_\gamma(\sigma):\sigma\in\Sigma,\ \gamma\subseteq\sigma\}
  \subseteq N(\gamma)_\R.
\]
We say that a polyhedron \(P\subseteq N_\R\) is \emph{bounded modulo \((N_\gamma)_\R\)} if \(\pi_\gamma(P)\) is bounded.

We now specialize this notation to the permutohedral setting.  For a finite set \(A\), let \((e_i)_{i\in A}\) be the standard basis of \(\Z^A\), and set \(\mathbf 1_A=\sum_{i\in A}e_i\).  We use the lattices
\[
  N_A=\Z^A/\Z\mathbf 1_A,
  \qquad
  N_{A,\R}=N_A\otimes_\Z\R
  =\R^A/\R\mathbf 1_A,
\]
and
\[
  M_A=\Hom(N_A,\Z)
  =
  \left\{q\in\Z^A:\sum_{i\in A}q_i=0\right\},
  \qquad
  M_{A,\R}=M_A\otimes_\Z\R.
\]
Let
\[
  \pi_A:\R^A\longrightarrow N_{A,\R}
\]
be the quotient map.  Let \(\bar e_i=\pi_A(e_i)\), and, for \(S\subseteq A\), let
\[
  \bar e_S
  =
  \pi_A\left(\sum_{i\in S}e_i\right)
  =
  \sum_{i\in S}\bar e_i.
\]

The \emph{permutohedral fan}, also called the \emph{braid fan}, on \(A\) is the fan \(\Sigma_A\) in \(N_{A,\R}\) whose cones are indexed by flags
\[
  \mathcal G:
  \varnothing\subsetneq G_1\subsetneq\cdots\subsetneq G_\ell\subsetneq A.
\]
Our convention is
\[
  \sigma_{\mathcal G}
  =
  \cone\bigl(
    \bar e_{G_1},\ldots,
  \bar e_{G_\ell}\bigr),
  \qquad
  \ell(\mathcal G)=\ell=\dim\sigma_{\mathcal G}.
\]
In particular, the ray indexed by a nonempty proper subset \(S\subsetneq A\) is
\[
  \rho_S=\R_{\geq0}\bar e_S,
  \qquad
  u_{\rho_S}=\bar e_S.
\]
We write \(X_A=X_{\Sigma_A}\) for the corresponding permutohedral toric variety.

Throughout the paper, \(E=[n]\), and we abbreviate
\[
  N=N_E,
  \qquad
  M=M_E,
  \qquad
  N_\R=N_{E,\R},
  \qquad
  M_\R=M_{E,\R}.
\]
For a function \(g\) on \(\Sigma_E\), we abbreviate \(g(\sigma_{\mathcal G})\) to \(g(\mathcal G)\).

\subsection{Matroids and polymatroids}\label{subsec:matroids-polymatroids}

For a finite set \(A\), a function
\(\rho:2^A\to\Z_{\geq0}\) is the rank function of an \emph{integral polymatroid} if it satisfies
\begin{enumerate}
  \item (normalized) \(\rho(\varnothing)=0\),
  \item (nondecreasing) \(\rho(S)\leq\rho(T)\) for \(S\subseteq T\),
  \item (submodular) \(
    \rho(S)+\rho(T)\geq\rho(S\cap T)+\rho(S\cup T)\)
    for \(S,T\subseteq A\).
\end{enumerate}

For \(u\in\R^A\) and \(S\subseteq A\), write \(u(S)=\sum_{i\in S}u_i\).  The polymatroid polytope and base polytope of
\(\rho\) are
\[
  I(\rho)
  =\{u\in\R_{\geq0}^A:u(S)\leq\rho(S)
  \text{ for all }S\subseteq A\},
  \qquad
  B(\rho)=\{u\in I(\rho):u(A)=\rho(A)\}.
\]
We write
\[
  \mathcal B(\rho)=B(\rho)\cap\Z^A
\]
for the set of \emph{integer bases} of \(\rho\).

A \emph{matroid} \(\mathsf M\) on \(A\) is an integral polymatroid whose rank function additionally satisfies \(\rk_{\mathsf M}(\{i\})\leq1\) for every \(i\in A\). A subset \(I\subseteq A\) is independent if \(\rk_{\mathsf M}(I)=|I|\), and a maximal independent set is a basis; we write \(\mathcal B(\mathsf M)\) for the set of bases.

An element of \(A\) is a \emph{loop} of \(\mathsf{M}\) if it belongs to no basis and a \emph{coloop} if it belongs to every basis, and \(\Loop(\mathsf M)\subseteq A\) denotes the set of loops.

The uniform matroid of rank \(r\) on \(A\) is denoted \(\mathsf{U}_{r,A}\); when \(A=[m]\), we also write \(\mathsf{U}_{r,m}\). In particular, if \(r=|A|\), \(\mathsf{U}_{r,A}\) is the Boolean matroid: every subset of \(A\) is independent.

We use the standard notation for restriction (\(\mathsf{M}|S\)), contraction (\(\mathsf{M}/S\)), and direct sum of matroids (\(\mathsf{M}_1\oplus \mathsf{M}_2\)).

\subsection{Initial matroids and relaxed Bergman fans}
\label{subsec:initial-matroids}

\begin{definition}\label{def:flag-initial-matroid}
  Let \(\mathsf M\) be a matroid on \(E\), and let
  \[
    \mathcal F:\varnothing=F_0\subsetneq F_1\subsetneq\cdots\subsetneq
    F_\ell\subsetneq F_{\ell+1}=E
  \]
  be a flag of subsets.  The \emph{initial matroid of \(\mathsf M\) along \(\mathcal F\)} is
  \[
    \operatorname{in}_{\mathcal F}\mathsf M
    =
    \bigoplus_{i=0}^{\ell}(\mathsf M|F_{i+1})/F_i,
  \]
  where the \(i\)-th summand has ground set \(F_{i+1}\setminus F_i\).

  More generally, for \(u\in N_\R\), let \(\operatorname{in}_u\mathsf M\) be the matroid whose bases are the bases of \(\mathsf M\) having maximum \(u\)-weight.  This is independent of the chosen lift of \(u\) to \(\R^E\), since adding a constant to every coordinate changes the weight of every basis by the same amount.  If \(u\in\sigma_{\mathcal F}^{\circ}\), then the greedy algorithm gives
  \[
    \operatorname{in}_u\mathsf M
    =\operatorname{in}_{\mathcal F}\mathsf M.
  \]
  For the empty flag of proper subsets, where \(\ell=0\), this gives \(\operatorname{in}_{\varnothing}\mathsf M=\mathsf M\).
\end{definition}

We use the min-plus convention for tropical geometry notions.  If \(\mathsf M\) is loopless, its \emph{Bergman fan} is
\[
  \Sigma_{\mathsf M}
  =\{u\in N_\R:\Loop(\operatorname{in}_u\mathsf M)=\varnothing\}.
\]
It is the subfan of \(\Sigma_E\) whose cones \(\sigma_\mathcal F\) are indexed by flags \(\mathcal F\) of nonempty proper flats of \(\mathsf M\). If \(\mathsf M\) has rank \(r\), then \(\Sigma_\mathsf M\) has dimension \(r-1\).

More generally, for a matroid \(\mathsf M\) on \(E\) and \(S\subseteq E\), define the \emph{\(S\)-relaxed Bergman fan} by
\[
  \Sigma_{\mathsf M,S}
  =
  \left\{
    u\in N_\R:
    \Loop(\operatorname{in}_u\mathsf M)\subseteq S
  \right\}.
\]
This recovers \(\Sigma_{\mathsf M}\) when \(S=\varnothing\) and \(\mathsf M\) is loopless.  If \(\mathsf M\) has a loop outside \(S\), then \(\Sigma_{\mathsf M,S}=\varnothing\).

A subset \(C\subseteq N_\R\) is \emph{tropically convex} if, whenever the classes of \(u,v\in\R^E\) lie in \(C\) and \(a,b\in\R\), the class of the coordinatewise minimum \(\min(a+u,b+v)\) also lies in \(C\).  We refer to \cite{develin2004tropical} for further details on tropical convexity.

\begin{proposition}\label{prop:relaxed-bergman-convex}
  The relaxed Bergman fan \(\Sigma_{\mathsf M,S}\) is closed and tropically convex.
\end{proposition}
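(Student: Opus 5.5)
The plan is to replace the condition \(\Loop(\operatorname{in}_u\mathsf M)\subseteq S\) by a finite system of inequalities of the form ``\(u_i\) is not a strict minimum on a circuit.'' Both closedness and tropical convexity should then follow almost formally.

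\textbf{Step 1 (circuit criterion).} Fix a lift \(u\in\R^E\). I would show that \(i\in E\) is a loop of \(\operatorname{in}_u\mathsf M\) if and only if there is a circuit \(C\ni i\) of \(\mathsf M\) with
\[
  u_i<\min_{j\in C\setminus\{i\}}u_j ,
\]
with the convention \(\min\varnothing=+\infty\), so that loops of \(\mathsf M\) itself are covered by \(C=\{i\}\). Both directions use the fundamental circuit.
\begin{enumerate}
  \item Suppose such a \(C\) exists and \(B\) is a basis containing \(i\). Then \(C\setminus\{i\}\not\subseteq B\), because \(C\) is dependent. Pick \(j\in C\setminus(B\cup\{i\})\), and let \(C'\) be the fundamental circuit of \(j\) in \(B\cup\{j\}\). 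If \(i\in C'\), then \(B-i+j\) is a basis of strictly larger weight. If \(i\notin C'\), I would use circuit elimination on \(C\) and \(C'\) to reduce to a circuit through \(i\) with fewer elements outside \(B\), again with \(u_i\) a strict minimum. So \(B\) is not of maximum weight.
  \item Conversely, suppose \(i\) is a non-loop of \(\mathsf M\) but lies in no maximum-weight basis. Take a maximum-weight basis \(B\not\ni i\), and let \(C\) be the fundamental circuit of \(i\) in \(B\cup\{i\}\). For each \(j\in C\setminus\{i\}\), the set \(B-j+i\) is a basis, and it is not of maximum weight because it contains \(i\). Hence \(u_j>u_i\), so \(u_i\) is a strict minimum on \(C\).
\end{enumerate}
The criterion is unchanged under adding a constant to \(u\), so it is well defined on \(N_\R\).

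\textbf{Step 2 (closedness).} By Step 1,
\[
  \Sigma_{\mathsf M,S}=\bigcap_{i\notin S}\ \bigcap_{C\ni i}\Bigl\{u: u_i\geq \min_{j\in C\setminus\{i\}}u_j\Bigr\},
\]
where \(C\) runs over circuits of \(\mathsf M\). Each set in the intersection is the preimage of a closed set under a continuous map, or is empty when \(C=\{i\}\). A finite intersection of closed sets is closed.

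\textbf{Step 3 (tropical convexity).} Let \(w=\min(a+u,b+v)\) with \(u,v\in\Sigma_{\mathsf M,S}\), and fix \(i\notin S\) and a circuit \(C\ni i\). Without loss of generality \(w_i=a+u_i\). Since \(u\in\Sigma_{\mathsf M,S}\), Step 1 gives
\[
  a+u_i\ \geq\ \min_{j\in C\setminus\{i\}}(a+u_j)\ \geq\ \min_{j\in C\setminus\{i\}}w_j ,
\]
where the second inequality holds because \(w_j\leq a+u_j\) for every \(j\). Thus \(w\) satisfies every defining inequality, so \(w\in\Sigma_{\mathsf M,S}\).

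The main obstacle is the circuit criterion in Step 1, specifically the circuit-elimination bookkeeping in direction (1). If that bookkeeping becomes awkward, I would instead use the standard characterization of maximum-weight bases. A basis \(B\) has maximum weight if and only if \(u_j\leq u_k\) whenever \(k\in B\), \(j\notin B\), and \(B-k+j\) is a basis. Direction (1) then reduces to a single exchange.
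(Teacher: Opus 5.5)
Your proof is correct, and it takes a genuinely different route from the paper's. The paper does not check tropical convexity by hand. It passes to the series extension \(\operatorname{Ser}_S(\mathsf M)\), in which each \(s\in S\) is doubled so that a loop \(\{s\}\) becomes the circuit \(\{s^0,s^1\}\). It then uses the same circuit criterion to show \(\Sigma_{\mathsf M,S}=\delta_S^{-1}\bigl(\Sigma_{\operatorname{Ser}_S(\mathsf M)}\bigr)\) for the coordinate-duplication map \(\delta_S\). Finally it imports closedness and tropical convexity of Bergman fans from Develin--Sturmfels and notes that \(\delta_S\) is tropically linear.

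You instead write \(\Sigma_{\mathsf M,S}\) directly as a finite intersection of one-sided conditions \(u_i\geq\min_{j\in C\setminus\{i\}}u_j\), for \(i\notin S\) and circuits \(C\ni i\). You then check that each such set is closed and min-tropically convex. Your route is self-contained: it needs neither the auxiliary matroid nor the citation, and it in effect reproves the relevant half of the Develin--Sturmfels statement. The paper's route is shorter given the literature, and it exhibits \(\Sigma_{\mathsf M,S}\) as the pullback of an honest Bergman fan. Both arguments rest on the circuit criterion, which the paper quotes without proof and you try to prove.

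One step in your Step~1(1) does not work as written. When \(i\notin C'\), you need strong circuit elimination to keep \(i\) in the new circuit. Even then, the new circuit may contain elements of \(C'\cap B\) that are lighter than \(u_i\), so \(u_i\) need not remain the strict minimum. You can repair this with a case split: if some \(k\in C'\setminus\{j\}\) has \(u_k<u_j\), then \(B-k+j\) is already a heavier basis; otherwise every element of \(C'\setminus\{j\}\) has weight at least \(u_j>u_i\), and the induction goes through.

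A simpler fix uses circuit--cocircuit orthogonality. The fundamental cocircuit of \(i\) with respect to \(B\) meets \(C\) in \(i\), so it also meets \(C\) in some \(j\neq i\). Then \(B-i+j\) is a basis of strictly larger weight. Your proposed fallback, the single-exchange characterization of maximum-weight bases, also needs exactly this \(j\) to work.
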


\begin{proof}
  We may assume \(\Loop(\mathsf M)\subseteq S\).  Let \(q_S:\widetilde E_S\to E\) have a two-element fiber \(\{s^0,s^1\}\) over each \(s\in S\) and a one-element fiber over each \(i\notin S\).  Let \(\operatorname{Ser}_S(\mathsf M)\) be the matroid obtained by successively applying the standard series-extension operation \cite[Section~5.4]{oxley2011matroid} to the elements of \(S\).  Its circuits are \(q_S^{-1}(C)\), where \(C\) runs over the circuits of \(\mathsf M\). It is loopless because each singleton circuit \(\{s\}\) is replaced by the two-element circuit \(\{s^0,s^1\}\).

  Define the coordinate-duplication map
  \[
    \delta_S:N_\R\longrightarrow N_{\widetilde E_S,\R},
    \qquad
    (\delta_Su)_a=u_{q_S(a)}.
  \]
  For the maximum-weight initial matroid, the circuit criterion says that an element is a loop precisely when it is the unique \(u\)-minimum of a circuit.  The minimum of \(\delta_Su\) on \(q_S^{-1}(C)\) is unique precisely when the minimum of \(u\) on \(C\) is unique and attained outside \(S\).  Hence
  \[
    \Sigma_{\mathsf M,S}
    =
    \delta_S^{-1}
    \bigl(\Sigma_{\operatorname{Ser}_S(\mathsf M)}\bigr).
  \]
  Bergman fans are closed and tropically convex \cite[Corollary~7]{develin2004tropical}, and \(\delta_S\) is tropical linear because it only duplicates coordinates.  The result follows.
\end{proof}

For a finite set \(A\) and \(S\subseteq A\), let
\[
  C_S=\cone\{\bar e_i:i\in S\}
  \subseteq N_{A,\R}.
\]
If \(S\subsetneq A\), this is the cone over the simplex on \(S\); if \(S=A\), then \(C_A=N_{A,\R}\).

\begin{lemma}
  \label{lem:relaxed-bergman-completion}
  Let \(\mathsf M\) be a loopless matroid on a finite set \(E\), let \(v\in N_\R\), and let \(S\subseteq E\).  Then
  \[
    \Loop(\operatorname{in}_v\mathsf M)\subseteq S
    \quad\Longleftrightarrow\quad
    (v+C_S)\cap\Sigma_{\mathsf M}\neq\varnothing.
  \]
  Equivalently,
  \[
    \Sigma_{\mathsf M,S}=\Sigma_{\mathsf M}-C_S.
  \]
\end{lemma}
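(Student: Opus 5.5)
The plan is to prove the two implications separately. Throughout I use the circuit criterion recalled in the proof of \cref{prop:relaxed-bergman-convex}: for \(u\in N_\R\), an element \(i\) is a loop of \(\operatorname{in}_u\mathsf M\) exactly when \(i\) is the unique \(u\)-minimum of some circuit \(C\) of \(\mathsf M\). This condition is unchanged by adding a constant to all coordinates, so I work with lifts to \(\R^E\).

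\emph{The implication \(\Leftarrow\).} Suppose \(v+c\in\Sigma_{\mathsf M}\), where \(c\) lifts to \(\sum_{s\in S}a_se_s\) with all \(a_s\geq0\). Assume, for contradiction, that some \(i\notin S\) is the unique \(v\)-minimum of a circuit \(C\). Passing from \(v\) to \(v+c\) leaves the coordinate \(v_i\) unchanged and does not decrease any other coordinate. Hence \(i\) is still the unique minimum of \(C\), and \(i\) is a loop of \(\operatorname{in}_{v+c}\mathsf M\). This contradicts \(v+c\in\Sigma_{\mathsf M}\). So \(\Loop(\operatorname{in}_v\mathsf M)\subseteq S\). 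This gives \(\Sigma_{\mathsf M}-C_S\subseteq\Sigma_{\mathsf M,S}\).

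\emph{The implication \(\Rightarrow\).} I would use tropical projection onto the closed tropically convex set \(\Sigma_{\mathsf M}\), viewed in \(\R^E\), where it is invariant under adding multiples of \(\mathbf 1_E\) and closed under coordinatewise minima.

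\begin{enumerate}
  \item Fix a lift of \(v\). Choose \(T\) large and set \(w=v+T\mathbf 1_S\).
  \item Consider the set \(\{y\in\Sigma_{\mathsf M}:y\geq v \text{ coordinatewise},\ y_i=v_i \text{ for } i\notin S\}\). The goal is to show it is nonempty; its coordinatewise minimum \(x\) then lies in \(\Sigma_{\mathsf M}\) by closedness and tropical convexity, and \(c=x-v\in C_S\).
  \item To show nonemptiness, start from \(\hat x=\min\{y\in\Sigma_{\mathsf M}:y\geq v\}\). This is the smallest point of \(\Sigma_{\mathsf M}\) above \(v\); it exists because the set is nonempty (add a large multiple of \(\mathbf 1_E\) to any point of \(\Sigma_{\mathsf M}\)), closed, and closed under minima.
  \item Show that \(\hat x_i=v_i\) for every \(i\notin S\). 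Suppose instead \(\hat x_i>v_i\) for some \(i\notin S\). Lower \(\hat x_i\) to \(\max(v_i,\hat x_i-\varepsilon)\).
  \item The only way the lowered point can leave \(\Sigma_{\mathsf M}\) is that \(i\) becomes the unique minimum of some circuit \(C\). Minimality of \(\hat x\) forces this to happen for arbitrarily small \(\varepsilon\). So there is a circuit \(C\ni i\) on which \(\hat x_i\) ties for the minimum of \(\hat x|_C\).
  \item Run a simultaneous-lowering argument. Let \(Z\) be the set of coordinates \(j\) with \(\hat x_j>v_j\). Lower all coordinates in \(Z\) by the same small \(\varepsilon\).
  \item Analyse which elements can become loops. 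Ties among elements of \(Z\) are preserved, so any new unique minimum of a circuit \(C\) must lie in \(Z\), and \(C\) must then have its \(\hat x\)-minimum attained only on \(Z\). Minimality of \(\hat x\) says such a circuit exists. Compare with \(v\): on \(C\) the \(v\)-values satisfy \(v_j<\hat x_j\) for \(j\in Z\) and \(v_j=\hat x_j\) for \(j\notin Z\). Use this to produce a circuit (by circuit elimination if necessary) whose unique \(v\)-minimum lies outside \(S\), contradicting \(\Loop(\operatorname{in}_v\mathsf M)\subseteq S\).
\end{enumerate}

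The main obstacle is step 7, namely extracting from the circuits that obstruct lowering \(\hat x\) a circuit whose unique \(v\)-minimum avoids \(S\). Circuit elimination, applied to circuits that tie at the minimum, is the tool I expect to use.

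If this direct argument becomes unwieldy, the fallback is induction on \(|S|\). Handle one \(s\in S\) at a time: move \(v\) to \(v+te_s\), and take the threshold \(t\) at which \(s\) stops being the unique minimum of any circuit. At that threshold, check with the circuit criterion that no element outside \(S\setminus\{s\}\) is a loop. Then apply the induction hypothesis with \(S\setminus\{s\}\), and use that a composite of translations along the \(e_s\) lies in \(C_S\).

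Finally, the equality \(\Sigma_{\mathsf M,S}=\Sigma_{\mathsf M}-C_S\) is just a restatement of the two implications.
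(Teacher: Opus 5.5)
Your proof of \(\Leftarrow\) is the same as the paper's, and the setup in steps 2--5 is sound. The gap is step~7, which carries all the content of \(\Rightarrow\).

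After lowering every coordinate in \(Z\) by a small \(\varepsilon\), a circuit obstructs exactly when its set of \(\hat x\)-minimizers meets \(Z\) in exactly one element \(j\). The remaining minimizers may lie outside \(Z\), so ``attained only on \(Z\)'' is wrong.

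The real problem is that minimality of \(\hat x\) produces such a circuit whenever \(Z\neq\varnothing\). This includes the legitimate case \(Z\subseteq S\). For example, take \(\mathsf U_{1,2}\), \(v=(0,-1)\), \(S=\{2\}\). Then \(\hat x=(0,0)\), \(Z=\{2\}\), and the obstructing circuit is \(\{1,2\}\) with \(j=2\in S\). So an obstructing circuit carries no information about \(Z\setminus S\), and nothing in your sketch forces \(j\notin S\).

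Lowering only \(Z\setminus S\) does give a circuit whose unique minimizer in \(Z\setminus S\) is some \(i\notin S\). But the values of \(v\) on \(C\cap Z\cap S\) and on the non-minimizers of \(C\) are uncontrolled and may lie below \(v_i\). You do not say how circuit elimination would fix this, so the main route is incomplete as written.

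The projection idea can be finished with no lowering at all. By the circuit criterion, and looplessness for the empty set, \(y\in\Sigma_{\mathsf M}\) if and only if every superlevel set \(\{a\in E:y_a\geq t\}\) is a flat of \(\mathsf M\). Hence the least point of \(\Sigma_{\mathsf M}\) above \(v\) satisfies \(\{a:\hat x_a\geq t\}=\operatorname{cl}_{\mathsf M}\{a:v_a\geq t\}\) for every \(t\). It follows that
\[
  \hat x_j>v_j
  \iff j\in\operatorname{cl}_{\mathsf M}\{i:v_i>v_j\}
  \iff j\text{ is the unique $v$-minimum of a circuit}
  \iff j\in\Loop(\operatorname{in}_v\mathsf M).
\]
So \(\hat x-v\geq0\) is supported exactly on the loop set, which proves \(\Rightarrow\).

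Your fallback also works and is close to the paper. However, its only nontrivial point, that no element outside \(S\) is a loop at the threshold, is exactly what you left unchecked. It is not automatic in circuit language, because raising \(v_s\) can break a tie between \(s\) and another element of a circuit.

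With bases the check is immediate. Let \(m\) be the maximal \(v\)-weight of a basis. If the threshold \(t^*\) is positive, every \(v\)-maximal basis omits \(s\) and keeps weight \(m\). For \(t\leq t^*\), no basis containing \(s\) exceeds \(m\), so every old nonloop stays a nonloop.

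The paper does this for all of \(S\) at once. After reducing to \(S=\Loop(\operatorname{in}_v\mathsf M)\), it chooses \(\epsilon\in C_S\) maximizing \(\epsilon(S)\) subject to \(v(D)+\epsilon(D\cap S)\leq m\) for every basis \(D\).
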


\begin{proof}
  If \(S=E\), then the condition on the loop set is automatic.  Moreover, \(C_E=N_\R\), because \(-\bar e_i=\sum_{j\in E\setminus\{i\}}\bar e_j\) for every \(i\), and \(0\in\Sigma_{\mathsf M}\) because \(\mathsf M\) is loopless.  Thus both assertions are immediate in this case, so we may assume \(S\subsetneq E\).

  Let \(\pi_E:\R^E\to N_\R\) be the quotient map, and choose a lift of \(v\), still denoted by \(v\).  Throughout the proof, replace \(\Sigma_{\mathsf M}\) by \(\pi_E^{-1}(\Sigma_{\mathsf M})\) and, for \(T\subseteq E\), replace \(C_T\) by \(\cone\{e_i:i\in T\}\subseteq\R^E\), retaining the same notation.  Since \(\pi_E(C_T)\) is the original \(C_T\), this does not
  change the intersection condition.

  Suppose first that \(L=\Loop(\operatorname{in}_v\mathsf M)\subseteq S\).  It suffices to prove the assertion when \(L=S\).  Indeed, applying that case with \(S\) replaced by \(L\) gives \((v+C_L)\cap\Sigma_{\mathsf M}\neq\varnothing\), and \(C_L\subseteq C_S\).

  Assume, then, that \(S=\Loop(\operatorname{in}_v\mathsf M)\).  A basis of \(\mathsf M\) is a basis of \(\operatorname{in}_v\mathsf M\) precisely when it has maximum \(v\)-weight; call such a basis \(v\)-maximal.  Every \(i\in E\setminus S\) belongs to a \(v\)-maximal basis, whereas every \(v\)-maximal basis avoids \(S\).

  Choose a \(v\)-maximal basis \(B\) and set \(m=v(B)\).  Choose \(\epsilon\in C_S\) maximizing \(\epsilon(S)\) subject to
  \[
    v(D)+\epsilon(D\cap S)\leq m
    \qquad\text{for every }D\in\mathcal B(\mathsf M).
  \]
  Such a maximum exists: the feasible set contains the origin, and each \(\epsilon_i\) is bounded by the inequality for a basis containing \(i\), which exists because \(\mathsf M\) is loopless.  Since every \(v\)-maximal basis avoids \(S\), it remains \((v+\epsilon)\)-maximal. Moreover, for every \(s\in S\), some \((v+\epsilon)\)-maximal basis contains \(s\); otherwise, by finiteness of \(\mathcal B(\mathsf M)\), a sufficiently small increase of \(\epsilon_s\) would preserve all the inequalities and contradict the maximality of \(\epsilon(S)\).  Thus \(\operatorname{in}_{v+\epsilon}\mathsf M\) is loopless, so \(v+\epsilon\in\Sigma_{\mathsf M}\).

  Conversely, suppose that \(v+\epsilon\in\Sigma_{\mathsf M}\) for some \(\epsilon\in C_S\).  If \(i\in E\setminus S\) were a loop of \(\operatorname{in}_v\mathsf M\), the circuit criterion would give a circuit \(C\) of \(\mathsf M\) such that
  \[
    v_i<v_j\qquad\text{for every }j\in C\setminus\{i\}.
  \]
  Since \(\epsilon_i=0\) and every coordinate of \(\epsilon\) is nonnegative, \(i\) would remain the unique minimum on \(C\) for \(v+\epsilon\).  It would therefore be a loop of \(\operatorname{in}_{v+\epsilon}\mathsf M\), a contradiction.  Hence \(\Loop(\operatorname{in}_v\mathsf M)\subseteq S\).

  Finally, the first equivalence says that \(v\in\Sigma_{\mathsf M,S}\) precisely when \(v\in\Sigma_{\mathsf M}-C_S\), which proves the second formulation.
\end{proof}

\subsection{Grothendieck weights}\label{subsec:grothendieck-weights}

We recall the Grothendieck-weight formalism from \cite[Sections~2 and~5]{wang2026grothendieckweights}.  Let \(\Sigma\) be a complete unimodular fan of dimension \(d\) in \(N_\R\), and set \(X=X_\Sigma\).  For a cone \(\sigma\in\Sigma\), write
\[
  x_\sigma=[\mathcal O_{V(\sigma)}]\in K(X).
\]
In particular, \(x_{\{0\}}=1\).  Since \(X\) is smooth, we identify the Grothendieck ring of vector bundles with the Grothendieck group of coherent sheaves.  The Euler characteristic is denoted by
\[
  \chi:K(X)\longrightarrow\Z.
\]

A \emph{Grothendieck weight} on \(\Sigma\) is a function \(g:\Sigma\to\Z\) that annihilates every linear relation among the classes \(x_\sigma\): if \(\sum_\sigma a_\sigma x_\sigma=0\) in \(K(X)\), then \(\sum_\sigma a_\sigma g(\sigma)=0\).  We write \(\GW(\Sigma)\) for the abelian group of such functions.  The Euler pairing on \(K(X)\) is perfect, and hence
\begin{equation}\label{eq:gw-identification}
  K(X)\xrightarrow{\ \sim\ }\GW(\Sigma),
  \qquad
  \xi\longmapsto g_\xi,
  \qquad
  g_\xi(\sigma)=\chi(\xi x_\sigma).
\end{equation}
We transport the ring structure of \(K(X)\) across this isomorphism.  Thus \(g_{\xi\eta}=g_\xi g_\eta\), and evaluation at the zero cone gives
\[
  g_\xi(\{0\})=\chi(\xi).
\]

We also fix our \(\lambda\)-class notation here.  For a vector bundle \(\mathcal E\),
\[
  \lambda_u(\mathcal E)
  =\sum_{i=0}^{\operatorname{rk}\mathcal E}
  [\wedge^i\mathcal E]u^i.
\]
We use the same notation for virtual classes, with \(\lambda_u(\alpha+\beta)=\lambda_u(\alpha)\lambda_u(\beta)\); in particular,
\(\lambda_u(\alpha-\beta)=\lambda_u(\alpha)/\lambda_u(\beta)\) in \(K(X)[[u]]\).  The same convention applies in equivariant \(K\)-theory.

For the product rule, recall that \(\Sigma\) is \emph{strongly unimodular} if \([N:N_\sigma+N_\tau]\) is either \(1\) or infinite for every \(\sigma,\tau\in\Sigma\).  A vector \(w\in N_\R\) is sufficiently generic if every nonempty intersection \((\sigma+w)\cap\tau\) satisfies \((\sigma^\circ+w)\cap\tau^\circ\neq\varnothing\) and has dimension \(\dim\sigma+\dim\tau-d\).  Such vectors form a dense rational polyhedral open subset of \(N_\R\).

If \(\sigma,\tau\supseteq\gamma\) and \((\sigma+w)\cap\tau\) is nonempty, then its recession cone is \(\sigma\cap\tau\).  Thus the intersection is bounded modulo \((N_\gamma)_\R\) if and only if \(\sigma\cap\tau\subseteq(N_\gamma)_\R\).  Since \(\gamma\) is a face of \(\sigma\cap\tau\), this is equivalent to \(\sigma\cap\tau=\gamma\).

\begin{proposition}[Product rule]\label{prop:gw-product-rule}
  Suppose that \(\Sigma\) is complete and strongly unimodular, and let \(w\in N_\R\) be sufficiently generic.  For \(\xi,\eta\in K(X)\) and \(\gamma\in\Sigma\),
  \[
    g_{\xi\eta}(\gamma)
    =
    \sum_{\substack{\sigma,\tau\supseteq\gamma\\
        (\sigma+w)\cap\tau\neq\varnothing\\
    \sigma\cap\tau=\gamma}}
    (-1)^{\dim\sigma+\dim\tau-d+\dim\gamma}
    g_\xi(\sigma)g_\eta(\tau).
  \]
\end{proposition}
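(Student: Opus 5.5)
The plan is to reduce to the case \(\gamma=\{0\}\) by restricting to the orbit closure \(V(\gamma)\), and to prove that case by expanding \(\xi\) and \(\eta\) in the basis dual to the orbit classes. The point is that both sides are bilinear in \((g_\xi,g_\eta)\), so it suffices to check the identity on a basis of \(K(X)\otimes K(X)\).

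\textbf{Step 1: reduction to \(\gamma=\{0\}\).} Since \(x_\gamma=[\mathcal O_{V(\gamma)}]\), we have \(g_{\xi\eta}(\gamma)=\chi(V(\gamma),\xi|_{V(\gamma)}\eta|_{V(\gamma)})\). Now \(V(\gamma)\) is the toric variety of \(\Star_\Sigma(\gamma)\), which is again complete and strongly unimodular, of dimension \(d-\dim\gamma\). For \(\sigma\supseteq\gamma\), transversality gives \(x_\sigma|_{V(\gamma)}=x_{\pi_\gamma(\sigma)}\), and this needs unimodularity. Hence \(g_{\xi|_{V(\gamma)}}(\pi_\gamma\sigma)=g_\xi(\sigma)\). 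A generic \(w\) maps to a generic \(\pi_\gamma(w)\). Since \(\sigma,\tau\supseteq\gamma\), the condition \((\sigma+w)\cap\tau\neq\varnothing\) is equivalent to \((\pi_\gamma\sigma+\pi_\gamma w)\cap\pi_\gamma\tau\neq\varnothing\) by translating along \(\gamma\). The condition \(\sigma\cap\tau=\gamma\) becomes \(\pi_\gamma\sigma\cap\pi_\gamma\tau=0\). Finally, \(\dim\sigma+\dim\tau-d+\dim\gamma\) equals \(\dim\pi_\gamma\sigma+\dim\pi_\gamma\tau-(d-\dim\gamma)\) modulo \(2\). So the formula at \(\gamma\) follows from the formula at the zero cone on \(V(\gamma)\).

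\textbf{Step 2: the case \(\gamma=\{0\}\) via a dual basis.} I would use the classes \(y_\sigma=[\mathcal O_{V(\sigma)}(-\partial V(\sigma))]\), with \(\partial V(\sigma)\) the toric boundary of \(V(\sigma)\), in the form recalled from \cite{wang2026grothendieckweights}. By Möbius inversion, \(g_\xi\) determines the coefficients of \(\xi\) in the \(x\)-basis through interior/compact-support Euler characteristics. Bilinearity then reduces the claim to checking \(\chi(x_\alpha x_\beta)\) for orbit classes \(x_\alpha,x_\beta\), or better, their "interior" versions. Via the Koszul-type relation \(x_\alpha=\sum_{\sigma\supseteq\alpha}y_\sigma\), the right side applied to \(g_{x_\alpha}(\sigma)=\chi(x_\alpha x_\sigma)\) becomes a sum over cells of the common refinement \((\Sigma+w)\wedge\Sigma\).

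The geometric input is a moving lemma. Translating by a generic torus element \(t\), the cycles \(t\cdot V(\alpha)\) and \(V(\beta)\) meet properly. Strong unimodularity makes the intersection multiplicities \(1\), so by Tor-independence \(x_\alpha x_\beta\) equals the class of the scheme \(tV(\alpha)\cap V(\beta)\). Tropicalizing (Fulton--Sturmfels style for \(K\)-theory), this intersection degenerates to a union of toric strata indexed by cells \((\sigma^\circ+w)\cap\tau^\circ\) with \(\sigma\supseteq\alpha\) and \(\tau\supseteq\beta\). Its Euler characteristic is then computed by inclusion–exclusion as \(\sum(-1)^{\dim\text{cell}}\) over the bounded cells. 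Unbounded cells contribute zero compactly supported Euler characteristic, since each is homeomorphic to a product with a half-line. This gives \(\chi(x_\alpha x_\beta)=\chi_c\) of the relevant union of bounded cells. Rewriting it with the signs \((-1)^{\dim\sigma+\dim\tau-d}\), the dimension of \((\sigma^\circ+w)\cap\tau^\circ\), and Möbius-inverting against the \(x\to g\) transformation yields the product rule at \(\{0\}\).

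\textbf{Main obstacle.} The hardest step is Step 2's degeneration. I need to identify \(\chi(x_\alpha x_\beta)\) with the alternating count of bounded cells, which requires showing that the fine signs and the boundedness condition \(\sigma\cap\tau=\{0\}\) come out exactly. I would first try to avoid degeneration entirely by an Euler-characteristic computation in the polyhedral complex \(\{(\sigma+w)\cap\tau\}\). For \(\chi(x_\alpha x_\beta)\), the tropical intersection \((\Star(\alpha)+w)\cap\Star(\beta)\), a polyhedral complex in \(N_\R\), should be contractible when nonempty, by tropical convexity of such sets. Expressing \(1=\chi\) of it as an alternating sum over its cells, minus unbounded ones via \(\chi_c\), gives the formula. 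Any remaining sign bookkeeping is routine.
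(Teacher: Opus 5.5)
\textbf{Verdict: there is a genuine gap.} Step~1 is fine, but Step~2, which carries the whole content of the proposition, does not work.

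\textbf{Step 1.} This is the same reduction to the star fan that the paper carries out for the $k$-fold rule in Appendix~B. One correction: read literally, $\iota_\gamma^*x_\sigma=x_{\pi_\gamma(\sigma)}$ is false. The inclusion $V(\sigma)\subseteq V(\gamma)$ is an excess intersection; already $\iota_\gamma^*x_\gamma=\lambda_{-1}(N^\vee_{V(\gamma)/X})\neq1$. The identity you actually need, $g_{\iota_\gamma^*\xi}(\pi_\gamma\sigma)=g_\xi(\sigma)$, comes from the projection formula $\iota_{\gamma*}\bigl(\iota_\gamma^*\xi\cdot[\mathcal O_{V(\sigma)}]\bigr)=\xi\,x_\sigma$.

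\textbf{Step 2.} None of the mechanisms you propose supplies the key identity.
\begin{enumerate}
\item \emph{No dual basis.} The $x_\sigma$ span $K(X)$ but are linearly dependent, since $K(X)$ has rank equal to the number of maximal cones. Reducing by bilinearity to $\xi=x_\alpha$, $\eta=x_\beta$ is legitimate but circular: the right-hand side then involves $g_{x_\alpha}(\sigma)=\chi(x_\alpha x_\sigma)$, numbers of exactly the kind you are trying to compute.
\item \emph{The moving lemma is vacuous.} $V(\alpha)$ is $T$-stable, so $t\cdot V(\alpha)=V(\alpha)$. When $\alpha\cap\beta\neq\{0\}$ (e.g.\ $\alpha=\beta$), no torus translate makes the intersection proper.
\item \emph{The fallback fails.} Contractibility produces only the values $0$ and $1$, whereas $\chi(x_\alpha x_\beta)$ can be negative. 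For the exceptional curve $E$ of the blow-up of $\PP^2$ at a torus-fixed point (a complete strongly unimodular fan), $\chi(x_E^2)=\chi(\mathcal O_E)-\chi(\mathcal O_E(-E))=1-2=-1$. Moreover, tropical convexity is not available for a general fan $\Sigma$.
\item \emph{The unbounded-cell claim is wrong as stated.} Open cells $(\sigma^\circ+w)\cap\tau^\circ$ have $\chi_c=(-1)^{\dim}$ whether or not they are bounded. What vanishes for an unbounded cell $P$ is the M\"obius function $\mu(\hat0,P)$ of its face lattice, and you have no mechanism that turns a sheaf Euler characteristic into such a cellular sum.
\end{enumerate}

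\textbf{The missing idea.} Degenerate the diagonal of $X\times X$, not a cycle in $X$.
\begin{enumerate}
\item Replace $w$ by a lattice point in its chamber, which does not change the sum. Let $\lambda$ be the associated one-parameter subgroup, and take the closure of $\{(t,x,\lambda(t)x):t\in\mathbb G_m\}$ in $\PP^1\times X\times X$.
\item This family is flat and diagonal-$T$-invariant, so its special fiber has class $\delta_*[\mathcal O_X]$. That fiber is the reduced union of the $V(\sigma)\times V(\tau)$ for which $(\sigma+w)\cap\tau$ is a vertex of the common refinement.
\item Strong unimodularity enters exactly here. At a vertex, $N_\sigma+N_\tau$ has finite index, hence index one, so the maximal cells are unimodular and the local initial ideals are square-free.
\item The closed-cover exact sequence and the crosscut theorem then give
$\delta_*[\mathcal O_X]=\sum(-1)^{\dim\sigma+\dim\tau-d}\,x_\sigma\boxtimes x_\tau$, summed over bounded cells. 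Here $-\mu(\hat0,P)$ equals $(-1)^{\dim P}$ for bounded $P$ and $0$ otherwise; this is the correct form of your ``unbounded cells drop out.''
\item Multiply by $p_1^*\xi\cdot p_2^*\eta$ and apply $\chi$. The projection formula and the K\"unneth formula give the case $\gamma=\{0\}$. The general case follows from your Step~1, or from the same degeneration applied to $\delta_*x_\gamma$.
\end{enumerate}
This is the argument behind the paper's equivariant product rule, and it is carried out in detail for the $k$-fold rule in Appendix~B.
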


For later use, let \(\mathcal L\) be a nef torus-invariant line bundle on \(X\), and choose a torus-invariant Cartier divisor \(D=\sum_{\rho\in\Sigma(1)}a_\rho D_\rho\) representing \(\mathcal L\).  We use the maximizing convention for its lattice polytope:
\[
  P = \left\{
    m\in M_\R:
    \langle m,u_\rho\rangle\leq a_\rho
    \text{ for every }\rho\in\Sigma(1)
  \right\}.
\]
For \(v\in\tau^\circ\), set
\[
  h_P(v)=\max_{m\in P}\langle m,v\rangle,
  \qquad
  \face_\tau(P)=\{m\in P:\langle m,v\rangle=h_P(v)\}.
\]
This face is independent of the choice of \(v\in\tau^\circ\), and we set \(\face_{\{0\}}(P)=P\).  The toric lattice-point formula and Demazure vanishing give
\begin{equation}\label{eq:gw-nef-line-bundle}
  g_\mathcal L(\tau)=|\face_\tau(P)\cap M|.
\end{equation}
Indeed, the restriction of \(\mathcal L\) to \(V(\tau)\) is nef and has lattice polytope \(\face_\tau(P)\); see \cite[Sections~4.3 and~6.1 and Theorem~9.2.3]{cox2011toric}.

If \(\mathcal L\) is ample, then \(\face_\tau(P)\) has dimension \(d-\dim\tau\).  With \((-)^\circ\) denoting relative interior, Ehrhart reciprocity on \(V(\tau)\) gives
\begin{equation}\label{eq:gw-line-bundle}
  g_{\mathcal L^{-1}}(\tau)
  =(-1)^{d-\dim\tau}|\face_\tau(P)^\circ\cap M|.
\end{equation}

\subsection{Matroid \texorpdfstring{\(K\)}{K}-rings and Euler characteristics}
\label{subsec:matroid-k-ring}

Let \(\mathsf M\) be a loopless matroid on \(E\). Set
\[
  K(\mathsf M)=K(X_{\Sigma_\mathsf M}),
\]
the Grothendieck ring of vector bundles on the smooth toric variety of the Bergman fan. This is the unaugmented \(K\)-ring of \(\mathsf M\) in the terminology of \cite[Section~1.5]{larson2024krings}. Since \(X_{\Sigma_\mathsf M}\) is smooth, this is also the Grothendieck group of coherent sheaves.

For each nonempty proper flat \(F\) of \(\mathsf M\), set
\[
  x_F
  =[\mathcal O_{V_{\Sigma_\mathsf M}(\rho_F)}]
  \in K(\mathsf M).
\]
The presentation of \cite[Theorem~5.2]{larson2024krings}, equivalently the standard toric presentation, says that these classes generate \(K(\mathsf M)\), with relations
\[
  x_Fx_G=0
  \quad\text{if \(F\) and \(G\) are incomparable},
\]
and, for every \(i,j\in E\),
\[
  \prod_{\substack{F\in \mathcal{L}(\mathsf{M})\setminus\{\varnothing,E\}\\i\notin F}}
  (1-x_F)
  =
  \prod_{\substack{F\in \mathcal{L}(\mathsf{M})\setminus\{\varnothing,E\}\\j\notin F}}
  (1-x_F),
\]
where \(\mathcal{L}(\mathsf{M})\) denotes the set of flats of \(\mathsf{M}\).
For a flag \(\mathcal F:F_1\subsetneq\cdots\subsetneq F_\ell\) of nonempty proper flats, write
\[
  x_\mathcal F
  =x_{F_1}\cdots x_{F_\ell},
  \qquad
  x_\varnothing=1.
\]

The variety \(X_{\Sigma_\mathsf M}\) is generally not proper, so its Euler characteristic is not defined by pushforward to a point.  Instead, \cite[Section~1.5]{larson2024krings} constructs a canonical homomorphism
\[
  \chi(\mathsf M,-):K(\mathsf M)\longrightarrow\Z
\]
such that
\[
  \chi(\mathsf M,x_\mathcal F)=1
  \qquad\text{for every flag of flats \(\mathcal F\)}.
\]
The pairing
\[
  K(\mathsf M)\times K(\mathsf M)\longrightarrow\Z,
  \qquad
  (a,b)\longmapsto\chi(\mathsf M,ab),
\]
is perfect. In this paper, we use \(\chi_\mathsf{M}\) to denote \(\chi(\mathsf{M},-)\).

We now pass to the Grothendieck-weight formulation of \cite[Section~7]{wang2026grothendieckweights}.  A Grothendieck weight on \(\mathsf M\) is a function on flags of flats that annihilates every linear relation among the classes \(x_\mathcal F\).  If \(\GW(\mathsf M)\) denotes the group of these weights, the perfect pairing gives an isomorphism
\[
  K(\mathsf M)\xrightarrow{\ \sim\ }\GW(\mathsf M),
  \qquad
  a\longmapsto g_a,
  \qquad
  g_a(\mathcal F)
  =\chi(\mathsf M,a x_\mathcal F).
\]
In particular, \(1\in K(\mathsf M)\) corresponds to the constant weight \(1\) on \(\Sigma_\mathsf M\).

The inclusion of fans \(\Sigma_\mathsf M\subseteq\Sigma_E\) induces an open toric immersion \(X_{\Sigma_\mathsf M}\hookrightarrow X_E\).  Its pullback is the surjective homomorphism
\[
  \pi_\mathsf M:K(X_E)\longrightarrow K(\mathsf M),
  \qquad
  x_S\longmapsto
  \begin{cases}
    x_S, & S\text{ is a flat of \(\mathsf M\)},     \\
    0,   & \text{otherwise},
  \end{cases}
\]
where \(x_S=x_{\rho_S}\) for every nonempty proper subset \(S\subsetneq E\).

For \(\xi\in K(X_E)\), we abuse the notation \(\chi_\mathsf{M}\) and write:
\begin{equation}\label{eq:matroid-euler-characteristic}
  \chi_\mathsf M(\xi)
  =\chi(\mathsf M,\pi_\mathsf M(\xi)).
\end{equation}

The pushforward homomorphism induced by the inclusion \(\Sigma_\mathsf{M}\subseteq \Sigma_E\) is simply the zero extension in the language of Grothendieck weights. For any Grothendieck weight \(g\) on \(\mathsf{M}\), its zero extension is a function on \(K(X_E)\), defined by
\begin{equation}\label{eq:bergman-gw}
  \mathcal G \mapsto
  \begin{cases}
    g(\mathcal{G}), & \mathcal G\text{ is a flag of flats of \(\mathsf M\)}, \\
    0, & \text{otherwise}
  \end{cases}
  \qquad(\sigma_\mathcal G\in\Sigma_E).
\end{equation}
The zero extension of the constant weight on \(\Sigma_\mathsf M\) is the indicator function that records whether a flag of subsets is a flag of flats.

We use \(\Delta_\mathsf M\) also for the corresponding class in \(K(X_E)\) under \eqref{eq:gw-identification}.  The projection formula of \cite[Proposition~5.6]{larson2024krings}, expressed in Grothendieck-weight notation, gives
\begin{equation}\label{eq:matroid-euler-as-product}
  \chi_\mathsf M(\xi)
  =\chi(\Delta_\mathsf M\xi)
  =(\Delta_\mathsf M g_\xi)(\{0\}).
\end{equation}

\subsection{Tropical initial degenerations} \label{subsec:tropical-initial-degeneration}

The geometric input to \cref{sec:ample-k-positivity,sec:k-fold-point-complex-cm} is the tropical initial degeneration of \cite[Section~3]{eur2025vanishingtheoremscombinatorialgeometries}, which we recall here.

Following the paragraph preceding \cite[Definition~3.1]{eur2025vanishingtheoremscombinatorialgeometries}, a vector \(w\) is sufficiently generic if every nonempty intersection \(\sigma\cap(w+\tau)\), with \(\sigma,\tau\in\Sigma_E\), is transverse; in particular \(\sigma\) and \(w+\tau\) are disjoint whenever \(\dim\sigma+\dim\tau<n-1\). This is exactly our definition of sufficiently generic in \cref{subsec:grothendieck-weights}.

For the rest of this subsection, fix a rational vector \(w\in N_\R\) that is sufficiently generic.

\begin{definition}[{\cite[Definition~3.1]
  {eur2025vanishingtheoremscombinatorialgeometries}}]
  \label{def:tropical-initial-degeneration}
  Let \(\mathsf M\) be a matroid of rank \(r\) on \(E\).  If \(\mathsf M\) is loopless, the \emph{tropical initial degeneration} \(\operatorname{ind}_w\mathsf M\subseteq X_E\) is the union of the strata \(V(\sigma)\), over those cones \(\sigma\in\Sigma_E\) of codimension \(r-1\) that meet a maximal cone of \(w+\Sigma_\mathsf M\).  If \(\mathsf M\) has a loop, then \(\operatorname{ind}_w\mathsf M=\varnothing\).
\end{definition}

If \(\mathsf M\) is loopless, then \(\operatorname{ind}_w\mathsf M\) is by construction a reduced union of torus-orbit closures, pure of dimension \(r-1\).  When such a matroid is realized by \(L\subseteq\k^E\), \(\operatorname{ind}_w\mathsf M\) is a Gr\"obner degeneration of the wonderful compactification \(W_L\) associated with \(L\) \cite[Proposition~3.2]{eur2025vanishingtheoremscombinatorialgeometries}.

The following three statements are the only properties of \(\operatorname{ind}_w\mathsf M\) that we use.

\begin{lemma}[{\cite[Lemma~3.5] {eur2025vanishingtheoremscombinatorialgeometries}}] \label{lem:efl-orbit-criterion}
  For a loopless \(\mathsf M\) and any cone \(\sigma\in\Sigma_E\),
  \[
    V(\sigma)\subseteq\operatorname{ind}_w\mathsf M
    \quad\Longleftrightarrow\quad
    \sigma\cap(w+\Sigma_\mathsf M)\neq\varnothing.
  \]
\end{lemma}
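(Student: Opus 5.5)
We must prove: for loopless \(\mathsf M\) and \(\sigma\in\Sigma_E\), \(V(\sigma)\subseteq\operatorname{ind}_w\mathsf M\) iff \(\sigma\cap(w+\Sigma_\mathsf M)\neq\varnothing\). Throughout we use that \(\operatorname{ind}_w\mathsf M\) is the union of the \(V(\tau)\) over cones \(\tau\) of codimension \(r-1\) meeting \(w+\Sigma_\mathsf M\). Because \(w\) is sufficiently generic, such a \(\tau\) meets a maximal cone of \(w+\Sigma_\mathsf M\) transversally in a point of relative interiors. Recall also that \(V(\sigma)\subseteq V(\tau)\) iff \(\tau\subseteq\sigma\).

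\textbf{The direction \((\Leftarrow)\).} Suppose \(\sigma\cap(w+\Sigma_\mathsf M)\neq\varnothing\).

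1. By genericity, some cone \(\beta\in\Sigma_\mathsf M\) meets \(\sigma\) in \((w+\beta^\circ)\cap\sigma^\circ\). This intersection has dimension \(\dim\sigma+\dim\beta-(n-1)\).
2. Refine to a maximal cone \(\beta'\supseteq\beta\) of the pure \((r-1)\)-dimensional fan \(\Sigma_\mathsf M\). Then \(w+\beta'\) still meets \(\sigma\), with intersection of dimension \(\dim\sigma-(n-r)\geq0\).
3. The intersection \(\sigma\cap(w+\beta')\) is a polyhedron of that dimension inside \(\sigma\). Take a vertex of it, or more simply use the face structure, as follows.
4. The face of \(\sigma\) that is minimal among those meeting \(w+\beta'\) is a face \(\tau\leq\sigma\). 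By genericity, \(w+\beta'\) meets \(\tau\) transversally at a point, so \(\dim\tau=n-r\), that is, \(\tau\) has codimension \(r-1\).
5. Hence \(V(\tau)\subseteq\operatorname{ind}_w\mathsf M\), and \(V(\sigma)\subseteq V(\tau)\).

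\textbf{The direction \((\Rightarrow)\).} Suppose \(V(\sigma)\subseteq\operatorname{ind}_w\mathsf M\).

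1. \(V(\sigma)\) is irreducible and \(\operatorname{ind}_w\mathsf M\) is a finite union of the irreducible closed sets \(V(\tau)\). So \(V(\sigma)\subseteq V(\tau)\) for one such \(\tau\).
2. This gives \(\tau\subseteq\sigma\).
3. Therefore \(\sigma\cap(w+\Sigma_\mathsf M)\supseteq\tau\cap(w+\Sigma_\mathsf M)\neq\varnothing\).

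The direction \((\Rightarrow)\) is formal.

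\textbf{Main obstacle.} The only delicate step is step 4 of \((\Leftarrow)\). We must check that the minimal face \(\tau\) meeting \(w+\beta'\) has dimension exactly \(n-r\). Genericity forces \(\dim\tau\geq n-r\). Minimality implies \((w+\beta')\cap\tau\) lies in \(\tau^\circ\). If its dimension were positive, moving along it toward an unbounded direction or a boundary point would reach a smaller face. We need to rule this out, since the intersection could be unbounded within \(\tau^\circ\).

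First attempt: intersect with a generic sufficiently large compact slice. Cleaner alternative: note that \((w+\beta')\cap\sigma\) is a polyhedron whose minimal faces lie on faces \((w+\beta'')\cap\tau\) with \(\beta''\leq\beta'\). Choose such a vertex-type face; if the polyhedron has lineality, it is contained in \(\beta'\cap\sigma\), which is impossible since \(\beta'\) is pointed. Such a vertex gives a transverse point intersection of \(w+\beta''\) with a face \(\tau\) of \(\sigma\), with \(\dim\beta''+\dim\tau=n-1\). Then enlarge \(\tau\) inside \(\sigma\) and \(\beta''\) to a maximal \(\beta'''\), preserving nonemptiness, until \(\dim\tau=n-r\).
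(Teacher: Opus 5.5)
Your direction \((\Rightarrow)\) is correct. The gap is in step~4 of \((\Leftarrow)\), which is false as stated, and your proposed repair does not fix it. For a fixed maximal cone \(\beta'\) of \(\Sigma_{\mathsf M}\), the minimal face of \(\sigma\) meeting \(w+\beta'\) can be \(\sigma\) itself even when \(\dim\sigma>n-r\).

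Here is an example. Take \(n=3\) and \(\mathsf M=\mathsf U_{2,3}\), so \(\Sigma_{\mathsf M}=\rho_{\{1\}}\cup\rho_{\{2\}}\cup\rho_{\{3\}}\) and \(n-r=1\). Let \(\sigma=\cone(\bar e_1,\bar e_{\{1,2\}})=\{u_1\geq u_2\geq u_3\}\), and let \(w=(2,1,0)\), which is sufficiently generic. For \(\beta'=\rho_{\{1\}}\) one has \(w+\beta'\subseteq\sigma^\circ\). So the minimal face of \(\sigma\) meeting \(w+\beta'\) is \(\sigma\), of dimension \(2\). The unique vertex of \((w+\beta')\cap\sigma\) is \(w\) itself, with \(\beta''=\{0\}\) and \(\tau=\sigma\).

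In general a vertex only gives \(\dim\tau=n-1-\dim\beta''\geq n-r\), with equality exactly when \(\beta''\) is maximal. Your last step, which enlarges \(\beta''\) and \(\tau\), cannot lower \(\dim\tau\): you need to shrink the face of \(\sigma\), not enlarge it. In the example, the rays \(\rho_{\{1,2\}}\) and \(\rho_{\{1\}}\) of \(\sigma\) meet \(w+\Sigma_{\mathsf M}\) only through the other cones \(w+\rho_{\{2\}}\) and \(w+\rho_{\{3\}}\).

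What is missing is a descent step that may switch between maximal cones of \(\Sigma_{\mathsf M}\). The claim is: if \(\dim\sigma>n-r\) and \(\sigma\cap(w+|\Sigma_{\mathsf M}|)\neq\varnothing\), then some proper face of \(\sigma\) meets \(w+|\Sigma_{\mathsf M}|\). Induction on \(\dim\sigma\) then gives a face of dimension \(n-r\), and your step~5 finishes. This step needs a global property of the Bergman fan, not just genericity.

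One way to prove it runs as follows.
\begin{enumerate}
  \item Choose \(\ell\in M_\R\) positive on \(\sigma\setminus\{0\}\) and otherwise generic. Minimize \(\ell\) over \(\sigma\cap(w+|\Sigma_{\mathsf M}|)\), which is a finite union of polyhedra with pointed recession cones inside \(\sigma\).
  \item The minimum is attained at a vertex \(p\in(w+\beta_1^\circ)\cap\tau^\circ\) of the common refinement, with \(\dim\beta_1+\dim\tau=n-1\).
  \item Suppose \(\tau=\sigma\). Transversality gives \(\operatorname{span}\beta_1\oplus\operatorname{span}\sigma=N_\R\). Near \(p\), the set is \(p+\Phi\), where \(\Phi\subseteq\operatorname{span}\sigma\) maps isomorphically onto \(\Star_{\Sigma_{\mathsf M}}(\beta_1)\subseteq N_\R/\operatorname{span}\beta_1\). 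This star is a product of Bergman fans of loopless minors, of total dimension \(\dim\sigma-(n-r)\geq1\).
  \item Minimality of \(p\) gives \(\ell\geq0\) on \(\Phi\). Transport \(\ell|_{\operatorname{span}\sigma}\) through the isomorphism to the star.
  \item In the Bergman fan of a loopless matroid of rank at least \(2\), the atom rays \(\bar e_A\) sum to zero, and every flat is a union of atoms. So the transported functional vanishes on every factor, hence \(\ell\) vanishes on \(\Phi\), which spans a nonzero subspace.
  \item A generic \(\ell\) avoids this for the finitely many vertices involved. Therefore \(\tau\neq\sigma\).
\end{enumerate}
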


\begin{proposition}[{\cite[Proposition~3.6] {eur2025vanishingtheoremscombinatorialgeometries}}]
  \label{prop:efl-euler-characteristic}
  For every \(\xi\in K(X_E)\) and every loopless matroid \(\mathsf M\),
  \[
    \chi_\mathsf M(\xi)
    =\chi\bigl(\operatorname{ind}_w\mathsf M,\xi\bigr),
  \]
  where the left-hand side is the Euler characteristic \eqref{eq:matroid-euler-characteristic} of the matroid \(K\)-ring.
\end{proposition}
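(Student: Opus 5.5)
The plan is to compare two expansions of classes in \(K(X_E)\) in the basis-like family \(x_\tau\), and to reduce the identity to a topological statement about \((w+\Sigma_{\mathsf M})\cap\tau\). By \eqref{eq:matroid-euler-as-product} and the product rule (\cref{prop:gw-product-rule}) at \(\gamma=\{0\}\), applied to \(\Delta_{\mathsf M}\) and \(\xi\), we get
\[
  \chi_{\mathsf M}(\xi)=\sum_{\tau\in\Sigma_E}c_\tau\,g_\xi(\tau)=\chi\Bigl(\xi\sum_\tau c_\tau x_\tau\Bigr),
  \qquad
  c_\tau=\sum_{\substack{\sigma\in\Sigma_{\mathsf M},\ \sigma\cap\tau=\{0\}\\ (\sigma+w)\cap\tau\neq\varnothing}}(-1)^{\dim\sigma+\dim\tau-n+1}.
\]
It therefore suffices to prove the identity \([\mathcal O_{\operatorname{ind}_w\mathsf M}]=\sum_\tau c_\tau x_\tau\) in \(K(X_E)\).

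\textbf{The class of the degeneration.} Let \(S=\{\tau\in\Sigma_E:V(\tau)\subseteq\operatorname{ind}_w\mathsf M\}\). By \cref{lem:efl-orbit-criterion}, \(S\) consists exactly of the cones meeting \(w+\Sigma_{\mathsf M}\), and \(S\) is closed under passing to larger cones. The scheme \(Z=\operatorname{ind}_w\mathsf M\) is a reduced union of orbit closures, and intersections of orbit closures are again reduced orbit closures. Locally on each affine chart \(\mathbb A^{n-1}\) it is therefore a Stanley--Reisner scheme, so the Mayer--Vietoris (inclusion--exclusion) complex of structure sheaves is exact. This yields \([\mathcal O_Z]=\sum_\tau m_\tau x_\tau\), where the coefficients \(m_\tau\) are supported on \(S\) and are characterized by the triangular system
\[
  \sum_{\gamma\le\tau}m_\gamma=\mathbf 1_S(\tau)\qquad\text{for all }\tau\in\Sigma_E .
\]
As a sanity check, for \(Z=V(\sigma)\) we get \(m=\delta_\sigma\). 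Since the system is unitriangular, the \(m_\tau\) are uniquely determined.

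\textbf{The same system for \(c_\tau\).} It remains to show \(\sum_{\gamma\le\tau}c_\gamma=\mathbf 1_S(\tau)\). As in the prototype discussion, set
\[
  P^b_\tau=\bigcup_{\sigma\in\Sigma_{\mathsf M},\ \gamma\le\tau}(\sigma^\circ+w)\cap\gamma^\circ,
\]
the union being over the bounded cells. By genericity of \(w\), each cell has dimension \(\dim\sigma+\dim\gamma-n+1\). Hence the compactly supported Euler characteristic gives \(\chi(P^b_\tau)=\sum_{\gamma\le\tau}c_\gamma\), and \(P^b_\tau\) is a compact polyhedral complex. We then need two facts: \(P^b_\tau=\varnothing\) when \(\tau\notin S\) (clear, since then \((w+\Sigma_{\mathsf M})\cap\tau=\varnothing\)), and \(P^b_\tau\) is contractible when \(\tau\in S\).

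\textbf{Main obstacle: contractibility.} I expect the contractibility of \(P^b_\tau\) to be the hard step. The set \(Y=(w+\Sigma_{\mathsf M})\cap\tau\) is the intersection of a translated Bergman fan with an orthant-like cone. I would first try to show that \(Y\) is tropically convex, using \cref{prop:relaxed-bergman-convex}. Then \(P^b_\tau\) is the bounded part of \(Y\): the union of the cells of \(Y\) whose recession cone is trivial. I would exhibit a deformation retraction of \(Y\) onto \(P^b_\tau\) that pushes each unbounded cell along its recession directions; the retraction lemmas later in the paper (the first-wall retraction) suggest this is the intended route. Tropical convexity of \(Y\) should then give contractibility, for instance by a straight-line tropical homotopy to a point of \(P^b_\tau\). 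If this direct retraction is awkward, the fallback is to use \cref{lem:relaxed-bergman-completion}. It rewrites membership in \(Y\) modulo recession directions as membership in a relaxed Bergman fan \(\Sigma_{\mathsf M,T}\), which is tropically convex and hence contractible, and the bounded part should be a strong deformation retract of it.
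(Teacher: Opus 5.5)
Your argument is correct and is essentially the route the paper itself indicates: the paper only cites \cite[Proposition~3.6]{eur2025vanishingtheoremscombinatorialgeometries} here, but \cref{sec:ample-k-positivity} carries out exactly your computation. There, \cref{lem:matroid-euler-product-rule} gives \(\chi_{\mathsf M}(\xi)=\sum_\tau c_\tau g_\xi(\tau)\), and the proof of \cref{prop:sign-c-tau} gives \(\chi(P^b_\tau)=\sum_{\gamma\le\tau}c_\gamma=\mathbf 1_{P_\tau\neq\varnothing}\) via tropical convexity of \(P_\tau\) (braid cones being tropically convex as in \cref{lem:gp-normal-cone-tropically-convex}) together with the cellwise retraction onto bounded cells; the closing remark of that appendix then identifies these \(c_\tau\) with the Eur--Fink--Larson coefficients defined by the inclusion--exclusion recurrence for \([\mathcal O_{\operatorname{ind}_w\mathsf M}]\), which is precisely your unitriangular system for \(m_\tau\).
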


\begin{theorem}[{\cite[Corollaries~3.8 and~3.11] {eur2025vanishingtheoremscombinatorialgeometries}}]
  \label{thm:efl-cm}
  Let \(\mathsf M\) be a loopless matroid on \(E\).
  \begin{enumerate}
    \item The scheme \(\operatorname{ind}_w\mathsf M\) is Cohen--Macaulay,
      geometrically connected, and geometrically reduced.
    \item Let \(P\subseteq M_\R\) be a generalized permutohedron, so its normal fan coarsens \(\Sigma_E\).  Let \(X_P\) be the toric variety of the normal fan of \(P\), and let \(f_P:X_E\to X_P\) be the induced toric morphism.  Denote the scheme-theoretic image of \(\operatorname{ind}_w\mathsf M\) by \(f_P(\operatorname{ind}_w\mathsf M)\).  Then the natural map
      \[
        \mathcal O_{f_P(\operatorname{ind}_w\mathsf M)}
        \longrightarrow
        Rf_{P*}\mathcal O_{\operatorname{ind}_w\mathsf M}
      \]
      is an isomorphism, and \(f_P(\operatorname{ind}_w\mathsf M)\) is Cohen--Macaulay.
  \end{enumerate}
\end{theorem}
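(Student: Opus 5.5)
Although this result is quoted from \cite{eur2025vanishingtheoremscombinatorialgeometries}, here is how I would prove it directly from the combinatorics of \(w+\Sigma_\mathsf M\). The approach is to treat \(\operatorname{ind}_w\mathsf M\) as a union of torus-orbit closures, hence locally a Stanley--Reisner scheme, and to reduce each assertion to topology. The key topological input is that intersections of the tropically convex set \(w+\Sigma_\mathsf M\) with polyhedral pieces of \(\Sigma_E\) are contractible, via \cref{prop:relaxed-bergman-convex} and \cref{lem:relaxed-bergman-completion}.

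\textbf{Step 1 (local model).} Cover \(X_E\) by the affine charts \(U_\tau\cong\mathbb A^{n-1}\), one for each maximal cone \(\tau\). Since \(\Sigma_E\) is unimodular, \(V(\sigma)\cap U_\tau\) for \(\sigma\le\tau\) is a coordinate subspace indexed by the rays of \(\sigma\). Hence \(\operatorname{ind}_w\mathsf M\cap U_\tau\) is the Stanley--Reisner scheme of a complex \(\Delta_\tau\) on the rays of \(\tau\). Its facets are the complements of the minimal cones \(\sigma\le\tau\) meeting \(w+\Sigma_\mathsf M\), using \cref{lem:efl-orbit-criterion}. Reducedness, including geometric reducedness, is then automatic.

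\textbf{Step 2 (Reisner's criterion).} The condition \(\sigma\cap(w+\Sigma_\mathsf M)\neq\varnothing\) is upward closed in \(\sigma\). So the nonfaces of \(\Delta_\tau\) are governed by the polyhedral complex \(\tau\cap(w+\Sigma_\mathsf M)\), cut into cells \((\gamma^\circ)\cap(w+\Sigma_\mathsf M)\). By Alexander duality and Hochster's formula, the homology of each link of \(\Delta_\tau\) is expressed through the homology of subsets of the form
\[
  \bigl(w+\Sigma_\mathsf M\bigr)\cap\bigl(\text{a face }\gamma\text{ of }\tau,\ \text{or }\gamma+\text{directions }C_S\bigr).
\]
I expect each of these sets to be either empty or contractible. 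The reason is that \(\tau\) and its faces are tropically convex (they are cones of the braid fan), so intersections with \(w+\Sigma_{\mathsf M,S}\) are tropically convex by \cref{prop:relaxed-bergman-convex}, and tropically convex sets are contractible. Genericity of \(w\) controls the dimensions, which yields the vanishing of reduced homology below the top degree required by Reisner. Connectedness then follows from \(0\)-dimensional acyclicity together with the global fact that \(w+\Sigma_\mathsf M\) is connected.

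\textbf{Step 3 (pushforward and image).} For (2), use the theorem on formal functions to reduce \(R^if_{P*}\mathcal O_{\operatorname{ind}_w\mathsf M}=0\) for \(i>0\) to a computation on each torus-invariant fiber. There the cohomology of the structure sheaf of a union of orbit closures is computed by a Čech/Ishida-type complex, whose cohomology is the cohomology of a polyhedral complex of the form \((w+\Sigma_\mathsf M)\cap\tilde\sigma\), where \(\tilde\sigma\) is a cone of the normal fan of \(P\). That complex is again tropically convex, hence contractible, which gives both the vanishing and \(f_{P*}\mathcal O=\mathcal O_{f_P(\operatorname{ind}_w\mathsf M)}\). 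Cohen--Macaulayness of the image is then deduced by applying the same Stanley--Reisner/Reisner analysis to the image, which is itself a union of orbit closures of \(X_P\) (possibly singular). Alternatively, one can use duality: \(Rf_{P*}\omega^\bullet\) is concentrated in one degree, since relative-dimension Grauert--Riemenschneider-type vanishing holds for the contractible fibers. The image is therefore CM because \(\operatorname{ind}_w\mathsf M\) is.

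\textbf{Main obstacle.} I expect the hardest step to be the last one: establishing Cohen--Macaulayness on the possibly singular \(X_P\), where charts are no longer affine spaces. My first attempt would be the duality route via Kempf's criterion, that is, CM of the source, \(Rf_*\mathcal O=\mathcal O\), and vanishing of \(R^if_*\omega\) for \(i>0\), with the last condition checked combinatorially by the same contractibility argument.
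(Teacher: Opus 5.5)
Your Step~1 is sound. It is exactly the local description the paper itself uses in the proof of \cref{prop:sign-c-tau}, namely \(\operatorname{ind}_w\mathsf M\cap U_\tau\simeq\operatorname{Spec}\k[\Delta_\tau]\times(\mathbb G_m)^{n-1-s}\), and reducedness and connectedness are also fine. The gap is in Step~2, which is where all the content of part~(1) sits. Every link of \(\Delta_\tau\) is again of the form \(\Delta_\gamma\) for a face \(\gamma\leq\tau\) with \(P_\gamma=(w+\Sigma_{\mathsf M})\cap\gamma\neq\varnothing\). Cover \(P_\gamma\cap\partial\gamma\) by its intersections with the facets of \(\gamma\); these are tropically convex, hence empty or contractible. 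The nerve theorem then gives \(|\Delta_\gamma|\simeq(w+\Sigma_{\mathsf M})\cap\partial\gamma\). That is all contractibility of such pieces can give you: it identifies the homotopy type of the links, but says nothing about the degrees in which their homology lives. The links are not acyclic in general. Already for \(\mathsf U_{2,3}\), \(\operatorname{ind}_w\mathsf M\) is a chain of three rational curves, and \(\Delta_\tau\) at a node is two points. Reisner's criterion asks for \(\widetilde H_i(\Delta_\gamma)=0\) for \(i<\dim\Delta_\gamma=\dim\gamma-(n-r)-1\). Since \(P_\gamma\) is contractible, this says \(H_j(P_\gamma,P_\gamma\cap\partial\gamma)=0\) for \(j<\dim P_\gamma\). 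Genericity of \(w\) only bounds dimensions from above, so it kills homology above the top degree, not below it. Vanishing below the top degree is a local Cohen--Macaulay property of \(w+\Sigma_{\mathsf M}\). A proof along your lines must feed in something like the fact that stars of \(\Sigma_{\mathsf M}\) are products of Bergman fans of minors with shellable links (Bj\"orner), plus a duality argument for the pair \((P_\gamma,P_\gamma\cap\partial\gamma)\) that copes with unbounded cells. Your proposal neither states nor proves such an ingredient, and the tropical-convexity facts you cite do not supply it.

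The same problem resurfaces in Step~3. The isomorphism \(\mathcal O_{f_P(\operatorname{ind}_w\mathsf M)}\to Rf_{P*}\mathcal O_{\operatorname{ind}_w\mathsf M}\) is plausible along your lines, though the theorem on formal functions is the wrong tool. Since \(f_P\) is toric, you should compute \(H^i(f_P^{-1}(U),\mathcal O)\) over affine charts \(U\subseteq X_P\) via the character-graded \v{C}ech complex, whose graded pieces are cohomologies of tropically convex regions. Cohen--Macaulayness of the image via Grothendieck duality, however, needs \(Rf_{P*}\) of the dualizing sheaf to be concentrated in one degree. The dualizing sheaf of a union of orbit closures brings in exactly the relative, interior-type terms from Step~2, which contractibility does not control. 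So your ``main obstacle'' is misplaced: the hard point already occurs in part~(1).

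For comparison, the paper gives no proof of this theorem; it is imported from Eur--Fink--Larson. The paper also uses it in the opposite direction from you. In \cref{prop:sign-c-tau} and in \cref{sec:k-fold-point-complex-cm} it deduces Cohen--Macaulayness of the local Stanley--Reisner complexes from that of \(\operatorname{ind}_w\mathsf M\) or of its image. The only links it handles by tropical convexity, the nerve theorem and \cref{lem:k-fold-first-wall-retraction} are those that turn out to be contractible. Your Step~2 would have to supply precisely the input the paper treats as a black box.
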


\subsection{Cohen--Macaulay simplicial complexes}
\label{subsec:cm-simplicial-complexes}

Let \(\Delta\) be a finite simplicial complex on an ambient vertex set \(V\), possibly with ghost vertices (i.e. vertices that are not in any face).  Its Stanley--Reisner ring is
\[
  \k[\Delta]
  =\k[x_v:v\in V]/I_\Delta,
  \qquad
  I_\Delta=(x_F:F\subseteq V,\ F\notin\Delta),
\]
where \(x_F=\prod_{v\in F}x_v\). In particular, \(x_v\in I_\Delta\) if \(v\) is a ghost vertex.

\begin{definition}[{\cite[Definition~1.12 and Theorem~1.13]{miller2005combinatorial}}]\label{def:simplicial-k-polynomial}
  Give \(\k[\Delta]\) the fine \(\Z^V\)-grading \(\deg x_v=e_v\).  Its \emph{finely graded \(K\)-polynomial} \(\mathcal K_V(\Delta;\bm z)\) is defined by
  \[
    \operatorname{Hilb}(\k[\Delta];\bm z)
    =
    \frac{\mathcal K_V(\Delta;\bm z)}
    {\prod_{v\in V}(1-z_v)}.
  \]
  Equivalently,
  \[
    \mathcal K_V(\Delta;\bm z)
    =
    \sum_{F\in\Delta}
    \prod_{v\in F}z_v
    \prod_{v\in V\setminus F}(1-z_v).
  \]
  More generally, for a map \(\pi:V\to[k]\), the \(\Z^k\)-graded \(K\)-polynomial with \(\deg x_v=e_{\pi(v)}\) is
  \[
    \mathcal K(\Delta;Z_1,\ldots,Z_k)
    =
    \left.
    \mathcal K_V(\Delta;\bm z)
    \right|_{z_v=Z_{\pi(v)}}.
  \]
\end{definition}

\begin{definition}\label{def:cm-simplicial-complex}
  The simplicial complex \(\Delta\) is \emph{Cohen--Macaulay over \(\k\)} if \(\k[\Delta]\) is Cohen--Macaulay.  For a face \(F\in\Delta\), its \emph{link} is
  \[
    \link_\Delta(F)
    =\{G\in\Delta:G\cap F=\varnothing,\ F\cup G\in\Delta\}.
  \]
  In particular, \(\link_\Delta(\varnothing)=\Delta\).
\end{definition}

\begin{theorem}[Reisner's criterion]\label{thm:reisner-criterion}
  The complex \(\Delta\) is Cohen--Macaulay over \(\k\) if and only if
  \[
    \widetilde H_i\bigl(\link_\Delta(F);\k\bigr)=0
    \qquad
    \text{for every \(F\in\Delta\) and every
    \(i<\dim\link_\Delta(F)\)}.
  \]
\end{theorem}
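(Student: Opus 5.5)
This is Reisner's criterion, a classical result; since the paper quotes it as background, I would prove it via Hochster's formula rather than reprove commutative algebra from scratch.

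\emph{Step 1 (reduction to local cohomology).} Let \(\mathfrak m\) be the homogeneous maximal ideal of \(\k[\Delta]\) and \(d=\dim\Delta+1=\dim\k[\Delta]\). A graded ring is Cohen--Macaulay if and only if \(H^i_{\mathfrak m}(\k[\Delta])=0\) for all \(i<d\). Ghost vertices only add variables killed in \(\k[\Delta]\), so we may discard them.

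\emph{Step 2 (Hochster's local cohomology formula).} Compute \(H^i_{\mathfrak m}(\k[\Delta])\) using the \v{C}ech complex on the variables, which is \(\Z^V\)-graded. In degree \(a\in\Z^V\), only \(a\le0\) contribute. Set \(F=\operatorname{supp}(a)\). The degree-\(a\) strand is then the cochain complex computing \(\widetilde H^{i-|F|-1}(\link_\Delta F;\k)\), and it vanishes unless \(F\in\Delta\). The verification is that the degree-\(a\) piece of \(\k[\Delta]_{x_G}\) is \(\k\) exactly when \(G\supseteq F\) and \(G\in\Delta\). The resulting complex is the augmented cochain complex of \(\link_\Delta F\), shifted by \(|F|\).

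\emph{Step 3 (conclusion).} By Step 2, \(\k[\Delta]\) is Cohen--Macaulay if and only if \(\widetilde H^{j}(\link_\Delta F;\k)=0\) for all \(F\in\Delta\) and all \(j<d-|F|-1\). Over a field, \(\widetilde H^j\) and \(\widetilde H_j\) have the same dimension, so cohomology can be replaced by homology. If \(\Delta\) is pure, then \(\dim\link_\Delta F=d-|F|-1\), which gives exactly the stated condition. Purity also follows from the vanishing condition itself. Vanishing of \(\widetilde H_{-1}\) for links of faces of codimension \(\geq1\) forbids empty links of non-maximal-dimension faces. Vanishing of \(\widetilde H_0\) for links of dimension \(\geq1\) forces connectivity. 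An induction on the codimension of faces then shows that all facets have equal dimension. With purity established, the two conditions match.

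\emph{Main obstacle.} The main difficulty is the bookkeeping in Step 2. We must identify the \v{C}ech strand with the simplicial cochain complex of the link, with the correct signs and shift. We must also handle the case where the link is \(\{\varnothing\}\) (when \(F\) is a facet), so that \(\widetilde H^{-1}\) appears correctly.
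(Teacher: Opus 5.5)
The paper does not prove this statement. It quotes Reisner's theorem, citing Reisner's paper and Bj\"orner's survey. Your route through Hochster's local cohomology formula is the standard textbook derivation, and its outline is correct. Steps 1 and 2 are right, including the identification of the degree-$a$ strand ($a\le 0$) with the augmented cochain complex of $\link_\Delta F$ shifted by $|F|$, and the case $\link_\Delta F=\{\varnothing\}$. Compared with the paper's citation, your route gives a self-contained argument, and it also yields the stronger codimension form of the criterion. One piece of bookkeeping is left unjustified: why degrees with some $a_v>0$ contribute nothing. In that strand the index set of faces $G$ is stable under adding or removing $v$, so the strand is a cone and is acyclic.

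One sentence in Step 3 is wrong. Under the indexing $i<\dim\link_\Delta(F)$ of the statement, the $\widetilde H_{-1}$ condition is vacuous. Indeed, $\widetilde H_{-1}(\link_\Delta F;\k)\neq0$ only when $\link_\Delta F=\{\varnothing\}$, that is, when $F$ is a facet. But then $\dim\link_\Delta F=-1$, so no condition is imposed, and nothing rules out facets of small dimension.

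That argument is valid only starting from the codimension form $i<\dim\Delta-|F|$, which is available only in the only-if direction. There you do not need purity anyway, since $\dim\link_\Delta F\le\dim\Delta-|F|$ always. In the if direction, purity must come entirely from connectivity and induction, which you should spell out:
\begin{enumerate}
  \item The hypothesis passes to links, because $\link_{\link_\Delta F}(G)=\link_\Delta(F\cup G)$. By induction on dimension, every vertex link $\link_\Delta(F\cup\{v\})$ of $\link_\Delta F$ is pure.
  \item If $\dim\link_\Delta F\geq1$, the hypothesis at $i=0$ makes $\link_\Delta F$ connected, so every vertex lies on an edge.
  \item For an edge $\{v,w\}$, the links of $v$ and $w$ are pure. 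Every facet through $v$ has dimension one more than the link of $v$, and likewise for $w$. A facet containing the edge contains both endpoints, so the two links have equal dimension.
  \item Connectivity then forces all facets of $\link_\Delta F$ to have the same dimension. Dimensions $0$ and $-1$ are trivial.
\end{enumerate}
Taking $F=\varnothing$ gives purity of $\Delta$, and with this correction your proof is complete.
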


This is \cite[Theorem~1]{reisner1976cohen}; see also \cite[(11.5)]{bjorner1995topological}.  In particular, a Cohen--Macaulay simplicial complex is pure, every one of its links is Cohen--Macaulay, and its reduced homology vanishes below its dimension.  For a finite polytopal complex, we use the analogous terminology: it is Cohen--Macaulay if it is pure and the same face-link homology vanishing holds \cite[Section~5.1]{rowlands2024topology}.

\section{Equivariant Grothendieck weights}\label{sec:equivariant-gw}
In this section, we record the equivariant version of the Grothendieck weight formalism in \cite{wang2026grothendieckweights}.  The main new feature is that the balancing relations remember characters of the acting torus.

\subsection{Equivariant conventions}\label{subsec:equivariant-conventions}

Write \(\mathbb G_m=\operatorname{Spec}\k[t^{\pm1}]\). For the equivariant theory on the permutohedral variety, let \(H=\mathbb G_m^E\) be the homogeneous coordinate torus.  Its action on \(X_E\) factors through the dense torus \(H/\mathbb G_m\), where \(\mathbb G_m\subseteq H\) is the diagonal subtorus.  We retain the full torus \(H\) and write
\[
  R(H)=\Z[t_i^{\pm1}:i\in E].
\]

We follow the conventions of \cite{BergetEurSpinkTseng2023,berget2025externalactivitycomplexpair}.  For \(t=(t_i)_{i\in E}\in H\) and \(v=(v_i)_{i\in E}\in\k^E\), the action on \(\k^E\) is
\[
  t\cdot v=(t_i^{-1}v_i)_{i\in E}.
\]
We use the induced action on \(\operatorname{Gr}(r;E)\), and the trivial bundle \(\underline{\k^E}=X_E\times\k^E\) is linearized by
\[
  t\cdot(x,v)=\bigl(t\cdot x,(t_i^{-1}v_i)_{i\in E}\bigr).
\]
Thus the dual tautological classes have local weights \(t_i\).  For \(B\subseteq E\), write \(\bm t_B=\prod_{i\in B}t_i\).
\subsection{Equivariant balancing}

Let \(N\) be a lattice, let \(\Sigma\) be a complete strongly unimodular fan in \(N_\R\), let \(d=\dim \Sigma\), and let \(X=X_\Sigma\).  Here strongly unimodular means that \(\Sigma\) is unimodular and that, for every pair of cones \(\sigma,\tau\), the index \([N:N_\sigma+N_\tau]\) is either \(1\) or infinite \cite[Definition~3.1]{wang2026grothendieckweights}.  We write \(T\) for the dense torus, \(M=\Hom(N,\Z)\), and
\[
  R(T)=\Z[M].
\]
For \(m\in M\), the corresponding character is denoted by \(\chi^m\).  For a cone \(\sigma\in\Sigma\), set
\[
  x_\sigma=[\mathcal{O}_{V(\sigma)}]\in K_T(X).
\]
Since \(X\) is smooth, we identify equivariant \(K\)-theory and equivariant \(G\)-theory.  We denote the equivariant Euler characteristic, or equivalently the pushforward to a point, by
\[
  \chi_T:K_T(X)\to R(T).
\]

\begin{definition}
  An \emph{equivariant Grothendieck weight} on \(\Sigma\) is a function \(g:\Sigma\to R(T)\) such that for every relation
  \[
    \sum_{\sigma\in\Sigma}a_\sigma x_\sigma=0
    \qquad \text{in }K_T(X),
  \]
  with \(a_\sigma\in R(T)\), one has
  \[
    \sum_{\sigma\in\Sigma}a_\sigma g(\sigma)=0.
  \]
  We denote the \(R(T)\)-module of equivariant Grothendieck weights by \(\GW_T(\Sigma)\).
\end{definition}

As in the ordinary case, we show below that \(\GW_T(\Sigma)\) is isomorphic to \(K_T(X)\) as an \(R(T)\)-module. The key input is the following equivariant Kronecker duality, proved in \cite[Theorem~6.1]{anderson2015operational}.

\begin{lemma}\label{lem:perfectness-euler-pairing}
  For a complete unimodular fan \(\Sigma\), the Euler pairing defined by
  \[
    (a,b)\mapsto \chi_T(ab)
  \]
  is a perfect pairing.
\end{lemma}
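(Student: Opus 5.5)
The plan is to reduce \cref{lem:perfectness-euler-pairing} to equivariant Kronecker duality for smooth complete toric varieties, i.e.\ \cite[Theorem~6.1]{anderson2015operational}, and to make precise which module-theoretic sense of ``perfect'' we need. Since \(\Sigma\) is complete and unimodular, \(X=X_\Sigma\) is smooth and complete. Hence \(K_T(X)\) agrees with the operational equivariant \(K\)-theory \(\operatorname{op}K_T(X)\), and also with equivariant \(G\)-theory \(K^T_\circ(X)\). Anderson--Payne show that for complete toric varieties the map
\[
  \operatorname{op}K_T(X)\longrightarrow\Hom_{R(T)}\bigl(K^T_\circ(X),R(T)\bigr),
  \qquad
  c\longmapsto\bigl(\beta\mapsto\chi_T(c\cap\beta)\bigr),
\]
is an isomorphism. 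Under the identifications \(K_T(X)=\operatorname{op}K_T(X)=K^T_\circ(X)\) and \(c\cap\beta=c\beta\), this map becomes \(a\mapsto\chi_T(a\,\cdot)\), so the pairing \((a,b)\mapsto\chi_T(ab)\) induces an isomorphism \(K_T(X)\cong\Hom_{R(T)}(K_T(X),R(T))\).

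For perfectness in the strong, symmetric sense, we also need \(K_T(X)\) to be a finitely generated free \(R(T)\)-module. Over a field, the map \(K\to K^{\vee\vee}\) is then an isomorphism, and the pairing is symmetric, so both adjoint maps are isomorphisms. Freeness holds because a smooth complete toric variety admits a filtration by \(T\)-stable closed subvarieties; this can be produced from a shelling given by a generic one-parameter subgroup, or by Białynicki-Birula cells. Each successive difference is a \(T\)-equivariant affine space, so localization sequences and homotopy invariance show that \(K_T(X)\) is free with a basis indexed by the maximal cones. Alternatively, the classes \(x_\sigma\), with \(\sigma\) ranging over maximal-cone-indexed orbit closures chosen along the shelling, give an upper-triangular basis.

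The main subtlety is matching conventions. First, Anderson--Payne state the result with \(\chi_T\) as the pushforward to a point. This is our \(\chi_T\), and the complete and smooth hypotheses cover their setting. Second, because \(X\) is smooth, the Poincaré duality identification of operational \(K\)-theory with \(G\)-theory sends the cap product to the ordinary product. Once these identifications are recorded, the lemma follows directly, and no further calculation is needed.
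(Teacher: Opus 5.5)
Your argument is the paper's proof: apply the equivariant Kronecker duality of \cite[Theorem~6.1]{anderson2015operational} to the complete toric (hence \(T\)-linear) variety \(X\), then use smoothness to identify both \(\operatorname{op}K_T(X)\) and \(K^T_\circ(X)\) with \(K_T(X)\), so that the isomorphism becomes \(a\mapsto\chi_T(a\,\cdot)\). Your freeness paragraph is not needed, because \(K_T(X)\) is commutative, so both adjoint maps of the pairing are this same map. That paragraph is also loosely justified: \(R(T)\) is not a field, and a shelling or Bia{\l}ynicki-Birula filtration is standard for projective \(X\) but not evidently available for an arbitrary complete fan, which is why the paper only quotes projectivity of \(K_T(X)\) from Vezzosi--Vistoli.
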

\begin{proof}
  Every toric variety is \(T\)-linear, so \cite[Theorem~6.1]{anderson2015operational} applies to the complete toric variety \(X\) and gives an isomorphism
  \[
    \operatorname{op}K_T^\circ(X)
    \longrightarrow \Hom_{R(T)}(K_{\circ}^T(X),R(T)),
    \qquad
    \beta\longmapsto\bigl(\alpha\mapsto\chi_T(\alpha\beta)\bigr).
  \]
  Since \(X\) is smooth, we may identify both \(K_\circ^T(X)\) and \(\operatorname{op}K_T^\circ(X)\) with \(K_T(X)\). Under these identifications, the displayed isomorphism is exactly the perfectness of the Euler pairing.
\end{proof}

\begin{proposition}
  \(\GW_T(\Sigma)\) is isomorphic to \(K_T(X)\) as an \(R(T)\)-module.
\end{proposition}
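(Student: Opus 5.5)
We map \(\xi\in K_T(X)\) to the function \(g_\xi:\Sigma\to R(T)\), \(g_\xi(\sigma)=\chi_T(\xi x_\sigma)\), and show that this map \(\Phi:K_T(X)\to\GW_T(\Sigma)\) is an \(R(T)\)-module isomorphism. The map lands in \(\GW_T(\Sigma)\): if \(\sum_\sigma a_\sigma x_\sigma=0\) with \(a_\sigma\in R(T)\), then \(R(T)\)-linearity of \(\chi_T\) gives \(\sum_\sigma a_\sigma g_\xi(\sigma)=\chi_T\bigl(\xi\sum_\sigma a_\sigma x_\sigma\bigr)=0\). The same linearity shows that \(\Phi\) is \(R(T)\)-linear.

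Next we use that the classes \(x_\sigma\), \(\sigma\in\Sigma\), generate \(K_T(X)\) as an \(R(T)\)-module. For smooth complete toric varieties, the equivariant \(K\)-theory is generated by the orbit-closure structure sheaves. One way to see this: each \(T\)-equivariant coherent sheaf admits a filtration whose quotients are supported on orbit closures and twisted by characters, since \(G_T\) of a torus orbit \(O(\sigma)\cong T/T_\sigma\) is \(R(T_\sigma)\), a quotient of \(R(T)\). Localization sequences along the orbit stratification then give generation. Alternatively, we could cite the presentation of \(K_T(X)\) as piecewise Laurent polynomials, or \cite{anderson2015operational}. Therefore we may define \(\Psi:\GW_T(\Sigma)\to\Hom_{R(T)}(K_T(X),R(T))\) by \(\Psi(g)\bigl(\sum_\sigma a_\sigma x_\sigma\bigr)=\sum_\sigma a_\sigma g(\sigma)\). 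This is well defined precisely because \(g\) annihilates every \(R(T)\)-linear relation among the \(x_\sigma\). The map \(\Psi\) is injective, since the \(x_\sigma\) generate and \(\Psi(g)(x_\sigma)=g(\sigma)\). It is surjective, since any functional \(f\) yields the weight \(\sigma\mapsto f(x_\sigma)\).

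Let \(D:K_T(X)\to\Hom_{R(T)}(K_T(X),R(T))\), \(\xi\mapsto\chi_T(\xi\,\cdot\,)\), be the duality map. Then \(\Psi\circ\Phi=D\), and \cref{lem:perfectness-euler-pairing} says that \(D\) is an isomorphism. Since \(\Psi\) is an isomorphism, so is \(\Phi=\Psi^{-1}\circ D\).

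The main obstacle is the generation statement. The rest is formal once perfectness is granted. If the filtration argument is awkward to write out, I would fall back on the standard fact that \(K_T(X)\) is a free \(R(T)\)-module with a basis given by suitable twisted orbit-closure classes. Another route is the Białynicki-Birula / shelling decomposition of a smooth projective toric variety, which gives a basis of the form \(\chi^{m}x_\sigma\), so in particular the \(x_\sigma\) generate over \(R(T)\). Because \(X\) is complete but possibly non-projective, the filtration argument is the safer general route.
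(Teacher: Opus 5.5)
Your proof is correct and follows essentially the same route as the paper. The paper defines $g_\xi(\sigma)=\chi_T(\xi x_\sigma)$, shows the $x_\sigma$ generate $K_T(X)$ over $R(T)$ via the localization sequence along the orbit stratification, and concludes from perfectness of the Euler pairing (\cref{lem:perfectness-euler-pairing}). Your factorization $D=\Psi\circ\Phi$ through $\Hom_{R(T)}(K_T(X),R(T))$ simply spells out the step the paper compresses into ``the perfectness of the Euler pairing proves the result.''
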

\begin{proof}
  For an arbitrary class \(\alpha\in K_T(X)\), we assign a Grothendieck weight \(g_\alpha^T\) as follows:
  \[
    g_\alpha^T(\sigma)=\chi_T(\alpha x_\sigma).
  \]
  The function \(g_\alpha^T\) is clearly an equivariant Grothendieck weight, and this assignment is \(R(T)\)-linear. We claim this assignment defines an \(R(T)\)-isomorphism.

  The classes \(x_\sigma\) generate \(K_T(X)\) over \(R(T)\). This is because \(X\) has a stratification defined by torus orbits
  \[
    X_{\leq j} \coloneqq \bigcup_{\dim O(\sigma)\leq j} O(\sigma),
  \]
  and the localization sequence for the equivariant \(K\)-ring applied successively to \((X_{\leq j},X_{\leq j-1})\) proves the result. See, for example, \cite[Theorem~8]{Merkurjev2005}.

  Therefore, the perfectness of the Euler pairing proves the result.
\end{proof}

Vezzosi--Vistoli give a multiplicative Stanley--Reisner presentation in \cite[Theorem~6.4]{vezzosi2003higher}.

\begin{theorem}[{\cite[Theorem~6.4]{vezzosi2003higher}}]\label{thm:VV-SR-presentation}
  For a complete smooth toric variety,
  \[
    K_T(X)\simeq
    \frac{\Z[(1-x_\rho)^{\pm1}:\rho\in\Sigma(1)]}{\mathcal{I}_{\mathrm{SR}}},
  \]
  where \(\mathcal{I}_{\mathrm{SR}}\) is generated by the relations
  \[
    \prod_{\rho\in S}x_\rho=0
    \quad\text{if \(S\) is not contained in a cone}.
  \]
  The \(R(T)\)-algebra structure of \(K_T(X)\) is determined by
  \[
    \chi^m=\prod_{\rho\in\Sigma(1)}
    (1-x_\rho)^{-\langle m,u_\rho\rangle}
    \qquad(m\in M).
  \]
\end{theorem}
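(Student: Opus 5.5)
The plan is to realize the presented ring and \(K_T(X)\) as the same subring of \(\prod_{\sigma\in\Sigma(d)}R(T)\), namely the ring of \emph{piecewise Laurent polynomials} on \(\Sigma\). Concretely, let \(\operatorname{PL}(\Sigma)\) be the set of tuples \((f_\sigma)\) with \(f_\sigma\in\Z[M]\) and \(f_\sigma\equiv f_{\sigma'}\) modulo the ideal generated by \(1-\chi^m\) for \(m\in(\sigma\cap\sigma')^\perp\cap M\).

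First I define the map \(\Phi\) from the presented ring to \(K_T(X)\). It sends \(1-x_\rho\) to the invertible class \([\mathcal O(-D_\rho)]\), so that \(x_\rho\mapsto[\mathcal O_{D_\rho}]\) by the equivariant ideal-sheaf sequence.

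The two families of relations are checked directly.
\begin{itemize}
\item If the rays in \(S\) do not span a cone, then \(\bigcap_{\rho\in S}D_\rho=\varnothing\). Since the divisors meet transversally (\(X\) is smooth), the product \(\prod_{\rho\in S}[\mathcal O_{D_\rho}]\) is the class of the structure sheaf of this empty intersection, hence \(0\).
\item The rational function \(\chi^m\) is a \(T\)-semi-invariant section. It trivializes \(\mathcal O\bigl(\sum_\rho\langle m,u_\rho\rangle D_\rho\bigr)\) up to the character \(\chi^m\), which gives the stated formula for \(\chi^m\). The sign of the exponent must be matched against the paper's convention for the torus action.
\end{itemize}

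Next, surjectivity. For a cone \(\sigma\), transversality gives \(x_\sigma=\prod_{\rho\in\sigma(1)}x_\rho\). The proof of the preceding proposition shows that the \(x_\sigma\) generate \(K_T(X)\) over \(R(T)\). By the character formula, \(R(T)\) itself lies in the image of \(\Phi\). Hence \(\Phi\) is surjective.

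Then I prove that \(K_T(X)\) is a free \(R(T)\)-module and that restriction to the \(T\)-fixed points \(x_\sigma\), \(\sigma\in\Sigma(d)\), is injective. Freeness comes from a Białynicki-Birula decomposition of the complete smooth toric variety into affine cells, together with the localization sequence already used above. Injectivity then follows from Thomason's localization theorem, since a free module has no torsion. The restriction of \([\mathcal O(-D_\rho)]\) to the fixed point of \(\sigma\) is \(\chi^{\pm m_{\sigma,\rho}}\) when \(\rho\in\sigma\), where \((m_{\sigma,\rho})\) is the basis of \(M\) dual to the rays of \(\sigma\) (unimodularity), and it is \(1\) otherwise. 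So the image of \(\Phi\) lands in \(\operatorname{PL}(\Sigma)\).

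The remaining and main step is to show that the presented ring maps \emph{isomorphically} onto \(\operatorname{PL}(\Sigma)\), and that the image of \(K_T(X)\) is all of \(\operatorname{PL}(\Sigma)\). This is the multiplicative analogue of the Billera/Brion statement that the Stanley--Reisner ring equals the ring of piecewise polynomials.

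For injectivity of the presented ring into \(\operatorname{PL}(\Sigma)\), I first localize at a maximal cone \(\sigma\). Inverting the relevant elements kills every \(x_\rho\) with \(\rho\notin\sigma\), and the ring becomes \(\Z[(1-x_\rho)^{\pm1}:\rho\in\sigma(1)]\cong\Z[M]\) by unimodularity. I then glue these local rings by induction on the faces of the fan, writing an element as a sum of terms \(x_\tau\cdot(\text{Laurent monomials})\) supported on stars of cones. For surjectivity onto \(\operatorname{PL}(\Sigma)\), I would peel off the restriction to one maximal cone at a time using the elements \(x_\sigma\), which restrict to zero away from \(\sigma\).

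I expect this gluing/injectivity argument to be the main obstacle, especially keeping track of the \(R(T)\)-module structure through the induction. If it becomes unwieldy, I would instead compare ranks: both sides are free over \(R(T)\) of rank \(|\Sigma(d)|\), via an \(h\)-vector count from a shelling of the complete unimodular fan. A surjection between free modules of equal finite rank over \(R(T)\) is then an isomorphism.
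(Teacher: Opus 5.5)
The paper does not prove this statement; it quotes it from Vezzosi--Vistoli, so your argument has to stand on its own. The first half does stand. The map $\Phi$ with $1-x_\rho\mapsto[\mathcal O(-D_\rho)]$ respects the Stanley--Reisner relations by transversality, and it respects the character relation up to the sign convention you flagged. It is also surjective, because the $x_\sigma$ generate $K_T(X)$ over $R(T)$ and $R(T)$ lies in the image.

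The gap is injectivity of $\Phi$. This is the entire content of the theorem, and you only outline it. The outline has three concrete problems.
\begin{enumerate}
  \item Inverting $x_\sigma=\prod_{\rho\in\sigma(1)}x_\rho$ also inverts every $x_\rho$ with $\rho\in\sigma(1)$. That localization is therefore $\Z[(1-x_\rho)^{\pm1},x_\rho^{-1}:\rho\in\sigma(1)]$, not $\Z[M]$. The map you actually want is the \emph{quotient} by $(x_\rho:\rho\notin\sigma(1))$, i.e.\ restriction to $U_\sigma$ or to the fixed point.
  \item The inductive gluing is never carried out.
  \item The fallback presupposes a shelling of an arbitrary complete unimodular fan. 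This is not available off the shelf for non-polytopal fans. Even with a shelling, you would still need a multiplicative shelling-basis argument showing that the presented ring is spanned over $R(T)$ by $|\Sigma(d)|$ elements; an $h$-vector count only predicts the rank.
\end{enumerate}

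The missing idea is that the presented ring $A$ is a localization of the ordinary Stanley--Reisner ring. Since $\Z[(1-x_\rho)^{\pm1}]=\Z[x_\rho][(1-x_\rho)^{-1}]$, one has $A=S^{-1}(\Z[x_\rho]/J)$. Here $J$ is generated by the monomials $\prod_{\rho\in S'}x_\rho$ with $S'$ contained in no cone, and $S$ is the multiplicative set generated by the elements $1-x_\rho$.

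Next, $J=\bigcap_{\sigma\in\Sigma(d)}(x_\rho:\rho\notin\sigma(1))$. Both sides are monomial ideals, and a monomial lies in the right-hand side exactly when its support lies in no maximal cone. Hence $\Z[x_\rho]/J\hookrightarrow\prod_\sigma\Z[x_\rho:\rho\in\sigma(1)]$.

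Localization is exact and commutes with finite products, and in the $\sigma$-th factor $1-x_\rho$ becomes $1$ for $\rho\notin\sigma(1)$. Therefore
\[
  A\hookrightarrow\prod_{\sigma\in\Sigma(d)}\Z\bigl[(1-x_\rho)^{\pm1}:\rho\in\sigma(1)\bigr]\cong\prod_{\sigma\in\Sigma(d)}R(T),
\]
where the last isomorphism comes from unimodularity, via the basis $m_{\sigma,\rho}$ of $M$ dual to the rays of $\sigma$. Your fixed-point computation shows that this composite is $\mathrm{res}\circ\Phi$, since the two agree on the generators $(1-x_\rho)^{\pm1}$. So $\Phi$ is injective, and hence an isomorphism.

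This route makes several steps of your plan unnecessary: the freeness of $K_T(X)$, the Bia\l ynicki-Birula decomposition, Thomason's localization theorem, and the surjectivity onto $\operatorname{PL}(\Sigma)$.
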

\begin{remark}
  For a cone \(\sigma\), transversality of the invariant divisors gives \(x_\sigma=\prod_{\rho\in\sigma(1)}x_\rho\).
\end{remark}

\iffalse
\begin{lemma}\label{lemma:localized-fixed-point-basis}
  Let \(F=\operatorname{Frac}R(T)\).  The classes
  \(x_\sigma\) for maximal cones \(\sigma\) form an \(F\)-basis of
  \(K_T(X)\otimes_{R(T)}F\).
\end{lemma}

\begin{proof}
  For a class \(\alpha\in K_T(X)\), the Euler pairing with the point class \(x_\sigma\) is the same as the restriction to a point, by the projection formula:
  \[
    \chi(\alpha x_\sigma)=\alpha|_{V(\sigma)}.
  \]

  Since the Euler pairing is perfect by \cref{lem:perfectness-euler-pairing}, the \(x_\sigma\) form an \(F\)-basis if and only if the maps \(\alpha\mapsto \alpha|_{V(\sigma)}\) form a basis of \(\mathrm{Hom}_{R(T)}(K_T(X),F)=\mathrm{Hom}_{F}(K_T(X)\otimes_{R(T)}F,F)\), and the latter is a restatement of the equivariant localization theorem.
\end{proof}
\fi

We now give the equivariant balancing conditions.  Define
\[
  Q_\Sigma=\{q\in M_\R:-1\leq \langle q,u_\rho\rangle\leq 1
  \text{ for all }\rho\in\Sigma(1)\}.
\]
For \(q\in Q_\Sigma\cap M\), set
\[
  \bm P_q=\{\rho\in\Sigma(1):\langle q,u_\rho\rangle=1\},
  \qquad
  \bm N_q=\{\rho\in\Sigma(1):\langle q,u_\rho\rangle=-1\}.
\]
Recall that \(M(\tau)=\tau^\perp\cap M\) is the annihilator lattice of \(\tau\).
If \(q\in M(\tau)\), we call \(\tau\) a \(q\)-neutral cone.

\begin{theorem}[Equivariant \(K\)-balancing]\label{thm:equivariant-k-balancing}
  A function \(g:\Sigma\to R(T)\) is an equivariant Grothendieck weight if and only if, for every \(q\in Q_\Sigma\cap M\) and every \(q\)-neutral cone \(\tau\), one has
  \[
    \sum_{\substack{\sigma\supseteq\tau\\
    \sigma(1)\setminus\tau(1)\subseteq\bm P_q}}
    (-1)^{\dim\sigma}g(\sigma)
    =
    \chi^{-q}
    \sum_{\substack{\sigma\supseteq\tau\\
    \sigma(1)\setminus\tau(1)\subseteq\bm N_q}}
    (-1)^{\dim\sigma}g(\sigma).
  \]
  Equivalently,
  \[
    (1-\chi^{-q})(-1)^{\dim\tau}g(\tau)
    +\sum_{\substack{\sigma\supsetneq\tau\\
    \sigma(1)\setminus\tau(1)\subseteq\bm P_q}}
    (-1)^{\dim\sigma}g(\sigma)
    -\chi^{-q}
    \sum_{\substack{\sigma\supsetneq\tau\\
    \sigma(1)\setminus\tau(1)\subseteq\bm N_q}}
    (-1)^{\dim\sigma}g(\sigma)=0.
  \]
\end{theorem}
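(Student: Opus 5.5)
The plan is to use the Vezzosi--Vistoli presentation (\cref{thm:VV-SR-presentation}) to describe all \(R(T)\)-linear relations among the classes \(x_\sigma\), and then translate each generating relation into a condition on \(g\). Since \(\GW_T(\Sigma)\) is the annihilator of the relation module, it suffices to show two things. First, for each \(q\in Q_\Sigma\cap M\) and each \(q\)-neutral \(\tau\),
\[
  \sum_{\substack{\sigma\supseteq\tau\\ \sigma(1)\setminus\tau(1)\subseteq\bm P_q}}(-1)^{\dim\sigma}x_\sigma
  =\chi^{-q}\sum_{\substack{\sigma\supseteq\tau\\ \sigma(1)\setminus\tau(1)\subseteq\bm N_q}}(-1)^{\dim\sigma}x_\sigma
\]
holds in \(K_T(X)\). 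Second, these relations span the full \(R(T)\)-module of relations among the \(x_\sigma\).

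The first step is to derive the displayed relation. Start from \(\chi^{q}=\prod_\rho(1-x_\rho)^{-\langle q,u_\rho\rangle}\). For \(q\in Q_\Sigma\), only exponents \(\pm1\) occur, which gives \(\chi^{q}\prod_{\rho\in\bm P_q}(1-x_\rho)=\prod_{\rho\in\bm N_q}(1-x_\rho)\) in the Laurent ring. I would fix the sign of \(q\) to match the stated \(\chi^{-q}\). Multiply both sides by \(x_\tau\) and expand each product. By the Stanley--Reisner relations, a monomial \(x_\tau\prod_{\rho\in S}x_\rho\) survives only when \(\tau(1)\cup S\) spans a cone \(\sigma\). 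In that case it equals \(x_\sigma\), provided \(S\cap\tau(1)=\varnothing\).

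The \(q\)-neutrality of \(\tau\) is exactly what guarantees that \(\tau(1)\) is disjoint from \(\bm P_q\cup\bm N_q\). Hence no squared factors \(x_\rho^2\) appear, and the signs are \((-1)^{|S|}=(-1)^{\dim\sigma-\dim\tau}\). Multiplying through by \((-1)^{\dim\tau}\) gives the displayed relation. Pairing it against \(g\) shows that every equivariant Grothendieck weight satisfies the balancing identities, which proves necessity. The "equivalently" form follows by isolating the \(\sigma=\tau\) terms on each side.

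For sufficiency, I must show the listed relations generate all relations. The plan is to let \(\mathcal R\) be the \(R(T)\)-span of the balancing relations inside the free module \(F=\bigoplus_\sigma R(T)e_\sigma\), and to show that \(F/\mathcal R\to K_T(X)\) is injective. I would construct a ring structure on \(F/\mathcal R\), or more simply show that \(F/\mathcal R\) is spanned by a set of \(e_\sigma\) whose images are independent. The working approach is to reduce modulo \(\mathcal R\) to a basis given by a shelling order.

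For a complete unimodular fan, \(K_T(X)\) is a free \(R(T)\)-module with basis \(\{x_{\sigma}\}\), where \(\sigma\) ranges over the "restriction faces" of a shelling order on the maximal cones, as in the Chow-theoretic Brion and Fulton--Sturmfels argument. Using the balancing relation with \(\tau\) a face and \(q\) a suitable dual vector, I would rewrite each \(x_\tau\) as an \(R(T)\)-combination of \(x_\sigma\) with \(\sigma\) strictly larger or earlier in the order. Here unimodularity supplies \(q\in M\) with \(\langle q,u_\rho\rangle\in\{0,\pm1\}\) on the relevant rays; strong unimodularity is needed to ensure such \(q\) lie in \(Q_\Sigma\) globally. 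The rank count then forces injectivity.

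I expect this generation step to be the main obstacle: producing enough \(q\in Q_\Sigma\cap M\) to perform the reduction. If it becomes delicate, I would instead cite the nonequivariant \(K\)-balancing theorem of \cite{wang2026grothendieckweights} and lift it. The argument would be that \(K_T(X)\otimes_{R(T)}\Z\cong K(X)\), combined with freeness and a graded Nakayama argument over the augmentation ideal, transports generation of relations from the nonequivariant setting to the equivariant one.
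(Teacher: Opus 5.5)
Your necessity argument is the paper's: multiply $\prod_{\rho\in\bm P_q}(1-x_\rho)=\chi^{-q}\prod_{\rho\in\bm N_q}(1-x_\rho)$ by $x_\tau$, use $q$-neutrality to avoid repeated rays, and drop non-cones. The gap is in sufficiency. Write $\Psi_T\colon R(T)^\Sigma\to K_T(X)$, $e_\sigma\mapsto x_\sigma$, and let $\Psi$ be its nonequivariant analogue. You set out to show that the balancing relations generate the whole $R(T)$-module $\ker\Psi_T$, and neither of your routes does this.

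In the shelling route, the balancing relation based at $\tau$ has coefficient $(-1)^{\dim\tau}(1-\chi^{-q})$ on $x_\tau$. This is not a unit of $R(T)$, so you cannot solve for $x_\tau$ in terms of larger cones as in the nonequivariant or Chow-theoretic reduction. The route also presupposes a shelling of an arbitrary complete unimodular fan with a restriction-face $R(T)$-basis of $K_T(X)$. Finally, it leaves open the supply of $q\in Q_\Sigma\cap M$, which you yourself flag as the main obstacle.

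In the fallback, $R(T)=\Z[M]$ is a Laurent polynomial ring, not a positively graded ring whose irrelevant ideal is the augmentation ideal $I$, so no graded Nakayama lemma applies. Ordinary Nakayama can be used, after identifying $\ker\Psi_T\otimes_{R(T)}\Z$ with $\ker\Psi$ via projectivity of $K_T(X)$ and $K_T(X)\otimes_{R(T)}\Z\cong K(X)$. But it only shows that $\ker\Psi_T/\mathcal R$ vanishes after localizing at the prime $I$, which is weaker than the $R(T)$-generation you claim. Moreover, the nonequivariant theorem only gives that the specialized relations $b_{q,\tau}$ span $\ker\Psi$ over $\Q$.

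The idea you are missing is that $R(T)$-generation is unnecessary. Let $b^T_{q,\tau}\in R(T)^\Sigma$ be the coefficient vectors of your balancing relations, and let $\mathbb F=\operatorname{Frac}R(T)$. Since $R(T)$ is a domain and the condition on $g$ is $R(T)$-linear, it suffices that the $b^T_{q,\tau}$ span $\ker\Psi_T\otimes_{R(T)}\mathbb F$ over $\mathbb F$. Then every $a\in\ker\Psi_T$ is an $\mathbb F$-combination of them, so $\sum_\sigma a_\sigma g(\sigma)=0$ in $\mathbb F$, hence in $R(T)$.

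The paper proves this spanning by a rank count:
\begin{itemize}
\item The $b_{q,\tau}$ span $\ker\Psi\otimes\Q$, so their matrix has rank $|\Sigma|-\rk K(X)$.
\item A nonzero minor of that size is the augmentation of the corresponding minor of the matrix of the $b^T_{q,\tau}$. That minor is therefore nonzero, so the rank over $\mathbb F$ is at least $|\Sigma|-\rk K(X)$.
\item $K_T(X)$ is projective of rank equal to the number of maximal cones, which is also $\rk K(X)$. Hence $\dim_{\mathbb F}\ker(\Psi_T\otimes\mathbb F)=|\Sigma|-\rk K(X)$.
\end{itemize}

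Your localized-Nakayama argument would also give $\mathbb F$-spanning. So the fallback can be repaired, but only once you replace the goal of $R(T)$-generation by this weaker, sufficient one.
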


\begin{proof}
  Let
  \[
    \Psi_T:R(T)^\Sigma\longrightarrow K_T(X),
    \qquad e_\sigma\longmapsto x_\sigma.
  \]
  Fix \(q\in Q_\Sigma\cap M\) and a \(q\)-neutral cone \(\tau\).  Since \(q\) is integral and belongs to \(Q_\Sigma\), every pairing \(\langle q,u_\rho\rangle\) lies in \(\{0,\pm1\}\).  Moreover, \(q\in M(\tau)\), so no ray of \(\tau\) lies in \(\bm P_q\cup\bm N_q\).  The character relation in the Stanley--Reisner presentation \cref{thm:VV-SR-presentation}, multiplied by \(x_\tau\), is
  \[
    x_\tau\left(
      \prod_{\rho\in\bm P_q}(1-x_\rho)
      -\chi^{-q}\prod_{\rho\in\bm N_q}(1-x_\rho)
    \right)=0.
  \]
  Expanding the products and discarding the monomials whose rays do not span a cone gives
  \[
    \sum_{\substack{\sigma\supseteq\tau\\
    \sigma(1)\setminus\tau(1)\subseteq\bm P_q}}
    (-1)^{\dim\sigma-\dim\tau}x_\sigma
    -
    \chi^{-q}
    \sum_{\substack{\sigma\supseteq\tau\\
    \sigma(1)\setminus\tau(1)\subseteq\bm N_q}}
    (-1)^{\dim\sigma-\dim\tau}x_\sigma=0.
  \]
  After multiplying by \((-1)^{\dim\tau}\), we obtain the relation described in the statement. This proves the ``only if'' direction.

  To prove the ``if'' direction, let \(b^T_{q,\tau}\in R(T)^\Sigma\) be the coefficient vector of the relation in the statement.  Thus \(b^T_{q,\tau}\in\ker\Psi_T\).

  We now compare these relations with the ordinary ones.  In the notation of \cite[Theorem~4.2]{wang2026grothendieckweights}, set
  \[
    \Psi:\Q^\Sigma\longrightarrow K(X)\otimes_\Z\Q,
    \qquad e_\sigma\longmapsto x_\sigma,
  \]
  and, for every admissible pair \((q,\tau)\), let
  \[
    b_{q,\tau}
    =
    \sum_{\substack{\sigma\supsetneq\tau\\
    \sigma(1)\setminus\tau(1)\subseteq\bm P_q}}
    (-1)^{\dim\sigma}e_\sigma
    -
    \sum_{\substack{\sigma\supsetneq\tau\\
    \sigma(1)\setminus\tau(1)\subseteq\bm N_q}}
    (-1)^{\dim\sigma}e_\sigma.
  \]
  The proof of the cited theorem also shows that the vectors \(b_{q,\tau}\) span \(\ker\Psi\) over \(\Q\).  Under the specialization map \(R(T)\to\Z\), the vector \(b^T_{q,\tau}\) specializes exactly to \(b_{q,\tau}\).

  Following the notation in \cite[Theorem~4.2]{wang2026grothendieckweights}, we set \(M_{\GW}^T\) and \(M_{\GW}\) to be the matrices whose row vectors are \(b_{q,\tau}^T\) and \(b_{q,\tau}\), respectively. It is proved in the same reference that
  \[
    \rk M_{\GW} = |\Sigma|-\dim_{\Q} K(X)\otimes_\Z \Q.
  \]

  Since \(M_{\GW}^T\) specializes to \(M_{\GW}\), we claim that \(M_{\GW}^T\) has rank at least the same number over \(F=\operatorname{Frac}R(T)\). Take any minor with nonzero determinant that witnesses the rank of \(M_{\GW}\); then the same minor also witnesses the rank of \(M_{\GW}^T\) over \(F\).

  On the other hand, by \cite[Theorem~6.9 and Corollary~6.10]{vezzosi2003higher}, \(K_T(X)\) is a projective \(R(T)\)-module of rank equal to the number of maximal cones of \(\Sigma\), and \(K(X)\) is a free abelian group of the same rank. Therefore
  \[
    \begin{aligned}
      \dim_F \ker (\Psi_T\otimes_{R(T)} F) & = |\Sigma| - \dim_F (K_T(X)\otimes_{R(T)}F) \\
      & =|\Sigma| - \dim_\Q (K(X)\otimes_{\Z}\Q).    \\
    \end{aligned}
  \]
  Thus \(M_{\GW}^T\) has rank at most the same number. Together, these bounds show that \(M_{\GW}^T\) has rank exactly \(|\Sigma| - \dim_\Q (K(X)\otimes_{\Z}\Q)\) over \(F\).
  Thus the vectors \(b_{q,\tau}^T\) span \(\ker \Psi_T\) over \(F\).

  If \(g\) satisfies the equations in the theorem, its \(F\)-linear extension annihilates \(\ker(\Psi_T\otimes_{R(T)} F)\).  If \(a=(a_\sigma)\in\ker\Psi_T\), then \(\sum_\sigma a_\sigma g(\sigma)=0\) in \(F\).  This sum belongs to \(R(T)\), and \(R(T)\) injects into \(F\), so it is already zero in \(R(T)\).  Thus \(g\) is an equivariant Grothendieck weight. This proves the ``if'' direction.
\end{proof}

\begin{remark}
  Under the augmentation \(R(T)\to\Z\), \(\chi^{-q}\mapsto 1\), the term \((1-\chi^{-q})g(\tau)\) disappears and \cref{thm:equivariant-k-balancing} becomes the ordinary \(K\)-balancing condition of \cite[Theorem~4.2]{wang2026grothendieckweights}.
\end{remark}

We specialize this to the permutohedral fan using the notation fixed in \cref{subsec:permutohedral-fan,subsec:equivariant-conventions}. We will pass to the \(H\)-equivariant version when needed below. Note that the surjection \(H\to T\) induces a free extension map
\[
  R(T)\to R(H),\qquad \chi^{e_i-e_j}\mapsto \frac{t_i}{t_j},
\]
where we write \(R(H)=\mathbb{Z}[t_i^{\pm 1}, i\in E]\). Using this map,
\[
  K_H(X_E)=K_T(X_E)\otimes_{R(T)}R(H).
\]
The Euler pairing, Grothendieck weight formalism, \(K\)-balancing and product rules below are all compatible with this base change.

For \(i\neq j\), a flag \(\mathcal G\) is called \(\{i,j\}\)-neutral if every \(G_a\) either contains both \(i\) and \(j\) or contains neither of them.  Let \(\bm S_{ij}(\mathcal G)\) be the set of strict refinements \(\mathcal F\succneqq\mathcal G\) such that every new set \(B\in\mathcal F\setminus\mathcal G\) contains \(i\) and does not contain \(j\).

\begin{corollary}\label{cor:equivariant-permutohedral-balancing}
  A function \(g:\Sigma_E\to R(H)\) is an \(H\)-equivariant Grothendieck weight if and only if, for every \(i\neq j\) and every \(\{i,j\}\)-neutral flag \(\mathcal G\), one has
  \[
    \left(1-\frac{t_j}{t_i}\right)(-1)^{\ell(\mathcal G)}g(\mathcal G)
    +\sum_{\mathcal F\in\bm S_{ij}(\mathcal G)}
    (-1)^{\ell(\mathcal F)}g(\mathcal F)
    -\frac{t_j}{t_i}
    \sum_{\mathcal F\in\bm S_{ji}(\mathcal G)}
    (-1)^{\ell(\mathcal F)}g(\mathcal F)=0.
  \]
\end{corollary}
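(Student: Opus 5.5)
The plan is to specialize \cref{thm:equivariant-k-balancing} to \(\Sigma_E\) and then base change along \(R(T)\to R(H)\). The first step is to identify the lattice points of \(Q_{\Sigma_E}\) that are needed. For \(i\neq j\), take \(q=e_i-e_j\in M\). For a ray \(\rho_S\) with \(u_{\rho_S}=\bar e_S\), one has \(\langle q,\bar e_S\rangle=[i\in S]-[j\in S]\in\{0,\pm1\}\). Hence \(q\in Q_{\Sigma_E}\cap M\), with
\[
  \bm P_q=\{\rho_S:i\in S,\ j\notin S\},\qquad \bm N_q=\{\rho_S:j\in S,\ i\notin S\}.
\]
A cone \(\sigma_{\mathcal G}\) is \(q\)-neutral exactly when every \(G_a\) contains both or neither of \(i,j\), that is, when \(\mathcal G\) is \(\{i,j\}\)-neutral. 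Cones containing \(\sigma_{\mathcal G}\) correspond to refinements \(\mathcal F\succeq\mathcal G\), and the condition \(\sigma(1)\setminus\tau(1)\subseteq\bm P_q\) says that every new set contains \(i\) and not \(j\), which is the definition of \(\bm S_{ij}(\mathcal G)\). Likewise the \(\bm N_q\) condition gives \(\bm S_{ji}(\mathcal G)\). The character is \(\chi^{-q}\mapsto t_j/t_i\) under \(R(T)\to R(H)\). Substituting these into the second displayed form of \cref{thm:equivariant-k-balancing} yields exactly the stated relation.

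The main obstacle is the ``if'' direction. \Cref{thm:equivariant-k-balancing} requires the relations for \emph{all} \(q\in Q_{\Sigma_E}\cap M\), whereas the corollary only uses \(q=\pm(e_i-e_j)\); note that \(q=e_j-e_i\) is already covered by swapping \(i,j\). Other integral \(q\) in \(Q_{\Sigma_E}\) exist; for example, \(q=e_1+e_2-e_3-e_4\) satisfies \(|\langle q,\bar e_S\rangle|\le 1\) only if no \(S\) contains \(\{1,2\}\) but not \(\{3,4\}\), which fails. In fact, for \(q=\sum q_ke_k\) with \(\sum q_k=0\), the condition \(|q(S)|\le1\) for all \(S\) forces the positive part to have total at most \(1\), so \(q=e_i-e_j\) or \(q=0\). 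The case \(q=0\) gives a trivial relation. I would verify this short computation explicitly, so that the relations in the corollary are exactly all the relations in \cref{thm:equivariant-k-balancing}.

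It remains to handle the passage from \(T\) to \(H\). Since \(R(H)\) is free over \(R(T)\) and \(K_H(X_E)=K_T(X_E)\otimes_{R(T)}R(H)\), the kernel of \(R(H)^{\Sigma}\to K_H(X_E)\) is the base change of \(\ker\Psi_T\). Therefore the \(R(H)\)-span of the vectors \(b^T_{q,\tau}\) still detects all relations after tensoring with \(\operatorname{Frac}R(H)\). The same injectivity argument as in the proof of \cref{thm:equivariant-k-balancing} then concludes the ``if'' direction for \(R(H)\)-valued functions.
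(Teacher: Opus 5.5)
Your proof is correct and follows the paper's route. You specialize \cref{thm:equivariant-k-balancing} to \(q=e_i-e_j\), translate \(\bm P_q\), \(\bm N_q\) and \(q\)-neutrality into flag language with \(\chi^{-q}\mapsto t_j/t_i\), and base change to \(H\). The paper instead cites \cite{wang2026grothendieckweights} for the fact that the \(e_i-e_j\) give enough \(q\)'s and only asserts base-change compatibility; your check that \(Q_{\Sigma_E}\cap M=\{0\}\cup\{e_i-e_j:i\neq j\}\), together with your flatness argument, makes both steps self-contained.

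Do delete the sentence claiming that other integral \(q\) in \(Q_{\Sigma_E}\) exist. Your own computation shows they do not, and your example \(e_1+e_2-e_3-e_4\) already violates \(|q(S)|\leq1\) at \(S=\{1,2\}\).
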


\begin{proof}
  Apply \cref{thm:equivariant-k-balancing} to \(q=e_i-e_j\).  For a ray \(\rho_S\) of \(\Sigma_E\),
  \[
    \langle e_i-e_j,u_{\rho_S}\rangle
    =
    \begin{cases}
      1,  & i\in S,\ j\notin S, \\
      -1, & i\notin S,\ j\in S, \\
      0,  & \text{otherwise}.
    \end{cases}
  \]
  This translates \(q\)-neutrality and the two sets \(\bm P_q,\bm N_q\) into the stated flag language, while \(\chi^{-q}\) maps to \(t_j/t_i\).  By \cite[Remark~4.3 and Section~6]{wang2026grothendieckweights}, the elements \(e_i-e_j\) give enough \(q\)'s for the permutohedral fan.
\end{proof}

\subsection{The equivariant product rule}

The product rule also has an equivariant form.  The diagonal \(\delta:X\to X\times X\) is equivariant for the diagonal \(T\)-action on \(X\times X\).  The same diagonal degeneration used in \cite[Section~5]{wang2026grothendieckweights} is diagonal-\(T\)-invariant, so the equality of structure sheaf classes holds in \(K_T(X\times X)\) with the same coefficients as in the ordinary case.

\begin{theorem}[Equivariant product rule]\label{thm:equivariant-product-rule}
  Let \(\Sigma\) be a complete strongly unimodular fan of dimension \(d\), and let \(v\in N_\R\) be sufficiently generic, as in \cref{subsec:grothendieck-weights}. For any two \(T\)-equivariant Grothendieck weights \(g_1,g_2\),
  \[
    (g_1g_2)(\gamma)=
    \sum_{\substack{\sigma,\tau\supseteq\gamma\\
        (\sigma+v)\cap\tau\neq\varnothing\\
    \sigma\cap\tau=\gamma}}
    (-1)^{\dim\sigma+\dim\tau-d+\dim\gamma}
    g_1(\sigma)g_2(\tau).
  \]
\end{theorem}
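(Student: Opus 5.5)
The plan is to transport the proof of the nonequivariant product rule (\cref{prop:gw-product-rule}, from \cite[Section~5]{wang2026grothendieckweights}) to \(K_T\), using the remark just before the statement: the diagonal degeneration is diagonal-\(T\)-invariant. The first step is to write the product through the diagonal. For \(\alpha,\beta\in K_T(X)\) and \(\gamma\in\Sigma\),
\[
  (g_\alpha g_\beta)(\gamma)=\chi_T(\alpha\beta x_\gamma)=\chi_{T}\bigl(X\times X,(\alpha\boxtimes\beta)\cdot\delta_*x_\gamma\bigr),
\]
by the projection formula for the \(T\)-equivariant map \(\delta\). Here \(\delta_*x_\gamma=[\mathcal O_{\Delta(V(\gamma))}]\), the class of the diagonal of \(V(\gamma)\), and \(T\) acts on \(X\times X\) diagonally.

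Next, degenerate the diagonal. Translating the second factor by the one-parameter subgroup attached to the generic (rational, after a small perturbation inside the open chamber) vector \(v\) gives a flat family. Its generic fiber is \(\Delta(V(\gamma))\) twisted by a torus element, and its special fiber is a union of products \(V(\sigma)\times V(\tau)\) indexed by the bounded cells of \((\Sigma+v)\wedge\Sigma\) in \(N(\gamma)_\R\). The family is stable under the diagonal torus because the one-parameter subgroup commutes with \(T\). Hence it defines a \(T\)-equivariant flat family over \(\mathbb A^1\), where the base \(\mathbb A^1\) carries the trivial action. Restriction to the fibers over \(1\) and \(0\) gives equal classes in \(K_T(X\times X)\), since \(K_T(X\times X\times\mathbb A^1)\cong K_T(X\times X)\) by homotopy invariance. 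The twisted diagonal has the same class as \(\Delta\), because translation by a connected torus element acts trivially on \(K_T\) (homotopy along the torus, which is \(T\)-equivariant as \(T\) is commutative).

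The special fiber's class is computed as in the ordinary case: inclusion–exclusion (a Mayer–Vietoris/cellular resolution of the reduced union) gives
\[
  [\mathcal O_{\text{special}}]=\sum (-1)^{\dim\sigma+\dim\tau-d+\dim\gamma}\,x_\sigma\boxtimes x_\tau ,
\]
where the sum runs over the pairs in the statement. The coefficients are integers, so this holds verbatim in \(K_T\), because all sheaves involved are canonically \(T\)-linearized structure sheaves of invariant subschemes and the resolution maps are equivariant. Pairing with \(\alpha\boxtimes\beta\) and using the Künneth formula \(\chi_T(X\times X,\alpha x_\sigma\boxtimes\beta x_\tau)=\chi_T(\alpha x_\sigma)\chi_T(\beta x_\tau)\) gives the formula for \(g_\alpha,g_\beta\). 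Since every equivariant Grothendieck weight is some \(g^T_\alpha\) by the isomorphism \(\GW_T(\Sigma)\cong K_T(X)\), the general statement follows.

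The main obstacle is verifying that the class identity for the special fiber, including the sign bookkeeping and the reducedness and flatness of the limit, really lifts equivariantly with trivial characters, that is, that no character twists \(\chi^m\) appear. The way to handle this is to check that the limiting ideal in each \(T\)-stable affine chart is a monomial (Stanley–Reisner type) ideal for the diagonal torus. The resolution terms are then quotients by monomial ideals with their natural linearizations, so they carry no extra characters. The same check also yields flatness, because the degeneration is a Gröbner degeneration with respect to a \(T\)-weight.
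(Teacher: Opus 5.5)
Your proposal is correct and follows essentially the same route as the paper: replace \(v\) by an integral vector in the same chamber, observe that the ordinary diagonal degeneration is invariant under the diagonal \(T\)-action, so the identity \(\delta_*x_\gamma=\sum(-1)^{\dim\sigma+\dim\tau-d+\dim\gamma}x_\sigma\boxtimes x_\tau\) holds in \(K_T(X\times X)\), then pair with \(p_1^*\alpha_1\cdot p_2^*\alpha_2\) and apply the projection and K\"unneth formulas. Your worry about stray characters is already settled by a point you make yourself: the family is a \(T\)-invariant subscheme over a base with trivial \(T\)-action, so every fiber and every term of the closed-cover resolution carries its canonical linearization; the monomial-ideal check is needed only for reducedness, which is inherited from the ordinary case.
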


\begin{proof}
  The diagonal degeneration initially requires an integral translation vector. Since the condition of being sufficiently generic in \cref{subsec:grothendieck-weights} is equivalent to being in a maximal-dimensional chamber of a polyhedral complex, we can choose \(w\in N\) so that replacing \(v\) by \(w\) does not affect the summation.
  %The finitely many rational polyhedral cones \(\tau-\sigma\), for \(\sigma,\tau\in\Sigma\), cut the generic locus into rational polyhedral chambers.
  %Choose \(w\in N\) in the same chamber as
  %\(v\).  For every \(\sigma,\tau\), \[ (\sigma+v)\cap\tau\neq\varnothing \quad\Longleftrightarrow\quad v\in\tau-\sigma \quad\Longleftrightarrow\quad w\in\tau-\sigma \quad\Longleftrightarrow\quad (\sigma+w)\cap\tau\neq\varnothing. \] The condition \(\sigma\cap\tau=\gamma\) is independent of the translation vector.  Thus the summation determined by \(v\) is the same as the one determined by \(w\).

  The one-parameter subgroup associated with \(w\) gives
  \[
    \delta_*x_\gamma=
    \sum_{\substack{\sigma,\tau\supseteq\gamma\\
        (\sigma+w)\cap\tau\neq\varnothing\\
    \sigma\cap\tau=\gamma}}
    (-1)^{\dim\sigma+\dim\tau-d+\dim\gamma}
    x_\sigma\boxtimes x_\tau
  \]
  in \(K_T(X\times X)\).  Multiply both sides by \(p_1^*\alpha_1\cdot p_2^*\alpha_2\) and apply \(\chi_T\).  The projection formula identifies the left-hand side with
  \[
    \chi_T(\alpha_1\alpha_2 x_\gamma)=(g_1g_2)(\gamma).
  \]
  For each term on the right, the equivariant K\"unneth formula for Euler characteristics gives
  \[
    \chi_T(p_1^*(\alpha_1x_\sigma)\cdot p_2^*(\alpha_2x_\tau))
    =
    \chi_T(\alpha_1x_\sigma)\chi_T(\alpha_2x_\tau)
    =
    g_1(\sigma)g_2(\tau).
  \]
  This proves the formula.
\end{proof}

\subsection{Higher product rules on the braid fan}

We extend the diagonal degeneration to an arbitrary number of factors on the braid fan.  The proof, including the multi-cone lattice-index calculation needed beyond \cite[Section~5]{wang2026grothendieckweights}, is given in \cref{sec:proof-higher-product-rule}.

Fix an integer \(k\geq2\), and set \(d=|E|-1\).  Let
\[
  \bm\sigma=(\sigma_1,\ldots,\sigma_k),
  \qquad
  \bm v=(v_1,\ldots,v_k)\in (N_\R)^k.
\]
For \(\sigma_1,\ldots,\sigma_k\in\Sigma_E\), set
\[
  C(\bm\sigma;\bm v)
  =
  \bigcap_{i=1}^k(\sigma_i+v_i)
  \subseteq N_\R.
\]
We call \(\bm v\) \emph{jointly generic} if every such nonempty intersection meets the relative interiors of all \(k\) translated cones and has the expected dimension
\begin{equation}\label{eq:multi-expected-dimension}
  \dim C(\bm\sigma;\bm v) = \sum_{i=1}^k\dim\sigma_i-(k-1)d.
\end{equation}

\begin{theorem}[Equivariant \(k\)-fold product rule]
  \label{thm:equivariant-multi-product-rule}
  Let \(g_1,\ldots,g_k\) be equivariant Grothendieck weights on the braid fan \(\Sigma_E\).  For a jointly generic tuple \(\bm v\in(N_\R)^k\), their product satisfies, for every \(\gamma\in\Sigma_E\),
  \[
    \left(\prod_{i=1}^k g_i\right)(\gamma)
    =
    \sum_{\substack{
        \sigma_i\supseteq\gamma\text{ for all }i\\
        C(\bm\sigma;\bm v)\neq\varnothing\\
    \cap_i \sigma_i = \gamma\\ }}
    (-1)^{\dim C(\bm\sigma;\bm v)+\dim\gamma}
    \prod_{i=1}^kg_i(\sigma_i).
  \]
\end{theorem}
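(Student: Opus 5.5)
The plan is to mimic the proof of \cref{thm:equivariant-product-rule}, replacing the diagonal $\delta:X_E\to X_E\times X_E$ by the small diagonal $\delta_k:X_E\to X_E^k$. As there, joint genericity is an open condition cut out by the finitely many rational polyhedral conditions $\bm v\in\prod_i\sigma_i-\Delta(N_\R)$. We may therefore replace $\bm v$ by an integral tuple $\bm w\in N^k$ in the same chamber without changing the index set of the sum. Translating all $w_i$ by a common vector does not change $C(\bm\sigma;\bm w)$ up to translation, so we may normalize $w_k=0$.

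\textbf{Degeneration.} Consider the one-parameter subgroup $\lambda(s)=(s^{w_1},\ldots,s^{w_{k-1}},1)$ of $T^k$ acting on $X_E^k$. The flat limit as $s\to0$ of $\lambda(s)\cdot\delta_k(V(\gamma))$ is a $T$-invariant (diagonal action) subscheme $Z_\gamma$, and $[\mathcal O_{\delta_k V(\gamma)}]=[\mathcal O_{Z_\gamma}]$ in $K_T(X_E^k)$. The limit is computed torically: $\delta_k(X_E)$ is the toric subvariety of $X_{\Sigma_E^k}$ associated with the diagonal sublattice. Its $\bm w$-translate degenerates to the union of orbit closures $V(\sigma_1)\times\cdots\times V(\sigma_k)$ with $C(\bm\sigma;\bm w)\neq\varnothing$ of expected dimension zero; more precisely, it degenerates to the toric scheme attached to the polyhedral complex $\bigcap_i(\Sigma_E+w_i)$ (the tropical/Gröbner limit of a linear subspace). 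Restricting to the stratum of $\gamma$ means working in the star of $\gamma$ with cones $\sigma_i\supseteq\gamma$.

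\textbf{Computing the class.} The limit scheme is reduced and its structure sheaf has a resolution indexed by the cells of the common refinement. Equivalently, by inclusion–exclusion over its irreducible components, the class of the limit equals $\sum\pm[\mathcal O_{V(\sigma_1)\times\cdots\times V(\sigma_k)}]$ over all cells of the refinement that are bounded modulo $(N_\gamma)_\R$, with sign $(-1)^{\dim C+\dim\gamma}$. This is the Möbius function of the face poset of a complex whose bounded part is contractible, and the argument is exactly as in the two-factor case of \cite[Section~5]{wang2026grothendieckweights}.

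The boundedness criterion is the same as before. The recession cone of $C(\bm\sigma;\bm w)$ is $\bigcap_i\sigma_i$, so boundedness modulo $N_\gamma$ is equivalent to $\bigcap_i\sigma_i=\gamma$.

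\textbf{Main obstacle.} The main obstacle is the scheme-theoretic statement that the intersections of the strata are reduced, with no multiplicities. This needs the lattice index $[N^k:\Delta(N)+\sum_iN_{\sigma_i}]$ to equal $1$ whenever the cones meet transversally, the multi-cone analogue of strong unimodularity. For the braid fan I would verify it directly. The $N_{\sigma_i}$ are spanned by indicator vectors of chains of subsets, and the relevant matrix is a network/incidence-type matrix, hence totally unimodular; alternatively, induct on $k$ by applying the two-factor index statement to $\Sigma_E$ and to refinements of the form $\bigcap_{i<k}(\sigma_i+w_i)$, which are again unions of cones of $\Sigma_E$-type chambers.

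\textbf{Conclusion.} Granting this, multiply the identity $\delta_{k*}x_\gamma=\sum\pm\, x_{\sigma_1}\boxtimes\cdots\boxtimes x_{\sigma_k}$ by $\prod_ip_i^*\alpha_i$ and apply $\chi_T$. The projection formula turns the left side into $(\prod_i g_i)(\gamma)$, and the equivariant Künneth formula turns each term on the right into $\prod_ig_i(\sigma_i)$. This gives the stated formula.
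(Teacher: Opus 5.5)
Your plan is essentially the paper's proof: move $\bm v$ to an integral tuple in the same chamber, degenerate the small diagonal by a one-parameter subgroup, and get reducedness of the special fiber from a multi-cone unimodularity statement for the braid fan. Your index condition $[N^k:\Delta(N)+\sum_iN_{\sigma_i}]=1$ at the vertex cells is dual to the paper's \cref{lem:braid-multi-cone-index}, $\sum_iM(\sigma_i)=M$, which the paper proves by connectivity of the graph of blocks rather than by total unimodularity. You then compute the class by the crosscut/M\"obius argument and finish with the projection formula and equivariant K\"unneth, exactly as the paper does. The only step where the paper is more explicit is your ``working in the star of $\gamma$'': it proves the case $\gamma=\{0\}$ and applies it on $V(\gamma)$, whose fan is $\prod_j\Sigma_{B_j}$, using \cref{lem:quotient-commutes-with-intersection} to check that $\pi_\gamma$ carries the cells $C(\bm\sigma;\bm v)$ to the star-fan cells and that $\bar{\bm v}$ stays jointly generic there.
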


\subsection{Equivariant tautological weights}

Finally, we compute the equivariant tautological weights used below, in the notation of \cref{subsec:initial-matroids,subsec:equivariant-conventions}.

In \cite[Section~3]{fink2012kclasses}, Fink--Speyer define \(y(M)\) equivariantly for the diagonal torus action on \(G(d,n)\).  Their fixed-point formula is independent of the ground field.  Under the identifications \([n]=E\) and \(d=r\), this is the action of our homogeneous coordinate torus \(H\). We denote the resulting class in \(K_H(\operatorname{Gr}(r;E))\) by \(y_H(\mathsf M)\), to emphasize the role of \(H\).

Let \(\mathsf M\) be a rank-\(r\) matroid on \(E\), and let \(\mathcal E\in K_H(\operatorname{Gr}(r;E))\).  For a maximal flag \(\mathcal F\), let \(p_{\mathcal F}\in X_E\) be the corresponding fixed point and let \(B_{\mathcal F}(\mathsf M)\) be the unique basis of \(\operatorname{in}_{\mathcal F}\mathsf M\).  We denote by \(\mathcal E_{\mathsf M}\in K_H(X_E)\) the class characterized by
\[
  \mathcal E_{\mathsf M}|_{p_{\mathcal F}}
  =
  \mathcal E|_{p_{B_{\mathcal F}(\mathsf M)}},
\]
where \(p_B\in\operatorname{Gr}(r;E)\) is the coordinate fixed point indexed by \(B\).  This is \cite[Proposition~3.13]{BergetEurSpinkTseng2023} in our inverse-action convention. The same proposition states that the map \(\mathcal{E}\mapsto \mathcal{E}_\mathsf{M}\) is a ring homomorphism.

\begin{lemma}\label{lemma:equivariant-grassmannian-comparison}
  Let \(\mathsf M\) be a matroid of rank \(r\) on
  \(E\), and let
  \(\mathcal E\in K_H(\operatorname{Gr}(r;E))\).  Then
  \[
    \chi_H(X_E,\mathcal E_{\mathsf M})
    =
    \chi_H(\operatorname{Gr}(r;E),y_H(\mathsf M)\mathcal E).
  \]
\end{lemma}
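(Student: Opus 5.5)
We compare both sides by equivariant localization: each Euler characteristic is a sum of fixed-point contributions, and we match them after grouping the chambers of the braid fan according to the basis \(B_{\mathcal F}(\mathsf M)\). The first step is to write the left side via Atiyah--Bott/Lefschetz localization on the smooth projective toric variety \(X_E\):
\[
  \chi_H(X_E,\mathcal E_{\mathsf M})
  =\sum_{\mathcal F\ \mathrm{maximal}}
  \frac{\mathcal E|_{p_{B_{\mathcal F}(\mathsf M)}}}
  {\prod_{\rho\in\sigma_{\mathcal F}(1)}\bigl(1-\text{(weight of the cotangent direction at }p_{\mathcal F})\bigr)}.
\]
Grouping the maximal flags by \(B=B_{\mathcal F}(\mathsf M)\), this becomes \(\sum_{B\in\mathcal B(\mathsf M)}\mathcal E|_{p_B}\cdot S_B\). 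Here \(S_B\) is the sum of the inverse \(K\)-theoretic Euler classes of the tangent spaces over those chambers \(\sigma_{\mathcal F}\) with \(B_{\mathcal F}(\mathsf M)=B\).

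The second step is the corresponding computation on the right, using the Fink--Speyer fixed-point formula. In our inverse-action convention, \(y_H(\mathsf M)|_{p_B}\) equals \(\prod_{\text{tangent weights }\mu\text{ at }p_B}(1-\mu)\) times the equivariant integer-point (Hilbert) series of the tangent cone of \(P(\mathsf M)\) at the vertex \(e_B\), and it vanishes for \(B\notin\mathcal B(\mathsf M)\). Localization on \(\operatorname{Gr}(r;E)\) cancels the tangent Euler class. What remains is
\[
  \chi_H(\operatorname{Gr}(r;E),y_H(\mathsf M)\mathcal E)
  =\sum_{B\in\mathcal B(\mathsf M)}\mathcal E|_{p_B}\cdot
  \mathrm{Hilb}\bigl(\cone_{e_B}(P(\mathsf M))\bigr),
\]
where the Hilbert series is the lattice-point generating function in the variables \(t\) and is a rational function.

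It therefore suffices to prove, for each basis \(B\), the identity \(S_B=\mathrm{Hilb}(\cone_{e_B}P(\mathsf M))\) of rational functions. The set of \(u\) with \(B_u(\mathsf M)=B\) is the interior of the normal cone of \(P(\mathsf M)\) at \(e_B\). The chambers \(\sigma_{\mathcal F}\) with \(B_{\mathcal F}(\mathsf M)=B\) are exactly the maximal cones of the braid fan lying in this normal cone, and they triangulate it unimodularly. The sum of \(1/\prod(1-\chi^{-m})\) over such a unimodular subdivision of a full-dimensional normal cone is given by the Brion--Lawrence--Varchenko/Khovanskii--Pukhlikov theorem: the lower-dimensional overlaps contribute zero as rational functions, since their terms come from cones containing lines. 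This computes the Hilbert series of the dual cone, which is \(\cone_{e_B}P(\mathsf M)\), the dual of the normal cone. Equivalently, one can apply localization on the toric variety of the normal fan of \(P(\mathsf M)\) refined by \(\Sigma_E\).

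The main obstacle is matching conventions. We must check that the inverse action \(t\cdot v=(t_i^{-1}v_i)\) makes the tangent weights at \(p_{\mathcal F}\) and at \(p_B\) the correct signs, so that \(S_B\) computes the tangent cone rather than its negative, and that the max-weight convention for \(\operatorname{in}_u\) matches the vertex \(e_B\). Comparing the nonequivariant specialization with \cite[Proposition~3.13]{BergetEurSpinkTseng2023}, or running the case \(\mathsf M=\mathsf U_{1,2}\), will fix the signs. A cleaner alternative, which I would try if the local computation becomes messy, is to use the Fink--Speyer identity \(y(\mathsf M)=\pi_{d*}\pi_{(n-1)}^{*}\)-type correspondence through the flag variety \(\mathrm{Fl}(1,r,n-1;n)\). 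That identity expresses \(\chi(\operatorname{Gr},y(\mathsf M)\mathcal E)\) as an Euler characteristic over the torus-orbit closure of \(L\), which maps birationally from \(X_E\). Combined with \(R\pi_*\mathcal O=\mathcal O\), this gives the result directly in the realizable case. The general case then follows from valuativity, since both sides are valuative in \(\mathsf M\).
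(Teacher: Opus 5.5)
Your localization argument is correct, and it takes a genuinely different route from the paper.

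\textbf{The paper's route.} The paper argues geometrically. For a \(\mathbb C\)-realizable \(\mathsf M\), it uses \cite[Proposition~3.13]{BergetEurSpinkTseng2023} to write \(\mathcal E_{\mathsf M}=f_L^*j_L^*\mathcal E\), where \(f_L:X_E\to Y_L=\overline{H\cdot L}\) is the toric morphism. It then combines three facts:
\begin{itemize}
\item the toric identity \((f_L)_*[\mathcal O_{X_E}]=[\mathcal O_{Y_L}]\);
\item the realizable description \(y_H(\mathsf M)=(j_L)_*[\mathcal O_{Y_L}]\);
\item the projection formula.
\end{itemize}
It passes to arbitrary \(\mathsf M\) because both sides are valuative and every matroid polytope indicator is an integer combination of \(\mathbb C\)-realizable ones \cite[Lemma~5.9]{BergetEurSpinkTseng2023}.

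\textbf{Your route.} You work one vertex of \(P(\mathsf M)\) at a time. Grouping the chambers by \(B_{\mathcal F}(\mathsf M)\) and applying Lawrence--Varchenko/Brion polarity to the braid subdivision of the full-dimensional normal cone at \(e_B\) is exactly the fixed-point shadow of \((f_L)_*\mathcal O_{X_E}=\mathcal O_{Y_L}\). In those terms, your identity \(S_B=\mathrm{Hilb}(\cone_{e_B}P(\mathsf M))\) is the vanishing of higher direct images over the affine chart at \(p_B\).

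\textbf{What each route buys.} Your version needs no realizability, no spanning lemma, no normality of orbit closures, and no passage to \(\mathbb C\). It also handles disconnected \(\mathsf M\) uniformly: there the normal cone has a lineality space and its dual is a lower-dimensional pointed cone, but the polarity argument is unaffected. The cost is explicit bookkeeping of conventions, namely
\begin{itemize}
\item the max-weight convention for \(\operatorname{in}_u\);
\item outer versus inner normal cone, hence the sign of the dual cone;
\item the inverse action on the characters of coordinate functions;
\item Fink--Speyer's character conventions.
\end{itemize}
These are uniform choices, so any mismatch would be the global involution \(\chi^m\mapsto\chi^{-m}\) on every vertex term. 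You should track it once explicitly rather than rely on a test case, but this is routine. The paper avoids this bookkeeping by importing the geometric identification from the literature.

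\textbf{Your fallback.} Your ``cleaner alternative'' is essentially the paper's proof, with three corrections.
\begin{enumerate}
\item The flag-variety correspondence is not what is used; what is needed is \(y_H(\mathsf M)=(j_L)_*[\mathcal O_{Y_L}]\) directly.
\item The map \(X_E\to Y_L\) is birational only when \(\mathsf M\) is connected, since \(\dim Y_L=\dim P(\mathsf M)\). Only \((f_L)_*[\mathcal O_{X_E}]=[\mathcal O_{Y_L}]\) matters.
\item Valuativity alone does not pass from realizable to arbitrary matroids. You also need that realizable matroid polytope indicators span, which is the cited Lemma~5.9.
\end{enumerate}
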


\begin{proof}
  In this proof, we work over \(\mathbb{C}\).  This is because both \(K_H(X_E)\) and \(K_H(\operatorname{Gr}(r;E))\) have field-independent descriptions.

  Suppose that \(\mathsf M\) is realized by \(L\subseteq\mathbb C^E\), and set \(Y_L=\overline{H\cdot L}\subseteq\operatorname{Gr}(r;E)\), with the inverse action.  Define the \(H\)-equivariant toric morphism
  \[
    f_L:X_E\longrightarrow Y_L,
    \qquad
    \bar t\longmapsto t^{-1}L.
  \]
  In the notation of \cite[Proposition~3.13]{BergetEurSpinkTseng2023}, this is \(f_L=\varphi_L\circ\operatorname{crem}\).  The Grassmannian convention of \cite{fink2012kclasses} and the fiber convention for the tautological classes on \(X_E\) in \cite{BergetEurSpinkTseng2023} both use inverse coordinate scaling.  Hence the cited proposition gives \(\mathcal E_{\mathsf M}=f_L^*j_L^*\mathcal E\), where \(j_L:Y_L\hookrightarrow\operatorname{Gr}(r;E)\) is the inclusion. Since \(f_L\) is obtained from the corresponding coarsening of fans, \((f_L)_*[\mathcal O_{X_E}]=[\mathcal O_{Y_L}]\) in \(K_H(Y_L)\); see \cite[Theorem~9.2.5]{cox2011toric}.  Moreover, the defining property of the Fink--Speyer class gives \(y_H(\mathsf M)=(j_L)_*[\mathcal O_{Y_L}]\).  The equivariant projection formula now gives
  \[
    \chi_H(X_E,\mathcal E_{\mathsf M})
    =
    \chi_H\bigl(\operatorname{Gr}(r;E),
    (j_L)_*[\mathcal O_{Y_L}]\,\mathcal E\bigr)
    =
    \chi_H\bigl(\operatorname{Gr}(r;E),
    y_H(\mathsf M)\mathcal E\bigr).
  \]

  For arbitrary \(\mathsf M\), the right-hand side is valuative by the definition of \(y_H(\mathsf M)\).  \cite[Proposition~5.8]{BergetEurSpinkTseng2023} states that, for fixed \([\mathcal E]\), the assignment \(\mathsf M\mapsto[\mathcal E_{\mathsf M}]\) is valuative.  Applying \(\chi_H\) shows that the left-hand side is valuative as well.  Finally, \cite[Lemma~5.9]{BergetEurSpinkTseng2023} states that the indicator function of every matroid base polytope is an integral linear combination of those of \(\mathbb C\)-realizable matroids.  The equality for realizable matroids therefore implies the equality for all matroids.
\end{proof}

For a matroid \(\mathsf N\) on \(A\subseteq E\), recall the homogeneous multivariate Tutte polynomial from \cite{branden2020lorentzian}.  For
\(\bm w=(w_i)_{i\in A}\), it is
\[
  Z_{\mathsf N}(q,w_0,\bm w)
  =
  \sum_{B\subseteq A}
  q^{-\rk_{\mathsf N}(B)}
  w_0^{|A|-|B|}
  \prod_{i\in B}w_i.
\]
Define
\[
  T_{\mathsf N}^H(u,v;\bm t)
  =
  Z_{\mathsf N}\bigl(v/u,1,(v t_i)_{i\in A}\bigr)
  =
  \sum_{B\subseteq A}
  u^{\rk_{\mathsf N}(B)}
  v^{|B|-\rk_{\mathsf N}(B)}
  \bm t_B.
\]

The following proposition is essentially a restatement of \cite[Theorem~5.2]{fink2012kclasses}.
\begin{proposition}\label{prop:equivariant-tautological-gw}
  Let \(\mathsf M\) be a matroid on \(E\).  The \(H\)-equivariant Grothendieck weight of
  \[
    \lambda_u(\mathcal S_{\mathsf M}^{\vee})
    \lambda_v(\mathcal Q_{\mathsf M}^{\vee})\in K_H(X_E)[u,v]
  \]
  is
  \[
    \mathcal F\longmapsto
    T_{\operatorname{in}_{\mathcal F}\mathsf M}^H(u,v;\bm t).
  \]
\end{proposition}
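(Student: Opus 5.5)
The weight at a flag \(\mathcal F\) is \(g(\mathcal F)=\chi_H(X_E,\lambda_u(\mathcal S_{\mathsf M}^\vee)\lambda_v(\mathcal Q_{\mathsf M}^\vee)\,x_{\sigma_\mathcal F})\). The plan is to reduce first to the zero cone, and then evaluate there using \cref{lemma:equivariant-grassmannian-comparison} together with Fink--Speyer.

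\textbf{Reduction to the zero cone.} Let \(\mathcal F:F_1\subsetneq\cdots\subsetneq F_\ell\). The orbit closure \(V(\sigma_\mathcal F)\) is \(H\)-equivariantly isomorphic to \(\prod_{i=0}^{\ell}X_{F_{i+1}\setminus F_i}\), with the torus acting factorwise through the coordinates in each block. Under this identification the restrictions of \(\mathcal S_\mathsf M\) and \(\mathcal Q_\mathsf M\) decompose as external sums of the tautological classes of the minors \((\mathsf M|F_{i+1})/F_i\). This is the standard restriction property of tautological classes to boundary strata, and it can be checked on fixed points. A fixed point \(p_{\mathcal G}\) of \(V(\sigma_\mathcal F)\) corresponds to a maximal flag \(\mathcal G\) refining \(\mathcal F\), and the greedy basis satisfies \(B_\mathcal G(\mathsf M)=B_\mathcal G(\operatorname{in}_\mathcal F\mathsf M)\), which is the disjoint union of the greedy bases of the summands. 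Since \(\lambda\) is multiplicative and the Künneth formula holds for \(\chi_H\), we get
\[
g(\mathcal F)=\prod_i g_{(\mathsf M|F_{i+1})/F_i}(\{0\}).
\]
On the other side, \(T^H\) of a direct sum is the product of the \(T^H\) of the summands, because rank is additive across the summands. So it suffices to treat the zero cone for an arbitrary matroid \(\mathsf N\) on a set \(A\).

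\textbf{Zero cone.} Apply \cref{lemma:equivariant-grassmannian-comparison} with
\[
\mathcal E=\lambda_u(S^\vee)\lambda_v(Q^\vee)\in K_H(\operatorname{Gr}(r;A))[u,v],
\]
where \(S\) and \(Q\) are the tautological sub- and quotient bundles. The map \(\mathcal E\mapsto\mathcal E_\mathsf N\) is a ring homomorphism that sends \(S,Q\) to \(\mathcal S_\mathsf N,\mathcal Q_\mathsf N\), and it commutes with \(\lambda\)-operations, as can be seen on fixed points. The lemma then gives
\[
\chi_H(X_A,\lambda_u(\mathcal S_\mathsf N^\vee)\lambda_v(\mathcal Q_\mathsf N^\vee))=\chi_H(\operatorname{Gr}(r;A),\,y_H(\mathsf N)\lambda_u(S^\vee)\lambda_v(Q^\vee)).
\]
The right-hand side is exactly the quantity computed in \cite[Theorem~5.2]{fink2012kclasses}: the equivariant pushforward to a point of \(y(\mathsf N)\) times \([\wedge^pS^\vee][\wedge^qQ^\vee]\), packaged as a generating function. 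Their answer is a sum over subsets \(B\subseteq A\) of \(u^{\rk B}v^{|B|-\rk B}\) times a monomial in the torus weights, which is \(T^H_\mathsf N(u,v;\bm t)\) once one matches conventions.

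\textbf{Main obstacle.} The difficult step is matching conventions precisely. Fink--Speyer use the inverse action, and it must be checked that \(\bm t_B\) appears rather than \(\bm t_B^{-1}\), and that duals and the roles of \(S\) versus \(Q\) (equivalently \(u\) versus \(v\)) are not swapped. I would fix these by checking small cases directly.
\begin{itemize}
\item For \(\mathsf N\) a single coloop on \(A=\{i\}\), we have \(X_A=\mathrm{pt}\) and \(\mathcal S^\vee\) has weight \(t_i\). This gives \(1+ut_i\), which matches \(T^H=1+ut_i\).
\item For a single loop, \(\mathcal Q^\vee\) has weight \(t_i\), giving \(1+vt_i\), which again matches.
\end{itemize}
Since both sides are multiplicative on direct sums and valuative, and the Fink--Speyer formula has the stated form for every matroid, these checks pin down the normalization. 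If a direct citation is awkward, an alternative is to prove the zero-cone formula for realizable \(\mathsf N\) by equivariant localization on \(X_A\), summing \(\prod_{i\in B_\mathcal G}(1+ut_i)\prod_{i\notin B_\mathcal G}(1+vt_i)\) over the fixed points with the toric denominators. This would be compared with the Tutte expansion via deletion–contraction, and the general case would follow by valuativity (\cite[Lemma~5.9]{BergetEurSpinkTseng2023}).
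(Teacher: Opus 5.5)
Your proposal is correct and follows essentially the same route as the paper. The paper also splits the orbit closure \(V(\sigma_{\mathcal F})\) into a product of smaller permutohedral varieties on which the tautological classes decompose over the minors \((\mathsf M|F_{i+1})/F_i\); it cites \cite[Propositions~5.2 and~5.3]{BergetEurSpinkTseng2023} where you argue via fixed points. It then settles the zero cone by combining \cref{lemma:equivariant-grassmannian-comparison} with \cite[Theorem~5.2]{fink2012kclasses}.

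The one convention step the paper makes explicit is this: Fink--Speyer's identity is stated with \([\mathcal O(1)][\wedge^p\mathcal S]\) and \(u^{r-\rk_{\mathsf M}(S)}\). It becomes your form after replacing \(u\) by \(u^{-1}\) and multiplying by \(u^r\), using \(u^r[\mathcal O(1)]\lambda_{u^{-1}}(\mathcal S)=\lambda_u(\mathcal S^\vee)\). The torus weights \(\bm t_S\) already agree, since both sources use the inverse coordinate action.
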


\begin{proof}
  First assume that \(\mathcal F\) is empty, and set \(r=\rk\mathsf M\).  Let \(\mathcal S,\mathcal Q\) be the tautological bundles on \(\operatorname{Gr}(r;E)\), and set \(\mathcal E_{u,v}=\lambda_u(\mathcal S^\vee)\lambda_v(\mathcal Q^\vee)\). By the inverse-action translation of \cite[Proposition~3.13]{BergetEurSpinkTseng2023} used above, the matroid analogue construction sends \(\mathcal E_{u,v}\) to \(\lambda_u(\mathcal S_{\mathsf M}^{\vee}) \lambda_v(\mathcal Q_{\mathsf M}^{\vee})\).  By \cref{lemma:equivariant-grassmannian-comparison},
  \[
    \chi_H\bigl(X_E,
      \lambda_u(\mathcal S_{\mathsf M}^{\vee})
    \lambda_v(\mathcal Q_{\mathsf M}^{\vee})\bigr)
    =
    \chi_H\bigl(\operatorname{Gr}(r;E),y_H(\mathsf M)\mathcal E_{u,v}\bigr).
  \]

  We now apply Fink--Speyer's equivariant rank-generating identity.  They use the same inverse coordinate action as we do.  In their notation, a rank \(d\) matroid \(M\) has ground set \([n]\) and rank function \(\rho_M\), and \cite[Theorem~5.2]{fink2012kclasses} states
  \[
    \int^T\sum_{p=0}^{d}\sum_{q=0}^{n-d}
    y(M)[\mathcal O(1)]
    [\textstyle\bigwedge^p\mathcal S]
    [\textstyle\bigwedge^q\mathcal Q^\vee]u^pv^q
    =
    \sum_{S\subseteq[n]}
    t^{e_S}u^{d-\rho_M(S)}v^{|S|-\rho_M(S)}.
  \]
  Their equivariant pushforward \(\int^T\) is our \(\chi_H\), their class \(y(M)\) is our \(y_H(\mathsf M)\), and under the identifications \([n]=E\), \(d=r\), and \(\rho_M=\rk_{\mathsf M}\), one has \(t^{e_S}=\bm t_S\).  Replace \(u\) by \(u^{-1}\) in this identity and multiply both sides by \(u^r\).  On the left-hand side,
  \[
    u^r[\mathcal O(1)]\lambda_{u^{-1}}(\mathcal S)
    =\lambda_u(\mathcal S^\vee),
  \]
  so the result is
  \[
    \chi_H\bigl(\operatorname{Gr}(r;E),
    y_H(\mathsf M)\mathcal E_{u,v}\bigr) =
    \sum_{S\subseteq E}
    \bm t_S u^{\rk_{\mathsf M}(S)}
    v^{|S|-\rk_{\mathsf M}(S)}
    =T_{\mathsf M}^H(u,v;\bm t).
  \]

  For a general flag \(\mathcal F\), the orbit closure \(V(\sigma_{\mathcal F})\) is a product of smaller permutohedral varieties, one for each interval \(F_{i+1}\setminus F_i\).  The restrictions of \(\mathcal S_{\mathsf M}\) and \(\mathcal Q_{\mathsf M}\) decompose over the direct-sum components of \(\operatorname{in}_{\mathcal F}\mathsf M\), namely the successive minors \((\mathsf M|F_{i+1})/F_i\), by the minor decomposition property of tautological classes \cite[Propositions~5.2 and~5.3] {BergetEurSpinkTseng2023}.  Multiplying the empty-flag formula over these intervals gives the stated product.
\end{proof}

\begin{corollary}\label{cor:equivariant-single-tautological-gw}
  Let \(\mathsf M\) be a matroid on \(E\), and let \(\mathcal F\) denote an arbitrary flag of subsets. The \(H\)-equivariant Grothendieck weight of \(\lambda_v(\mathcal Q_{\mathsf M}^{\vee})\) is
  \[
    \mathcal F\longmapsto
    L_{\operatorname{in}_{\mathcal F}\mathsf M}^H(v,\bm t)
    =
    \prod_{i\in\Loop(\operatorname{in}_{\mathcal F}\mathsf M)}(1+vt_i),
  \]
  where, for any matroid \(\mathsf N\) on \(A\),
  \[
    L_\mathsf{N}^H(v,\bm t)
    =
    \prod_{i\in\Loop(\mathsf{N})}(1+vt_i).
  \]

  The \(H\)-equivariant Grothendieck weight of \(\lambda_u(\mathcal S_{\mathsf M}^{\vee})\) is
  \[
    \mathcal F\longmapsto
    I_{\operatorname{in}_{\mathcal F}\mathsf M}^H(u,\bm t)
    =
    \sum_{B\in\mathcal I(\operatorname{in}_{\mathcal F}\mathsf M)}
    u^{|B|}\bm t_B,
  \]
  where, for any matroid \(\mathsf N\) on \(A\),
  \[
    I_\mathsf{N}^H(u,\bm t)
    =
    \sum_{B\in\mathcal I(\mathsf{N})}u^{|B|}\bm t_B.
  \]
\end{corollary}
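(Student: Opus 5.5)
The plan is to specialize \cref{prop:equivariant-tautological-gw}, since the weight there is the rank-generating polynomial \(T^H_{\operatorname{in}_{\mathcal F}\mathsf M}(u,v;\bm t)\). Setting \(u=0\) in \(\lambda_u(\mathcal S_{\mathsf M}^\vee)\lambda_v(\mathcal Q_{\mathsf M}^\vee)\) leaves \(\lambda_v(\mathcal Q_{\mathsf M}^\vee)\). Setting \(v=0\) leaves \(\lambda_u(\mathcal S_{\mathsf M}^\vee)\). Evaluation at \(u=0\) or \(v=0\) is a ring map \(K_H(X_E)[u,v]\to K_H(X_E)[v]\) (or \([u]\)), and it commutes with taking Grothendieck weights, because \(\chi_H\) is \(R(H)\)-linear and the weight is computed coefficientwise in \(u,v\). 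So it suffices to specialize the polynomial
\[
  T^H_{\mathsf N}(u,v;\bm t)=\sum_{B\subseteq A}u^{\rk_{\mathsf N}(B)}v^{|B|-\rk_{\mathsf N}(B)}\bm t_B
\]
for \(\mathsf N=\operatorname{in}_{\mathcal F}\mathsf M\), which is a matroid on \(A=E\).

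For \(v=0\), only subsets with \(|B|=\rk_{\mathsf N}(B)\) survive, and these are exactly the independent sets. Each contributes \(u^{|B|}\bm t_B\), which gives \(I^H_{\mathsf N}(u,\bm t)\) directly.

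For \(u=0\), only subsets with \(\rk_{\mathsf N}(B)=0\) survive, and these are exactly the subsets of \(\Loop(\mathsf N)\). Each contributes \(v^{|B|}\bm t_B\). Summing over all subsets of \(\Loop(\mathsf N)\) factors as \(\prod_{i\in\Loop(\mathsf N)}(1+vt_i)=L^H_{\mathsf N}(v,\bm t)\).

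One convention needs checking, since \(0^0=1\) is needed for the terms with exponent zero. This is automatic when the terms are read as polynomial monomials. I do not expect any real obstacle; the main thing to be careful about is that specialization commutes with the weight map, which follows from the \(R(H)[u,v]\)-linearity above.
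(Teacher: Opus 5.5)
Your proposal is correct and follows the paper's proof: both specialize \cref{prop:equivariant-tautological-gw} at \(u=0\) and at \(v=0\). At \(u=0\) only the rank-zero subsets survive, which are the subsets of the loop set, and at \(v=0\) only the independent subsets survive. The one cosmetic difference is that the paper argues on each direct-sum component \((\mathsf M|F_{i+1})/F_i\), while you work with \(\operatorname{in}_{\mathcal F}\mathsf M\) on all of \(E\) at once; you also make explicit that specialization commutes with the weight map, which the paper leaves implicit.
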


\begin{proof}
  Set \(u=0\) and \(v=0\), respectively, in \cref{prop:equivariant-tautological-gw}.  On each direct-sum component \((\mathsf M|F_{i+1})/F_i\) of \(\operatorname{in}_{\mathcal F}\mathsf M\), the specialization \(u=0\) keeps exactly the rank-zero subsets, which are the subsets of its loop set. This gives the product formula for \(\lambda_v(\mathcal Q_{\mathsf M}^{\vee})\).  The specialization \(v=0\) keeps exactly the subsets \(B\) satisfying \(|B|=\rk(B)\), which are the independent subsets of the same component.  This gives the formula for \(\lambda_u(\mathcal S_{\mathsf M}^{\vee})\).
\end{proof}

\section{Positivity for tautological bundles under nef twists}
\label{sec:nef-tautological-positivity}

\thmENefTautologicalPositivity*

We begin by applying the product rule and identifying the resulting coefficients.  Let
\[
  \mathcal E
  =\lambda_u(\mathcal S_{\mathsf M}^{\vee})
  \lambda_{-1}(\mathcal Q_{\mathsf M}^{\vee}).
\]
In the notation of \cite[Proposition~1.8]{wang2026grothendieckweights}, the Grothendieck weight of \(\lambda_u(\mathcal S_{\mathsf M}^{\vee}) \lambda_v(\mathcal Q_{\mathsf M}^{\vee})\) on a braid cone \(\sigma_\mathcal F\) is
\[
  u^rT_{\operatorname{in}_\mathcal F\mathsf M}
  (1+u^{-1},1+v).
\]
Specializing \(v=-1\), we obtain
\[
  g_\mathcal E(\mathcal F)
  =u^rT_{\operatorname{in}_\mathcal F\mathsf M}
  (1+u^{-1},0),
\]
where \(r=\rk\mathsf M\) and \(T_\mathsf N(x,y)\) denotes the ordinary Tutte polynomial. To keep the notation simple, we fix a sufficiently generic \(w\in N_\R\), and define
\[
  \Gamma_w
  =\left\{
    (\sigma,\tau)\in\Sigma_E\times\Sigma_E:
    (\sigma+w)\cap\tau\text{ is nonempty and bounded}
  \right\}.
\]
The product rule \cref{prop:gw-product-rule} gives
\[
  \chi(X_E,\mathcal E\mathcal L)
  =\sum_{(\sigma_\mathcal F,\sigma_\mathcal G)\in\Gamma_w}
  (-1)^{\ell(\mathcal F)+\ell(\mathcal G)-n+1}
  g_\mathcal E(\mathcal F)g_\mathcal L(\mathcal G).
\]
If \(\operatorname{in}_\mathcal F\mathsf M\) has a loop, then its Tutte polynomial is divisible by its second variable, so \(g_\mathcal E(\mathcal F)=0\).  We may therefore restrict the summation to \(\sigma_\mathcal F\in\Sigma_\mathsf M\).

Fix the natural order \(1<\cdots<n\) on \(E\).  For a loopless matroid \(\mathsf N\) on \(E\), let \(\operatorname{BC}(\mathsf N)\) be its broken-circuit complex, with the convention that the broken circuit of a circuit \(C\) is \(C\setminus\{\max C\}\).  Whitney's broken-circuit identity \cite[Section~7.4]{oxley2011matroid} states that
\[
  T_\mathsf N(x,0)
  =\sum_{I\in\operatorname{BC}(\mathsf N)}
  (x-1)^{\rk\mathsf N-|I|}.
\]
Substituting \(x=1+u^{-1}\) and multiplying by \(u^{\rk\mathsf N}\) gives
\[
  u^{\rk\mathsf N}T_\mathsf N(1+u^{-1},0)
  =\sum_{I\in\operatorname{BC}(\mathsf N)}u^{|I|}.
\]
Substituting this identity into the product-rule formula and regrouping by \(I\subseteq E\), we obtain
\begin{equation}\label{eq:nef-tautological-coefficient-expansion}
  \chi(X_E,\mathcal E\mathcal L)
  =\sum_{I\subseteq E}a_Iu^{|I|},
\end{equation}
where
\begin{equation}\label{eq:nef-tautological-a-I-initial}
  a_I
  =\sum_{\substack{
      (\sigma_\mathcal F,\sigma_\mathcal G)\in\Gamma_w\\
      \sigma_\mathcal F\in\Sigma_\mathsf M\\
  I\in\operatorname{BC}(\operatorname{in}_\mathcal F\mathsf M)}}
  (-1)^{\ell(\mathcal F)+\ell(\mathcal G)-n+1}
  g_\mathcal L(\mathcal G).
\end{equation}
The condition involving \(I\) in this summation naturally leads to the following subsets of the Bergman fan.

\begin{lemma}\label{lem:broken-circuit-support-convex}
  For every \(I\subseteq E\), the set
  \[
    B_I
    =\left\{
      x\in|\Sigma_\mathsf M|:
      I\in\operatorname{BC}(\operatorname{in}_x\mathsf M)
    \right\}
  \]
  is closed and tropically convex.
\end{lemma}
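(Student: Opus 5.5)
The plan is to rewrite the condition \(I\in\operatorname{BC}(\operatorname{in}_x\mathsf M)\) as a conjunction of conditions of the form ``some element is not the unique \(x\)-minimum of some circuit''. Then I would argue as in the proof of \cref{prop:relaxed-bergman-convex}: each such condition cuts out the preimage of a Bergman fan, or of a relaxed Bergman fan, under a tropical linear coordinate map. Intersections of closed tropically convex sets are closed and tropically convex, and \(|\Sigma_{\mathsf M}|\) is itself closed and tropically convex. So the lemma reduces to producing such a description.

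The first step is a purely matroidal reformulation of the broken-circuit condition. For a loopless matroid \(\mathsf N\), with the convention that the broken circuit of \(C\) is \(C\setminus\{\max C\}\), I claim the following. \(I\in\operatorname{BC}(\mathsf N)\) holds if and only if \(I\) is independent and, for every \(e\notin I\) with \(e\in\operatorname{cl}_{\mathsf N}(I)\), the fundamental circuit of \(e\) in \(I\cup\{e\}\) has maximum \(e\). Indeed, suppose \(C\setminus\{\max C\}\subseteq I\). Then \(\max C\notin I\) by independence, so \(C\) is the fundamental circuit of \(\max C\). Equivalently, for every \(e\notin I\),
\[
  e\in\operatorname{cl}_{\mathsf N}(I)\ \Longrightarrow\ e\notin\operatorname{cl}_{\mathsf N}(I\cap\{1,\ldots,e-1\})
\]
fails to be violated. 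This says that no element outside \(I\) lies in the closure of \(I\) but outside the closure of \(I_{<e}\) in the reverse sense demanded by the broken-circuit condition. Both independence of \(I\) and these closure conditions are rank conditions, namely \(\rk_{\mathsf N}(J)=|J|\) or \(\rk_{\mathsf N}(J\cup e)=\rk_{\mathsf N}(J)\) for subsets \(J\subseteq I\).

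The second step applies this with \(\mathsf N=\operatorname{in}_x\mathsf M\) and uses the circuit criterion already invoked in \cref{prop:relaxed-bergman-convex}. The circuits of \(\operatorname{in}_x\mathsf M\) are the minimal members of the family \(\{\operatorname{argmin}_{x}(C): C\text{ a circuit of }\mathsf M\}\). Hence each condition above becomes a statement about which elements can be the unique \(x\)-minima, or part of the minimizing set, of circuits of auxiliary matroids. For instance, \(J\) is independent in \(\operatorname{in}_x\mathsf M\) if and only if \(J\) is coindependent-free of such minimizing sets. I would encode each condition, as in the series-extension trick, by a loop condition \(\Loop(\operatorname{in}_{\delta x}\mathsf M')\subseteq S\). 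Here \(\mathsf M'\) is a series or parallel extension of a minor of \(\mathsf M\) built from \(J\) and \(e\), and \(\delta\) is a coordinate-duplication or coordinate-deletion map, which is tropical linear. Then \(B_I\) is a finite intersection of sets \(\delta^{-1}(\Sigma_{\mathsf M',S})\) with \(|\Sigma_{\mathsf M}|\), and \cref{prop:relaxed-bergman-convex} finishes the proof.

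The main obstacle is the second step. The conditions ``\(J\) independent in \(\operatorname{in}_x\mathsf M\)'' and ``\(e\in\operatorname{cl}_{\operatorname{in}_x\mathsf M}(J)\)'' do not obviously come in the one direction that loop conditions provide, and the implication in the broken-circuit criterion mixes both directions. If the encoding fails, my fallback is to prove the two properties directly.

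For closedness, I would use that for \(y\) near \(x\) one has \(\operatorname{in}_y\mathsf M=\operatorname{in}_{y-x}(\operatorname{in}_x\mathsf M)\), which is a same-rank weak-map image of \(\operatorname{in}_x\mathsf M\). I would then check that the broken-circuit complex can only shrink under passing to such loopless initial matroids, so the complement of \(B_I\) is open in \(|\Sigma_{\mathsf M}|\).

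For tropical convexity, I would take \(z=\min(a+u,b+v)\) and note that for each circuit \(C\) the set \(\operatorname{argmin}_z(C)\) is \(\operatorname{argmin}_u(C)\), \(\operatorname{argmin}_v(C)\), or their union. I would then verify directly that a violating broken circuit for \(z\) produces one for \(u\) or for \(v\).
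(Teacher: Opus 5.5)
\textbf{The encoding route has a gap.} Your main route stalls at exactly the step that carries the content. The obstacle you name there disappears once the criterion is stated correctly.

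Write \(I_{<e}=I\cap[e-1]\). Since \(\operatorname{cl}_{\mathsf N}(I_{<e})\subseteq\operatorname{cl}_{\mathsf N}(I)\), your implication \(e\in\operatorname{cl}_{\mathsf N}(I)\Rightarrow e\notin\operatorname{cl}_{\mathsf N}(I_{<e})\) is equivalent to the unconditional statement \(e\notin\operatorname{cl}_{\mathsf N}(I_{<e})\). Requiring this for every \(e\in E\), including \(e\in I\), also encodes the independence of \(I\). Hence \(I\in\operatorname{BC}(\mathsf N)\) if and only if \(e\notin\operatorname{cl}_{\mathsf N}(I_{<e})\) for all \(e\in E\), and every condition points in the nonloop direction. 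Your verbal form of step 1 is inverted: the fundamental circuit of \(e\) must \emph{not} have maximum \(e\). The displayed implication is the correct one.

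What is then missing is a way to turn a closure condition in \(\operatorname{in}_x\mathsf M\) into a loop condition. The paper does this with compatibility of initial matroids with contraction. If \(J\) is independent in \(\operatorname{in}_x\mathsf M\), then \((\operatorname{in}_x\mathsf M)/J=\operatorname{in}_{p_J(x)}(\mathsf M/J)\), where \(p_J\) deletes the coordinates in \(J\); to see this, extend \(J\) to an \(x\)-maximal basis and compare weights. With \(J=I_{<e}\), the condition \(e\notin\operatorname{cl}_{\operatorname{in}_x\mathsf M}(J)\) becomes \(p_J(x)\in\Sigma_{\mathsf M/J,\,E\setminus(J\cup\{e\})}\). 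The independence of \(I_{<e}\) needed to apply the compatibility follows by induction on \(e\) from the conditions for smaller elements.

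So \(B_I\) is \(|\Sigma_{\mathsf M}|\) intersected with pullbacks of relaxed Bergman fans under coordinate projections, which are tropically linear, and \cref{prop:relaxed-bergman-convex} finishes the proof. No new series or parallel extensions are needed. Without this idea, your step 2 (``coindependent-free'', unspecified auxiliary matroids \(\mathsf M'\)) is not yet an argument.

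\textbf{Your fallback is sound.} Once written out, it is a genuinely different and more elementary proof. Both halves rest on the observation that \(C''\subseteq C'\) implies \(C''\setminus\max C''\subseteq C'\setminus\max C'\).

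For closedness: suppose a circuit \(C'\) of \(\operatorname{in}_x\mathsf M\) satisfies \(C'\setminus\max C'\subseteq I\). Then \(C'\) stays dependent in the weak-map image \(\operatorname{in}_y\mathsf M\) for \(y\) near \(x\). So it contains a circuit \(C''\) of \(\operatorname{in}_y\mathsf M\), and \(I\notin\operatorname{BC}(\operatorname{in}_y\mathsf M)\).

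For convexity: note first that \(z\in|\Sigma_{\mathsf M}|\), since Bergman fans are tropically convex. Suppose \(C'=\operatorname{argmin}_z(C)\) is a circuit of \(\operatorname{in}_z\mathsf M\) with \(C'\setminus\max C'\subseteq I\). Your trichotomy gives \(\operatorname{argmin}_u(C)\subseteq C'\) or \(\operatorname{argmin}_v(C)\subseteq C'\). That set is dependent in \(\operatorname{in}_u\mathsf M\), respectively \(\operatorname{in}_v\mathsf M\), so it contains a circuit \(C''\) there. Hence \(u\notin B_I\) or \(v\notin B_I\).

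The one input you must actually prove is that the circuits of \(\operatorname{in}_x\mathsf M\) are the minimal members of \(\{\operatorname{argmin}_x(C)\}\). This follows from the decomposition \(\operatorname{in}_x\mathsf M=\bigoplus_i(\mathsf M|F_{i+1})/F_i\) of \cref{def:flag-initial-matroid}, together with the fact that the circuits of \(\mathsf M/T\) are the minimal nonempty sets \(C\setminus T\).

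Your fallback checks closedness and convexity directly and bypasses relaxed Bergman fans. The paper's route instead gives an explicit description of \(B_I\) as an intersection of pulled-back relaxed Bergman fans, reusing \cref{prop:relaxed-bergman-convex}.
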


\begin{proof}
  For \(e\in E\), set \(I_{<e}=I\cap[e-1]\).  If \(\mathsf N\) is loopless, then
  \begin{equation}\label{eq:broken-circuit-closure-characterization}
    I\in\operatorname{BC}(\mathsf N)
    \quad\Longleftrightarrow\quad
    e\notin\operatorname{cl}_\mathsf N(I_{<e})
    \quad\text{for every \(e\in E\)}.
  \end{equation}
  Indeed, if \(e\in\operatorname{cl}_\mathsf N(I_{<e})\), looplessness gives a circuit \(C\) such that \(e\in C\subseteq I_{<e}\cup\{e\}\).  Then \(e=\max C\), so \(C\setminus\{e\}\) is a broken circuit contained in \(I\).  Conversely, if \(C\setminus\{\max C\}\subseteq I\), then, with \(e=\max C\), one has \(C\setminus\{e\}\subseteq I_{<e}\), and hence \(e\in\operatorname{cl}_\mathsf N(I_{<e})\).

  Let
  \[
    p_e:N_\R\longrightarrow N_{E\setminus I_{<e},\R}
  \]
  be the coordinate projection.  We claim that
  \begin{equation}\label{eq:broken-circuit-support-relaxed-bergman}
    B_I
    =|\Sigma_\mathsf M|
    \cap\bigcap_{e\in E}
    p_e^{-1}\left(
      \left|
      \Sigma_{\mathsf M/I_{<e},\,
      E\setminus(I_{<e}\cup\{e\})}
      \right|
    \right).
  \end{equation}
  We first record the compatibility needed to prove this equality.  If a subset \(J\) is independent in \(\operatorname{in}_x\mathsf M\), then
  \begin{equation}\label{eq:initial-contraction-compatibility}
    (\operatorname{in}_x\mathsf M)/J
    =\operatorname{in}_{p_J(x)}(\mathsf M/J),
  \end{equation}
  where \(p_J\) deletes the coordinates in \(J\).  To see this, extend \(J\) to an \(x\)-maximum basis of \(\mathsf M\).  Thus the maximum \(x\)-weight among bases containing \(J\) is the unrestricted maximum. Comparing the weights of the bases \(A\) of \(\mathsf M/J\) with those of the corresponding bases \(A\cup J\) of \(\mathsf M\) proves the claimed compatibility.

  Suppose first that \(x\in B_I\), and let \(\mathsf N=\operatorname{in}_x\mathsf M\).  The matroid \(\mathsf N\) is loopless, and \(I\in\operatorname{BC}(\mathsf N)\), so \(I\), and hence every \(I_{<e}\), is independent in \(\mathsf N\).  By \cref{eq:initial-contraction-compatibility,eq:broken-circuit-closure-characterization},
  \[
    e\notin
    \Loop\left(
      \operatorname{in}_{p_e(x)}(\mathsf M/I_{<e})
    \right).
  \]
  This is precisely the condition that \(p_e(x)\) belong to the relaxed Bergman fan appearing on the right-hand side of \eqref{eq:broken-circuit-support-relaxed-bergman}.

  Conversely, suppose that \(x\) belongs to the right-hand side of \eqref{eq:broken-circuit-support-relaxed-bergman}, and again let \(\mathsf N=\operatorname{in}_x\mathsf M\).  We prove successively for \(e=1,\ldots,n\) that \(I_{<e}\) is independent in \(\mathsf N\).  This is clear for \(e=1\).  If it holds for some \(e<n\), then \eqref{eq:initial-contraction-compatibility} and the relaxed Bergman condition show that \(e\) is not a loop of \(\mathsf N/I_{<e}\).  If \(e\in I\), this implies that \(I_{<e+1}=I_{<e}\cup\{e\}\) is independent; if \(e\notin I\), then \(I_{<e+1}=I_{<e}\).  The induction follows.  We may therefore apply \eqref{eq:initial-contraction-compatibility} for every \(e\).  The relaxed Bergman conditions then give \(e\notin\operatorname{cl}_\mathsf N(I_{<e})\) for every \(e\), so \eqref{eq:broken-circuit-closure-characterization} gives \(I\in\operatorname{BC}(\mathsf N)\).  This proves \eqref{eq:broken-circuit-support-relaxed-bergman}.

  The factor \(|\Sigma_\mathsf M|\) and each relaxed Bergman fan on the right-hand side of \eqref{eq:broken-circuit-support-relaxed-bergman} are closed and tropically convex by \cref{prop:relaxed-bergman-convex}.  Moreover, coordinate projection commutes with tropical linear combinations:
  \[
    p_e\bigl(\min(a+x,b+y)\bigr)
    =\min\bigl(a+p_e(x),b+p_e(y)\bigr).
  \]
  Hence each inverse image in this intersection is closed and tropically convex, and so is their finite intersection with \(|\Sigma_\mathsf M|\).  Thus \(B_I\) is closed and tropically convex. Since every set in the intersection is a union of braid cones, so is \(B_I\).
\end{proof}

\begin{lemma}\label{lem:gp-normal-cone-tropically-convex}
  Every normal cone of a generalized permutohedron is tropically convex.
\end{lemma}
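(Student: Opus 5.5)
The plan is to show that every normal cone of a generalized permutohedron is an intersection of closed half-spaces of the form \(\{x_i\geq x_j\}\), and that each such half-space is tropically convex. Fix a generalized permutohedron \(P\subseteq M_\R\) and a nonempty face \(F\) of \(P\). Its (maximizing) normal cone is
\[
  N_P(F)=\{v\in N_\R:\ F\subseteq\face_v(P)\},
\]
where \(\face_v(P)\) is the set of maximizers of \(\langle -,v\rangle\) on \(P\). This is a closed convex polyhedral cone. Since the normal fan of \(P\) coarsens \(\Sigma_E\), it is also a union of braid cones.

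First I will show that every facet-defining hyperplane of \(N_P(F)\) has the form \(\{x_i=x_j\}\) for some \(i\neq j\). A facet \(G\) of \(N_P(F)\) has dimension \(\dim N_P(F)-1\), and it is covered by the braid cones it contains. Hence some braid cone \(\sigma\subseteq G\) has \(\dim\sigma=\dim G\). The affine span of \(G\) is then the span of \(\sigma\), which is a face of a braid cone of \(N_P(F)\) one dimension higher. Every codimension-one face of a braid cone lies in a wall \(\{x_i=x_j\}\) of the braid arrangement, so the facet hyperplane is \(\{x_i=x_j\}\) for some \(i\neq j\). The linear span of \(N_P(F)\) is an intersection of braid-arrangement hyperplanes for the same reason: it is the span of a braid cone, i.e. a flat of the braid arrangement. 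Each hyperplane \(\{x_i=x_j\}\) is the intersection \(\{x_i\geq x_j\}\cap\{x_j\geq x_i\}\). Combining these, \(N_P(F)\) is a finite intersection of half-spaces \(H_{ij}=\{x\in N_\R:x_i\geq x_j\}\). Each \(H_{ij}\) is well defined on \(N_\R\), since adding a constant to all coordinates preserves \(x_i-x_j\).

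Next I check that each \(H_{ij}\) is tropically convex. Let \(u,v\in\R^E\) with \(u_i\geq u_j\) and \(v_i\geq v_j\), and let \(a,b\in\R\). Then
\[
  \min(a+u_i,\,b+v_i)\geq\min(a+u_j,\,b+v_j),
\]
because the minimum is monotone in each argument. So \(\min(a+u,b+v)\) lies in \(H_{ij}\). A finite intersection of tropically convex sets is tropically convex, so \(N_P(F)\) is tropically convex. Closedness is clear.

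The main obstacle is the first step: justifying carefully that a convex cone which is a union of braid cones is cut out only by inequalities \(x_i\geq x_j\). If the facet argument above seems too loose, the fallback is to use the defining inequalities directly. Generalized permutohedra satisfy the condition that every edge direction is \(e_i-e_j\). The normal cone of a vertex \(m\) is then \(\{v:\langle m'-m,v\rangle\leq0\ \text{for all neighbours }m'\}\), with \(m'-m\) a positive multiple of \(e_i-e_j\), which is exactly an intersection of half-spaces \(H_{ji}\). The normal cone of a general face \(F\) is the intersection of the normal cones of its vertices, hence also such an intersection.
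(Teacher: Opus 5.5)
Your proposal is correct and follows the same route as the paper. Both arguments write each normal cone as a finite intersection of half-spaces \(\{x_i\geq x_j\}\), using that the normal fan coarsens the braid fan, and then check that each such half-space is tropically convex by monotonicity of \(\min\). The paper simply asserts the first step, whereas you justify it, and either your facet argument or your edge-direction fallback is enough.
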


\begin{proof}
  The normal fan of a generalized permutohedron coarsens the braid fan \cite[Proposition~3.2]{postnikov2008faces}.  Hence each normal cone is a convex union of braid cones and is cut out by weak coordinate-order relations \(x_i\leq x_j\).  Each halfspace \(x_i\leq x_j\) is tropically convex, since the same inequalities for \(x\) and \(y\) imply
  \[
    \min(a+x_i,b+y_i)\leq\min(a+x_j,b+y_j).
  \]
  The result follows by taking intersections.
\end{proof}

\begin{proof}[Proof of \cref{thm:nef-tautological-positivity}]
  By \cref{lem:broken-circuit-support-convex}, the broken-circuit condition in \eqref{eq:nef-tautological-a-I-initial} is equivalent to \(\sigma_\mathcal F\subseteq B_I\).  Thus
  \[
    a_I
    =\sum_{\substack{
        (\sigma_\mathcal F,\sigma_\mathcal G)\in\Gamma_w\\
    \sigma_\mathcal F\subseteq B_I}}
    (-1)^{\ell(\mathcal F)+\ell(\mathcal G)-n+1}
    g_\mathcal L(\mathcal G).
  \]

  Choose a torus-invariant Cartier divisor representing \(\mathcal L\), and let \(P\subseteq M_\R\) be its lattice polytope.  Since \(\mathcal L\) is nef, its support function is convex, and the normal fan of \(P\) coarsens the braid fan \cite[Sections~6.1 and~6.3]{cox2011toric}; equivalently, \(P\) is a generalized permutohedron \cite[Proposition~3.2]{postnikov2008faces}.  The polytope is determined by \(\mathcal L\) only up to translation by a lattice vector, which does not affect the argument.  By \eqref{eq:gw-nef-line-bundle},
  \[
    g_\mathcal L(\mathcal G)
    =|\face_\mathcal G(P)\cap M|.
  \]

  For each \(m\in P\cap M\), let \(F(m)\) be the unique face of \(P\) whose relative interior contains \(m\), and let \(N_m\subseteq N_\R\) be its \emph{outer} normal cone.
  Explicitly,
  \[
    N_m =
    \left\{
      v\in N_\R:
      \langle p,v\rangle\leq\langle m,v\rangle
      \text{ for every }p\in P
    \right\}.
  \]
  Since the normal fan of \(P\) coarsens the braid fan, \(N_{\face_\mathcal G(P)}\) is the smallest normal cone of \(P\) containing \(\sigma_\mathcal G\).  Therefore
  \[
    m\in\face_\mathcal G(P)
    \quad\Longleftrightarrow\quad
    \sigma_\mathcal G\subseteq N_m.
  \]
  Indeed, the right-hand side is equivalent to \(N_{\face_\mathcal G(P)}\subseteq N_m\), and the order-reversing correspondence between faces and normal cones identifies this with \(F(m)\subseteq\face_\mathcal G(P)\), or equivalently with the left-hand side. Consequently,
  \[
    |\face_\mathcal G(P)\cap M|
    =\sum_{m\in P\cap M}
    \mathbf1_{\sigma_\mathcal G\subseteq N_m}.
  \]

  Set
  \[
    C_{I,m}=(B_I+w)\cap N_m.
  \]
  \(N_m\) is tropically convex by \cref{lem:gp-normal-cone-tropically-convex}.  Thus \(C_{I,m}\) is closed and tropically convex.

  The braid cones give \(C_{I,m}\) a finite polyhedral cell structure.  By genericity of \(w\), its nonempty cells are
  \[
    (\sigma_\mathcal F^\circ+w)
    \cap\sigma_\mathcal G^\circ,
    \qquad
    \sigma_\mathcal F\subseteq B_I,
    \quad
    \sigma_\mathcal G\subseteq N_m,
  \]
  and such a cell has dimension
  \[
    \ell(\mathcal F)+\ell(\mathcal G)-n+1.
  \]
  Denote the union of such \emph{bounded} cells by \(C_{I,m}^b\).  Expanding the lattice-point count in the formula for \(a_I\) and then regrouping by \(m\) gives
  \begin{align*}
    a_I
    &=\sum_{m\in P\cap M}
    \sum_{\substack{
        (\sigma_\mathcal F,\sigma_\mathcal G)\in\Gamma_w\\
        \sigma_\mathcal F\subseteq B_I,
    \sigma_\mathcal G\subseteq N_m}}
    (-1)^{\ell(\mathcal F)+\ell(\mathcal G)-n+1}\\
    &=\sum_{m\in P\cap M}\chi(C_{I,m}^b).
  \end{align*}

  Every cell of \(C_{I,m}\) has a pointed recession cone because braid cones are pointed in \(N_\R\).  The cellwise argument of \cite[Proof of Proposition~3.4] {eur2025vanishingtheoremscombinatorialgeometries} therefore gives a deformation retraction \(C_{I,m}\to C_{I,m}^b\).  On the other hand, a nonempty tropically convex set is contractible by \cite[Theorem~2]{develin2004tropical}.  Hence
  \[
    \chi(C_{I,m}^b)
    =\chi(C_{I,m})
    =\mathbf1_{C_{I,m}\neq\varnothing}.
  \]
  It follows that \(a_I\geq0\) for every \(I\subseteq E\).  Under our convention that a broken circuit is obtained by deleting the largest element of a circuit, the element \(n\) belongs to no broken circuit.  Hence, for every \(I\subseteq E\setminus\{n\}\) and every \(x\in|\Sigma_\mathsf M|\),
  \[
    I\in\operatorname{BC}(\operatorname{in}_x\mathsf M)
    \quad\Longleftrightarrow\quad
    I\cup\{n\}\in\operatorname{BC}(\operatorname{in}_x\mathsf M).
  \]
  Thus \(B_I=B_{I\cup\{n\}}\), and consequently \(a_I=a_{I\cup\{n\}}\).  Pairing these terms in \eqref{eq:nef-tautological-coefficient-expansion} gives
  \[
    \chi(X_E,\mathcal E\mathcal L)
    =(1+u)\sum_{I\subseteq E\setminus\{n\}}a_Iu^{|I|}.
  \]
  The theorem follows because every \(a_I\) is nonnegative.
\end{proof}
\begin{remark}
  In fact, the proof shows that
  \[
    a_I = \#\{m\in P\cap M:(B_I+w)\cap N_m\neq\varnothing\}.
  \]
\end{remark}

\corCAntiample*

\begin{proof}
  The cases \(n\leq2\) are immediate.  Indeed, if \(n=1\), then \(X_E\) is a point.  If \(n=2\), write \(\mathcal L=\mathcal O_{\PP^1}(d)\) with \(d\geq1\).  The only loopless matroids are \(\mathsf{U}_{1,2}\) and \(\mathsf{U}_{2,2}\).  Their matroid fans are the zero fan and \(\Sigma_E\), respectively.  Thus their matroid Euler characteristics of \(\mathcal L^{-1}\) are \(1\) and \(\chi(\PP^1,\mathcal O_{\PP^1}(-d))=1-d\), so the asserted quantities are \(1\) and \(d-1\). We may therefore assume \(n\geq3\).

  By \cite[Proposition~1.8]{wang2026grothendieckweights}, specialized at \(u=0\) and \(v=-1\), the Grothendieck weight of \(\lambda_{-1}(\mathcal Q_\mathsf M^\vee)\) is the indicator function of \(\Sigma_\mathsf M\).  Thus \(\lambda_{-1}(\mathcal Q_\mathsf M^\vee)=\Delta_\mathsf M\), and \eqref{eq:matroid-euler-as-product} gives
  \[
    \chi_\mathsf M(\mathcal L^{-1})
    =\chi\left(
      X_E,\lambda_{-1}(\mathcal Q_\mathsf M^\vee)\mathcal L^{-1}
    \right).
  \]

  Since \(\rk\mathcal Q_\mathsf M=n-r\), exterior-power duality gives
  \[
    \lambda_{-1}(\mathcal Q_\mathsf M)
    =(-1)^{n-r}\det(\mathcal Q_\mathsf M)
    \lambda_{-1}(\mathcal Q_\mathsf M^\vee).
  \]
  The tautological relation \(\mathcal S_\mathsf M+\mathcal Q_\mathsf M=\mathcal O_{X_E}^{E}\) gives \(\det(\mathcal Q_\mathsf M)=\det(\mathcal S_\mathsf M^\vee)\). Serre duality \cite[Chapter~III, Corollary~7.7]{hartshorne1977algebraic} on the \((n-1)\)-dimensional variety \(X_E\) therefore gives, with \(\omega_{X_E}\) denoting the canonical line bundle,
  \begin{align*}
    (-1)^{r-1}\chi_\mathsf M(\mathcal L^{-1})
    &=(-1)^{r-1+(n-1)}
    \chi\left(
      X_E,
      \lambda_{-1}(\mathcal Q_\mathsf M)
      \mathcal L\omega_{X_E}
    \right)\\
    &=(-1)^{r-1+(n-1)+(n-r)}
    \chi\left(
      X_E,
      \det(\mathcal S_\mathsf M^\vee)
      \lambda_{-1}(\mathcal Q_\mathsf M^\vee)
      \mathcal L\omega_{X_E}
    \right)\\
    &=\chi\left(
      X_E,
      \det(\mathcal S_\mathsf M^\vee)
      \lambda_{-1}(\mathcal Q_\mathsf M^\vee)
      \mathcal L\omega_{X_E}
    \right).
  \end{align*}

  We next verify that \(\mathcal L\omega_{X_E}\) is nef directly from its polytope.  Write the lattice polytope of \(\mathcal L\) as
  \[
    P=\left\{m\in M_\R:
      \langle m,\bar e_S\rangle\leq a_S
      \text{ for every }\varnothing\subsetneq S\subsetneq E
    \right\}.
  \]
  The divisor--polytope correspondence and the toric canonical-bundle formula \cite[Sections~4.3 and~6.1 and Theorem~8.2.3]{cox2011toric} identify the polytope of \(\mathcal L\omega_{X_E}\) with
  \[
    P'
    =\left\{m\in M_\R:
      \langle m,\bar e_S\rangle\leq a_S-1
      \text{ for every }\varnothing\subsetneq S\subsetneq E
    \right\}.
  \]
  Set \(a_\varnothing=a_E=0\).  Since \(\mathcal L\) is ample, the function \(S\mapsto a_S\) is integral and strictly submodular.  Let \(b_\varnothing=b_E=0\) and \(b_S=a_S-1\) for \(\varnothing\subsetneq S\subsetneq E\).  We verify directly that \(b\) is submodular.  It is enough to show, for every \(i,j\) and \(A\subseteq E\setminus\{i,j\}\), that
  \[
    b_{A\cup\{i\}}+b_{A\cup\{j\}}
    \geq b_A+b_{A\cup\{i,j\}}.
  \]
  If \(A\neq\varnothing\) and \(A\cup\{i,j\}\neq E\), then submodularity of \(a\) gives
  \[
    b_{A\cup\{i\}}+b_{A\cup\{j\}}
    =a_{A\cup\{i\}}+a_{A\cup\{j\}}-2
    \geq a_A+a_{A\cup\{i,j\}}-2
    =b_A+b_{A\cup\{i,j\}}.
  \]
  If \(A=\varnothing\), then \(\{i,j\}\neq E\) because \(n\geq3\), and strict submodularity and integrality of \(a\) give
  \[
    b_{\{i\}}+b_{\{j\}}
    =a_{\{i\}}+a_{\{j\}}-2
    \geq a_{\{i,j\}}-1
    =b_\varnothing+b_{\{i,j\}}.
  \]
  Finally, if \(A\cup\{i,j\}=E\), then \(A\neq\varnothing\) and
  \[
    b_{A\cup\{i\}}+b_{A\cup\{j\}}
    =a_{A\cup\{i\}}+a_{A\cup\{j\}}-2
    \geq a_A-1
    =b_A+b_E.
  \]
  Thus \(b\) is submodular, \(P'\) is a generalized permutohedron, and \(\mathcal L\omega_{X_E}\) is nef.

  Applying \cref{thm:nef-tautological-positivity} to this line bundle gives
  \[
    \chi\left(
      X_E,
      \lambda_u(\mathcal S_\mathsf M^\vee)
      \lambda_{-1}(\mathcal Q_\mathsf M^\vee)
      \mathcal L\omega_{X_E}
    \right)
    \in\Z_{\geq0}[u].
  \]
  By the definition of \(\lambda_u\), its coefficient of \(u^r\) is the Euler characteristic obtained above by Serre duality, and is therefore nonnegative.
\end{proof}

\section{Positivity for tuples of matroids}\label{sec:k-fold-positivity}

For a pair \(\bm{\mathsf M}=(\mathsf M_1,\mathsf M_2)\) without common loops, Berget and Fink construct the diagonal Dilworth truncation \(D=D(\bm{\mathsf M})\) and a Cohen--Macaulay external activity complex \(\Delta_w=\Delta_w(\bm{\mathsf M})\).  By \cite[Theorem~C(2)--(3)]{berget2025externalactivitycomplexpair}, its \(\Z^2\)-graded \(K\)-polynomial is
\[
  \mathcal K(\Delta_w;V_1,V_2)
  =
  \mathcal K_{\bm{\mathsf M}}(V_1,V_2)
  :=
  \chi\left(
    X_{E},
    \lambda_{-V_1}(\mathcal Q_{\mathsf M_1}^{\vee})
    \lambda_{-V_2}(\mathcal Q_{\mathsf M_2}^{\vee})
  \right).
\]
Thus, if \(c=n-\rk D\), then
\[
  (-1)^{d_1+d_2-c}
  [U_1^{d_1}U_2^{d_2}]\,
  \mathcal K_{\bm{\mathsf M}}(1-U_1,1-U_2)
  \geq0
  \qquad(d_1,d_2\geq0).
\]
We extend their Dilworth truncation and positivity result to arbitrary \(k\).

\subsection{The diagonal Dilworth polymatroid and external activity complex}
\label{subsec:external-activity-tuple}

Fix an integer \(k\geq2\) and a tuple
\(
  \bm{\mathsf M}=(\mathsf M_1,\ldots,\mathsf M_k)
\)
of matroids on \(E\) with no common loop.  Set
\[
  \mathcal K_{\bm{\mathsf M}}(V_1,\ldots,V_k)
  =
  \chi\left(
    X_E,
    \prod_{j=1}^k
    \lambda_{-V_j}(\mathcal Q_{\mathsf M_j}^{\vee})
  \right).
\]

\begin{definition}\label{def:k-fold-dilworth}
  For a tuple \(\bm{\mathsf{M}}=(\mathsf{M}_1,\ldots,\mathsf{M}_k)\) of matroids on \(E\) with no common loop, its \emph{diagonal Dilworth polymatroid} \(D(\bm{\mathsf{M}})\) is defined by the following set function:
  \[
    \rho(A)
    =
    \min_{A=A_1\sqcup\cdots\sqcup A_s}
    \sum_{q=1}^s
    \left(\sum_{j=1}^k\rk_{\mathsf M_j}(A_q)-1\right),
    \qquad
    \rho(\varnothing)=0.
  \]
  Here \(A\subseteq E\), no \(A_q\) is empty, and \(s\) may be \(0\).
\end{definition}
\begin{remark}
  When \(k=2\), \cite[Definition~4.1] {berget2025externalactivitycomplexpair} gives the rank function of \(D(\mathsf M_1,\mathsf M_2)\) as
  \[
    \rk_D(A)
    =
    \min_{T_1\sqcup\cdots\sqcup T_s\subseteq A}
    \left\{
      \sum_{q=1}^{s}
      \bigl(\rk_{\mathsf M_1}(T_q)+\rk_{\mathsf M_2}(T_q)-1\bigr)
      +\left|A\setminus\bigcup_{q=1}^sT_q\right|
    \right\},
  \]
  where the \(T_q\) are nonempty and \(s\) may be zero.  Write \(T=\bigsqcup_qT_q\).  Minimizing first over partitions of \(T\), we obtain
  \[
    \rk_D(A)=\min_{T\subseteq A}\bigl(\rho(T)+|A\setminus T|\bigr).
  \]
  Taking \(T=A\) gives \(\rk_D(A)\leq\rho(A)\).  Conversely, append the singleton parts \(\{i\}\), for \(i\in A\setminus T\), to a partition attaining \(\rho(T)\).  Since the pair has no common loops, each appended part has cost
  \[
    \rk_{\mathsf M_1}(\{i\})+\rk_{\mathsf M_2}(\{i\})-1\in\{0,1\}.
  \]
  Hence \(\rho(A)\leq\rho(T)+|A\setminus T|\) for every \(T\subseteq A\).
  Minimizing over \(T\) gives \(\rho(A)\leq\rk_D(A)\), and therefore \(\rho=\rk_D\).
\end{remark}

\begin{proposition}\label{prop:rho-is-polymatroid}
  The function \(\rho\) is the rank function of an integral polymatroid.
\end{proposition}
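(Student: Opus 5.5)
This is the classical Dilworth truncation construction, so I will recognize \(\rho\) as the Dilworth truncation of an integral polymatroid. Set \(f(A)=\sum_{j=1}^k\rk_{\mathsf M_j}(A)\). Then \(f\) is normalized, nondecreasing, submodular and integral, being a sum of matroid rank functions. The no-common-loop hypothesis gives \(f(\{i\})\geq1\) for every \(i\in E\), and hence \(f(A)\geq1\) for every nonempty \(A\). Consequently \(f-1\) is nonnegative and integral on nonempty sets, and \(\rho\) is the minimum over partitions of \(A\) of the sum of \(f-1\) over the parts. Integrality and \(\rho(\varnothing)=0\) are then immediate, and \(\rho\geq0\) because every part contributes \(f(A_q)-1\geq0\).

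\emph{Monotonicity.} Take \(S\subseteq T\) and a partition \(T=T_1\sqcup\cdots\sqcup T_s\) attaining \(\rho(T)\). Intersecting each part with \(S\) and discarding the empty intersections gives a partition of \(S\). For a part with \(T_q\cap S\neq\varnothing\), monotonicity of \(f\) gives \(f(T_q\cap S)-1\leq f(T_q)-1\). Each dropped part had cost \(f(T_q)-1\geq0\). Therefore \(\rho(S)\leq\rho(T)\).

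\emph{Submodularity.} This is the main step, and I would follow the classical argument of Dilworth and Lovász for truncations of submodular functions that are nonnegative on nonempty sets. Fix optimal partitions \(\mathcal P\) of \(S\) and \(\mathcal R\) of \(T\), so that
\[
  \rho(S)+\rho(T)=\sum_{P\in\mathcal P}(f(P)-1)+\sum_{R\in\mathcal R}(f(R)-1).
\]
Consider the combined family \(\mathcal P\sqcup\mathcal R\) as a multiset of sets. Every element of \(S\cap T\) is covered exactly twice, and every element of \(S\cup T\) is covered at least once. Whenever two members \(X,Y\) of the family cross and intersect, replace them by \(X\cup Y\) and \(X\cap Y\). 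By submodularity of \(f\), this does not increase the total of \(f\), and the number of sets, hence the total of the \(-1\) terms, is unchanged. Standard uncrossing terminates, for instance because \(\sum|X|^2\) strictly increases, in a laminar family with the same coverage multiplicities. Next, whenever two members \(X,Y\) are disjoint but both lie inside a common part, or more generally whenever we want to merge, we use the inequality \(f(X)-1+f(Y)-1\geq f(X\cup Y)-1\) for disjoint nonempty \(X,Y\). This inequality holds because \(f(X)+f(Y)\geq f(X\cup Y)+f(\varnothing)\) and \(f(X\cup Y)\geq1\).

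The delicate bookkeeping is to split the resulting laminar family into a partition of \(S\cup T\) and a partition of \(S\cap T\). In a laminar family in which each point is covered once or twice, the maximal members are pairwise disjoint, and I would take them as the candidate partition of \(S\cup T\). The remaining members cover exactly the points of \(S\cap T\), each once, but they may also cover points of \(S\cup T\) outside \(S\cap T\) in a way that needs checking. The argument I expect to work is to uncross only pairs \(X\in\mathcal P\) and \(Y\in\mathcal R\). At each stage the family is then the union of a partition of some set \(U\supseteq S\cap T\) and a partition of some set \(U'\) with \(U\cup U'=S\cup T\) and \(U\cap U'=S\cap T\) as multisets. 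Maintaining this invariant under the uncrossing step is where care is needed, and discarding the empty intersections is harmless by the nonnegativity of \(f-1\). At termination, one family is a partition of \(S\cup T\) and the other is a partition of \(S\cap T\), which yields \(\rho(S)+\rho(T)\geq\rho(S\cup T)+\rho(S\cap T)\).

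If this invariant proves awkward, the fallback is to cite the general theorem that for a nondecreasing submodular \(f\) with \(f(A)\geq1\) for all nonempty \(A\), the Dilworth truncation of \(f-1\) is submodular (Lovász). Alternatively, one can give a short induction on \(|S\triangle T|\), reducing to the case \(S=A\cup\{i\}\), \(T=A\cup\{j\}\).
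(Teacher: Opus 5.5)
Your treatment of normalization, integrality, nonnegativity and monotonicity is correct. Your fallback for submodularity is exactly the paper's proof: the paper applies the Dilworth truncation theorem (Schrijver, Theorem~48.2) to \(f-1\), which requires only submodularity of \(f-1\). It then reads off monotonicity and nonnegativity from the partition formula using the no-common-loop hypothesis. With that citation your proof is complete and essentially the paper's.

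Your self-contained uncrossing sketch should not be finished the way you propose, because the invariant for uncrossing only \(\mathcal P\)--\(\mathcal R\) pairs is not maintained. Take \(S=T=\{1,2,3,4\}\), \(\mathcal P=\{\{1,2\},\{3,4\}\}\) and \(\mathcal R=\{\{2,3\},\{1,4\}\}\). Uncrossing \(\{1,2\}\) and \(\{2,3\}\) gives the family \(\{2\},\{1,2,3\},\{3,4\},\{1,4\}\). Its last three members pairwise intersect, so the family is not a union of two partitions.

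Your first idea already works, and the bookkeeping you worried about is automatic. Uncross arbitrary crossing pairs until the family is laminar. Each step keeps the number of members and the sum \(\mathbf 1_S+\mathbf 1_T\) of indicator vectors, and by submodularity of \(f\) it does not increase the cost. Now take the inclusion-maximal members, keeping one copy of any repeated set; they are pairwise disjoint and cover \(S\cup T\). Every point of a non-maximal member also lies in a maximal one, so it is covered twice and therefore lies in \(S\cap T\). Conversely, each point of \(S\cap T\) lies in exactly one non-maximal member. Hence the non-maximal members partition \(S\cap T\), which gives \(\rho(S)+\rho(T)\geq\rho(S\cup T)+\rho(S\cap T)\). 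No merging step is needed.
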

\begin{proof}
  The Dilworth truncation construction \cite[Section~48.2, especially Theorem~48.2] {schrijver2003combinatorial} associates to every submodular function \(f:2^E\to\mathbb R\) a largest submodular function \(\hat f\) such that \(\hat f(\varnothing)=0\) and \(\hat f(A)\leq f(A)\) for every nonempty \(A\subseteq E\).  It is given by
  \[
    \hat f(A) = \min_{A=A_1\sqcup\cdots\sqcup A_s} \sum_{q=1}^s f(A_q),
    \qquad
    \hat f(\varnothing)=0.
  \]
  Apply this theorem to the integral submodular function
  \[
    f(A)=\sum_{j=1}^k\rk_{\mathsf M_j}(A)-1,
  \]
  for which \(f(\varnothing)=-1\).  We obtain the submodularity and integrality of \(\rho=\hat f\).  Moreover, \(f\) is nondecreasing and \(f(A)\geq0\) for every nonempty \(A\), since the \(\mathsf M_j\) have no common loop.  The partition formula then shows that \(\rho\) is nondecreasing and nonnegative.  Therefore \(\rho\) is the rank function of an integral polymatroid.
\end{proof}

Set \(\widetilde E=[k]\times E\), and choose \(\bm w=(w_j)_{j\in[k]}\in\R^{\widetilde E}\).  We also write \(w_j\) for the image of its \(j\)-th block in \(N_\R\), and write \(\widetilde w\) for the image of \(\bm w\) in \(N_{\widetilde E,\R}\).  We require the tuple \((w_j)_{j\in[k]}\in(N_\R)^k\) to be jointly generic in the sense of \cref{thm:equivariant-multi-product-rule}, and \(\widetilde w\) to be sufficiently generic on \(\Sigma_{\widetilde E}\) in the sense of \cref{subsec:tropical-initial-degeneration}. The pullbacks of both requirements to \(\R^{\widetilde E}\) are dense rational open conditions, so they can be met simultaneously by an integral choice of \(\bm w\).

Introduce formal symbols \(x_i^{(j)}\), for \(j\in[k]\) and \(i\in E\).  For \(A\subseteq E\), set
\[
  \bm x_A^{(j)}=\prod_{i\in A}x_i^{(j)},
  \qquad
  \bm x^{(j)}=\bm x_E^{(j)},
  \qquad
  \bm X=\prod_{j=1}^k\bm x^{(j)}.
\]
For \(m\mid\bm X\), set \(m^\vee=\bm X/m\). For \(u\in N_{\R}\), define its loop label by
\[
  \lambda_{\bm w}(u)
  =
  \prod_{j=1}^k
  \bm x_{\Loop(\operatorname{in}_{u-w_j}\mathsf M_j)}^{(j)}.
\]

\begin{definition}\label{def:k-fold-external-activity-complex}
  For a tuple of matroids \(\bm{\mathsf M}=(\mathsf M_1,\ldots,\mathsf M_k)\), its \emph{external activity complex} is
  \[
    \Delta_{\bm w}(\bm{\mathsf M})
    =
    \left\{
      p\mid\bm X:
      p\mid\lambda_{\bm w}(u)^\vee
      \text{ for some }u\in N_\R
    \right\}.
  \]
  We identify a square-free divisor \(p\mid\bm X\) with its support
  \[
    \{(j,i)\in\widetilde E:x_i^{(j)}\mid p\}.
  \]
  Under this identification, divisibility becomes inclusion, \(p^\vee\) becomes the complement in \(\widetilde E\), and intersections and unions of square-free monomials are taken on their supports.  Thus \(\Delta_{\bm w}(\bm{\mathsf M})\) is a simplicial complex on the ambient set \(\widetilde E\), possibly with ghost vertices, and we use the same symbol for a face, its support, and its square-free monomial.
\end{definition}

\begin{remark}
  When \(k=2\), \cref{def:k-fold-external-activity-complex} coincides with \cite[Theorem~6.15]{berget2025externalactivitycomplexpair}.
\end{remark}

Write \(\Delta=\Delta_{\bm w}(\bm{\mathsf M})\) if \(\bm w\) and \(\bm{\mathsf M}\) are both clear from the context.

For \(v=(j,i)\in\widetilde E\), write \(z_i^{(j)}=z_v\) in the \(K\)-polynomial notation of \cref{def:simplicial-k-polynomial}.  Set
\[
  c=(k-1)n-\rho(E).
\]
We now restate the two main results.

\thmAPositivity*

\thmBExternalActivity*

\begin{lemma}\label{lem:theorem-b-implies-a}
  \Cref{thm:B-CM-Delta} implies \cref{thm:A-positivity-K-poly}.
\end{lemma}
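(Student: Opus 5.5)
The plan is to read the coefficients of \(\mathcal K(\Delta;1-U_1,\ldots,1-U_k)\) off the face expansion of the \(K\)-polynomial. Then Reisner's criterion, via \cref{thm:reisner-criterion}, fixes the sign of each one.

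By \cref{thm:B-CM-Delta}(1) we have \(\mathcal K_{\bm{\mathsf M}}=\mathcal K(\Delta;V_1,\ldots,V_k)\). By \cref{def:simplicial-k-polynomial},
\[
  \mathcal K(\Delta;V_1,\ldots,V_k)=\sum_{F\in\Delta}\prod_{j=1}^k V_j^{f_j}(1-V_j)^{n-f_j},
  \qquad f_j=|F\cap(\{j\}\times E)|.
\]
Substituting \(V_j=1-U_j\) turns each summand into \(\prod_j(1-U_j)^{f_j}U_j^{n-f_j}\). So the coefficient of \(U^{\bm d}\) is
\[
  \sum_{F\in\Delta}\prod_j(-1)^{d_j-n+f_j}\binom{f_j}{d_j-n+f_j}.
\]

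The next step is to regroup this sum by the subface \(G\subseteq F\) that selects the factors \(-U_j\) in the expansion of \(\prod(1-U_j)^{f_j}\). Write \(g_j=|G\cap(\{j\}\times E)|\). Each pair \(G\subseteq F\) then contributes \((-1)^{|G|}\) to the monomial \(\prod_j U_j^{n-f_j+g_j}\). Equivalently, set \(H=F\setminus G\), so that \(F=G\sqcup H\) with \(H\in\link_\Delta(G)\). The exponent of \(U_j\) becomes \(n-h_j\), where \(h_j=|H\cap(\{j\}\times E)|\), and the sign is \((-1)^{|F|-|H|}\).

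That bookkeeping is awkward, so I would use a cleaner reindexing. Fix \(H\) with \(|H\cap(\{j\}\times E)|=n-d_j\). The coefficient then becomes
\[
  \sum_{H:\,h_j=n-d_j}\;\sum_{G\in\link_\Delta(H)}(-1)^{|G|}
  =-\sum_{H}\widetilde\chi\bigl(\link_\Delta(H)\bigr),
\]
where the sum runs over faces \(H\in\Delta\) with the prescribed multidegree. Here \(\widetilde\chi\) is the reduced Euler characteristic, with the empty face contributing \(+1\). In this sum the \(K\)-polynomial identity has been used only to pass from \(\mathcal K_{\bm{\mathsf M}}\) to \(\mathcal K(\Delta)\). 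If no face has this multidegree, the coefficient is \(0\) and there is nothing to prove.

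By \cref{thm:B-CM-Delta}(2), \(\Delta\) is Cohen--Macaulay of dimension \(D=n+\rho(E)-1\). Hence it is pure, and each link \(\link_\Delta(H)\) is Cohen--Macaulay of dimension \(D-|H|\). Reisner's criterion then says its reduced homology is concentrated in the top degree, so
\[
  (-1)^{D-|H|}\widetilde\chi\bigl(\link_\Delta(H)\bigr)\geq0.
\]
This includes the case of an empty link, which occurs when \(H\) is a facet; then \(\widetilde\chi=-1\) and \(D-|H|=-1\). It follows that \((-1)^{D-|H|+1}\) times the coefficient is nonnegative.

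It remains to match the sign. We have \(|H|=kn-\sum_j d_j\), so
\[
  D-|H|+1=n+\rho(E)-kn+\textstyle\sum_j d_j=\textstyle\sum_j d_j-\bigl((k-1)n-\rho(E)\bigr)=\sum_j d_j-c.
\]
This is exactly the sign \((-1)^{d_1+\cdots+d_k-c}\) in \cref{thm:A-positivity-K-poly}.

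The main care point is the combinatorial regrouping: checking the inclusion–exclusion that produces \(-\widetilde\chi(\link_\Delta(H))\) with \(H\) rather than \(F\) controlling the exponent, and getting the sign conventions right for empty links and ghost vertices. Ghost vertices are simply never in any face, so they cause no trouble.
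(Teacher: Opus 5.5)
Your proposal is correct and is essentially the paper's argument. The paper does the same regrouping at the fine level: substituting \(z_v=1-x_v\), the coefficient of \(\bm x^S\) is \(-\widetilde\chi(\link_\Delta(\widetilde E\setminus S))\), which is your \(H=\widetilde E\setminus S\); it then applies Cohen--Macaulayness of links and Reisner's criterion and specializes \(x_i^{(j)}=U_j\), giving the same sign computation \(D-|H|+1=\sum_j d_j-c\) that you obtain (the only slip is wording: the empty face contributes \(+1\) to \(\sum_{G\in\link_\Delta(H)}(-1)^{|G|}\) but \(-1\) to \(\widetilde\chi\), which is the convention you actually use for facets).
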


\begin{proof}
  Assume \cref{thm:B-CM-Delta}, and set \(\Delta=\Delta_{\bm w}(\bm{\mathsf M})\).  In the face formula of \cref{def:simplicial-k-polynomial}, put \(z_i^{(j)}=1-x_i^{(j)}\).  For \(S\subseteq\widetilde E\), write \(\bm x^S=\prod_{(j,i)\in S}x_i^{(j)}\).  The face formula gives
  \[
    [\bm x^S]\mathcal K_{\widetilde E}(\Delta;\bm 1-\bm x)
    =
    \begin{cases}
      -\widetilde\chi\bigl(\link_\Delta(\widetilde E\setminus S)\bigr),
      &\widetilde E\setminus S\in\Delta,\\
      0,&\widetilde E\setminus S\notin\Delta.
    \end{cases}
  \]
  Suppose that \(\widetilde E\setminus S\in\Delta\), and set \(L=\link_\Delta(\widetilde E\setminus S)\).  By \cref{thm:B-CM-Delta}, the complex \(\Delta\) is Cohen--Macaulay of dimension \(n+\rho(E)-1\).  Hence \(L\) is Cohen--Macaulay of dimension
  \[
    d
    =n+\rho(E)-1-(kn-|S|)
    =|S|-c-1.
  \]
  By \cref{thm:reisner-criterion}, its reduced homology vanishes below degree \(d\).  Therefore
  \[
    (-1)^{|S|-c}
    [\bm x^S]\mathcal K_{\widetilde E}(\Delta;\bm 1-\bm x)
    =
    \dim_\k\widetilde H_d(L;\k)
    \geq0.
  \]
  This also includes \(d=-1\), with the usual convention for \(\{\varnothing\}\).

  Under the specialization \(x_i^{(j)}=U_j\), the \(K\)-polynomial identity in \cref{thm:B-CM-Delta} gives
  \[
    \mathcal K_{\bm{\mathsf M}}(1-U_1,\ldots,1-U_k)
    =
    \left.
    \mathcal K_{\widetilde E}(\Delta;\bm 1-\bm x)
    \right|_{x_i^{(j)}=U_j}.
  \]
  Thus the coefficient of \(U_1^{d_1}\cdots U_k^{d_k}\) is the sum of the fine coefficients indexed by the subsets \(S\subseteq\widetilde E\) satisfying
  \[
    |S\cap(\{j\}\times E)|=d_j
    \qquad(j\in[k]).
  \]
  Every such \(S\) has \(|S|=d_1+\cdots+d_k\), so all summands have the sign asserted in \cref{thm:A-positivity-K-poly}.
\end{proof}

\begin{lemma}\label{lem:k-fold-loop-reduction}
  It suffices to prove \cref{thm:B-CM-Delta} when every \(\mathsf M_j\) is loopless.
\end{lemma}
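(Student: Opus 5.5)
The plan is to remove loops one at a time. I would replace a loop of some \(\mathsf M_j\) by a coloop and check that every quantity in \cref{thm:B-CM-Delta} changes in a controlled way. Then induction on the total number of pairs \((j,i)\) with \(i\in\Loop(\mathsf M_j)\) reduces to the loopless case.

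\textbf{The replacement.} Fix \(j\) and a loop \(i\) of \(\mathsf M_j\). Let \(\mathsf M_j'\) be obtained by turning \(i\) into a coloop, that is, \(\mathsf M_j'=(\mathsf M_j\setminus i)\oplus\mathsf U_{1,\{i\}}\). Let \(\bm{\mathsf M}'\) be the tuple with \(\mathsf M_j\) replaced by \(\mathsf M_j'\). It still has no common loop and has one fewer loop in total. The genericity conditions on \(\bm w\) concern only the braid fans, so the same \(\bm w\) works for \(\bm{\mathsf M}'\).

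\textbf{Effect on the Dilworth polymatroid.} For every \(A\ni i\) one has \(\rk_{\mathsf M_j'}(A)=\rk_{\mathsf M_j}(A)+1\). For \(A\not\ni i\) the two ranks agree. In any partition of \(E\), exactly one part contains \(i\). Hence each partition's cost rises by exactly \(1\), and \(\rho'(E)=\rho(E)+1\).

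\textbf{Effect on the complex.} By the greedy description in \cref{def:flag-initial-matroid}, for every \(u\) the initial matroid \(\operatorname{in}_{u-w_j}\mathsf M_j\) has \(i\) as a loop. Likewise \(\operatorname{in}_{u-w_j}\mathsf M_j'\) has \(i\) as a coloop, and away from \(i\) the two initial matroids are equal. So \(\lambda_{\bm w}(u)=x_i^{(j)}\lambda'_{\bm w}(u)\) for all \(u\).

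It follows that the faces of \(\Delta=\Delta_{\bm w}(\bm{\mathsf M})\) are the faces of \(\Delta'=\Delta_{\bm w}(\bm{\mathsf M}')\) that avoid \((j,i)\), and that \(\Delta'\) is the cone over \(\Delta\) with apex \((j,i)\). In \(\Delta\), the vertex \((j,i)\) is a ghost vertex. From the face formula in \cref{def:simplicial-k-polynomial}, the cone apex contributes a factor \(z+(1-z)=1\) and a ghost vertex contributes \(1-z\). Therefore
\[
  \mathcal K(\Delta;V_1,\ldots,V_k)=(1-V_j)\,\mathcal K(\Delta';V_1,\ldots,V_k).
\]
Moreover \(\Delta\) is Cohen--Macaulay if and only if \(\Delta'\) is, since coning and adding ghost vertices do not affect links' homology in Reisner's criterion. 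Finally \(\dim\Delta=\dim\Delta'-1\), which matches the shift \(n+\rho(E)-1=n+\rho'(E)-2\).

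\textbf{Effect on the \(K\)-theory side.} By \cref{cor:equivariant-single-tautological-gw}, the Grothendieck weight of \(\lambda_{-V}(\mathcal Q^\vee_{\mathsf M_j})\) at \(\mathcal F\) is the product of \(1-Vt_l\) over the loops \(l\) of \(\operatorname{in}_{\mathcal F}\mathsf M_j\). Those loops are the loops of \(\operatorname{in}_{\mathcal F}\mathsf M_j'\) together with \(i\). So the weights differ by the constant factor \(1-Vt_i\), and by \eqref{eq:gw-identification} this gives
\[
  \lambda_{-V}(\mathcal Q^\vee_{\mathsf M_j})=(1-Vt_i)\,\lambda_{-V}(\mathcal Q^\vee_{\mathsf M_j'}).
\]
Nonequivariantly the factor is \(1-V\), so \(\mathcal K_{\bm{\mathsf M}}=(1-V_j)\mathcal K_{\bm{\mathsf M}'}\). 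Valuativity in \(\mathsf M_j\) is the one point needing care. I would get it directly from the definition of \(\mathcal K_{\bm{\mathsf M}}\), using \cite[Proposition~5.8]{BergetEurSpinkTseng2023}, rather than transporting it through the reduction.

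\textbf{Conclusion and main obstacle.} Both parts of \cref{thm:B-CM-Delta} for \(\bm{\mathsf M}\) follow from those for \(\bm{\mathsf M}'\). I expect the main point needing care to be the pointwise claim on initial matroids: that \(i\) stays a loop, respectively a coloop, of every initial matroid, with the rest unchanged. This follows from \(\operatorname{in}_u(\mathsf N\oplus\mathsf N')=\operatorname{in}_u\mathsf N\oplus\operatorname{in}_u\mathsf N'\).
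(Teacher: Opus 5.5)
Your proposal is correct and follows essentially the same route as the paper. Both arguments replace loops by coloops, observe that the loop label changes by \(x_i^{(j)}\) so the new complex is a cone over the old one (in which \((j,i)\) is a ghost vertex), and then match the factor \(1-V_j\), the shift \(\rho'(E)=\rho(E)+1\), and Cohen--Macaulayness across the two complexes. The differences are cosmetic: the paper replaces all loops at once rather than inducting one loop at a time, and it gets the factor \((1-V_j)^{\ell_j}\) from the direct-sum compatibility of tautological classes in \cite{BergetEurSpinkTseng2023}, whereas you read it off from the loop-product weight formula of \cref{cor:equivariant-single-tautological-gw}, which works equally well.
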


\begin{proof}
  For each \(j\in[k]\), let \(L_j=\Loop(\mathsf M_j)\), set \(\ell_j=|L_j|\), and let \(\widehat{\mathsf M}_j\) be obtained from \(\mathsf M_j\) by replacing every element of \(L_j\) by a coloop. Equivalently, if \(\mathsf N_j=\mathsf M_j\setminus L_j\), then
  \[
    \mathsf M_j=\mathsf N_j\oplus \mathsf{U}_{0,L_j},
    \qquad
    \widehat{\mathsf M}_j
    =\mathsf N_j\oplus \mathsf{U}_{\ell_j,L_j}.
  \]
  Set
  \[
    \widehat{\bm{\mathsf M}}
    =(\widehat{\mathsf M}_1,\ldots,\widehat{\mathsf M}_k).
  \]
  This tuple is loopless.

  For every \(A\subseteq E\), replacing the loops in \(L_j\) by coloops gives
  \[
    \rk_{\widehat{\mathsf M}_j}(A)
    =\rk_{\mathsf M_j}(A)+|A\cap L_j|.
  \]
  Write \(\widehat\rho\) for the diagonal Dilworth rank function of \(\widehat{\bm{\mathsf M}}\).  Since the sets in a partition of \(A\) are disjoint, the partition formula gives
  \[
    \widehat\rho(A)
    =\rho(A)+\sum_{j=1}^k|A\cap L_j|.
  \]
  In particular, if \(\ell=\sum_{j=1}^k\ell_j\), then
  \begin{equation}\label{eq:k-fold-rho-loop-reduction}
    \widehat\rho(E)=\rho(E)+\ell.
  \end{equation}

  Tautological \(K\)-classes are compatible with direct sum \cite[Proposition~5.13]{BergetEurSpinkTseng2023}.  In the notation of that proposition, apply the direct-sum formula to \(\mathsf N_j\oplus \mathsf{U}_{0,L_j}\) and \(\mathsf N_j\oplus \mathsf{U}_{\ell_j,L_j}\).  Since
  \[
    [\mathcal Q_{\mathsf{U}_{0,1}}^\vee]=[\mathcal O_{X_E}],
    \qquad
    [\mathcal Q_{\mathsf{U}_{1,1}}^\vee]=0,
  \]
  it specializes to
  \[
    [\mathcal Q_{\mathsf M_j}^\vee]
    =
    [\mathcal Q_{\widehat{\mathsf M}_j}^\vee]
    +\ell_j[\mathcal O_{X_E}].
  \]
  The multiplicativity of total exterior powers therefore gives
  \begin{equation}\label{eq:k-fold-loop-reduction}
    \mathcal K_{\bm{\mathsf M}}(\bm V)
    =
    \left(\prod_{j=1}^k(1-V_j)^{\ell_j}\right)
    \mathcal K_{\widehat{\bm{\mathsf M}}}(\bm V).
  \end{equation}

  Shrinking the genericity condition if necessary, take \(\bm w\) sufficiently generic for both tuples.  Set
  \[
    \widetilde L
    =\{(j,i)\in\widetilde E:i\in L_j\}.
  \]
  For every \(u\in N_\R\) and \(j\in[k]\), replacing the loops in \(L_j\) by coloops gives
  \[
    \left\{
      (j,i):i\notin
      \Loop(\operatorname{in}_{u-w_j}\widehat{\mathsf M}_j)
    \right\}
    =
    \left\{
      (j,i):i\notin
      \Loop(\operatorname{in}_{u-w_j}\mathsf M_j)
    \right\}
    \sqcup(\{j\}\times L_j).
  \]
  Let \(\Delta^\circ\) be the restriction of \(\Delta_{\bm w}(\bm{\mathsf M})\) to \(\widetilde E\setminus\widetilde L\).  The definition of the external activity complex shows that \(\Delta_{\bm w}(\bm{\mathsf M})\) is \(\Delta^\circ\) with \(\widetilde L\) adjoined as ghost vertices, while
  \begin{equation}\label{eq:k-fold-complex-loop-reduction}
    \Delta_{\bm w}(\widehat{\bm{\mathsf M}})
    =
    \Delta^\circ*2^{\widetilde L}.
  \end{equation}
  Consequently,
  \[
    \k[\Delta_{\bm w}(\widehat{\bm{\mathsf M}})]
    \simeq
    \k[\Delta^\circ]
    [x_i^{(j)}:(j,i)\in\widetilde L].
  \]

  Assume \cref{thm:B-CM-Delta} for the loopless tuple \(\widehat{\bm{\mathsf M}}\).  Polynomial extension reflects Cohen--Macaulayness, so \(\Delta^\circ\), and equivalently \(\Delta_{\bm w}(\bm{\mathsf M})\), is Cohen--Macaulay.  Moreover, \cref{eq:k-fold-rho-loop-reduction,eq:k-fold-complex-loop-reduction}, together with the loopless dimension formula, give
  \[
    \dim\Delta_{\bm w}(\bm{\mathsf M})
    =n+\widehat\rho(E)-1-\ell
    =n+\rho(E)-1.
  \]

  Adjoining a ghost vertex of degree \(e_j\) multiplies the \(\Z^k\)-graded \(K\)-polynomial by \(1-V_j\), while adjoining a cone vertex does not change it.  Hence
  \[
    \mathcal K(\Delta_{\bm w}(\bm{\mathsf M});\bm V)
    =
    \left(\prod_{j=1}^k(1-V_j)^{\ell_j}\right)
    \mathcal K(\Delta_{\bm w}(\widehat{\bm{\mathsf M}});\bm V).
  \]
  Comparing this identity with \eqref{eq:k-fold-loop-reduction} proves the \(K\)-polynomial identity for \(\bm{\mathsf M}\).  Its independence of \(\bm w\) follows immediately, and its multivaluativity follows exactly as in the proof of \cref{prop:k-fold-k-polynomial-loopless}.
\end{proof}

\subsection{The \texorpdfstring{\(K\)}{K}-polynomial identity in the loopless case}
For the rest of the section, we assume that every \(\mathsf M_j\) is loopless. We prove part~(1) of \cref{thm:B-CM-Delta} under this assumption.

To apply the product rule, we introduce the following notations. For a tuple of flags
\(
  \bm{\mathcal F}
  =(\mathcal F_1,\ldots,\mathcal F_k)
\), set
\[
  D_{\bm{\mathcal F}}
  =
  \bigcap_{j=1}^k(w_j+\sigma_{\mathcal F_j}).
\]
These are the cells in the common refinement of the translated braid fans. Joint genericity gives
\[
  D_{\bm{\mathcal{F}}}^\circ
  =
  \bigcap_{j=1}^k(w_j+\sigma_{\mathcal{F}_j}^\circ)
\]
and
\[
  \dim D_{\bm{\mathcal F}}
  =
  \sum_{j=1}^k\dim\sigma_{\mathcal F_j}-(k-1)(n-1).
\]
Moreover, \(D_{\bm{\mathcal F}}\) is bounded exactly when
\[
  \bigcap_{j=1}^k\sigma_{\mathcal F_j}=\{0\}.
\]
Define the loop label of \(\bm{\mathcal{F}}\) by
\[
  \lambda(\bm{\mathcal F})
  =
  \prod_{j=1}^k
  \bm x_{\Loop(\operatorname{in}_{\mathcal F_j}\mathsf M_j)}^{(j)}.
\]
If \(u\in D_{\bm{\mathcal F}}^\circ\), then
\[
  \lambda_{\bm w}(u)=\lambda(\bm{\mathcal F}),
\]
because \(u-w_j\in\sigma_{\mathcal F_j}^\circ\) and hence \(\operatorname{in}_{u-w_j}\mathsf M_j =\operatorname{in}_{\mathcal F_j}\mathsf M_j\) for every \(j\), by \cref{def:flag-initial-matroid}.

Now set
\[
  \Psi
  =\Psi_{\bm{\mathsf M}}(V_1,\ldots,V_k;\bm t)
  =
  \chi_H\left(
    X_E,
    \prod_{j=1}^k
    \lambda_{-V_j}(\mathcal Q_{\mathsf M_j}^{\vee})
  \right).
\]
By \cref{thm:equivariant-multi-product-rule,cor:equivariant-single-tautological-gw},
after temporarily setting \(x_i^{(j)}=1-V_jt_i\), we obtain
\begin{equation}\label{eq:k-fold-gw-expansion}
  \Psi
  =
  \sum_{\substack{
      D_{\bm{\mathcal F}}\neq\varnothing\\
  D_{\bm{\mathcal F}}\text{ bounded}}}
  (-1)^{\dim D_{\bm{\mathcal F}}}
  \lambda(\bm{\mathcal F}).
\end{equation}
We emphasize again that the \(x_i^{(j)}\) are formal symbols.
For a square-free \(m\mid\bm X\), define its fine coefficient by
\begin{equation}\label{eq:k-fold-fine-coefficient}
  [m]\Psi
  =
  \sum_{\substack{
      D_{\bm{\mathcal F}}\neq\varnothing\\
      D_{\bm{\mathcal F}}\text{ bounded}\\
  \lambda(\bm{\mathcal F})=m}}
  (-1)^{\dim D_{\bm{\mathcal F}}},\qquad \Psi=\sum_{m|\bm X} ([m]\Psi) m.
\end{equation}

To interpret these coefficients topologically, set
\[
  Q_m
  =
  \{\bm{\mathcal F}:D_{\bm{\mathcal F}}\neq\varnothing,\ \lambda(\bm{\mathcal F})\mid m\},
  \qquad
  Y_m
  =
  \bigcup_{\bm{\mathcal F}\in Q_m}D_{\bm{\mathcal F}}^\circ.
\]

\begin{lemma}\label{lem:k-fold-Ym}
  \(Y_m\) is closed and tropically convex. Moreover,
  \[
    Y_m\neq\varnothing \Longleftrightarrow m^\vee\in\Delta.
  \]
\end{lemma}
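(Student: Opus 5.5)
We will describe \(Y_m\) pointwise and reduce to \cref{prop:relaxed-bergman-convex,lem:relaxed-bergman-completion}. Write the square-free divisor \(m\mid\bm X\) as \(m=\prod_j\bm x^{(j)}_{S_j}\) with \(S_j\subseteq E\). The cells \(D_{\bm{\mathcal F}}^\circ\) with \(D_{\bm{\mathcal F}}\neq\varnothing\) partition \(N_\R\): each \(u\) lies in exactly one, namely the one with \(u-w_j\in\sigma_{\mathcal F_j}^\circ\) for all \(j\). For such \(u\) we have \(\lambda_{\bm w}(u)=\lambda(\bm{\mathcal F})\), so
\[
  Y_m=\{u\in N_\R:\lambda_{\bm w}(u)\mid m\}
  =\bigcap_{j=1}^k\{u:\Loop(\operatorname{in}_{u-w_j}\mathsf M_j)\subseteq S_j\}
  =\bigcap_{j=1}^k\bigl(w_j+\Sigma_{\mathsf M_j,S_j}\bigr).
\]

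\textbf{Closedness and tropical convexity.} By \cref{prop:relaxed-bergman-convex}, each \(\Sigma_{\mathsf M_j,S_j}\) is closed and tropically convex. Translation by \(w_j\) preserves both properties, since \(\min(a+u+w,b+v+w)=\min(a+u,b+v)+w\) coordinatewise. Intersections of closed tropically convex sets are again closed and tropically convex. This proves the first assertion, and the only input is the pointwise description of \(Y_m\) given above.

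\textbf{The nonemptiness criterion.} We unwind \cref{def:k-fold-external-activity-complex}. A face \(p\) lies in \(\Delta\) exactly when \(p\mid\lambda_{\bm w}(u)^\vee\) for some \(u\). Taking complements, this says \(\lambda_{\bm w}(u)\mid p^\vee\). Applied to \(p=m^\vee\), we get
\[
  m^\vee\in\Delta\iff\exists\,u:\ \lambda_{\bm w}(u)\mid m\iff Y_m\neq\varnothing,
\]
by the description of \(Y_m\) above.

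\textbf{Main point to check.} The only point needing care is the partition claim: every \(u\in N_\R\) lies in the relative interior of a unique cell \(D_{\bm{\mathcal F}}\). This holds because \(\Sigma_E\) is complete and its cones' relative interiors partition \(N_\R\). So for each \(j\) there is a unique \(\mathcal F_j\) with \(u-w_j\in\sigma^\circ_{\mathcal F_j}\). Then \(u\) lies in \(\bigcap_j(w_j+\sigma_{\mathcal F_j}^\circ)=D^\circ_{\bm{\mathcal F}}\), which uses the joint genericity identity for \(D^\circ_{\bm{\mathcal F}}\). Genericity is therefore not needed beyond identifying \(D^\circ\) with the intersection of the relative interiors.
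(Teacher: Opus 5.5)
Your proposal is correct and takes essentially the same route as the paper. Both arguments identify \(Y_m\) pointwise with \(\{u:\lambda_{\bm w}(u)\mid m\}\) via the partition of \(N_\R\) into the relative interiors \(D_{\bm{\mathcal F}}^\circ\), rewrite this set as \(\bigcap_j(w_j+\Sigma_{\mathsf M_j,S_j})\) to invoke \cref{prop:relaxed-bergman-convex}, and obtain the nonemptiness criterion by unwinding \cref{def:k-fold-external-activity-complex} through complementation; the \cref{lem:relaxed-bergman-completion} cited in your opening line is never actually used, and the paper does not need it either.
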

\begin{proof}
  We claim that
  \[
    Y_m = \{u\in N_\R: \lambda_{\bm w}(u)\mid m\}.
  \]
  In fact, if \(\bm{\mathcal{F}}\in Q_m\), take any \(z\in D_{\bm{\mathcal{F}}}^\circ\). Then \(\lambda_{\bm w}(z)=\lambda(\bm{\mathcal{F}})\mid m\), so \(z\) belongs to the right-hand side. Conversely, given \(z\) such that \(\lambda_{\bm w}(z)\mid m\), let \(\bm{\mathcal{F}}\) be the unique tuple of flags such that \(z\in D_{\bm{\mathcal{F}}}^\circ\). Then \(\lambda(\bm{\mathcal{F}})=\lambda_{\bm w}(z)\mid m\), so \(\bm{\mathcal{F}}\in Q_m\) and \(z\in Y_m\).

  For \(m\mid\bm X\), write \(|m|\) for its degree and set
  \[
    \operatorname{supp}_j(m)
    =\{i\in E:x_i^{(j)}\mid m\}
    \qquad(j\in[k]).
  \]

  The divisibility \(\lambda_{\bm w}(u)\mid m\) is equivalent to
  \[
    \Loop(\operatorname{in}_{u-w_j}\mathsf{M}_j)\subseteq \operatorname{supp}_j(m).
  \]
  Then, in terms of relaxed Bergman fans,
  \[
    Y_m
    =
    \bigcap_{j=1}^k
    \left(w_j+\Sigma_{\mathsf M_j,\operatorname{supp}_j(m)}\right).
  \]
  By \cref{prop:relaxed-bergman-convex}, it is therefore closed and tropically convex.

  The set \(Y_m\) is nonempty if and only if there is some \(u\in N_\R\) such that \(\lambda_{\bm w}(u)\mid m\), or equivalently
  \[
    m^\vee\mid\lambda_{\bm w}(u)^\vee.
  \]
  By \cref{def:k-fold-external-activity-complex}, this is exactly the condition \(m^\vee\in\Delta\).
\end{proof}

\begin{proposition}\label{prop:k-fold-link-coefficient}
  For every square-free \(m\mid\bm X\),
  \[
    [m]\Psi
    =
    \begin{cases}
      -\widetilde\chi\bigl(
        \link_\Delta(m^\vee)
      \bigr),
      & m^\vee\in\Delta,    \\
      0, & m^\vee\notin\Delta.
    \end{cases}
  \]
\end{proposition}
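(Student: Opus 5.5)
Following the prototype in the introduction, we will compute Euler characteristics of the bounded parts of the closed sets \(Y_{m'}\) and then apply Möbius inversion over the Boolean lattice of square-free divisors.

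First, for each square-free \(m\mid\bm X\), let \(Y_m^b\) be the union of the bounded cells \(D_{\bm{\mathcal F}}^\circ\) with \(\bm{\mathcal F}\in Q_m\). The cells \(D_{\bm{\mathcal F}}^\circ\) partition \(Y_m\). The set \(Y_m\) is closed, and each \(D_{\bm{\mathcal F}}\) is a union of relatively open cells \(D_{\bm{\mathcal F}'}^\circ\) whose labels divide \(\lambda(\bm{\mathcal F})\). The latter holds because loops of initial matroids can only shrink under specialization to a face; this is upper semicontinuity of the loop set, which is the content of closedness in \cref{prop:relaxed-bergman-convex}. Hence \(Y_m\) and \(Y_m^b\) are finite polyhedral complexes. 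Counting cells gives
\[
  \chi(Y_m^b)=\sum_{m'\mid m}[m']\Psi .
\]
Next, we show \(\chi(Y_m^b)=\mathbf 1_{Y_m\neq\varnothing}\). As in the proof of \cref{thm:nef-tautological-positivity}, the cellwise argument of Eur--Fink--Larson retracts \(Y_m\) onto \(Y_m^b\), since every cell has pointed recession cone \(\bigcap_j\sigma_{\mathcal F_j}\). By \cref{lem:k-fold-Ym}, \(Y_m\) is tropically convex, so it is contractible when nonempty. Combining this with \cref{lem:k-fold-Ym} again gives \(\chi(Y_m^b)=\mathbf 1_{m^\vee\in\Delta}\).

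Möbius inversion on the Boolean lattice then yields
\[
  [m]\Psi=\sum_{m'\mid m}(-1)^{|m|-|m'|}\,\mathbf 1_{m'^\vee\in\Delta}.
\]
If \(m^\vee\notin\Delta\), every \(m'\mid m\) has \(m'^\vee\supseteq m^\vee\), so \(m'^\vee\notin\Delta\) and the sum vanishes. If \(m^\vee\in\Delta\), write \(m'^\vee=m^\vee\sqcup G\) with \(G\subseteq\operatorname{supp}(m)\). The condition \(m'^\vee\in\Delta\) is exactly \(G\in\link_\Delta(m^\vee)\), because \(G\) is disjoint from \(m^\vee\). Also \(|m|-|m'|=|G|\). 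The sum therefore equals \(\sum_{G\in\link_\Delta(m^\vee)}(-1)^{|G|}=-\widetilde\chi(\link_\Delta(m^\vee))\), with the empty face contributing \(+1\).

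The main obstacle is the first step: justifying the deformation retraction \(Y_m\to Y_m^b\) and checking that \(Y_m^b\) is a genuine closed subcomplex. This requires that faces of bounded cells are bounded, which is clear. It also requires that faces of a cell in \(Y_m\) stay in \(Y_m\), which follows from closedness. I would adapt the Eur--Fink--Larson retraction verbatim, using the pointed recession cones.
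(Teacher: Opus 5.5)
Your proposal is correct and matches the paper's argument step for step. The paper also forms the bounded subcomplex \(Y_m^b\), retracts \(Y_m\) onto it cellwise using the pointed recession cones, and uses tropical convexity together with \cref{lem:k-fold-Ym} to get \(\sum_{r\mid m}[r]\Psi=\mathbf 1_{m^\vee\in\Delta}\). It then applies Boolean M\"obius inversion and reindexes the surviving terms as faces of \(\link_\Delta(m^\vee)\), exactly as you do. The only detail the paper spells out separately is that \(Y_m^b\neq\varnothing\) whenever \(Y_m\neq\varnothing\), because every cell has a vertex; your deformation retraction already gives this implicitly.
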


\begin{proof}
  Let \(Y_m^b\) be the union of the bounded common-refinement cells contained in \(Y_m\).  If \(Y_m=\varnothing\), then also \(Y_m^b=\varnothing\).  Suppose now that \(Y_m\neq\varnothing\).  By \cref{lem:k-fold-Ym}, \(Y_m\) is closed and is a union of cells in the finite common refinement of \(w_j+\Sigma_E\), for \(j\in[k]\).  Hence it is a finite subcomplex of that common refinement.  Every nonempty closed cell
  \[
    D_{\bm{\mathcal F}}
    =
    \bigcap_{j=1}^k(w_j+\sigma_{\mathcal F_j})
  \]
  has recession cone
  \[
    \operatorname{rec}(D_{\bm{\mathcal F}})
    =
    \bigcap_{j=1}^k\sigma_{\mathcal F_j},
  \]
  which is pointed.  Hence every cell has a vertex, and that vertex is a bounded cell of the same subcomplex.  Thus \(Y_m^b\neq\varnothing\). Retracting the unbounded cells onto their boundaries in decreasing order of dimension, as in \cite[Proof of Lemma~3.4]{elias2025categoricalvaluative}, gives a deformation retraction
  \[
    Y_m\longrightarrow Y_m^b.
  \]
  Since \(Y_m\) is tropically convex, it is contractible by \cite[Theorem~2]{develin2004tropical}.  Hence
  \[
    \chi(Y_m^b)
    =
    \mathbf 1_{Y_m\neq\varnothing}.
  \]
  The cells of \(Y_m^b\) are exactly the bounded \(D_{\bm{\mathcal F}}\) for which \(\lambda(\bm{\mathcal F})\mid m\).  Their cellular Euler characteristic and \eqref{eq:k-fold-fine-coefficient} therefore give
  \[
    \sum_{r\mid m}[r]\Psi
    =
    \mathbf 1_{Y_m\neq\varnothing}.
  \]
  Boolean M\"obius inversion yields
  \[
    \begin{aligned}
      [m]\Psi
      & =\sum_{r\mid m}(-1)^{|m|-|r|}
      \mathbf 1_{r^\vee\in\Delta}                                    \\
      & =\sum_{\substack{r'\mid m\\m^\vee r'\in\Delta}}(-1)^{|r'|},
    \end{aligned}
  \]
  where in the second line we set \(r'=m/r\).
  As in \cref{def:k-fold-external-activity-complex}, we identify square-free monomials with their supports.  Since \(\Delta\) is a simplicial complex, it is divisor-closed.  Thus, if \(m^\vee\notin\Delta\), the last indexing set is empty.  If \(m^\vee\in\Delta\), then \cref{def:cm-simplicial-complex} gives
  \[
    \link_\Delta(m^\vee)
    =
    \{r'\in\Delta:r'\cap m^\vee=\varnothing,
    \ m^\vee\cup r'\in\Delta\}.
  \]
  In square-free monomial notation, the disjointness condition is equivalent to \(r'\mid m\), since \(m=\bm X/m^\vee\) contains exactly the variables outside \(m^\vee\).  For such \(r'\), the union \(m^\vee\cup r'\) is represented by the product \(m^\vee r'\).  Moreover, \(m^\vee r'\in\Delta\) already implies \(r'\in\Delta\), because \(\Delta\) is divisor-closed.  Hence
  \[
    \link_\Delta(m^\vee)
    =
    \{r'\mid m:m^\vee r'\in\Delta\},
  \]
  which is precisely the last indexing set.  Moreover, \((-1)^{|r'|}=-(-1)^{\dim r'}\), and the empty face is included in the sum.  Thus the last sum is the negative reduced Euler characteristic of that link.
\end{proof}

The following proposition proves part~(1) of \cref{thm:B-CM-Delta} for loopless tuples.

\begin{proposition}\label{prop:k-fold-k-polynomial-loopless}
  If every \(\mathsf M_j\) is loopless, then
  \[
    \Psi_{\bm{\mathsf M}}(V_1,\ldots,V_k;\bm t)
    =
    \left.
    \mathcal K_{\widetilde E}(\Delta;\bm z)
    \right|_{z_i^{(j)}=V_jt_i}.
  \]
  Consequently,
  \[
    \mathcal K(\Delta;V_1,\ldots,V_k)
    =\mathcal K_{\bm{\mathsf M}}(V_1,\ldots,V_k),
  \]
  where the left-hand side is the \(\Z^k\)-graded \(K\)-polynomial for
  \(\deg x_i^{(j)}=e_j\).  This polynomial is multivaluative in
  \(\mathsf M_1,\ldots,\mathsf M_k\).
\end{proposition}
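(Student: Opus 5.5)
The plan is to compare the two sides coefficient by coefficient in the monomials \(m\mid\bm X\), using \cref{prop:k-fold-link-coefficient}. First I would fix the substitution. In \eqref{eq:k-fold-gw-expansion} the symbols \(x_i^{(j)}\) stand for \(1-V_jt_i\), since by \cref{cor:equivariant-single-tautological-gw} the weight of \(\lambda_{-V_j}(\mathcal Q_{\mathsf M_j}^\vee)\) at \(\mathcal F_j\) is \(\prod_{i\in\Loop(\operatorname{in}_{\mathcal F_j}\mathsf M_j)}(1-V_jt_i)\). By the equivariant \(k\)-fold product rule \cref{thm:equivariant-multi-product-rule} at \(\gamma=\{0\}\), \(\Psi\) equals \(\sum_m([m]\Psi)\,m\) evaluated at \(x_i^{(j)}=1-V_jt_i\). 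The sign there is \((-1)^{\dim D_{\bm{\mathcal F}}}\), and the bounded cells are exactly those with \(\bigcap_j\sigma_{\mathcal F_j}=\{0\}\).

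Next I would use the face formula of \cref{def:simplicial-k-polynomial} with \(z_v=1-x_v\), exactly as in the proof of \cref{lem:theorem-b-implies-a}. This expresses \(\mathcal K_{\widetilde E}(\Delta;\bm 1-\bm x)\) as \(\sum_F\prod_{v\in F}(1-x_v)\prod_{v\notin F}x_v\). Expanding, the coefficient of a square-free \(\bm x^S\) (with \(S=m\)) is a sum over faces \(F\) with \(\widetilde E\setminus S\subseteq F\) of \((-1)^{|F|-(kn-|S|)}\). Writing \(F=(\widetilde E\setminus S)\sqcup G\), this is the sum of \((-1)^{|G|}\) over \(G\in\link_\Delta(m^\vee)\), that is, \(-\widetilde\chi(\link_\Delta(m^\vee))\), and it is \(0\) if \(m^\vee\notin\Delta\). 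By \cref{prop:k-fold-link-coefficient} these coefficients agree with \([m]\Psi\). Hence, as polynomials in the formal \(x\)'s, \(\sum_m([m]\Psi)m=\mathcal K_{\widetilde E}(\Delta;\bm 1-\bm x)\). Substituting \(x_i^{(j)}=1-V_jt_i\) then gives \(\Psi=\mathcal K_{\widetilde E}(\Delta;\bm z)|_{z_i^{(j)}=V_jt_i}\).

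For the nonequivariant statement I would specialize \(t_i\mapsto1\). The equivariant Euler characteristic specializes to the ordinary one, and \(z_i^{(j)}=V_j\) is exactly the \(\Z^k\)-graded specialization of \cref{def:simplicial-k-polynomial}. This gives \(\mathcal K(\Delta;\bm V)=\mathcal K_{\bm{\mathsf M}}(\bm V)\), and in particular shows that it is independent of \(\bm w\).

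For multivaluativity I would fix all but one matroid \(\mathsf M_j\). The map \(\mathsf M_j\mapsto[\lambda_{-V_j}(\mathcal Q_{\mathsf M_j}^\vee)]\in K(X_E)[V_j]\) is valuative by \cite[Proposition~5.8]{BergetEurSpinkTseng2023}. Multiplying by the fixed classes and applying \(\chi\), which is linear, preserves valuativity.

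The main subtlety is bookkeeping rather than mathematics. The identification of \([m]\Psi\) must be done in the formal \(x\)-variables before substituting, because after the substitution \(x=1-Vt\) distinct monomials are no longer independent. One must also track signs, namely \((-1)^{|r'|}\) versus \((-1)^{|G|}\). The care needed here is only to note that both sides are written in the same monomial basis \(\{m\}\) of square-free divisors of \(\bm X\) prior to the substitution.
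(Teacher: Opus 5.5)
Your proposal is correct and follows the paper's proof essentially step for step. You compute the coefficients of \(\mathcal K_{\widetilde E}(\Delta;\bm 1-\bm x)\) via the face formula exactly as in \cref{lem:theorem-b-implies-a}, and match them with the fine coefficients \([m]\Psi\) of \cref{prop:k-fold-link-coefficient} in the formal \(x\)-variables. You then substitute \(x_i^{(j)}=1-V_jt_i\), specialize \(t_i=1\), and obtain multivaluativity from \cite[Proposition~5.8]{BergetEurSpinkTseng2023}, which is the same sequence of steps the paper uses.
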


\begin{proof}
  The coefficient calculation in the proof of \cref{lem:theorem-b-implies-a} shows that, after setting \(z_i^{(j)}=1-x_i^{(j)}\), the coefficient of a square-free monomial \(m\mid\bm X\) in \(\mathcal K_{\widetilde E}(\Delta;\bm z)\) is
  \[
    \begin{cases}
      -\widetilde\chi\bigl(\link_\Delta(m^\vee)\bigr),
      &m^\vee\in\Delta,\\
      0,&m^\vee\notin\Delta.
    \end{cases}
  \]
  By \cref{prop:k-fold-link-coefficient}, these are exactly the fine coefficients \([m]\Psi\).  This proves the first identity after the specialization \(z_i^{(j)}=V_jt_i\).

  Setting \(t_i=1\) gives the \(\Z^k\)-graded identity.  Finally, the assignment \(\mathsf{M}\to\lambda_{-V}\mathcal{Q}_\mathsf{M}^\vee\) is valuative \cite[Proposition~5.8]{BergetEurSpinkTseng2023}. This proves valuativity in each \(\mathsf M_j\).
\end{proof}

\begin{remark}
  When \(k=2\), such coefficients are also considered in
  \cite[Proposition~2.4]{berget2025externalactivitycomplexpair}, where they
  are interpreted as finely graded Betti numbers.  We refer the interested
  reader to their paper for details.
\end{remark}

Part~(2) of \cref{thm:B-CM-Delta} for loopless tuples is the following proposition, proved in \cref{sec:k-fold-point-complex-cm}.

\begin{proposition}\label{prop:k-fold-point-complex-cm}
  The simplicial complex \(\Delta\) is Cohen--Macaulay over \(\k\) of
  dimension
  \[
    \dim\Delta=n+\rho(E)-1.
  \]
\end{proposition}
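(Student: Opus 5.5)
The plan follows the strategy sketched in the introduction. We realize \(\operatorname{MultiProj}\k[\Delta]\) as the scheme-theoretic image of a tropical initial degeneration, invoke \cref{thm:efl-cm}, and lift Cohen--Macaulayness from that image back to the Stanley--Reisner ring by a chart analysis.

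\emph{Step 1: the degeneration.} Consider the loopless matroid \(\widetilde{\mathsf M}=\bigoplus_j\mathsf M_j\) on \(\widetilde E=[k]\times E\), and the sufficiently generic vector \(\widetilde w\in N_{\widetilde E,\R}\). Take the generalized permutohedron \(P=\sum_{i\in E}\operatorname{conv}\{e_{(j,i)}:j\in[k]\}\), a product of simplices. Its normal fan coarsens \(\Sigma_{\widetilde E}\), so \(X_P\) is a torus quotient of \(\prod_{i\in E}\PP^{k-1}\) with coordinates \(x_i^{(j)}\). By \cref{thm:efl-cm}(2), the image \(Z=f_P(\operatorname{ind}_{\widetilde w}\widetilde{\mathsf M})\) is Cohen--Macaulay.

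\emph{Step 2: identify \(Z\) combinatorially.} \(Z\) is a reduced union of torus orbit closures, since \(\operatorname{ind}\) is a reduced union of orbit closures by \cref{lem:efl-orbit-criterion}. I would show that the coordinate strata of \((\PP^{k-1})^n\) it contains are exactly those whose set of nonvanishing coordinates is a face of \(\Delta\) meeting every fiber \([k]\times\{i\}\).

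The translation goes as follows. A cone \(\widetilde\sigma\) of \(\Sigma_{\widetilde E}\) meets \(\widetilde w+\Sigma_{\widetilde{\mathsf M}}\) if and only if there is a point whose \(j\)-th block is \(u-w_j\) up to translation, with \(\operatorname{in}\mathsf M_j\) loopless. Under the relaxed-Bergman description, projecting to the \(P\)-normal fan records which coordinates \((j,i)\) are minimal in each fiber \(i\). Via \cref{lem:relaxed-bergman-completion} this becomes the condition \(\Loop(\operatorname{in}_{u-w_j}\mathsf M_j)\subseteq\) the vanishing set, that is, \(p\mid\lambda_{\bm w}(u)^\vee\).

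Consequently, the multigraded ideal of \(Z\) is the Stanley--Reisner ideal \(I_\Delta\), saturated with respect to the irrelevant ideal \(\bigcap_i(x_i^{(1)},\dots,x_i^{(k)})\). No face of \(\Delta\) misses a fiber in an essential way, because the \(\mathsf M_j\) have no common loop. The dimension count comes from comparing \(\dim Z\) with the maximal faces. We have \(\dim\operatorname{ind}=\rk\widetilde{\mathsf M}-1\), and the image dimension should equal \(\rho(E)\); the diagonal Dilworth truncation appears exactly as the rank of the projected Bergman fan. Hence \(\dim\k[\Delta]=\rho(E)+n\), so \(\dim\Delta=n+\rho(E)-1\).

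\emph{Step 3: lift to \(\k[\Delta]\).} On the affine chart \(\{x_i^{(j_i)}\neq0\ \forall i\}\) of \((\PP^{k-1})^n\), the coordinate ring of \(Z\) is \(\k[\Delta]\) localized at the corresponding vertices. This is the Stanley--Reisner ring of \(\link_\Delta(F)\) with Laurent variables for \(F=\{(j_i,i)\}\). Cohen--Macaulayness of \(Z\) therefore gives, by Reisner's criterion (\cref{thm:reisner-criterion}), the vanishing of reduced homology of the links \(\link_\Delta(G)\) for all faces \(G\) containing a transversal \(F\).

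For faces \(G\) not containing a transversal, a different argument is needed. I would use that \(\Delta\) has cone-like behaviour along the fibers. Alternatively, I would argue via the \(\mathbb Z^k\)-graded ring: \(\k[\Delta]\) is the multi-cone over \(Z\), and \(H^q(Z,\mathcal O_Z(\bm a))=0\) for \(q>0\) and \(\bm a\ge0\) follows from the \(Rf_*\) isomorphism in \cref{thm:efl-cm}(2) together with vanishing on \(\operatorname{ind}\) of nef twists. By the standard comparison of local cohomology of the section ring with sheaf cohomology, this yields depth.

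\emph{Main obstacle.} I expect Step 3 to be the hardest, specifically handling faces that omit some fiber (the "non-transversal" links) and the ghost-vertex issue. My first attempt would be an induction on \(n\) by deletion of a fiber element. The link of a face avoiding fiber \(i\) should be identified with an external activity complex of the tuple restricted to \(E\setminus\{i\}\), joined with a simplex, so that the loopless case (\cref{lem:k-fold-loop-reduction}) and induction apply.
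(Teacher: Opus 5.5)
You follow the paper's route in Steps 1--2 and in the transversal part of Step 3: \(\operatorname{MultiProj}\k[\Delta]=Y\), and for \(F\) meeting every \(S_i\) the chart gives \(Y\cap U_{\tau_F}\simeq\operatorname{Spec}\k[\link_\Delta F]\times\mathbb G_m^{|F|-n}\), so those links are Cohen--Macaulay. Two pieces are missing, however.

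First, the dimension is only asserted. Nothing you write shows that \(\dim Z=\rho(E)\). The paper obtains it as follows:
\begin{enumerate}
  \item Using \(Rg_*\mathcal O_W\simeq\mathcal O_Y\) and \cref{prop:efl-euler-characteristic}, one has \(\chi(Y,\mathcal O_Y(d_1,\ldots,d_n))=\chi(\mathsf N,\bigotimes_i\mathcal L_{S_i}^{\otimes d_i})\).
  \item This is expanded with the dragon Hall--Rado formula.
  \item Maximality of the Dilworth truncation identifies the index set with \(I(\rho)\cap\Z^E\), whose top degree is \(\rho(E)\).
\end{enumerate}
To pass from \(\dim Y\) to \(\dim\Delta\) you also need purity of \(Y\) (connected plus Cohen--Macaulay) and the fact that every facet of \(\Delta\) meets every \(S_i\). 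The latter does not come from ``no common loop'' (here all \(\mathsf M_j\) are loopless). It comes from replacing a witness point by its fiberwise minimizers, as in \cref{prop:k-fold-projected-tropical-model}.

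The essential gap is the faces \(F\) that miss some fiber \(S_i\), which include \(F=\varnothing\). Such faces are invisible on the charts of \(X_P\), so Cohen--Macaulayness of \(Y\) says nothing about their links, and none of your three suggestions supplies an argument.
\begin{itemize}
  \item ``Cone-like behaviour'' is not an argument.
  \item The cohomological route fails as stated. The relevant grading is the \(\Z^E\)-grading \(\deg x_i^{(j)}=e_i\), not a \(\Z^k\)-grading. Sheaf cohomology of \(Z\) computes local cohomology supported on the irrelevant ideal \(\prod_i(x_i^{(1)},\ldots,x_i^{(k)})\), not on the homogeneous maximal ideal. Vanishing only for twists \(\bm a\geq0\) cannot control depth, since by Hochster's formula the homology of links of nonempty faces sits in multidegrees with negative entries.
  \item The claimed identification of such a link with a smaller external activity complex joined with a simplex is unsupported, and nothing in the construction provides a cone point.
\end{itemize}

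The missing idea is topological. Let \(\mathcal U_G\) be the set of \(u\) at which every element of \(G^{(j)}\) is a nonloop of \(\operatorname{in}_{u-w_j}\mathsf M_j\) for all \(j\). Each \(\mathcal U_G\) is closed and tropically convex, being an intersection of translated relaxed Bergman fans. It is nonempty if and only if \(G\in\Delta\), and \(\bigcap_{v\in J}\mathcal U_{F\cup\{v\}}=\mathcal U_{F\cup J}\). The nerve theorem therefore gives \(|\link_\Delta F|\simeq\bigcup_{v\notin F}\mathcal U_{F\cup\{v\}}\). Now flow along \(\bar e_i\) until \(i\) first becomes a nonloop of some \(\operatorname{in}_{u-w_j}\mathsf M_j\). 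This flow destroys no existing nonloop, so it deformation retracts both that union and the contractible set \(\mathcal U_F\) onto the same subset \(\mathcal U_F\cap\mathcal A_i\). Hence these links are contractible, and Reisner's criterion finishes the proof.
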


\begin{remark}
  When \(k=2\), \(\rho(E)=\rk D\), so the dimension and Cohen--Macaulay assertions agree with \cite[Proposition~4.19 and Theorem~7.2] {berget2025externalactivitycomplexpair}.
\end{remark}

\section{Cohen--Macaulayness of \texorpdfstring{\(\Delta\)}{Delta}}
\label{sec:k-fold-point-complex-cm}

In this section, we prove \cref{prop:k-fold-point-complex-cm}. We retain the notation and the looplessness assumption from the preceding section.

Recall that \(\widetilde E=[k]\times E\), and set
\[
  \mathsf N=\bigoplus_{j=1}^k\mathsf M_j,
  \qquad
  S_i=[k]\times\{i\}.
\]
For every nonempty \(A\subseteq\widetilde E\), write \(\Delta_A=\operatorname{conv}\{e_v:v\in A\}\), and set \(P=\sum_{i\in E}\Delta_{S_i}\). We identify \(v=(j,i)\in\widetilde E\) with the vertex \(x_i^{(j)}\), and write \(x_v=x_i^{(j)}\).
For \(F\subseteq\widetilde E\), write \(F^{(j)}=\{i:(j,i)\in F\}\).

As a Minkowski sum of coordinate simplices, \(P\) is a generalized permutohedron, so its normal fan coarsens \(\Sigma_{\widetilde E}\). The toric variety of this normal fan is
\[
  X_P\simeq(\PP^{k-1})^n,
\]
and the resulting toric morphism is
\[
  f_P:X_{\widetilde E}\longrightarrow X_P.
\]

Let
\[
  W=\operatorname{ind}_{\widetilde w}\mathsf N\subseteq X_{\widetilde E},
  \qquad
  Y=f_P(W)\subseteq X_P,
\]
where \(Y\) is the scheme-theoretic image.  By \cref{thm:efl-cm}(1), \(W\) is Cohen--Macaulay, geometrically connected, and geometrically reduced.  Since \(P\) is a generalized permutohedron, \cref{thm:efl-cm}(2) applies and shows that the natural map
\[
  \mathcal O_Y\longrightarrow Rf_{P*}\mathcal O_W
\]
is an isomorphism, and \(Y\) is Cohen--Macaulay.  In particular, \(\mathcal O_Y\simeq f_{P*}\mathcal O_W\) is a sheaf of reduced rings, so \(Y\) is also reduced.
Let
\[
  \delta:N\hookrightarrow N_{\widetilde E},
  \qquad
  \delta(u)_{(j,i)}=u_i,
\]
be the diagonal embedding, where the displayed formula is taken on representatives.  It is well-defined because replacing \(u\) by \(u+c\mathbf 1_E\) replaces \(\delta(u)\) by \(\delta(u)+c\mathbf 1_{\widetilde E}\).  If \(N_P\) denotes the cocharacter lattice of the dense torus of \(X_P\), then \(f_P\) is induced by the quotient map
\[
  q:N_{\widetilde E}\twoheadrightarrow N_P
  \simeq N_{\widetilde E}/\delta(N).
\]
We use the same symbols for the induced maps on their real extensions. Under the natural identification \(N_P\simeq(\Z^k/\Z(1,1,\ldots,1))^E\), the map \(q\) is induced by the blockwise quotient
\[
  (\Z^k)^E\longrightarrow
  (\Z^k/\Z(1,1,\ldots,1))^E.
\]
Explicitly, taking coordinates in any representative,
\[
  \ker(q)
  =\operatorname{im}(\delta)
  =
  \left\{
    y\in N_{\widetilde E,\R}:
    y_{(j,i)}=y_{(j',i)}
    \text{ for all }j,j'\in[k]\text{ and }i\in E
  \right\}.
\]

\begin{lemma}
  \label{lem:k-fold-projected-orbit-test}
  For every cone \(\tau\) of the normal fan of \(P\),
  \[
    V_P(\tau)\subseteq Y
    \quad\Longleftrightarrow\quad
    \tau\cap q\bigl(\widetilde w+|\Sigma_{\mathsf N}|\bigr)
    \neq\varnothing.
  \]
  Here \(V_P(\tau)\) means the orbit closure in \(X_P\).
\end{lemma}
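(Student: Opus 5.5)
We combine the orbit criterion of \cref{lem:efl-orbit-criterion} for \(W\) with the description of how torus-orbit closures behave under the toric morphism \(f_P\). First, \(W\) is a reduced union of orbit closures \(V(\sigma)\subseteq X_{\widetilde E}\), namely those with \(\sigma\cap(\widetilde w+|\Sigma_{\mathsf N}|)\neq\varnothing\). Since \(f_P\) is proper and toric, it maps each \(V(\sigma)\) onto \(V_P(\tau_\sigma)\), where \(\tau_\sigma\) is the smallest cone of the normal fan of \(P\) containing \(q(\sigma)\), equivalently the cone whose relative interior contains \(q(\sigma^\circ)\). Hence the scheme-theoretic image \(Y\) is supported on \(\bigcup_\sigma V_P(\tau_\sigma)\) over such \(\sigma\). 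Because \(Y\) is reduced, this union is \(Y\) as a scheme, and in any case the lemma only concerns containment of reduced orbit closures.

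Next we turn to the forward direction. An orbit closure \(V_P(\tau)\) is irreducible, so \(V_P(\tau)\subseteq Y\) forces \(V_P(\tau)\subseteq V_P(\tau_\sigma)\) for some \(\sigma\) meeting \(\widetilde w+|\Sigma_{\mathsf N}|\). This is equivalent to \(\tau_\sigma\subseteq\tau\). Pick \(y\in\sigma\cap(\widetilde w+|\Sigma_{\mathsf N}|)\). Then \(q(y)\in q(\sigma)\subseteq\tau_\sigma\subseteq\tau\), so \(\tau\cap q(\widetilde w+|\Sigma_{\mathsf N}|)\neq\varnothing\).

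For the converse, take \(y\in\widetilde w+|\Sigma_{\mathsf N}|\) with \(q(y)\in\tau\). Let \(\sigma\) be the braid cone of \(\Sigma_{\widetilde E}\) containing \(y\) in its relative interior. Then \(\sigma\) meets \(\widetilde w+|\Sigma_{\mathsf N}|\), so \(V(\sigma)\subseteq W\), and therefore \(V_P(\tau_\sigma)\subseteq Y\). Since \(q(y)\in q(\sigma^\circ)\subseteq\tau_\sigma^\circ\) and also \(q(y)\in\tau\), the cone \(\tau_\sigma\) is a face of \(\tau\). Indeed, the cone of the fan containing \(q(y)\) in its relative interior is a face of every fan cone containing \(q(y)\). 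So \(V_P(\tau)\subseteq V_P(\tau_\sigma)\subseteq Y\).

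The main point to verify carefully is the toric fact that \(f_P(V(\sigma))=V_P(\tau_\sigma)\) with \(\tau_\sigma\) the minimal cone containing \(q(\sigma)\), that is, containing \(q(\sigma^\circ)\) in its relative interior. Here \(f_P\) is not a refinement morphism but the composite of the linear map \(q\) with a coarsening. The standard statement \cite[Lemma~3.3.21]{cox2011toric} for toric morphisms compatible with fans gives \(f_P(O(\sigma))\subseteq O(\tau_\sigma)\). Surjectivity onto \(O(\tau_\sigma)\) follows because the induced map of tori \(N(\sigma)\to N_P(\tau_\sigma)\) is surjective, as \(q\) is surjective. Properness then gives equality of closures. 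One must also confirm that the compatibility holds, i.e.\ that \(q\) maps each braid cone of \(\widetilde E\) into a cone of the normal fan of \(P\). This holds because \(P\) is a generalized permutohedron on \(\widetilde E\): its normal fan coarsens \(\Sigma_{\widetilde E}\), and lives in \(N_{\widetilde E,\R}\) as the pullback along \(q\) of the fan of \((\PP^{k-1})^n\).
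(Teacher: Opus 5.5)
Your proof is correct and follows essentially the same route as the paper. Both arguments use the orbit criterion for \(W\), then the identity \(f_P(V(\sigma))=V_P(\tau_\sigma)\) (toric compatibility, surjectivity of the induced map of orbits, properness), then irreducibility for the forward direction and the relative-interior face argument for the converse; the paper packages that last step as \(q^{-1}(\tau)=\bigcup_{q(\sigma)\subseteq\tau}\sigma\). Your forward direction is slightly more economical, since it needs only the containment \(V_P(\tau)\subseteq V_P(\tau_\sigma)\) rather than the paper's equality \(\tau=\tau_\sigma\).
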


\begin{proof}
  By \cref{lem:efl-orbit-criterion}, \(V(\sigma)\subseteq W\) if and only if \(\sigma\cap(\widetilde w+|\Sigma_{\mathsf N}|)\neq\varnothing\).
  Let \(\tau_\sigma\) be the smallest cone in the normal fan of \(P\) that contains \(q(\sigma)\).
  By \cite[Lemma~3.3.21]{cox2011toric}, \(f_P(V(\sigma))\subseteq V_P(\tau_\sigma)\).
  Since \(q\) is surjective, the induced map from the dense orbit of \(V(\sigma)\) to the dense orbit of \(V_P(\tau_\sigma)\) is dominant.
  The morphism \(f_P|_{V(\sigma)}:V(\sigma)\to V_P(\tau_\sigma)\) is both dominant and proper, hence surjective.
  We conclude that
  \(
    f_P(V(\sigma))=V_P(\tau_\sigma).
  \)

  We claim that \(V_P(\tau)\subseteq Y\) if and only if
  \[
    \sigma\cap\bigl(\widetilde w+|\Sigma_{\mathsf N}|\bigr)\neq\varnothing
    \text{ and }q(\sigma)\subseteq\tau
    \text{ for some }\sigma.
  \]
  Indeed, if \(q(\sigma)\subseteq\tau\) and \(V(\sigma)\subseteq W\), then
  \(\tau_\sigma\subseteq\tau\), and hence
  \[
    V_P(\tau)\subseteq V_P(\tau_\sigma)
    =f_P(V(\sigma))\subseteq Y.
  \]
  Conversely, if \(V_P(\tau)\subseteq Y\), then \cref{def:tropical-initial-degeneration} realizes \(W\) as a torus-invariant union of orbit closures.  Hence \(W\cap f_P^{-1}(V_P(\tau))\) is again such a union, and
  \[
    V_P(\tau)=f_P\bigl(W\cap f_P^{-1}(V_P(\tau))\bigr)=\bigcup_{\sigma:V(\sigma)\subset W\cap f_P^{-1}(V_P(\tau))}V_P(\tau_\sigma).
  \]
  Since \(V_P(\tau)\) is irreducible, \(V_P(\tau)=V_P(\tau_\sigma)\), hence \(\tau=\tau_\sigma\) for some such \(\sigma\).

  Since \(q\) is compatible with the two fans,
  \[
    q^{-1}(\tau)=\bigcup_{q(\sigma)\subseteq\tau}\sigma.
  \]
  Indeed, for \(x\in q^{-1}(\tau)\), let \(\sigma\in\Sigma_{\widetilde E}\) be the unique cone whose relative interior contains \(x\), and again denote by \(\tau_\sigma\) the smallest target cone containing \(q(\sigma)\). By the minimality of \(\tau_\sigma\), the image of \(\sigma^\circ\) lies in \(\tau_\sigma^\circ\), so \(q(x)\in\tau\cap\tau_\sigma^\circ\).  Since two cones of a fan meet along a common face, this implies \(\tau_\sigma\subseteq\tau\), and hence \(q(\sigma)\subseteq\tau\).  The reverse inclusion in the displayed equality is immediate.
  The preceding equivalence is therefore the same as
  \[
    q^{-1}(\tau)
    \cap\bigl(\widetilde w+|\Sigma_{\mathsf N}|\bigr)
    \neq\varnothing,
  \]
  or, equivalently, the condition in the statement.
\end{proof}

For \(F\subseteq\widetilde E\) such that \(F\cap S_i\neq\varnothing\) for every \(i\in E\), let \(Z_F\subseteq(\PP^{k-1})^n\) be the product of the coordinate subspaces spanned by the sets \(F\cap S_i\).  If \(\tau_F\) is the inner normal cone of \(\sum_i\Delta_{F\cap S_i}\), then \(Z_F=V_P(\tau_F)\). Give \(\k[\Delta]\) the \(\Z^E\)-grading \(\deg x_i^{(j)}=e_i\).  We write \(\operatorname{MultiProj}\k[\Delta]\) for the resulting multigraded Stanley--Reisner subscheme of \(X_P\).  Since the defining ideal of \(\k[\Delta]\) is monomial, \(\operatorname{MultiProj}\k[\Delta]\) is torus-invariant.

\begin{proposition} \label{prop:k-fold-projected-tropical-model}
  For every \(F\subseteq\widetilde E\) meeting each \(S_i\), one has
  \[
    F\in\Delta
    \quad\Longleftrightarrow\quad
    \tau_F\cap q\bigl(\widetilde w+|\Sigma_{\mathsf N}|\bigr)
    \neq\varnothing
    \quad\Longleftrightarrow\quad
    Z_F\subseteq Y.
  \]
  Every facet \(G\) of \(\Delta\) satisfies \(G\cap S_i\neq\varnothing\) for every \(i\in E\), and
  \begin{equation}\label{eq:k-fold-efl-identification}
    \operatorname{MultiProj}\k[\Delta]=Y.
  \end{equation}
\end{proposition}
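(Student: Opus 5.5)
The plan is to prove the first equivalence by hand from the definition of \(\Delta\) and \cref{lem:relaxed-bergman-completion}. The second equivalence is then \cref{lem:k-fold-projected-orbit-test} applied to \(\tau=\tau_F\), since \(Z_F=V_P(\tau_F)\). The facet statement and \eqref{eq:k-fold-efl-identification} follow by comparing torus-invariant reduced subschemes.

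\emph{First equivalence.} I first make \(\tau_F\) explicit. Working on representatives in \(\R^{\widetilde E}\), a vector \(y\) maps into \(\tau_F\) exactly when, for every \(i\in E\), the minimum of \(j\mapsto y_{(j,i)}\) is attained at every \(j\) with \((j,i)\in F\). This is the inner normal cone of the face \(\Delta_{F\cap S_i}\) of \(\Delta_{S_i}\), and it is unaffected by \(\ker q=\operatorname{im}\delta\), since \(\delta\) shifts each block \(S_i\) by a constant.

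On the other side, \(F\in\Delta\) means there is \(u\in N_\R\) with \(F^{(j)}\cap\Loop(\operatorname{in}_{u-w_j}\mathsf M_j)=\varnothing\) for all \(j\). By \cref{lem:relaxed-bergman-completion} this holds if and only if there exist \(\epsilon_j\in C_{E\setminus F^{(j)}}\) with \(u-w_j+\epsilon_j\in\Sigma_{\mathsf M_j}\). Lifting to \(\R^E\), I take \(\epsilon_j\geq0\) with \(\epsilon_j\) vanishing on \(F^{(j)}\); the lifts of \(u-w_j+\epsilon_j\) can be chosen independently in each block because \(|\Sigma_{\mathsf N}|\) is the product of the lifted Bergman fans. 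Setting \(z=(u-w_j+\epsilon_j)_j\in|\Sigma_{\mathsf N}|\), we get \(\widetilde w+z-\delta(u)=(\epsilon_j)_j\). In each block \(S_i\) this vector has minimum \(0\), attained on \(F\cap S_i\neq\varnothing\), so \(q(\widetilde w+z)\in\tau_F\).

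Conversely, suppose \(q(\widetilde w+z)\in\tau_F\). I subtract from each block \(S_i\) its minimum, i.e. subtract \(\delta(u)\) for a suitable \(u\). This produces \(\epsilon=\widetilde w+z-\delta(u)\geq0\) vanishing on \(F\), and reading it blockwise gives the data above. The main thing to check carefully here is the bookkeeping of lifts and constants (per-block constants in \(\Sigma_{\mathsf N}\) versus the single global constant in \(N_{\widetilde E}\)) and the min-versus-max convention for \(\tau_F\). I expect this to be the only real obstacle.

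\emph{Facets meet every \(S_i\).} I use the same \(\epsilon\)-description, which makes sense for arbitrary \(F\). If \(F\in\Delta\) misses \(S_i\), take the witnessing data \((u,\epsilon_j)\). Let \(t=\min_j(\epsilon_j)_i\), attained at some \(j_0\), and replace \(u\) by \(u+te_i\) and each \(\epsilon_j\) by \(\epsilon_j-te_i\). The points \(u-w_j+\epsilon_j\) are unchanged, every \(\epsilon_j\) stays nonnegative, and \(\epsilon_{j_0}\) now vanishes at \(i\). Thus \(F\cup\{(j_0,i)\}\in\Delta\), so \(F\) is not a facet.

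\emph{Identification of schemes.} Both \(\operatorname{MultiProj}\k[\Delta]\) and \(Y\) are reduced, torus-invariant closed subschemes of \(X_P=(\PP^{k-1})^n\). The scheme \(Y\) is reduced by the discussion before \cref{lem:k-fold-projected-orbit-test}, and a Stanley--Reisner scheme is reduced. Each is therefore the union of the orbit closures it contains. The orbit closures of \(X_P\) are exactly the \(Z_F\) with \(F\) meeting every \(S_i\), and the multigraded Stanley--Reisner scheme contains \(Z_F\) if and only if \(F\in\Delta\). Faces missing some \(S_i\) are irrelevant here, and no information is lost since facets meet all \(S_i\). Combining this with the two equivalences shows that both schemes contain exactly the same \(Z_F\), which gives \eqref{eq:k-fold-efl-identification}.
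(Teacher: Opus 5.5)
Your proof is correct and takes essentially the same route as the paper. The first equivalence comes from \cref{lem:relaxed-bergman-completion}: you apply it to each \(\mathsf M_j\) separately and use that the lifted \(|\Sigma_{\mathsf N}|\) is a product of the lifted Bergman fans, whereas the paper applies it once to \(\mathsf N\) and identifies \(q^{-1}(\tau_F)=\ker(q)+C_{\widetilde E\setminus F}\). The second equivalence is \cref{lem:k-fold-projected-orbit-test}, and the scheme identity compares reduced torus-invariant subschemes orbit closure by orbit closure, exactly as in the paper. Your facet argument, which lowers coordinate \(i\) by its blockwise minimum to adjoin a single vertex, is the one-step version of the paper's construction of \(G\) from the blockwise argmin sets of \(y\).
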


\begin{proof}[Proof of \cref{prop:k-fold-projected-tropical-model}]
  Since initial matroids commute with direct sums,
  \[
    \operatorname{in}_{\delta(u)-\widetilde w}\mathsf N
    =
    \bigoplus_{j=1}^k\operatorname{in}_{u-w_j}\mathsf M_j.
  \]
  Hence the definition of \(\Delta\), followed by \cref{lem:relaxed-bergman-completion} applied once to \(\mathsf N\), gives
  \[
    \begin{aligned}
      F\in\Delta
      & \quad\Longleftrightarrow\quad
      \Loop\bigl(
        \operatorname{in}_{\delta(u)-\widetilde w}\mathsf N
      \bigr)\subseteq \widetilde{E}\setminus F
      \text{ for some }u\in N_\R       \\
      & \quad\Longleftrightarrow\quad
      \bigl(
        \delta(u)-\widetilde w+C_{\widetilde E\setminus F}
      \bigr)\cap|\Sigma_{\mathsf N}|\neq\varnothing
      \text{ for some }u\in N_\R.
    \end{aligned}
  \]
  After translating the last intersection by \(\widetilde w\), we see that its nonemptiness is equivalent to
  \[
    \bigl(\widetilde w+|\Sigma_{\mathsf N}|\bigr)
    \cap
    \bigl(\delta(u)+C_{\widetilde E\setminus F}\bigr)
    \neq\varnothing
    \quad\text{for some }u\in N_\R.
  \]
  Since \(\operatorname{im}(\delta)=\ker(q)\), this is precisely
  \begin{equation}\label{eq:k-fold-projected-face-test}
    F\in\Delta
    \quad\Longleftrightarrow\quad
    \bigl(\widetilde w+|\Sigma_{\mathsf N}|\bigr)
    \cap
    \bigl(\ker(q)+C_{\widetilde E\setminus F}\bigr)
    \neq\varnothing.
  \end{equation}

  If \(F\cap S_i\neq\varnothing\) for every \(i\), then \(Z_F\) is the product of the coordinate linear subspaces spanned by the sets \(F\cap S_i\). Thus one expects the rays of \(\tau_F\) to be indexed by the complementary coordinates \(\widetilde E\setminus F\). Formally, on the \(i\)-th simplex, the inner normal cone of \(\Delta_{F\cap S_i}\) is generated by the coordinate rays indexed by \(S_i\setminus(F\cap S_i)\). Since the normal fan of \(P\) is the product of the simplex normal fans,
  \[
    \tau_F=q\bigl(C_{\widetilde E\setminus F}\bigr).
  \]
  The blockwise description of \(q\) therefore gives
  \begin{equation}\label{eq:k-fold-target-cone-preimage}
    q^{-1}(\tau_F)=\ker(q)+C_{\widetilde E\setminus F}.
  \end{equation}
  Thus \eqref{eq:k-fold-projected-face-test} gives the first equivalence, and \cref{lem:k-fold-projected-orbit-test} gives the second.

  For arbitrary \(F\in\Delta\), we next show that \(F\) is contained in a larger face \(G\) such that \(G\cap S_i\neq\varnothing\) for every \(i\). Choose a point \(y\) in the intersection in \eqref{eq:k-fold-projected-face-test}.  Write \[y=\delta(u)+d\] with \(d\in C_{\widetilde E\setminus F}\), choose a lift
  \[
    d=\sum_{v\in\widetilde E\setminus F}d_ve_v,
    \qquad d_v\geq0,
  \]
  and choose compatible representatives of \(u\) and \(y\).
  Define \(G\subseteq\widetilde E\) by
  \[
    G\cap S_i
    =
    \left\{v\in S_i:y_v=\min_{v'\in S_i}y_{v'}\right\}
    \qquad(i\in E).
  \]
  For \(v\in F\cap S_i\), one has \(d_v=0\), whereas \(d_{v'}\geq0\) for every \(v'\in S_i\).  Hence every element of \(F\cap S_i\) minimizes \(y\) on \(S_i\), so \(F\subseteq G\); each \(G\cap S_i\) is nonempty by definition.  By definition of the normal cone, the face of \(P\) minimized by \(q(y)\) is \(\sum_i\Delta_{G\cap S_i}\), so \(y\in q^{-1}(\tau_G)\), \(q(y)\in q(\widetilde{w}+|\Sigma_\mathsf{N}|)\cap \tau_G\).  Now the established first equivalence gives \(G\in\Delta\).  In particular, every facet meets every \(S_i\).

  Both \(Y\) and \(\operatorname{MultiProj}\k[\Delta]\) are torus-invariant closed subschemes of \(X_P\).  By the definition of the Stanley--Reisner ideal, for every \(F\) meeting each \(S_i\),
  \[
    V_P(\tau_F)\subseteq\operatorname{MultiProj}\k[\Delta]
    \quad\Longleftrightarrow\quad
    F\in\Delta.
  \]
  Together with the preceding equivalences, this shows that the two schemes contain exactly the same orbit closures and hence have the same underlying closed set.  Since the Stanley--Reisner ideal of \(\Delta\) is square-free, \(\operatorname{MultiProj}\k[\Delta]\) is reduced.  Since \(Y\) is also
  reduced, the statement follows.
  \iffalse
  Let \(I_\Delta\) be the Stanley--Reisner
  ideal, and let
  \[
    \mathfrak B
    =
    \prod_{i\in E}(x_i^{(1)},\ldots,x_i^{(k)})
  \]
  be the Cox irrelevant ideal.  We claim that
  \(I_\Delta:\mathfrak B^\infty=I_\Delta\).  Both ideals are monomial, so it
  is enough to show that every monomial outside \(I_\Delta\) also lies
  outside \(I_\Delta:\mathfrak B^\infty\).

  Let \(m\notin I_\Delta\) be a monomial with support \(F\).  By the
  non-face description of a Stanley--Reisner ideal, this means precisely
  that \(F\) is a face of \(\Delta\).  Extend \(F\) to a face \(G\) meeting
  every \(S_i\), and choose \(v_i\in G\cap S_i\).  Then
  \[
    m\left(\prod_{i\in E}x_{v_i}\right)^r\notin I_\Delta
    \qquad(r\geq1),
  \]
  because its support is contained in the face \(G\), and hence is again a
  face of \(\Delta\).  Since
  \(\prod_i x_{v_i}\in\mathfrak B\), this shows that
  \(\mathfrak B^r m\nsubseteq I_\Delta\) for every \(r\geq1\).  Hence
  \(m\notin I_\Delta:\mathfrak B^\infty\), proving the claim.  Finally,
  \(I_\Delta\) is square-free, so both sides of
  \eqref{eq:k-fold-efl-identification} are reduced and the scheme equality
  follows.
  \fi
\end{proof}

\begin{proposition}
  \label{prop:k-fold-point-complex-geometry}
  The scheme \(Y\) is Cohen--Macaulay, and
  \(\Delta\) is pure of dimension
  \[
    \dim\Delta=n+\rho(E)-1.
  \]
\end{proposition}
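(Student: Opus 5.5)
The plan is to get Cohen--Macaulayness for free from \cref{thm:efl-cm}, to get purity by comparing facets of \(\Delta\) with irreducible components of \(Y\), and to reduce the dimension formula to a tropical dimension count. Nothing new is needed for Cohen--Macaulayness: as recorded just before \cref{lem:k-fold-projected-orbit-test}, \cref{thm:efl-cm}(2) applies to the generalized permutohedron \(P\) and gives that \(Y=f_P(W)\) is Cohen--Macaulay. Moreover, \(W\) is connected by \cref{thm:efl-cm}(1), and \(Y\) is the image of \(W\), so \(Y\) is connected. Cohen--Macaulay local rings are equidimensional, so any two irreducible components of \(Y\) that meet have the same dimension. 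Since \(Y\) is connected, all irreducible components of \(Y\) therefore have a common dimension \(\dim Y\).

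Next I would transfer this equidimensionality to \(\Delta\) using \cref{prop:k-fold-projected-tropical-model}. Every facet \(G\) of \(\Delta\) meets each \(S_i\), and \(Z_G=V_P(\tau_G)\subseteq Y\). By \eqref{eq:k-fold-efl-identification}, the irreducible components of \(Y=\operatorname{MultiProj}\k[\Delta]\) are exactly the \(Z_G\) for facets \(G\), because inclusion of faces corresponds to inclusion of the \(Z_F\). Since \(Z_G\) is a product of projective spaces of dimensions \(|G\cap S_i|-1\),
\[
  \dim Z_G=\sum_{i\in E}\bigl(|G\cap S_i|-1\bigr)=|G|-n .
\]
Equidimensionality of \(Y\) thus forces every facet to have cardinality \(n+\dim Y\). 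Hence \(\Delta\) is pure of dimension \(n+\dim Y-1\), and it remains to prove \(\dim Y=\rho(E)\).

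For the dimension, I would use \cref{lem:k-fold-projected-orbit-test}: \(\dim Y\) equals the maximal dimension of the polyhedral set \(q(\widetilde w+|\Sigma_{\mathsf N}|)\). The translation does not affect this dimension, so \(\dim Y=\max_\sigma \dim q(\sigma)\) over cones \(\sigma=\sigma_{\mathcal F_1}\times\cdots\times\sigma_{\mathcal F_k}\) of \(\Sigma_{\mathsf N}\). For such a cone,
\[
  \dim q(\sigma)=\dim\sigma-\dim\bigl(\operatorname{span}\sigma\cap\ker q\bigr),
\]
where \(\ker q=\operatorname{im}\delta\) is the diagonal. The vectors \(\delta(u)\) in \(\operatorname{span}\sigma\) are those \(u\) lying in every \(\operatorname{span}\sigma_{\mathcal F_j}\), i.e.\ constant on the blocks of the common refinement of the partitions induced by the flags. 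I would compute this maximum as follows.
\begin{itemize}
  \item \emph{Upper bound.} A partition \(E=A_1\sqcup\cdots\sqcup A_s\) attaining \(\rho(E)\) should give \(\dim q(\sigma)\leq\sum_q\bigl(\sum_j\rk_{\mathsf M_j}(A_q)-1\bigr)\) for every cone \(\sigma\).
  \item \emph{Lower bound.} A suitable choice of flags of flats, adapted to a maximizing partition via the min--max/Dilworth-truncation duality \cite[Theorem~48.2]{schrijver2003combinatorial}, should attain this bound.
\end{itemize}

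I expect this matroidal rank computation to be the main obstacle. If it proves awkward, the fallback is realizable matroids in characteristic zero, where \(Y\) is a degeneration of the image of \(\prod_j L_j\) in \((\PP^{k-1})^n\), whose dimension is classically the Dilworth-truncation rank. One would then extend to all matroids by valuativity of the \(K\)-polynomial (\cref{prop:k-fold-k-polynomial-loopless}), reading \(\dim\Delta\) off from the lowest total degree of \(\mathcal K_{\bm{\mathsf M}}(1-U_1,\ldots,1-U_k)\).
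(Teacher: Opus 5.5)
Your handling of Cohen--Macaulayness, connectedness, equidimensionality, and the matching of the irreducible components of \(Y\) with the \(Z_G\) for facets \(G\) is correct and is exactly what the paper does. It reduces everything to \(\dim Y=\rho(E)\), which is the real content of the proposition, and your proposal does not prove it.

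First, \cref{lem:k-fold-projected-orbit-test} only gives \(\dim Y=n(k-1)-\min\{\dim\tau:\tau\cap q(\widetilde w+|\Sigma_{\mathsf N}|)\neq\varnothing\}\). Genericity of \(\widetilde w\) on \(\Sigma_{\widetilde E}\), together with \(\ker q\subseteq q^{-1}(\tau)\), does yield \(\dim Y\leq\dim q(\widetilde w+|\Sigma_{\mathsf N}|)\). The reverse inequality, however, needs some cone of complementary dimension to actually meet this set, and that is not automatic for unbounded pieces. For example, the half-line \(a+\R_{\geq0}(e_1+e_2)\) with \(a\) in the open positive quadrant stays inside the interior of a maximal cone of the fan of \(\PP^2\) and meets no ray.

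Second, the equality \(\max_\sigma\dim q(\sigma)=\rho(E)\) is left as two ``should'' statements. For a maximal cone given by a complete flag of flats \(G_1\subsetneq\cdots\subsetneq G_R\) of \(\mathsf N\), \(\dim q(\sigma)\) equals \(\sum_j\rk\mathsf M_j\) minus the number of connected components of the graph on \(E\) that joins \(i\) and \(i'\) whenever \((j,i)\) and \((j',i')\) lie in a common difference \(G_a\setminus G_{a-1}\). Showing that the minimum of this count is \(\sum_j\rk\mathsf M_j-\rho(E)\) is a genuine Dilworth/matroid-union type theorem, not a routine check.

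Your fallback does not close the gap either. Valuativity cannot transport the lowest total degree of \(\mathcal K_{\bm{\mathsf M}}(1-U_1,\ldots,1-U_k)\) from realizable tuples: the realizable tuples in such a linear combination have varying \(\rho(E)\) (or even common loops), and their low-degree terms can cancel. Moreover, identifying \(\dim Y\) with the dimension of the image of \(\prod_jL_j\) would itself need a flatness argument.

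The missing idea, which is how the paper proceeds, is to read \(\dim Y\) off its Hilbert polynomial rather than from a tropical dimension count. By \cref{thm:efl-cm}(2) and the projection formula, \(\chi(Y,\mathcal O_Y(d_1,\ldots,d_n))=\chi\bigl(W,\bigotimes_i\mathcal L_{S_i}^{\otimes d_i}|_W\bigr)\). By \cref{prop:efl-euler-characteristic}, this equals \(\chi_{\mathsf N}\bigl(\bigotimes_i\mathcal L_{S_i}^{\otimes d_i}\bigr)\). The dragon Hall--Rado formula of \cite{eur2026ktheoreticpositivity} evaluates this as \(\sum_u\prod_i\binom{d_i+u_i-1}{u_i}\), summed over \(u\in\Z_{\geq0}^E\) with \(u(A)\leq f(A)=\sum_j\rk_{\mathsf M_j}(A)-1\) for all nonempty \(A\). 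The maximality property of the Dilworth truncation then identifies these \(u\) with the lattice points of \(I(\rho)\). The top-degree part is indexed by \(\mathcal B(\rho)\neq\varnothing\), so \(\dim Y=\rho(E)\) follows with no transversality or matroid-union argument. Comparing top-degree parts component by component also gives the facet--basis bijection of \cref{rmk:facet-basis-bijection}, which is needed later and which your route would not provide.
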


\begin{proof}
  By \cref{thm:efl-cm}(1), \(W\) is connected, so its image \(Y\) is connected. The fact that \(Y\) is Cohen--Macaulay was already established at the beginning of this section. We will deduce the dimension by calculating \(\chi(Y,\mathcal{O}_Y(d_1,\ldots,d_n))\) for our \(Y\subseteq (\mathbb{P}^{k-1})^n\)

  Recall that \(W=\operatorname{ind}_{\widetilde{w}}\mathsf{N}\subseteq X_{\widetilde{E}}\).
  Let \(g:W\to Y\) be the restriction of \(f_P\).  For every nonempty \(S\subseteq\widetilde E\), let \(\mathcal L_S\) be the toric line bundle on \(X_{\widetilde E}\) associated with the simplex \(\Delta_S\).  By construction of \(f_P\),
  \[
    g^*\mathcal O_Y(d_1,\ldots,d_n)
    =
    \left.\bigotimes_{i\in E}\mathcal L_{S_i}^{\otimes d_i}\right|_W.
  \]
  \Cref{thm:efl-cm}(2) gives \(Rg_*\mathcal O_W\simeq\mathcal O_Y\).  The projection formula, followed by \cref{prop:efl-euler-characteristic}, gives
  \[
    \chi\bigl(Y,\mathcal O_Y(d_1,\ldots,d_n)\bigr)
    =
    \chi\left(
      W,
      \left.\bigotimes_{i\in E}\mathcal L_{S_i}^{\otimes d_i}
      \right|_W
    \right)
    =
    \chi\left(
      \mathsf N,
      \bigotimes_{i\in E}\mathcal L_{S_i}^{\otimes d_i}
    \right).
  \]
  Proposition~2.15 of \cite{eur2026ktheoreticpositivity} states
  \[
    \chi\left(
      \mathsf N,
      \bigotimes_{\varnothing\neq S\subseteq\widetilde E}
      \mathcal L_S^{\otimes t_S}
    \right)
    =
    \sum_{\bm k\text{ satisfies the dragon Hall--Rado condition}}
    \bm t^{(\bm k)}.
  \]
  Here
  \[
    t^{(r)}=\binom{t+r-1}{r},
    \qquad
    \bm t^{(\bm k)}
    =
    \prod_{\varnothing\neq S\subseteq\widetilde E}t_S^{(k_S)},
  \]
  where \(r\in\Z_{\geq0}\) and \(\bm k=(k_S)_{\varnothing\neq S\subseteq\widetilde E}\).  The condition on \(\bm k\) means that the sequence in which each \(S\) occurs \(k_S\) times satisfies the dragon Hall--Rado condition.
  Equivalently,
  \[
    \rk_{\mathsf N}\left(\bigcup_{S\in\mathcal A}S\right)
    \geq
    1+\sum_{S\in\mathcal A}k_S
  \]
  for every nonempty subcollection \(\mathcal A\subseteq2^{\widetilde E}\setminus\{\varnothing\}\).

  We apply this formula by setting
  \[
    t_S=
    \begin{cases}
      d_i, & \text{ if }S=S_i, i\in E, \\
      0,   & \text{otherwise.}
    \end{cases}
  \]
  If \(S\notin\{S_i:i\in E\}\), then
  \[t_S^{(k_S)}=\binom{k_S-1}{k_S}=
    \begin{cases}
      0,& k_S> 0,\\
      1,& k_S=0.
  \end{cases}\]
  Thus, if \(\bm t^{(\bm k)}\neq 0\), then \(\bm k\) is supported on \(\{S_i:i\in E\}\).  To simplify the notation, write \(u=(u_i)_{i\in E}\in\Z_{\geq0}^E\), where \(u_i=k_{S_i}\).  The specialized formula is
  \begin{equation}\label{eq:k-fold-snapper}
    \chi\bigl(Y,\mathcal O_Y(d_1,\ldots,d_n)\bigr)
    =
    \sum_{\substack{u\in\Z_{\geq0}^E\\
    }}
    \prod_{i\in E}\binom{d_i+u_i-1}{u_i},
  \end{equation}
  where in the summation \(u\) satisfies the dragon Hall--Rado condition for \(\mathsf N\) and \((S_i)_{i\in E}\).
  We claim that this is equivalent to saying
  \[
    u(A)\leq\rho(A)
    \qquad\text{for every }A\subseteq E.
  \]
  Equivalently, these vectors are the integral points of the polymatroid polytope \(I(\rho)\), defined in \cref{subsec:matroids-polymatroids}.

  To prove the claim, fix \(u\in\Z_{\geq0}^E\) and set \(f(A)=\sum_{j=1}^k\rk_{\mathsf M_j}(A)-1\), as in \cref{prop:rho-is-polymatroid}. It is enough to test the dragon Hall--Rado inequalities on collections of the form \(\{S_i:i\in A\}\).  Indeed, for any nonempty collection \(\mathcal A\), set \(A=\{i\in E:S_i\in\mathcal A\}\).
  Then, since \(k_S=0\) for \(S\neq S_i\),
  \[
    \sum_{S\in\mathcal A}k_S=\sum_{i\in A}k_{S_i} = u(A),
    \qquad
    \rk_{\mathsf N}\left(\bigcup_{S\in\mathcal A}S\right)
    \geq
    \rk_{\mathsf N}\left(\bigcup_{i\in A}S_i\right).
  \]
  If \(A\neq\varnothing\), these relations show that the dragon Hall--Rado condition for \(\{S_i:i\in A\}\) implies that condition for \(\mathcal A\).  If \(A=\varnothing\), then \(u(A)=0\); the required lower bound is one, and the inequality is automatic because \(\mathsf N\) is loopless. Therefore the dragon Hall--Rado condition is equivalent to
  \[
    u(A)
    \leq
    \rk_{\mathsf N}\left(\bigcup_{i\in A}S_i\right)-1
    =\sum_{j=1}^k\rk_{\mathsf M_j}(A)-1
    =f(A)
    \qquad(\varnothing\neq A\subseteq E).
  \]

  Consider the function \(A\mapsto u(A)\). This function is modular and maps \(\varnothing\) to \(0\). The maximality property of the Dilworth truncation \cite[Section~48.2]{schrijver2003combinatorial} says that \(\rho=\widehat f\) is the largest normalized submodular function bounded above by \(f\) on nonempty subsets.  Thus \(u(A)\leq f(A)\) for every nonempty \(A\) implies \(u(A)\leq\rho(A)\) for every \(A\subseteq E\).  Conversely, if \(u(A)\leq\rho(A)\) for every \(A\subseteq E\), then \(u(A)\leq f(A)\) for every nonempty \(A\), since \(\rho(A)\leq f(A)\).  This proves the claim.

  Finally, to derive the dimension of \(Y\) and \(\Delta\), consider the top-degree part of \eqref{eq:k-fold-snapper}.  The summand indexed by \(u\) has degree \(u(E)\leq\rho(E)\), so the top-degree terms are indexed by \(u(E)=\rho(E)\), namely by the set \(\mathcal B(\rho)\) of integer bases defined in \cref{subsec:matroids-polymatroids}.

  Since \(\rho\) is an integral polymatroid, \(\mathcal B(\rho)\) is nonempty.  Therefore, the polynomial in \eqref{eq:k-fold-snapper} has degree \(\rho(E)\), with top-degree part
  \begin{equation}\label{eq:k-fold-snapper-top-degree}
    \sum_{b\in\mathcal B(\rho)}
    \prod_{i\in E}\frac{d_i^{b_i}}{b_i!}.
  \end{equation}
  Since \(\mathcal O_{X_P}(1,\ldots,1)\) is ample, this total degree is \(\dim Y\).  Hence \(Y\) has dimension \(\rho(E)\).  Because \(Y\) is connected and Cohen--Macaulay, it is pure-dimensional.  Moreover, \(Y\) is the union of the \(Z_F\), so its irreducible components are
  \[
    Z_F\simeq\prod_{i\in E}\PP^{|F\cap S_i|-1},
  \]
  indexed by the facets \(F\) of \(\Delta\).  Purity gives \(|F|-n=\rho(E)\), hence \(\dim\Delta=n+\rho(E)-1\).
\end{proof}

\begin{remark}\label{rmk:facet-basis-bijection}
  The same computation shows that
  \[
    F\longmapsto
    \bigl(|F\cap S_i|-1\bigr)_{i\in E}
  \]
  is a bijection from the facets of \(\Delta\) to \(\mathcal B(\rho)\).
  Indeed, because \(Y\) is reduced and pure, its top-degree part is the sum of the top-degree parts of its components.  The component \(Z_F\) contributes
  \[
    \prod_{i\in E}
    \frac{d_i^{|F\cap S_i|-1}}{(|F\cap S_i|-1)!}
  \]
  to the top-degree part. Comparing these component contributions with \eqref{eq:k-fold-snapper-top-degree} gives
  \[
    \#\left\{
      F\text{ facet}:|F\cap S_i|-1=b_i\text{ for all }i
    \right\}
    =
    \begin{cases}
      1, & b\in\mathcal B(\rho),    \\
      0, & b\notin\mathcal B(\rho).
    \end{cases}
  \]

  When \(k=2\), \(\rho\) is the rank function of \(D(\mathsf M_1,\mathsf M_2)\), so \(\mathcal B(\rho)\) consists of the indicator vectors of its bases.  The bijection above sends a facet \(F\) to
  \[
    B_F=\{i\in E:|F\cap S_i|=2\}.
  \]
  In the notation of Berget--Fink, \(B_F\) is the set of indices \(i\) for which both \(x_i\) and \(y_i\) occur in \(F\).  Thus the bijection above recovers their indexing of facets by bases.  Their definition \cite[Definition~4.18]{berget2025externalactivitycomplexpair} additionally describes the facet indexed by \(B\): for each \(i\notin B\), external activity determines whether \(x_i\) or \(y_i\) occurs.
\end{remark}

We are now in a position to prove \cref{prop:k-fold-point-complex-cm}.
\begin{proof}[Proof of \cref{prop:k-fold-point-complex-cm}]
  The dimension assertion follows from \cref{prop:k-fold-point-complex-geometry}.  It remains to prove that \(\Delta\) is Cohen--Macaulay.

  Fix \(F\in\Delta\), and set \(L=\link_\Delta(F)\).  First suppose that \(F\) meets every \(S_i\).  We claim that \(L\) is Cohen--Macaulay. The key input is the Cohen--Macaulayness of \(Y=\operatorname{MultiProj}\k[\Delta]\).

  Let \(U_{\tau_F}\subseteq X_P\) be the affine toric chart on which every variable indexed by \(F\) is nonzero.  Choose one \(b_i\in F\cap S_i\) for every \(i\), and set
  \[
    z_v=\frac{x_v}{x_{b_i}}
    \quad(v\in S_i\setminus F),
    \qquad
    t_f=\frac{x_f}{x_{b_i}}
    \quad(f\in F\cap S_i\setminus\{b_i\}).
  \]
  For \(J\subseteq\widetilde E\setminus F\), the monomial \(\prod_{v\in J}z_v\) lies in the localized Stanley--Reisner ideal exactly when \(F\cup J\notin\Delta\), or equivalently when \(J\notin L\).  Thus
  \[
    \k[L]
    \simeq
    \k[z_v:v\in\widetilde E\setminus F]
    /(\text{nonface monomials of }L).
  \]
  Hence
  \[
    \Gamma(Y\cap U_{\tau_F},\mathcal O_Y)
    \simeq
    \k[L][t_f^{\pm1}:f\in F\setminus\{b_i:i\in E\}].
  \]
  Consequently,
  \[
    Y\cap U_{\tau_F}
    \simeq
    \operatorname{Spec}\k[L]\times(\mathbb G_m)^{|F|-n}.
  \]
  The left-hand side is Cohen--Macaulay because it is open in \(Y\).  Since Laurent polynomial extension reflects Cohen--Macaulayness, \(\k[L]\) is Cohen--Macaulay, so \(L\) is Cohen--Macaulay by \cref{def:cm-simplicial-complex}.

  Now assume \(F\cap S_i=\varnothing\) for some \(i\).  Since every facet meets every \(S_i\) by \cref{prop:k-fold-projected-tropical-model}, the face \(F\) is not a facet, so \(L\) has at least one vertex.  We claim that \(L\) is contractible.

  For \(G\subseteq\widetilde E\), identify \(G\) with the corresponding square-free divisor of \(\bm X\), as in \cref{def:k-fold-external-activity-complex}, and set
  \[
    \mathcal U_G=Y_{G^\vee}.
  \]
  We use \(\mathcal U\) instead of \(Y\) to avoid conflict with the scheme \(Y=f_P(W)\) already fixed in this section.  The description of \(Y_m\) in the proof of \cref{lem:k-fold-Ym} gives
  \[
    \mathcal U_G =
    \left\{
      u\in N_\R:
      G^{(j)}\cap
      \Loop\bigl(\operatorname{in}_{u-w_j}\mathsf M_j\bigr)=\varnothing
      \text{ for every }j\in[k]
    \right\}.
  \]
  By its definition, \(\mathcal U_G\) is a union of cells of the common polyhedral refinement.  Moreover, \cref{lem:k-fold-Ym} shows that it is closed and tropically convex and that
  \[
    \mathcal U_G\neq\varnothing
    \quad\Longleftrightarrow\quad
    G\in\Delta.
  \]
  When nonempty, it is therefore contractible by \cite[Theorem~2]{develin2004tropical}.
  Let
  \[
    \mathcal U_{>F}
    =
    \bigcup_{v\in\widetilde E\setminus F}\mathcal U_{F\cup\{v\}}.
  \]
  This set is nonempty because \(L\) has a vertex and the displayed equivalence holds. By the cell description above, \(\mathcal U_{>F}\) is a triangulable polyhedral set.  The nonempty sets \(\mathcal U_{F\cup\{v\}}\), indexed by the vertices \(v\) of \(L\), form a finite closed cover of \(\mathcal U_{>F}\).  For every nonempty set \(J\) of vertices of \(L\),
  \[
    \bigcap_{v\in J}\mathcal U_{F\cup\{v\}}
    =\mathcal U_{F\cup J},
  \]
  which is nonempty and contractible exactly when \(J\in L\).  Thus the nerve of this cover is \(L\), and the nerve theorem \cite[Theorem~10.7]{bjorner1995topological} gives
  \[
    |\link_\Delta(F)|\simeq\mathcal U_{>F}.
  \]
  It remains to prove that \(\mathcal U_{>F}\) is contractible.
  For \(i\in E\), set
  \[
    \mathcal A_i
    =\left\{
      u\in N_\R:
      i\notin\Loop\bigl(\operatorname{in}_{u-w_j}\mathsf M_j\bigr)
      \text{ for some }j\in[k]
    \right\}.
  \]
  Choose \(i\in E\) such that \(F\cap S_i=\varnothing\).  Then
  \begin{equation}\label{eq:k-fold-retraction-target}
    \mathcal U_F\cap\mathcal A_i
    =
    \bigcup_{v\in S_i}\mathcal U_{F\cup\{v\}}
    \subseteq\mathcal U_{>F}.
  \end{equation}
  By \cref{lem:k-fold-first-wall-retraction} below, it is a strong deformation retract of both \(\mathcal U_F\) and \(\mathcal U_{>F}\).  Since \(\mathcal U_F\) is contractible, this proves the claim and hence the contractibility of \(\link_\Delta(F)\).

  Thus, for every face \(F\) of \(\Delta\), the link \(L=\link_\Delta(F)\) is Cohen--Macaulay if \(F\) meets every \(S_i\), and is contractible otherwise.  Applying \cref{thm:reisner-criterion} to \(L\) in the first case, and using contractibility in the second, give
  \[
    \widetilde H_q(L;\k)=0
    \qquad\text{for all }q<\dim L.
  \]
  A second application of \cref{thm:reisner-criterion} now shows that \(\Delta\) is Cohen--Macaulay.
\end{proof}

\begin{lemma}\label{lem:k-fold-first-wall-retraction}
  With the notation of the preceding proof, suppose that \(F\cap S_i=\varnothing\).  Then \(\mathcal U_F\cap\mathcal A_i\) is a strong deformation retract of both \(\mathcal U_F\) and \(\mathcal U_{>F}\).
\end{lemma}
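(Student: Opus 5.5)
The plan is to build one explicit flow that pushes points in the direction \(\bar e_i\) until they land in \(\mathcal A_i\), and to show it preserves both \(\mathcal U_F\) and \(\mathcal U_{>F}\). For \(u\in N_\R\), \(j\in[k]\) and \(x=u-w_j\), set
\[
  t_j(u)=\max_{B\in\mathcal B(\mathsf M_j)}x(B)-\max_{B\in\mathcal B(\mathsf M_j),\,i\in B}x(B),
\]
which is well defined because \(\mathsf M_j\) is loopless, so some basis contains \(i\). Let \(t(u)=\min_j t_j(u)\geq0\). Each \(t_j\) is a difference of maxima of linear functions, so \(t\) is continuous and piecewise linear. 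Moreover \(t_j(u)=0\) exactly when \(i\) lies in some \(x\)-maximal basis, that is, when \(i\notin\Loop(\operatorname{in}_{x}\mathsf M_j)\). Hence \(t^{-1}(0)=\mathcal A_i\). Define
\[
  H(u,s)=u+s\,t(u)\,\bar e_i,\qquad s\in[0,1].
\]
This homotopy is continuous and fixes \(\mathcal A_i\) pointwise. Since raising the \(i\)-th coordinate by \(t_j(u)\) makes a basis through \(i\) maximal for \(\mathsf M_j\), we get \(H(u,1)\in\mathcal A_i\).

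The key observation is that the initial matroids are frozen along each path before it reaches \(\mathcal A_i\). Take \(0\leq s<1\) with \(t(u)>0\). At the point \(u+s\,t(u)\bar e_i\), the element \(i\) is still a loop of every \(\operatorname{in}_{\cdot-w_j}\mathsf M_j\), because \(s\,t(u)<t_j(u)\) for all \(j\). Consequently no maximal basis contains \(i\). Raising the \(i\)-th coordinate leaves the weight of every basis avoiding \(i\) unchanged. It raises the weight of bases containing \(i\), but not enough to reach the maximum. So the set of maximal bases, and hence \(\operatorname{in}_{u+s t(u)\bar e_i-w_j}\mathsf M_j=\operatorname{in}_{u-w_j}\mathsf M_j\), is constant for \(s\in[0,1)\). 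In particular the loop label \(\lambda_{\bm w}\) is constant on the half-open path.

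Invariance then follows from closedness. Suppose \(u\in\mathcal U_G\) for some \(G\) with \(G\cap S_i=\varnothing\); the cases needed are \(G=F\) and \(G=F\cup\{v\}\) with \(v\notin S_i\). The condition defining \(\mathcal U_G\) only involves loop sets, so the whole segment \(H(u,[0,1))\) lies in \(\mathcal U_G\). Because \(\mathcal U_G\) is closed by \cref{lem:k-fold-Ym}, the endpoint lies there too, so \(H\) preserves \(\mathcal U_G\). This gives the retraction of \(\mathcal U_F\) onto \(\mathcal U_F\cap\mathcal A_i\).

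For \(\mathcal U_{>F}\), first note that \(\mathcal U_{>F}\subseteq\mathcal U_F\), since \(\mathcal U_{G}\subseteq\mathcal U_{G'}\) whenever \(G'\subseteq G\). Points of \(\mathcal U_{F\cup\{v\}}\) with \(v\in S_i\) already lie in \(\mathcal A_i\), so \(H\) fixes them. Points of the remaining sets \(\mathcal U_{F\cup\{v\}}\) are kept inside those sets by the invariance just shown. At time \(1\), every point lands in \(\mathcal U_F\cap\mathcal A_i\), which lies in \(\mathcal U_{>F}\) by \eqref{eq:k-fold-retraction-target}. So \(H\) restricts to a strong deformation retraction of \(\mathcal U_{>F}\) onto the same set.

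The main obstacle is the frozen-matroid claim, together with the continuity of the stopping time \(t\). Naively, raising \(u_i\) could create new loops among the other elements through ties with \(i\) at the minimum of a circuit. The point is that this cannot happen while \(i\) itself remains a loop in every factor \(\mathsf M_j\).
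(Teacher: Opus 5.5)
Your proof is correct and follows the paper's argument: it uses the same homotopy \(u\mapsto u+s\,t(u)\bar e_i\) with the same stopping time \(t(u)=\min_j(\alpha_j(u)-\beta_j(u))_+\), and shows that maximizing bases, hence nonloops, persist along the path, so each relevant \(\mathcal U_G\) is preserved. The differences are cosmetic: comparing with the maximum over all bases absorbs the paper's separate coloop case, and you reach the endpoint using closedness of \(\mathcal U_G\) rather than the paper's non-strict inequality \(\beta_j(u)+s\leq\alpha_j(u)\) for \(0\leq s\leq\tau_i(u)\) (you should also state, as the paper does, that \(t_j\) does not depend on the chosen lift of \(u\) to \(\R^E\)).
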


\begin{proof}
  If \(i\) is a coloop of \(\mathsf M_j\) for some \(j\in[k]\), then it is a nonloop in every initial matroid of \(\mathsf M_j\).  Hence \(\mathcal A_i=N_\R\) and \(\mathcal U_{F\cup\{x_i^{(j)}\}}=\mathcal U_F\).  Since \(F\cap S_i=\varnothing\), this set occurs in the union defining \(\mathcal U_{>F}\).  On the other hand, \(\mathcal U_{>F}\subseteq\mathcal U_F\).  It follows that \(\mathcal U_F\cap\mathcal A_i=\mathcal U_F=\mathcal U_{>F}\).
  We may therefore assume that \(i\) is not a coloop of any \(\mathsf M_j\).

  The homotopy will move in the \(\bar e_i\)-direction and stop when \(i\) first becomes a nonloop in one of the initial matroids.  Fix \(u\in\mathcal U_F\) and choose a lift to \(\R^E\), denoted again by \(u\).  All basis weights below are computed using this lift and the block lifts \(w_j\) supplied by \(\bm w\in\R^{\widetilde E}\).  Replacing \(u\) by \(u+c\mathbf 1_E\) adds \(c\rk(\mathsf M_j)\) to the weight of every basis of \(\mathsf M_j\).  Thus, for every \(j\), the difference between the two maxima below is independent of the lift, and hence so is the stopping time defined from these differences.

  Since each \(\mathsf M_j\) is loopless and \(i\) is not a coloop, each \(\mathsf M_j\) has bases both containing and omitting \(i\).
  For \(j\in[k]\), set
  \[
    \alpha_j(u) = \max_{\substack{D\in\mathcal B(\mathsf M_j)\\i\notin D}} (u-w_j)(D),
    \qquad
    \beta_j(u) = \max_{\substack{D\in\mathcal B(\mathsf M_j)\\i\in D}} (u-w_j)(D).
  \]
  Thus \(\alpha_j(u)\) and \(\beta_j(u)\) are the largest weights of bases omitting and containing \(i\), respectively.  Along the ray \(u+s\bar e_i\), represented on lifts by \(u+se_i\), one has
  \[
    \max_{\substack{D\in\mathcal B(\mathsf M_j)\\i\notin D}}
    (u+se_i-w_j)(D)=\alpha_j(u),
    \qquad
    \max_{\substack{D\in\mathcal B(\mathsf M_j)\\i\in D}}
    (u+se_i-w_j)(D)=\beta_j(u)+s.
  \]
  An element is a nonloop of an initial matroid exactly when it belongs to some maximum-weight basis.  Therefore, \(i\) is a nonloop of \(\operatorname{in}_{u+s\bar e_i-w_j}\mathsf M_j\) exactly when \(\beta_j(u)+s\geq\alpha_j(u)\).  Write \(r_+=\max\{r,0\}\), and define
  \[
    \tau_i(u)
    =\min_{j\in[k]}\bigl(\alpha_j(u)-\beta_j(u)\bigr)_+.
  \]
  The preceding comparison shows that
  \[
    \tau_i(u)
    =\min\{s\geq0:u+s\bar e_i\in\mathcal A_i\}.
  \]

  We next check that no existing nonloop disappears before this first wall.
  There is nothing to prove if \(\tau_i(u)=0\), so suppose that \(\tau_i(u)>0\).  Then \(u\notin\mathcal A_i\), so, for each \(j\), every \((u-w_j)\)-maximizing basis of \(\mathsf M_j\) omits \(i\).  Fix \(j\), and let \(a\) be a nonloop of \(\operatorname{in}_{u-w_j}\mathsf M_j\).  Choose a \((u-w_j)\)-maximizing basis \(D\) containing \(a\).  The basis \(D\) omits \(i\), so its weight remains \(\alpha_j(u)\) along the ray.
  Moreover, for \(0\leq s\leq\tau_i(u)\),
  \[
    \beta_j(u)+s\leq\alpha_j(u).
  \]
  Hence \(D\) remains maximizing, and \(a\) remains a nonloop.  After passing to \(N_\R\), it follows that the segment from \(u\) to \(u+\tau_i(u)\bar e_i\) preserves every nonloop and, in particular, stays inside \(\mathcal U_F\).

  Each maximum above is piecewise linear and continuous on \(\R^E\). Taking differences, positive parts, and a finite minimum shows that the descended function \(\tau_i\) is continuous and piecewise linear on \(N_\R\).

  Returning to \(N_\R\), define
  \[
    H_t(u)=u+t\tau_i(u)\bar e_i,
    \qquad 0\leq t\leq1,
  \]
  for \(u\in\mathcal U_F\).  The preceding paragraphs show that this is a homotopy in \(\mathcal U_F\) and that \(H_1(u)\in\mathcal A_i\).  If \(u\in\mathcal A_i\), then \(\tau_i(u)=0\), so \(H_t(u)=u\) for every \(t\).  Hence \(H\) is a strong deformation retraction of \(\mathcal U_F\) onto \(\mathcal U_F\cap\mathcal A_i\).

  It remains to restrict the same homotopy to \(\mathcal U_{>F}\).  If \(u\in\mathcal U_{>F}\), then \(u\in\mathcal U_{F\cup\{v\}}\) for some \(v\in\widetilde E\setminus F\).  Since the homotopy preserves every nonloop,
  \[
    H_t(u)\in\mathcal U_{F\cup\{v\}}\subseteq\mathcal U_{>F}
    \qquad(0\leq t\leq1).
  \]
  Thus the endpoint lies in \(\mathcal U_{>F}\cap\mathcal A_i\).  Moreover,
  \[
    \mathcal U_{>F}\cap\mathcal A_i
    =\mathcal U_F\cap\mathcal A_i.
  \]
  One inclusion follows from \(\mathcal U_{>F}\subseteq\mathcal U_F\), and the other is \eqref{eq:k-fold-retraction-target}.  The same homotopy therefore gives a strong deformation retraction of \(\mathcal U_{>F}\) onto \(\mathcal U_F\cap\mathcal A_i\).
\end{proof}

\begin{proof}[Proof of \cref{thm:B-CM-Delta}]
  By \cref{lem:k-fold-loop-reduction}, we may assume that every \(\mathsf M_j\) is loopless.  The \(K\)-polynomial identity and multivaluativity are given by \cref{prop:k-fold-k-polynomial-loopless}, and the Cohen--Macaulay and dimension assertions are given by \cref{prop:k-fold-point-complex-cm}.
\end{proof}

\begin{proof}[Proof of \cref{thm:A-positivity-K-poly}]
  Combine \cref{thm:B-CM-Delta} with \cref{lem:theorem-b-implies-a}.
\end{proof}

\section{Chern monomials and diagonal Dilworth bases}\label{sec:Chern-Dilworth}
In this section, we describe an application of the main results proved in the preceding sections.

Let \(\bm{\mathsf M}=(\mathsf M_1,\ldots,\mathsf M_k)\) be a \(k\)-tuple of connected matroids on \(E=[n]\), where \(n\geq2\) and \(k\geq\max\{2,n-1\}\), and set \(r_j=\rk(\mathsf M_j)\). Let \(\rho\) be the polymatroid rank function of \(D(\bm{\mathsf M})\):
\[
  \rho(A)=\min_{A=A_1\sqcup\cdots\sqcup A_s}\sum_{q=1}^s\left(\sum_{j=1}^k\rk_{\mathsf M_j}(A_q)-1\right), \qquad \rho(\varnothing)=0.
\]

\begin{lemma}\label{lem:rank-connected-tuple}
  Under the preceding assumption, \(\rho(E)=\sum_{j=1}^k r_j-1\).
\end{lemma}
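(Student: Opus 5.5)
The plan is to show that the trivial partition \(s=1\), \(A_1=E\), attains the minimum defining \(\rho(E)\). This gives the upper bound \(\rho(E)\leq\sum_j r_j-1\) at once. For the lower bound, it then suffices to check that every partition \(E=A_1\sqcup\cdots\sqcup A_s\) into \(s\geq2\) nonempty parts satisfies
\[
  \sum_{q=1}^s\left(\sum_{j=1}^k\rk_{\mathsf M_j}(A_q)-1\right)\geq\sum_{j=1}^k r_j-1 .
\]
After rearranging, this is equivalent to
\[
  \sum_{j=1}^k\left(\sum_{q=1}^s\rk_{\mathsf M_j}(A_q)-r_j\right)\geq s-1 .
\]

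The key step is a per-matroid estimate. For a connected matroid \(\mathsf M_j\) on \(E\) and any partition of \(E\) into \(s\geq2\) nonempty parts, I claim
\[
  \sum_q\rk_{\mathsf M_j}(A_q)\geq r_j+1 .
\]
To prove it, set \(B=A_2\sqcup\cdots\sqcup A_s\), which is a nonempty proper subset of \(E\). Subadditivity of rank (submodularity applied to disjoint sets, together with \(\rk(\varnothing)=0\)) gives \(\sum_{q\geq2}\rk(A_q)\geq\rk(B)\). Connectedness of \(\mathsf M_j\) means that \(E\) is not a separator, so \(\rk(A_1)+\rk(B)\geq r_j+1\) for the nonempty proper subset \(A_1\) with complement \(B\). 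Combining the two inequalities proves the claim. This estimate cannot be improved in general; for instance, a triangle split into singletons shows the excess can be exactly \(1\). That is why the hypothesis on \(k\) is needed.

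Summing the per-matroid estimate over \(j\) shows that the left-hand side of the rearranged inequality is at least \(k\). Every part is nonempty, so \(s\leq n\). Hence
\[
  k\geq n-1\geq s-1,
\]
which is the required bound. The hypothesis \(k\geq 2\) plays no role here beyond being part of the standing setup.

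The only point needing care is the connectedness inequality \(\rk(A)+\rk(E\setminus A)\geq r+1\) for nonempty proper \(A\). This is the standard characterization of connectivity, namely that \(E\) has no nontrivial separator. I would cite it, using the convention that a connected matroid on at least two elements has no loops or coloops, so that \(n\geq2\) makes the characterization apply. Everything else is bookkeeping.
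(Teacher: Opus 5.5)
Your proof is correct and follows the paper's argument essentially step for step. The trivial partition gives the upper bound. For \(s\geq2\), subadditivity together with the separator characterization of connectedness gives \(\sum_q\rk_{\mathsf M_j}(A_q)\geq r_j+1\) for each \(j\), and \(s\leq n\leq k+1\) then closes the argument. One wording slip: connectedness means that \(A_1\), not \(E\), fails to be a separator, which is what you correctly state at the end.
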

\begin{proof}
  Taking \(s=1\) and \(A_1=E\) gives \(\rho(E)\leq\sum_{j=1}^k r_j-1\). For the reverse inequality, let \(E=A_1\sqcup\cdots\sqcup A_s\) be a partition with \(s\geq2\). For every \(j\in[k]\), submodularity gives
  \[
    \sum_{q=1}^s \rk_{\mathsf M_j}(A_q)
    \geq \rk_{\mathsf M_j}(A_1)+\rk_{\mathsf M_j}(E\setminus A_1).
  \]
  In the notation of \cite[Proposition~4.2.1]{oxley2011matroid}, a subset \(T\subseteq E(\mathsf M)\) is a separator if and only if
  \[
    r_{\mathsf M}(T)+r_{\mathsf M}(E(\mathsf M)\setminus T)=r(\mathsf M).
  \]
  Applying this criterion with \(\mathsf M=\mathsf M_j\) and \(T=A_1\) and using the connectedness of \(\mathsf M_j\), we obtain
  \[
    \sum_{q=1}^s \rk_{\mathsf M_j}(A_q)\geq r_j+1.
  \]
  Since \(s\leq n\leq k+1\), every such partition satisfies
  \[
    \sum_{q=1}^s\left(\sum_{j=1}^k\rk_{\mathsf M_j}(A_q)-1\right)
    \geq \sum_{j=1}^k(r_j+1)-s
    \geq \sum_{j=1}^k r_j-1.
  \]
  Therefore the one-part partition attains the minimum value \(\sum_{j=1}^k r_j-1\).
\end{proof}

Recall that
\[
  \mathcal{B}(\rho)=\{b\in \mathbb{Z}^E_{\geq 0}:b(A)\leq \rho(A)\;\forall A\subseteq E,\ b(E)=\rho(E)\}
\]
is the set of integer bases of \(\rho\).

Choose a generic tuple of vectors \(\bm w\) as in \cref{subsec:external-activity-tuple}, and set \(\Delta=\Delta_{\bm w}(\bm{\mathsf{M}})\).
By \cref{rmk:facet-basis-bijection}, the map
\[
  F \mapsto (|F\cap S_i|-1)_{i\in E}
\]
is a bijection from the facets of \(\Delta\) to \(\mathcal{B}(\rho)\). We identify \(F\in \Delta\) with its vertex set as in \cref{def:k-fold-external-activity-complex}. Moreover, for \(F\in \Delta\), let
\[
  F^{(j)}=\{i\in E:(j,i)\in F\}.
\]

\thmDChernMonomials*

\begin{proof}
  If \(l_j>n-r_j\) for some \(j\), then \(c_{l_j}(\mathcal{Q}_{\mathsf M_j})=0\), since \(\mathcal{Q}_{\mathsf M_j}\) has rank \(n-r_j\). On the other hand, \(F^{(j)}\subseteq E\), so
  \[
    |F^{(j)}|\leq n<r_j+l_j
  \]
  for every facet \(F\) of \(\Delta\). Thus both sides of the claimed identity vanish. We may therefore assume that \(l_j\leq n-r_j\) for every \(j\in[k]\). Set \(d_j=n-r_j-l_j\geq0\). By \cref{lem:rank-connected-tuple},
  \[
    |\bm d|=kn-\sum_{j=1}^k r_j-|\bm l|=(k-1)n-\rho(E)=:c.
  \]
  Consider
  \[
    \mathcal{K}_{\bm{\mathsf{M}}}(1-U_1,\ldots,1-U_k)=\chi\left(X_E,\prod_{j=1}^k\lambda_{U_j-1}(\mathcal{Q}_{\mathsf M_j}^\vee)\right).
  \]
  Forgetting the \(H\)-action in \eqref{eq:k-fold-gw-expansion} and setting \(V_j=1-U_j\) gives
  \[
    [U_1^{d_1}\cdots U_k^{d_k}]\mathcal{K}_{\bm{\mathsf{M}}}(1-U_1,\ldots,1-U_k)=\sum_{m\mid\bm X}[m]\Psi,
  \]
  where the sum is over monomials \(m\mid\bm X\) satisfying \(|\operatorname{supp}_j(m)|=d_j\) for every \(j\). Every such monomial has \(|m|=|\bm d|=c\), and hence
  \[
    |m^\vee|=kn-|m|=kn-c=n+\rho(E)=\dim\Delta+1,
  \]
  where the last equality follows from \cref{prop:k-fold-point-complex-cm}. Consequently, if \(m^\vee\in\Delta\), then \(m^\vee\) is a facet of \(\Delta\). The link of a facet consists only of the empty face, so \cref{prop:k-fold-link-coefficient} gives
  \[
    [m]\Psi=
    \begin{cases}
      -\widetilde\chi(\{\varnothing\})=1,
      &m^\vee\in\Delta,\\
      0,&m^\vee\notin\Delta.
    \end{cases}
  \]
  Moreover, if \(F=m^\vee\) is a facet, then \(|\operatorname{supp}_j(m)|=n-|F^{(j)}|\). We conclude that
  \[
    [U_1^{d_1}\cdots U_k^{d_k}]\mathcal{K}_{\bm{\mathsf{M}}}(1-U_1,\ldots,1-U_k)=\#\{F\text{ facet of }\Delta:|F^{(j)}|=n-d_j\;\forall j\in [k]\}.
  \]
  On the other hand, the Hirzebruch--Riemann--Roch theorem gives
  \[
    \chi(X_E,\prod_{j=1}^k \lambda_{U_j-1}(\mathcal{Q}_{\mathsf M_j}^\vee))=\int_{X_E}\prod_{j=1}^k\ch(\lambda_{U_j-1}(\mathcal{Q}_{\mathsf M_j}^\vee))\td(X_E).
  \]
  Since \(\mathcal{Q}_{\mathsf M_j}\) has rank \(n-r_j\),
  \[
    [U^a]\ch(\lambda_{U-1}(\mathcal{Q}_{\mathsf M_j}^\vee))=c_{n-r_j-a}(\mathcal{Q}_{\mathsf M_j})+\text{terms of Chow degree greater than }n-r_j-a.
  \]
  This is \eqref{eq:lambda-chern-leading-term} with
  \(q_j=n-r_j\) and \(i=n-r_j-a\).
  Therefore,
  \[
    \begin{aligned}
      &[U_1^{d_1}\cdots U_k^{d_k}]\mathcal{K}_{\bm{\mathsf{M}}}(1-U_1,\ldots,1-U_k)\\
      &\qquad=\int_{X_E}\prod_{j=1}^k\bigl(c_{n-r_j-d_j}(\mathcal{Q}_{\mathsf M_j})+\text{terms of higher Chow degree}\bigr)\td(X_E).
    \end{aligned}
  \]
  Since \(\sum_{j=1}^{k}(n-r_j-d_j)=|\bm l|=n-1\), only the lowest-degree term from each factor can contribute to the integral. Hence,
  \[
    [U_1^{d_1}\cdots U_k^{d_k}]\mathcal{K}_{\bm{\mathsf{M}}}(1-U_1,\ldots,1-U_k)=\int_{X_E}\prod_{j=1}^k c_{n-r_j-d_j}(\mathcal{Q}_{\mathsf M_j}).
  \]
  Since \(n-r_j-d_j=l_j\) and \(n-d_j=r_j+l_j\) for every \(j\), comparing the two coefficient formulas proves the claimed identity.
\end{proof}

In particular, the theorem realizes these Chern numbers as the multidegree distribution of the facets of \(\Delta\). Although \(\Delta\) depends on the generic tuple \(\bm w\), this distribution does not. Their nonnegativity follows directly from the fan displacement rule, since the Chern classes \(c_i(\mathcal Q_{\mathsf M_j})\) are represented by nonnegative Minkowski weights. Moreover, \cite[Question~1.4]{BergetEurSpinkTseng2023} asks for combinatorial interpretations of products of Schur classes of \(\mathcal{S}_\mathsf{M}^\vee\) and \(\mathcal{Q}_\mathsf{M}\), together with analogous positivity and log-concavity properties. When specialized to \(\mathsf M_1=\cdots=\mathsf M_k=\mathsf M\), the theorem above partially answers the question for products of column Schur classes of \(\mathcal{Q}_\mathsf{M}\), since \(c_{l_j}(\mathcal{Q}_\mathsf{M})=s_{(1^{l_j})}(\mathcal{Q}_\mathsf{M})\).

\appendix
\crefalias{section}{appendix}
\renewcommand{\thetheorem}{\thesection.\arabic{theorem}}
\renewcommand{\thelemma}{\thetheorem}
\renewcommand{\theproposition}{\thetheorem}
\renewcommand{\thecorollary}{\thetheorem}
\renewcommand{\theconjecture}{\thetheorem}
\renewcommand{\theremark}{\thetheorem}
\renewcommand{\thedefinition}{\thetheorem}
\renewcommand{\theexample}{\thetheorem}
\section{An alternative proof of antiample positivity}\label{sec:ample-k-positivity}
In this appendix, we use Grothendieck weights to give an alternative proof of the following \(K\)-theoretic positivity result.

\corCAntiample*

We already deduced this result from \cref{thm:nef-tautological-positivity} in \cref{sec:nef-tautological-positivity}.  It is also a corollary of \cite[Theorem~B]{eur2025vanishingtheoremscombinatorialgeometries}.  We give the alternative proof because it serves as a model for the proof of \cref{thm:B-CM-Delta}.

We first rewrite \cref{thm:C-example-application} using the product rule.  Fix a sufficiently generic lattice vector \(w\in N\) as in \cref{subsec:grothendieck-weights}. To keep notation simple, we define
\[
  \Gamma_w = \{(\sigma,\tau):\sigma,\tau\in \Sigma_E,(\sigma+w)\cap\tau\neq\varnothing,
  \sigma\cap\tau=\{0\}.\}
\]
\begin{lemma}\label{lem:matroid-euler-product-rule}
  For an element \(\xi\in K(X_E)\) with corresponding GW \(g_\xi\), we have
  \[
    \chi_\mathsf{M}(\xi)=\sum_{\substack{(\sigma_\mathcal{F},\sigma_\mathcal{G})\in \Gamma_w\\\sigma_\mathcal{F}\in \Sigma_\mathsf{M}}}(-1)^{\ell(\mathcal{F})+\ell(\mathcal{G})-n+1}g_\xi(\mathcal{G}).
  \]
\end{lemma}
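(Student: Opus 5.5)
The plan is to combine the projection formula \eqref{eq:matroid-euler-as-product} with the product rule \cref{prop:gw-product-rule} at the zero cone, and then use that \(g_{\Delta_\mathsf M}\) is an indicator function.

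First, by \eqref{eq:matroid-euler-as-product} we have
\[
  \chi_\mathsf M(\xi)=\chi(X_E,\Delta_\mathsf M\xi)=(g_{\Delta_\mathsf M}g_\xi)(\{0\}).
\]
The class \(\Delta_\mathsf M\in K(X_E)\) is well defined: its Grothendieck weight is the zero extension \eqref{eq:bergman-gw} of the constant weight \(1\) on \(\Sigma_\mathsf M\), which is a Grothendieck weight on \(\Sigma_E\) because it is the pushforward. Alternatively, one may use \(\Delta_\mathsf M=\lambda_{-1}(\mathcal Q_\mathsf M^\vee)\), via \cref{cor:equivariant-single-tautological-gw} at \(v=-1\) after forgetting the torus action. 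In that formula the factor \(\prod_{i\in\Loop}(1-1)\) vanishes exactly when \(\operatorname{in}_\mathcal F\mathsf M\) has a loop, so \(g_{\Delta_\mathsf M}\) is the indicator function of \(\Sigma_\mathsf M\).

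Next, apply \cref{prop:gw-product-rule} with \(\gamma=\{0\}\), \(d=n-1\), \(g_1=g_{\Delta_\mathsf M}\) and \(g_2=g_\xi\). The braid fan is complete and strongly unimodular, which is the standing hypothesis of the paper (cited from \cite{wang2026grothendieckweights}). The condition \(\sigma,\tau\supseteq\{0\}\) is vacuous, and the remaining conditions are exactly \((\sigma,\tau)\in\Gamma_w\). The sign is
\[
  (-1)^{\dim\sigma+\dim\tau-(n-1)+0}=(-1)^{\ell(\mathcal F)+\ell(\mathcal G)-n+1}
\]
for \(\sigma=\sigma_\mathcal F\) and \(\tau=\sigma_\mathcal G\), using \(\ell(\mathcal F)=\dim\sigma_\mathcal F\).

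Finally, since \(g_{\Delta_\mathsf M}(\mathcal F)\in\{0,1\}\) equals \(1\) exactly when \(\sigma_\mathcal F\in\Sigma_\mathsf M\), the sum restricts to pairs with \(\sigma_\mathcal F\in\Sigma_\mathsf M\), and each such term is \(g_\xi(\mathcal G)\). This gives the stated formula.

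There is no genuine obstacle. The only points needing care are checking that the genericity of \(w\) used here matches that of \cref{prop:gw-product-rule}, which holds by the remark in \cref{subsec:tropical-initial-degeneration}, and confirming the sign convention.
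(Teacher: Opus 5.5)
Your proposal is correct and follows the paper's proof: you rewrite $\chi_\mathsf M(\xi)$ as $(\Delta_\mathsf M g_\xi)(\{0\})$ via \eqref{eq:matroid-euler-as-product}, apply \cref{prop:gw-product-rule} at $\gamma=\{0\}$ with $d=n-1$, and use that $g_{\Delta_\mathsf M}$ is the indicator function of $\Sigma_\mathsf M$ to restrict the sum. Your extra identification $\Delta_\mathsf M=\lambda_{-1}(\mathcal Q_\mathsf M^\vee)$ is a valid alternative justification, but the argument does not need it.
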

\begin{proof}
  By \eqref{eq:matroid-euler-as-product}, the left-hand side is \((\Delta_\mathsf M g_\xi)(\{0\})\).  Apply \cref{prop:gw-product-rule} with \(d=\dim X_E=n-1\).  The value of \(\Delta_\mathsf M\) is \(1\) precisely on the cones \(\sigma_\mathcal F\) indexed by flags of flats, by \eqref{eq:bergman-gw}, and boundedness is equivalent to \(\sigma_\mathcal F\cap\sigma_\mathcal G=\{0\}\).  The product rule is therefore exactly the displayed sum.
\end{proof}

To obtain positivity, we regroup the summation by the flag \(\mathcal{G}\). For a fixed cone \(\tau\in\Sigma_E\), set
\[
  c_\tau(\mathsf{M},w)=\sum_{\substack{\sigma\in\Sigma_\mathsf{M}\\(\sigma,\tau)\in\Gamma_w}}(-1)^{\dim\sigma+\dim\tau-n+1}.
\]
We suppress \(\mathsf{M},w\) from the notation henceforth.  Then
\[
  \chi_\mathsf{M}(\xi)=\sum_{\tau\in\Sigma_E}c_\tau g_\xi(\tau).
\]

For \(\tau\in\Sigma_E\), set
\[
  P_\tau=(\Sigma_{\mathsf M}+w)\cap\tau.
\]
The signs of the \(c_\tau\) are controlled by the following two complexes.

\begin{definition}\label{def:incidence-complex}
  Let \(\Pi_E\) be the standard permutohedron with normal fan \(\Sigma_E\), and let \(F_\gamma=\face_\gamma\Pi_E\) be the face with normal cone \(\gamma\).  The \emph{incidence complex} of \(\mathsf M\) and \(w\) is
  \[
    \mathcal C_w(\mathsf M)
    =\{F_\gamma\subseteq\Pi_E:P_\gamma\neq\varnothing\}.
  \]
  For \(\tau\in\Sigma_E\) with \(P_\tau\neq\varnothing\), set \(s=\dim\tau\) and label the rays of \(\tau\) by \([s]\).  For \(A\subseteq[s]\), let \(\tau_A\) be the cone generated by the rays in \(A\); since \(\Sigma_E\) is simplicial, every face of \(\tau\) is a unique \(\tau_A\), and \(\dim\tau_A=|A|\).  Set
  \[
    \Delta_\tau
    =\bigl\{[s]\setminus A:A\subseteq[s],\ P_{\tau_A}\neq\varnothing\bigr\}.
  \]
  We omit \(w\) and \(\mathsf M\) from both notations when they are clear.
\end{definition}

The set \(\Delta_\tau\) is a simplicial complex.  Indeed, if \(B=[s]\setminus A\in\Delta_\tau\) and \(B'\subseteq B\), then \(A'=[s]\setminus B'\) contains \(A\), so \(P_{\tau_A}\subseteq P_{\tau_{A'}}\) and hence \(B'\in\Delta_\tau\).  The empty face belongs to \(\Delta_\tau\) because it corresponds to \(A=[s]\) and \(P_\tau\neq\varnothing\).

\begin{proposition}\label{prop:sign-c-tau}
  For a loopless matroid \(\mathsf M\) of rank \(r\) on \(E\) and the fixed generic vector \(w\) above,
  \[
    (-1)^{n+r-\dim\tau}c_\tau(\mathsf M,w)\geq 0
    \qquad\text{for every \(\tau\in\Sigma_E\)}.
  \]
\end{proposition}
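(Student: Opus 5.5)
The strategy follows the prototype outlined in the introduction. We express \(c_\tau\) as an alternating sum of Euler characteristics of the bounded parts \(P^b_\gamma\) for faces \(\gamma\le\tau\). Each such Euler characteristic is \(0\) or \(1\), and the resulting sum is a reduced Euler characteristic of \(\Delta_\tau\). The sign then follows from Cohen--Macaulayness of \(\Delta_\tau\) via Reisner's criterion.

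\emph{Step 1 (M\"obius inversion).} Fix \(\tau\) and, for \(\gamma\le\tau\), let
\[
P^b_\gamma=\bigcup (\sigma^\circ+w)\cap\gamma'^\circ,
\]
the union over \(\sigma\in\Sigma_{\mathsf M}\) and \(\gamma'\le\gamma\) with the cell bounded, i.e.\ \(\sigma\cap\gamma'=\{0\}\). By genericity these cells have dimension \(\dim\sigma+\dim\gamma'-n+1\), so the cellular Euler characteristic gives \(\chi(P^b_\gamma)=\sum_{\gamma'\le\gamma}c_{\gamma'}\). Boolean M\"obius inversion over the faces of the simplicial cone \(\tau\) then yields
\[
c_\tau=\sum_{A\subseteq[s]}(-1)^{s-|A|}\chi(P^b_{\tau_A}).
\]

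\emph{Step 2 (contractibility).} The set \(P_\gamma=(\Sigma_{\mathsf M}+w)\cap\gamma\) is tropically convex and closed: \(\Sigma_{\mathsf M}\) is tropically convex by \cref{prop:relaxed-bergman-convex}, and a braid cone is cut out by the relations \(x_i\le x_j\) and \(x_i=x_j\), which are tropically convex as in \cref{lem:gp-normal-cone-tropically-convex}. Hence \(P_\gamma\) is contractible when nonempty, by \cite[Theorem~2]{develin2004tropical}. All recession cones are pointed, so the same cellwise retraction used in \cref{prop:k-fold-link-coefficient} retracts \(P_\gamma\) onto \(P^b_\gamma\). Therefore \(\chi(P^b_\gamma)=\mathbf 1_{P_\gamma\neq\varnothing}\). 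If \(P_\tau=\varnothing\), every \(P_{\tau_A}\) is empty and \(c_\tau=0\). Otherwise, substituting \(B=[s]\setminus A\) gives
\[
c_\tau=\sum_{B\in\Delta_\tau}(-1)^{|B|}=-\widetilde\chi(\Delta_\tau).
\]

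\emph{Step 3 (Cohen--Macaulayness and dimension).} We claim that \(\Delta_\tau\) is Cohen--Macaulay of dimension \(s-(n-r)-1\). Granting this, Reisner's criterion gives
\[
(-1)^{s-n+r}\bigl(-\widetilde\chi(\Delta_\tau)\bigr)=\dim\widetilde H_{\dim\Delta_\tau}\ge0,
\]
which is the asserted sign.

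For the dimension: a face \(B\) of \(\Delta_\tau\) corresponds to \(A=[s]\setminus B\) with \((\Sigma_{\mathsf M}+w)\cap\tau_A\ne\varnothing\). By genericity this requires \(|A|\ge n-r\). Each minimal such \(A\), of size exactly \(n-r\), occurs, because the transverse intersection of the \((r-1)\)-dimensional fan \(\Sigma_{\mathsf M}\) with the cone \(\tau\) has vertices lying on faces of codimension \(r-1\) in \(N_\R\). Purity follows once every \(A\) with \(P_{\tau_A}\neq\varnothing\) is shown to contain such a minimal one. I would prove this by taking a vertex of the bounded part of \(P_{\tau_A}\): it lies in a cell \((\sigma^\circ+w)\cap\tau_{A'}^\circ\) of dimension \(0\), which forces \(|A'|=n-r\).

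For the Cohen--Macaulay property I see two routes. The first uses the geometry, as in \cref{sec:k-fold-point-complex-cm}. The set \(\{A : P_{\tau_A}\ne\varnothing\}\) records the orbits \(V(\tau_A)\) contained in \(\operatorname{ind}_w\mathsf M\), by \cref{lem:efl-orbit-criterion}. Restricting the Cohen--Macaulay scheme \(\operatorname{ind}_w\mathsf M\) of \cref{thm:efl-cm}(1) to the affine chart \(U_\tau\cong\mathbb A^s\) gives a reduced union of coordinate subspaces, namely the Stanley--Reisner scheme of the complex whose faces are the complements \(B=[s]\setminus A\). Thus \(\k[\Delta_\tau]\) is the coordinate ring of \(\operatorname{ind}_w\mathsf M\cap U_\tau\), which is Cohen--Macaulay. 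The second, purely topological, route mimics Step 2 on links: the link of \(B\) in \(\Delta_\tau\) is \(\Delta_{\tau_A}\)-like, and contractibility arguments together with the nerve theorem should give the required homology vanishing.

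The main obstacle is Step 3, specifically identifying \(\operatorname{ind}_w\mathsf M\cap U_\tau\) with the Stanley--Reisner scheme of \(\Delta_\tau\), including the reducedness and the correct indexing of coordinate subspaces. I would try the geometric route first, checking via \cref{lem:efl-orbit-criterion} that the orbit closures meeting \(U_\tau\) and contained in \(\operatorname{ind}_w\mathsf M\) are exactly the \(V(\tau_A)\) with \(P_{\tau_A}\neq\varnothing\).
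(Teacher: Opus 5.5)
Your Steps 1 and 2, and the geometric route in Step 3, are the paper's argument: M\"obius inversion over the faces of \(\tau\); \(\chi(P^b_\gamma)=\mathbf 1_{P_\gamma\neq\varnothing}\) by tropical convexity and the retraction onto the bounded part; \(c_\tau=-\widetilde\chi(\Delta_\tau)\); and Cohen--Macaulayness of \(\Delta_\tau\) from \(\operatorname{ind}_w\mathsf M\cap U_\tau\) together with Reisner's criterion. The reducedness and indexing issue you flag is immediate. \(\operatorname{ind}_w\mathsf M\) is by definition a reduced union of orbit closures, \cref{lem:efl-orbit-criterion} says which \(V(\tau_A)\) occur, and \(V(\tau_A)\cap U_\tau\) is the coordinate subspace \(\{x_i=0:i\in A\}\) times a torus.

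The gap is the dimension count, on which the sign depends. Your bound \(|A|\geq n-r\) only gives \(\dim\Delta_\tau\leq s-(n-r)-1\). For the reverse inequality you need some \(A\) with \(|A|=n-r\) and \(P_{\tau_A}\neq\varnothing\). Purity then comes free from Cohen--Macaulayness.

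Your vertex argument does not supply this. A zero-dimensional cell \((\sigma^\circ+w)\cap\tau_{A'}^\circ\) has \(|A'|=n-1-\dim\sigma\), which equals \(n-r\) only when \(\sigma\) is a maximal cone of \(\Sigma_{\mathsf M}\). Vertices with non-maximal \(\sigma\) do occur. If \(r\geq2\) and \(\tau\) is the maximal braid cone with \(w\in\tau^\circ\), then \(w\) itself is the bounded vertex \((\{0\}+w)\cap\tau^\circ\) of \(P_\tau\), with \(|A'|=n-1\). The same example refutes your claim that vertices of the transverse intersection lie on \((n-r)\)-dimensional cones.

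The paper reads the dimension off the geometry. The chart is \(U_\tau\simeq\mathbb A^s\times(\mathbb G_m)^{n-1-s}\), not \(\mathbb A^s\) as you wrote. Hence
\(Y\cap U_\tau\simeq\operatorname{Spec}\k[\Delta_\tau]\times(\mathbb G_m)^{n-1-s}\), and you need that Laurent extension reflects Cohen--Macaulayness. This open set is nonempty when \(P_\tau\neq\varnothing\), and \(Y\) is pure of dimension \(r-1\). Therefore \(r-1=\dim\Delta_\tau+1+(n-1-s)\). With your chart \(\mathbb A^s\), the same count would give the wrong value \(r-2\).

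Equivalently, in your combinatorial language, suppose \(P_{\tau_A}\neq\varnothing\). Then \(V(\tau_A)\subseteq Y\) by \cref{lem:efl-orbit-criterion}. Since \(Y\) is by definition the union of the \(V(\sigma)\) with \(\dim\sigma=n-r\) meeting a maximal cone of \(w+\Sigma_{\mathsf M}\), irreducibility of \(V(\tau_A)\) gives \(A''\subseteq A\) with \(|A''|=n-r\) and \(P_{\tau_{A''}}\neq\varnothing\). The required face of size \(n-r\) comes from this orbit-closure criterion, not from an arbitrary vertex.
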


\Cref{thm:C-example-application} is a corollary of \cref{prop:sign-c-tau}.
\begin{proof}[Proof of \cref{thm:C-example-application}]
  Every line bundle on \(X_E\) is represented by a torus-invariant Cartier divisor.  Choose such a representative of \(\mathcal L\), and let \(P\) be its lattice polytope.  Recall from \eqref{eq:gw-line-bundle} that Ehrhart reciprocity gives, for \(\tau\in\Sigma_E\),
  \[
    g_{\mathcal L^{-1}}(\tau)
    =(-1)^{n-1-\dim\tau}|\face_\tau(P)^\circ\cap M|.
  \]
  Substituting this into the expression above for \(\chi_\mathsf M\) gives
  \[
    \begin{aligned}
      (-1)^{r-1}\chi_\mathsf M(\mathcal L^{-1})
      & =(-1)^{r-1}\sum_{\tau\in\Sigma_E}
      c_\tau g_{\mathcal L^{-1}}(\tau)     \\
      & =\sum_{\tau\in\Sigma_E}
      (-1)^{n+r-\dim\tau}c_\tau
      |\face_\tau(P)^\circ\cap M|.
    \end{aligned}
  \]
  This summation is nonnegative by \cref{prop:sign-c-tau}.
\end{proof}

\begin{proof}[Proof of \cref{prop:sign-c-tau}]
  For \(\tau\in\Sigma_E\), let \(P_\tau^b\) be the bounded subcomplex of \(P_\tau\).  By genericity of \(w\), the nonempty intersections \((\sigma^\circ+w)\cap\gamma^\circ\), with \(\sigma\in\Sigma_\mathsf{M}\) and \(\gamma\leq\tau\), are precisely the cells of \(P_\tau\), and their dimensions are \(\dim\sigma+\dim\gamma-n+1\).  Such a cell is bounded exactly when \(\sigma\cap\gamma=\{0\}\).  Hence
  \[
    \begin{aligned}
      \chi(P_\tau^b)
      & =\sum_{\gamma\leq\tau}
      \sum_{\substack{\sigma\in\Sigma_\mathsf{M}\\
      (\sigma,\gamma)\in\Gamma_w}}
      (-1)^{\dim\sigma+\dim\gamma-n+1} \\
      & =\sum_{\gamma\leq\tau}c_\gamma.
    \end{aligned}
  \]

  Both \(\Sigma_\mathsf{M}+w\) and \(\tau\) are tropically convex, so \(P_\tau\) is tropically convex and, whenever it is nonempty, contractible by \cite[Theorem~2]{develin2004tropical}.  Moreover, each cell has pointed recession cone \(\sigma\cap\gamma\).  The cellwise deformation-retraction argument in \cite[Proof of Proposition~3.4] {eur2025vanishingtheoremscombinatorialgeometries} therefore gives a deformation retraction \(P_\tau\to P_\tau^b\).  Thus, with \(\chi(\varnothing)=0\),
  \[
    \chi(P_\tau^b)=\mathbf 1_{P_\tau\neq\varnothing}.
  \]
  M\"obius inversion on the Boolean face lattice of \(\tau\) gives
  \begin{equation}\label{eq:c-tau-mobius}
    c_\tau
    =\sum_{\gamma\leq\tau}
    (-1)^{\dim\tau-\dim\gamma}
    \mathbf 1_{P_\gamma\neq\varnothing}.
  \end{equation}

  If \(\dim\tau<n-r\), then genericity forces \(P_\tau=\varnothing\), since \(\dim\Sigma_\mathsf{M}+\dim\tau<n-1\).  The same is then true for every \(P_\gamma\) with \(\gamma\leq\tau\), so the formula above gives \(c_\tau=0\).

  If \(P_\tau=\varnothing\), then the same formula again gives \(c_\tau=0\).  Suppose that \(P_\tau\neq\varnothing\), and use the notation \(s\), \(\tau_A\), and \(\Delta_\tau\) of \cref{def:incidence-complex}.  The formula above gives
  \[
    \begin{aligned}
      \widetilde\chi(\Delta_\tau)
      & =\sum_{\substack{A\subseteq[s]\\
      P_{\tau_A}\neq\varnothing}}
      (-1)^{s-|A|-1} \\
      & =-c_\tau.
    \end{aligned}
  \]

  Consider the tropical initial degeneration
  \[
    Y=\operatorname{ind}_w\mathsf M
  \]
  of \cref{def:tropical-initial-degeneration}.  It is a reduced, pure \((r-1)\)-dimensional union of torus-orbit closures, and it is Cohen--Macaulay by \cref{thm:efl-cm}.  By \cref{lem:efl-orbit-criterion},
  \[
    V(\gamma)\subseteq Y
    \quad\Longleftrightarrow\quad
    P_\gamma=\gamma\cap(w+\Sigma_\mathsf M)\neq\varnothing.
  \]

  On the affine toric chart \(U_\tau\), let \(x_i\) be the coordinate corresponding to the \(i\)-th ray of \(\tau\).  Recall that \(\mathbb G_m=\operatorname{Spec}\k[t^{\pm1}]\).  The description of \(Y\) as a reduced union of orbit closures gives
  \[
    V(\tau_A)\cap U_\tau
    \simeq\operatorname{Spec}\k[x_i:i\notin A]
    \times(\mathbb G_m)^{n-1-s}.
  \]
  Only the orbit closures indexed by faces of \(\tau\) meet \(U_\tau\). Therefore, the definition of \(\Delta_\tau\) gives, scheme-theoretically,
  \[
    Y\cap U_\tau
    \simeq
    \operatorname{Spec}\k[\Delta_\tau]
    \times(\mathbb G_m)^{n-1-s}.
  \]
  The left-hand side is Cohen--Macaulay because it is open in \(Y\). Since Laurent polynomial extension reflects Cohen--Macaulayness, \(\k[\Delta_\tau]\) is Cohen--Macaulay; equivalently, \(\Delta_\tau\) is Cohen--Macaulay over \(\k\) by \cref{def:cm-simplicial-complex}.

  Since \(P_\tau\neq\varnothing\), the open subscheme \(Y\cap U_\tau\) is nonempty and has dimension \(r-1\).  The preceding product decomposition therefore gives
  \[
    r-1
    =\dim\Delta_\tau+1+n-1-s.
  \]
  Thus
  \[
    d_\tau
    =\dim\Delta_\tau
    =s-(n-r)-1.
  \]
  By \cref{thm:reisner-criterion}, the reduced homology of \(\Delta_\tau\) vanishes below degree \(d_\tau\).  Therefore
  \[
    \widetilde\chi(\Delta_\tau)
    =(-1)^{d_\tau}
    \dim_\k\widetilde H_{d_\tau}
    (\Delta_\tau;\k),
  \]
  and hence
  \[
    (-1)^{n+r-\dim\tau}c_\tau
    =\dim_\k\widetilde H_{d_\tau}
    (\Delta_\tau;\k)
    \geq0.
  \]
  This includes \(d_\tau=-1\), with the usual reduced-homology convention for \(\{\varnothing\}\).
\end{proof}

The complexes \(\Delta_\tau\) are not merely an auxiliary device: they are the links of the single complex \(\mathcal C_w(\mathsf M)\), which is itself contractible and Cohen--Macaulay.  None of this is needed for \cref{prop:sign-c-tau}, but it is what suggests \cref{conj:incidence-complex-shellable} below, and it is the model for the treatment of tuples in \cref{sec:k-fold-positivity}.

\begin{proposition}\label{prop:incidence-complex}
  Let \(\mathsf M\) be a loopless matroid of rank \(r\) on \(E\), and let \(w\) be sufficiently generic.  Then
  \begin{enumerate}
    \item \(\mathcal C=\mathcal C_w(\mathsf M)\) is a polytopal subcomplex of
      \(\Pi_E\), pure of dimension \(r-1\);
    \item \(\link_{\mathcal C}(F_\tau)=\Delta_\tau\) for every
      \(\tau\in\Sigma_E\) with \(P_\tau\neq\varnothing\);
    \item \(|\mathcal C|\) is contractible;
    \item \(\mathcal C\) is Cohen--Macaulay over \(\k\).
  \end{enumerate}
\end{proposition}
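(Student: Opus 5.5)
The plan is to identify \(\mathcal C\) with the polyhedral complex dual to the cells of \(w+\Sigma_\mathsf M\) intersected with \(\Sigma_E\). We then read off all four properties from the geometry of \(Y=\operatorname{ind}_w\mathsf M\) and from tropical convexity, mirroring the proof of \cref{prop:sign-c-tau}.

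For (1), note that \(P_\gamma\subseteq P_{\gamma'}\) whenever \(\gamma\leq\gamma'\), so nonemptiness of \(P_\gamma\) is inherited by larger cones. Since faces of \(F_\gamma\) are the \(F_{\gamma'}\) with \(\gamma'\supseteq\gamma\), \(\mathcal C\) is closed under taking faces and is a polytopal subcomplex of \(\Pi_E\). By \cref{lem:efl-orbit-criterion}, \(F_\gamma\in\mathcal C\) if and only if \(V(\gamma)\subseteq Y\), and \(\dim F_\gamma=\dim V(\gamma)\). Since \(Y\) is pure of dimension \(r-1\) (\cref{def:tropical-initial-degeneration}), every orbit closure in \(Y\) lies in one of dimension \(r-1\). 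Hence \(\mathcal C\) is pure of dimension \(r-1\).

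For (2), fix \(\tau\) with \(P_\tau\neq\varnothing\), of dimension \(s\). The faces of \(\mathcal C\) containing \(F_\tau\) are the \(F_{\tau_A}\) with \(A\subseteq[s]\) and \(P_{\tau_A}\neq\varnothing\). The face poset of the link of the vertex-like face \(F_\tau\) in the simple polytope \(\Pi_E\) is the Boolean lattice on the \(s\) facets through \(F_\tau\), which are indexed by the rays of \(\tau\). Under the order reversal \(A\mapsto[s]\setminus A\), this is exactly \(\Delta_\tau\), with \(F_{\tau_A}\) corresponding to \([s]\setminus A\) and \(F_\tau\) itself to the empty face.

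For (4), I would combine (2) with the fact, proved inside \cref{prop:sign-c-tau}, that each \(\Delta_\tau\) is Cohen--Macaulay. Links of faces in links are again links of faces of \(\mathcal C\), so Reisner-type vanishing holds for every nonempty face. It then remains to check the global vanishing of \(\widetilde H_i(|\mathcal C|)\) for \(i<r-1\), together with purity, which follows from (3) and (1).

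Thus the main step is (3), contractibility of \(|\mathcal C|\). The approach is to show that \(|\mathcal C|\) is homotopy equivalent to the bounded complex of \(w+\Sigma_\mathsf M\) refined by \(\Sigma_E\), and hence to \(w+\Sigma_\mathsf M\) itself, which is contractible by tropical convexity \cite[Theorem~2]{develin2004tropical}. Concretely, I would take the closed cover of \(|w+\Sigma_\mathsf M|\) by the sets \(P_\rho=(w+\Sigma_\mathsf M)\cap\rho\) for the maximal cones \(\rho\) of \(\Sigma_E\), or equivalently by the stars of vertices of \(\Pi_E\). Each nonempty intersection \(P_{\rho_1}\cap\cdots\cap P_{\rho_m}=P_{\rho_1\cap\cdots\cap\rho_m}\) is tropically convex (\cref{lem:gp-normal-cone-tropically-convex}), hence contractible. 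By the nerve theorem \cite[Theorem~10.7]{bjorner1995topological}, \(|w+\Sigma_\mathsf M|\) is homotopy equivalent to the nerve. The nerve is exactly the complex whose simplices are sets of vertices of \(\Pi_E\) spanning a face \(F_\gamma\) with \(P_\gamma\neq\varnothing\). I expect the obstacle to lie here: one must check that this nerve has the homotopy type of \(|\mathcal C|\). I would handle this by a second nerve argument, covering \(|\mathcal C|\) by its maximal faces, or the closed faces \(F_\gamma\), which are polytopes and so contractible; their intersections are faces or empty, and the two nerves agree via the face–cone correspondence. Once (3) holds, \(\widetilde H_*(|\mathcal C|)=0\), which completes (4).
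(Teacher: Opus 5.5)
Your treatment of (1), (2) and (4) is the paper's argument. So is the first half of (3): the closed cover of \(w+\Sigma_{\mathsf M}\) by the sets \(P_v=(w+\Sigma_{\mathsf M})\cap\sigma_v\), indexed by the vertices \(v\) of \(\mathcal C\), has contractible nonempty intersections \(P_{\gamma_S}\). Its nerve \(\mathcal N\) consists of the vertex sets \(S\) with \(F_S\in\mathcal C\), where \(F_S\) is the smallest face of \(\Pi_E\) containing \(S\).

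The step that fails as written is the last one. You claim that the nerve of the cover of \(|\mathcal C|\) by its closed maximal faces (or by all its closed faces) ``agrees'' with \(\mathcal N\) via the face--cone correspondence. It does not. That nerve has one vertex for each maximal face (respectively, each face) of \(\mathcal C\), and a set of them is a simplex when the faces have a common point. By contrast, \(\mathcal N\) has one vertex for each vertex of \(\mathcal C\). These are different simplicial complexes. Your second application of the nerve theorem therefore shows that \(|\mathcal C|\) is homotopy equivalent to some complex other than \(\mathcal N\), and you still need a comparison between the two.

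The missing ingredient is a cover of \(|\mathcal C|\) indexed by vertices whose nerve is literally \(\mathcal N\). The paper uses the open stars \(\operatorname{st}^{\circ}_{\mathcal C}(v)\). Because \(\mathcal C\) is closed under taking faces, a vertex set \(S\) lies in some face of \(\mathcal C\) if and only if \(F_S\in\mathcal C\). In that case \(\bigcap_{v\in S}\operatorname{st}^{\circ}_{\mathcal C}(v)=\operatorname{st}^{\circ}_{\mathcal C}(F_S)\), which is contractible; otherwise the intersection is empty. So this cover has nerve \(\mathcal N\), and the two nerve-theorem applications give \(w+\Sigma_{\mathsf M}\simeq|\mathcal N|\simeq|\mathcal C|\).

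Alternatively, you can keep your maximal-face cover and bridge the two nerves with Dowker's theorem, applied to the incidence relation between vertices and maximal faces of \(\mathcal C\). Under that relation, \(\mathcal N\) is the complex of vertex sets lying in a common maximal face. Your nerve is the complex of sets of maximal faces sharing a vertex, since nonempty intersections of faces are faces and contain a vertex. With either repair, the rest of your proposal goes through as you describe.
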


\begin{proof}
  The normal-cone correspondence gives
  \[
    F_\delta\subseteq F_\gamma
    \quad\Longleftrightarrow\quad
    \delta\supseteq\gamma.
  \]
  If \(\delta\supseteq\gamma\), then \(P_\gamma\subseteq P_\delta\).  Hence \(\mathcal C\) is a polytopal subcomplex of \(\Pi_E\).  The orbit-closure description of \(Y=\operatorname{ind}_w\mathsf M\) in the proof of \cref{prop:sign-c-tau} shows that \(\mathcal C\) indexes the orbit closures contained in \(Y\).  Its maximal faces therefore index the irreducible components of \(Y\), so \(\mathcal C\) is pure of dimension \(r-1\).  This proves (1).

  Recall that, for a face \(F\) of a polytopal complex \(\mathcal P\), the face poset of \(\link_{\mathcal P}(F)\) is naturally identified with
  \[
    \{H\in\mathcal P:F\subseteq H\},
  \]
  where \(F\) corresponds to the empty face, inclusions are preserved, and the face corresponding to \(H\) has dimension \(\dim H-\dim F-1\).  For a simplicial complex, this agrees with \cref{def:cm-simplicial-complex}.

  We now prove (2).  Suppose that \(P_\tau\neq\varnothing\), and use the notation \(s\) and \(\tau_A\) of \cref{def:incidence-complex}.  The faces of \(\Pi_E\) containing \(F_\tau\) are precisely
  \[
    \{F_\gamma:\gamma\leq\tau\}
    =\{F_{\tau_A}:A\subseteq[s]\}.
  \]
  The correspondence
  \[
    F_{\tau_A}\longmapsto[s]\setminus A
  \]
  identifies this face poset with that of the simplex on \([s]\): it sends \(F_\tau\) to the empty face and preserves inclusions because
  \[
    F_{\tau_A}\subseteq F_{\tau_B}
    \quad\Longleftrightarrow\quad
    A\supseteq B
    \quad\Longleftrightarrow\quad
    [s]\setminus A\subseteq[s]\setminus B.
  \]
  Restricting to the faces that belong to \(\mathcal C\) now gives
  \[
    \begin{aligned}
      \link_{\mathcal C}(F_\tau)
      & =\{[s]\setminus A:A\subseteq[s],\ F_{\tau_A}\in\mathcal C\} \\
      & =\{[s]\setminus A:A\subseteq[s],\
      P_{\tau_A}\neq\varnothing\} \\
      & =\Delta_\tau.
    \end{aligned}
  \]

  For (3), we compare \(|\mathcal C|\) with \(\Sigma_{\mathsf M}+w\) using the nerve theorem. For a face \(F\in\mathcal C\), let \(\operatorname{st}^{\circ}_{\mathcal C}(F)\) denote its open star, the union of the relative interiors of all faces of \(\mathcal C\) that contain \(F\). For each vertex \(v\) of \(\mathcal C\), let \(\sigma_v\) be its normal cone in \(\Pi_E\), and abbreviate
  \[
    P_v=P_{\sigma_v}=(\Sigma_{\mathsf M}+w)\cap\sigma_v,
    \qquad
    V_v=\operatorname{st}^{\circ}_{\mathcal C}(v).
  \]
  After subdividing \(\Sigma_{\mathsf M}+w\) by the braid fan, the subcomplexes \(P_v\) form a finite closed cover of \(\Sigma_{\mathsf M}+w\), while the sets \(V_v\) form an open cover of \(|\mathcal C|\). Indeed, the braid fan is complete, and every maximal cone meeting \(\Sigma_{\mathsf M}+w\) corresponds by definition to a vertex of \(\mathcal C\). We will show that these two covers share the same nerve. For a nonempty set of vertices \(S\), let \(F_S\) be the smallest face of \(\Pi_E\) containing \(S\), and set \(\gamma_S=\bigcap_{v\in S}\sigma_v\), its normal cone.  Then
  \[
    \bigcap_{v\in S}P_v=P_{\gamma_S},
    \qquad
    \bigcap_{v\in S}V_v=
    \begin{cases}
      \operatorname{st}^{\circ}_{\mathcal C}(F_S), & F_S\in\mathcal C,    \\
      \varnothing,                                 & F_S\notin\mathcal C.
    \end{cases}
  \]
  Since \(F_S\) has normal cone \(\gamma_S\), the definition of \(\mathcal C\) gives
  \[
    P_{\gamma_S}\neq\varnothing
    \quad\Longleftrightarrow\quad
    F_S\in\mathcal C.
  \]
  Thus the two covers have the same nerve \(\mathcal N\). Each nonempty intersection \(P_{\gamma_S}\) is contractible by tropical convexity, while each nonempty intersection of \(V_v\) is contractible by the nature of open star. Hence they are both good covers. The nerve theorem \cite[Theorem~10.7]{bjorner1995topological} therefore gives
  \[
    \Sigma_{\mathsf M}+w\simeq|\mathcal N|\simeq |\mathcal{C}|.
  \]
  Since \(\Sigma_{\mathsf M}+w\) is contractible, so is \(|\mathcal C|\).

  For (4), the polytopal form of \cref{thm:reisner-criterion} reduces the claim to
  \[
    \widetilde H_i\bigl(\link_{\mathcal C}(F);\k\bigr)=0
    \qquad
    \text{for every face \(F\in\mathcal C\) and every
    \(i<\dim\link_{\mathcal C}(F)\)}.
  \]
  If \(F\) is nonempty, write \(F=F_\tau\).  Then \(P_\tau\neq\varnothing\), and the link identification above gives \(\link_{\mathcal C}(F)=\Delta_\tau\).  The proof of \cref{prop:sign-c-tau} shows that \(\Delta_\tau\) is Cohen--Macaulay, so the required vanishing follows.  For the empty face, the link is \(\mathcal C\) itself, whose reduced homology vanishes because \(\mathcal C\) is contractible.  Hence \(\mathcal C\) is Cohen--Macaulay over \(\k\).
\end{proof}

\begin{remark}
  For \(\dim\tau\geq n-r\), set \(i_\tau(\mathsf M)=\mathbf 1_{P_\tau\neq\varnothing}\).  Under the order reversal \(V(\gamma)\supseteq V(\tau)\Longleftrightarrow\gamma\leq\tau\), \eqref{eq:c-tau-mobius} is the M\"obius-inversion form of the recurrence defining \(c_\tau(\mathsf M)\) in \cite[Proof of Proposition~3.4] {eur2025vanishingtheoremscombinatorialgeometries}.  Thus \(c_\tau(\mathsf M,w)\) agrees with the coefficient denoted \(c_\tau(\mathsf M)\) there; for \(\dim\tau<n-r\), our coefficient vanishes by the argument above.
\end{remark}

\Cref{prop:incidence-complex} suggests the following combinatorial strengthening.

\begin{conjecture}\label{conj:incidence-complex-shellable}
  For every loopless matroid \(\mathsf M\) on \(E\) and every sufficiently generic vector \(w\in N_\R\), the pure polytopal complex \(\mathcal C_w(\mathsf M)\) is shellable.
\end{conjecture}

\section{Proof of the higher product rule}\label{sec:proof-higher-product-rule}

We retain the notation introduced before \cref{thm:equivariant-multi-product-rule}, establish the required auxiliary results, and then prove the theorem.

\begin{lemma}\label{lem:jointly-generic-locus}
  The jointly generic tuples form a dense rational polyhedral open subset of
  \((N_\R)^k\).  Every chamber contains an integral tuple.
\end{lemma}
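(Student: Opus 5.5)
The idea is to describe the non-generic locus as a finite union of rational polyhedra of positive codimension in \((N_\R)^k\). For a tuple of cones \(\bm\sigma=(\sigma_1,\ldots,\sigma_k)\), consider the linear map
\[
  \Phi:(N_\R)^{k}\times N_\R\longrightarrow(N_\R)^k,
  \qquad
  (\bm v,x)\longmapsto(x-v_1,\ldots,x-v_k).
\]
Then \(C(\bm\sigma;\bm v)\neq\varnothing\) exactly when \(\bm v\) lies in the Minkowski-type set
\[
  T_{\bm\sigma}=\{\bm v:\exists x,\ x-v_i\in\sigma_i\ \forall i\}.
\]
This equals \(\Delta(N_\R)-\prod_i\sigma_i\), the image of a rational polyhedral cone under a rational linear map, so it is a rational polyhedral cone in \((N_\R)^k\).

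Joint genericity fails at \(\bm v\) only if, for some \(\bm\sigma\), the intersection \(C(\bm\sigma;\bm v)\) is nonempty but either misses the relative interior of some \(\sigma_i+v_i\), or has dimension different from \eqref{eq:multi-expected-dimension}. Both defects can be rephrased through faces. If the intersection misses \(\sigma_i^\circ+v_i\), then \(C(\bm\sigma';\bm v)\neq\varnothing\) for a tuple \(\bm\sigma'\) obtained by replacing \(\sigma_i\) by a proper face. For the dimension defect, note that \(\bm v\in T_{\bm\sigma}^\circ\), the interior relative to the ambient space, whenever \(\prod\sigma_i+\Delta\) spans \((N_\R)^k\), i.e. when the linear spans are transverse. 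In that case transversality gives the expected dimension at the interior points.

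So I will prove the following. Let \(B\) be the union, over all tuples \(\bm\sigma\), of \(T_{\bm\sigma}\) whenever \(\dim T_{\bm\sigma}<k(n-1)\), together with the relative boundaries \(\partial T_{\bm\sigma}\) of the full-dimensional ones. Then \(B\) is a finite union of rational polyhedra of dimension \(<k(n-1)\), and every \(\bm v\notin B\) is jointly generic. The key step is this: at \(\bm v\in T_{\bm\sigma}^\circ\), the fiber \(\Phi^{-1}\) over \(\prod\sigma_i\) restricted to \(\bm v\) has dimension \(\sum\dim\sigma_i-(k-1)(n-1)\), because the map \(\prod\sigma_i\times N_\R\to(N_\R)^k\) is surjective near interior points. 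Moreover, the fiber meets the interior of the product, since the image of the boundary of \(\prod\sigma_i\) is the union of \(T_{\bm\sigma'}\) over proper faces, which either are lower-dimensional or have \(\bm v\) outside them up to boundary pieces already placed in \(B\).

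Once \(B\) is handled, its complement \(U\) is open, dense, and a union of open rational polyhedral chambers cut out by the finitely many rational hyperplanes spanned by facets of the pieces of \(B\). The expected main obstacle is precisely the bookkeeping showing that the relative-interior condition is captured by faces; I would handle it by the face-replacement argument above, using that the faces of \(\prod\sigma_i\) are products of faces. Finally, each open chamber is a nonempty open rational polyhedral cone in \((N_\R)^k\), since all the \(T_{\bm\sigma}\) are cones, so it contains rational points. Scaling a rational point by a positive integer stays in the same chamber, because the chamber is invariant under positive scalars, and this yields an integral tuple.
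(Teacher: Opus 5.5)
Your setup is the paper's. Your \(T_{\bm\sigma}\) is the paper's \(D_{\bm\sigma}\). Its span has dimension \(d+\sum_i\dim\sigma_i-\dim\bigcap_i\operatorname{span}_\R(\sigma_i)\) with \(d=n-1\), so \(T_{\bm\sigma}\) is lower-dimensional exactly for nontransverse tuples, and your \(B\) is the set the paper removes. Your density, rationality and integral-point arguments are fine.

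The gap is in the one step with real content. For \(\bm v\notin B\) with \(C(\bm\sigma;\bm v)\neq\varnothing\), you must show that the fiber of \(Q=\prod_i\sigma_i\times N_\R\) over \(\bm v\) meets \(\operatorname{relint}Q\). Your reason is that \(\Phi(\partial Q)\) is the union of the \(T_{\bm\sigma'}\) over face tuples with a proper face, and that these ``either are lower-dimensional or have \(\bm v\) outside them up to boundary pieces already placed in \(B\)''. That claim is false:
\begin{itemize}
\item Every positive-dimensional cell \(C(\bm\sigma;\bm v)\) has a vertex \(p\), because its recession cone \(\bigcap_i\sigma_i\) is pointed.
\item Let \(\sigma_i'\) be the face of \(\sigma_i\) with \(p-v_i\in(\sigma_i')^\circ\). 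Some \(\sigma_i'\) must be proper; otherwise \(p\in\bigcap_i(\sigma_i^\circ+v_i)\) would be a relative interior point of the cell.
\item For \(\bm v\notin B\), the cone \(T_{\bm\sigma'}\) is then full-dimensional and contains \(\bm v\) in its interior.
\end{itemize}
So \(\bm v\in\Phi(\partial Q)\) is the normal situation and gives no contradiction. What must be shown is that the fiber is not \emph{entirely} contained in \(\partial Q\). For the same reason, your face-replacement remark detects no defect. Likewise, ``surjective near interior points'' does not by itself control the single fiber over \(\bm v\).

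The missing ingredient is exactly what the paper invokes: a linear map carries the relative interior of a convex set onto the relative interior of its image. Since \(\operatorname{relint}Q=\prod_i\sigma_i^\circ\times N_\R\), this says \(\bm v\in T_{\bm\sigma}^\circ\) if and only if \(\bigcap_i(\sigma_i^\circ+v_i)\neq\varnothing\). In your setting it has a short direct proof:
\begin{itemize}
\item Pick \(a\in\operatorname{relint}Q\).
\item Since \(\bm v\) is interior, \(\bm v'=\bm v+\epsilon(\bm v-\Phi(a))\in T_{\bm\sigma}\) for small \(\epsilon>0\).
\item If \(b\in Q\) maps to \(\bm v'\), then \((b+\epsilon a)/(1+\epsilon)\) lies in \(\operatorname{relint}Q\) and maps to \(\bm v\).
\end{itemize}
Given \(p\in\bigcap_i(\sigma_i^\circ+v_i)\), the cell agrees near \(p\) with \(\bigcap_i(\operatorname{span}_\R(\sigma_i)+v_i)\). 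Its dimension is therefore \(\dim\bigcap_i\operatorname{span}_\R(\sigma_i)\), which is the expected value because \(T_{\bm\sigma}\) is full-dimensional. With this replacing your boundary-image argument, the proof is complete and coincides with the paper's.
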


\begin{proof}
  This is the \(k\)-fold version of
  \cite[Lemma~5.2]{wang2026grothendieckweights}.  For a cone tuple
  \(\bm\sigma\), set
  \[
    D_{\bm\sigma}
    =
    \{\bm v\in(N_\R)^k:C(\bm\sigma;\bm v)\neq\varnothing\}.
  \]
  Equivalently,
  \[
    D_{\bm\sigma}
    =
    \{(p-x_1,\ldots,p-x_k):
    p\in N_\R,\ x_i\in\sigma_i\text{ for }1\leq i\leq k\},
  \]
  so \(D_{\bm\sigma}\) is a rational polyhedral cone.  For a polyhedron
  \(P\), write \(P^\circ\) for its relative interior and
  \(\partial P=P\setminus P^\circ\).

  Define \(U\) to be the complement in \((N_\R)^k\) of
  \[
    \bigcup_{\bm\sigma}\partial D_{\bm\sigma}
    \ \cup\
    \bigcup_{\bm\sigma\ \mathrm{nontransverse}}D_{\bm\sigma}.
  \]
  Here \(\bm\sigma\) is nontransverse if
  \[
    \dim\bigcap_i(N_{\sigma_i})_\R
    >
    \sum_i\dim\sigma_i-(k-1)d.
  \]
  Since \(\Sigma_E\) is finite, both unions are finite and closed.  The first
  union has codimension at least one.  For a tuple occurring in the second
  union,
  \[
    \dim\operatorname{span}(D_{\bm\sigma})
    =
    d+\sum_i\dim\sigma_i
    -\dim\bigcap_i(N_{\sigma_i})_\R
    <kd,
  \]
  so it is contained in a proper linear subspace.  Thus \(U\) is dense,
  open, and rational polyhedral.

  Relative interior commutes with linear images, so
  \[
    \bm v\in D_{\bm\sigma}^\circ
    \quad\Longleftrightarrow\quad
    \bigcap_{i=1}^k(\sigma_i^\circ+v_i)\neq\varnothing.
  \]
  If this holds, then
  \[
    \operatorname{aff}C(\bm\sigma;\bm v)
    =
    \bigcap_i\bigl((N_{\sigma_i})_\R+v_i\bigr),
    \qquad
    \dim C(\bm\sigma;\bm v)
    =
    \dim\bigcap_i(N_{\sigma_i})_\R.
  \]
  It follows directly from the definition of \(U\) that \(U\) is precisely
  the jointly generic locus.  Finally, its chambers are rational open cones,
  so each contains an integral tuple.
\end{proof}

Once \(C(\bm\sigma;\bm v)\) is nonempty, its recession cone is
\(\bigcap_{i=1}^k\sigma_i\).  Consequently,
\[
  C(\bm\sigma;\bm v)\text{ is bounded}
  \quad\Longleftrightarrow\quad
  \bigcap_{i=1}^k\sigma_i=\{0\}.
\]

Recall that \(M(\sigma)=\sigma^\perp\cap M\) is the annihilator lattice of
\(\sigma\).
\begin{lemma}\label{lem:braid-multi-cone-index}
  Let \(\sigma_1,\ldots,\sigma_k\in\Sigma_E\).  If
  \[
    \bigcap_{i=1}^k\operatorname{span}_\R(\sigma_i)
    =\{0\},
  \]
  then
  \[
    M(\sigma_1)+\cdots+M(\sigma_k)=M.
  \]
\end{lemma}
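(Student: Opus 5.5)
The plan is to make both sides explicit in coordinates, using the flag description of braid cones; the statement then reduces to a connectivity fact about a graph on \(E\).

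\textbf{Step 1: describe each span and each annihilator.} Let \(\sigma=\sigma_{\mathcal G}\) with \(\mathcal G:\varnothing\subsetneq G_1\subsetneq\cdots\subsetneq G_\ell\subsetneq E\), and let \(\pi_\sigma\) be the set partition of \(E\) into the blocks \(G_{a+1}\setminus G_a\) (with \(G_0=\varnothing\), \(G_{\ell+1}=E\)). Since the vectors \(\bar e_{G_a}\) span the same space as the indicator vectors of these blocks modulo \(\mathbf 1_E\), we get
\[
  \operatorname{span}_\R(\sigma)=\{x\in N_\R: x \text{ has a lift constant on each block of }\pi_\sigma\}.
\]
Dually, \(q\in M\) annihilates \(\sigma\) exactly when \(q(G_a)=0\) for all \(a\). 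Equivalently, \(q\) has zero coordinate sum on each block of \(\pi_\sigma\). So
\[
  M(\sigma)=\{q\in\Z^E:q(B)=0\text{ for every block }B\text{ of }\pi_\sigma\}.
\]
This lattice is generated by the vectors \(e_i-e_j\) with \(i,j\) in a common block. To see this, on each block \(B\) the zero-sum lattice \(\{q\in\Z^B:q(B)=0\}\) is generated integrally by \(e_{b}-e_{b_0}\), \(b\in B\), for a fixed \(b_0\in B\).

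\textbf{Step 2: translate the hypothesis.} Form the graph \(\Gamma\) on \(E\) whose edges join \(i\) and \(j\) whenever they lie in a common block of some \(\pi_{\sigma_l}\). A vector is constant on every block of every \(\pi_{\sigma_l}\) exactly when it is constant on the connected components of \(\Gamma\). By Step 1, \(\bigcap_l\operatorname{span}_\R(\sigma_l)\) is therefore the image in \(N_\R\) of the space of such vectors. This image is zero exactly when constants on components are all multiples of \(\mathbf 1_E\), that is, exactly when \(\Gamma\) is connected. So the hypothesis says \(\Gamma\) is connected.

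\textbf{Step 3: integrality.} By Step 1, \(M(\sigma_1)+\cdots+M(\sigma_k)\) is the sublattice of \(M\) generated by the vectors \(e_i-e_j\) over the edges \(\{i,j\}\) of \(\Gamma\). Since \(\Gamma\) is connected, choose a spanning tree and fix a root \(r\). For every \(i\), \(e_i-e_r\) is a signed sum of edge vectors along the tree path from \(r\) to \(i\), so it lies in the sum. The vectors \(e_i-e_r\) generate \(M=\{q\in\Z^E:\sum_i q_i=0\}\) over \(\Z\), because \(q=\sum_i q_i(e_i-e_r)\) for any such \(q\). Hence \(M(\sigma_1)+\cdots+M(\sigma_k)=M\).

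\textbf{Main obstacle.} The real-coefficient analogue follows from linear duality: the sum of the annihilators equals the annihilator of the intersection, which is all of \(M_\R\). The content of the lemma is that the integral sum is saturated. That is the point at which the specific block structure of braid cones enters, through the integral generators \(e_i-e_j\) of Step 1 and the spanning-tree argument of Step 3.
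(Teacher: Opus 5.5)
Your proof is correct and follows essentially the same route as the paper: the block description of $M(\sigma)$ with integral generators $e_r-e_s$, the graph on $E$ joining elements that share a block, connectivity of that graph, and the path-sum argument to reach all of $M$. The only difference is that you derive connectivity directly from the primal description of $\bigcap_l\operatorname{span}_\R(\sigma_l)$, whereas the paper derives it dually, by comparing the rank $|E|-c$ of the edge lattice with the real identity $\bigl(\sum_i M(\sigma_i)\bigr)_\R=M_\R$.
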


\begin{proof}
  First consider an arbitrary cone \(\sigma=\sigma_{\mathcal F}\), where
  \(\mathcal F\) is the flag
  \[
    F_1\subsetneq\cdots\subsetneq F_\ell.
  \]
  Set \(F_0=\varnothing\) and \(F_{\ell+1}=E\), and let
  \[
    B_j=F_j\setminus F_{j-1}
    \qquad(1\leq j\leq\ell+1)
  \]
  be the blocks of the associated ordered partition.  Since \(\sigma\) is generated by the vectors \(\bar e_{F_j}\), an element \(q=(q_r)_{r\in E}\in M\) lies in \(M(\sigma)\) if and only if
  \[
    \sum_{r\in F_j}q_r=0
    \qquad(1\leq j\leq\ell).
  \]
  Taking successive differences, and using
  \(\sum_{r\in E}q_r=0\), this is equivalent to
  \[
    \sum_{r\in B_j}q_r=0
    \qquad(1\leq j\leq\ell+1).
  \]
  Therefore
  \[
    M(\sigma)
    =
    \bigoplus_{j=1}^{\ell+1}
    \left\langle e_r-e_s:r,s\in B_j\right\rangle_\Z.
  \]

  For each \(i\), let
  \(B_{i,1},\ldots,B_{i,\ell_i+1}\) be the blocks associated with
  \(\sigma_i\), where \(\ell_i=\dim\sigma_i\).
  Let \(G\) be the graph on \(E\) in which \(r\) and \(s\) are adjacent if
  they lie in the same block \(B_{i,j}\) for some \(i,j\).  The preceding
  description gives
  \[
    \sum_{i=1}^k M(\sigma_i)
    =
    \left\langle
    e_r-e_s:\{r,s\}\text{ is an edge of }G
    \right\rangle_\Z.
  \]
  After tensoring with \(\R\), the hypothesis gives
  \[
    \left(\sum_{i=1}^k M(\sigma_i)\right)_\R
    =
    \left(
      \bigcap_{i=1}^k\operatorname{span}_\R(\sigma_i)
    \right)^\perp
    =
    M_\R.
  \]
  We claim that the graph \(G\) is connected.
  If the connected components of \(G\) are \(E_1,\ldots,E_c\),
  \[
    \sum_{i=1}^k M(\sigma_i) =
    \left\{
      q\in\Z^E:
      \sum_{r\in E_b}q_r=0\text{ for }1\leq b\leq c
    \right\}.
  \]
  Indeed, if \(r\) and \(s\) lie in the same component, choose a path
  \(r=r_0,r_1,\ldots,r_h=s\).  Then
  \[
    e_r-e_s
    =
    \sum_{t=0}^{h-1}(e_{r_t}-e_{r_{t+1}}).
  \]
  The sublattice generated by all \(e_r-e_s\) where
  \(r,s\in E_b\), \(1\leq b\leq c\), is exactly the right-hand side.
  Thus \(\sum_i M(\sigma_i)\) has rank \(|E|-c\).  Since
  \(\dim M_\R=|E|-1\), the displayed equality forces \(c=1\).  Thus \(G\)
  is connected, and the right-hand side is precisely \(M\).
\end{proof}

The same statement holds for a product of braid fans, by applying
\cref{lem:braid-multi-cone-index} to each factor.

Let \(\delta_k:X_E\to X_E^k\) denote the small diagonal embedding.

\begin{lemma}
  \label{lem:small-multi-diagonal-degeneration}
  Let \(\bm v\in N^k\) be integral and jointly generic.  Then, in
  \(K_T(X_E^k)\),
  \[
    (\delta_k)_*[\mathcal{O}_{X_E}]
    =
    \sum_{\substack{
        \sigma_1,\ldots,\sigma_k\in\Sigma_E\\
        C(\bm\sigma;\bm v)\neq\varnothing\\
    C(\bm\sigma;\bm v)\text{ bounded}}}
    (-1)^{\sum_{i=1}^k\dim\sigma_i-(k-1)d}
    x_{\sigma_1}\boxtimes\cdots\boxtimes x_{\sigma_k}.
  \]
\end{lemma}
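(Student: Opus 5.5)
The plan is to imitate the two-factor diagonal degeneration of \cite[Section~5]{wang2026grothendieckweights}, with the small diagonal in place of the diagonal. Write \(\Sigma_E^k\) for the product fan of \(X_E^k\), whose cones are \(\bm\sigma=\sigma_1\times\cdots\times\sigma_k\). Its dense torus is \(T^k\), and \(x_{\sigma_1}\boxtimes\cdots\boxtimes x_{\sigma_k}=[\mathcal O_{V(\bm\sigma)}]\). The image \(\delta_k(X_E)\) is the closure of the diagonal subtorus \(T\subseteq T^k\). Its tropicalization is the diagonal linear space \(\Lambda=\{(p,\ldots,p):p\in N_\R\}\subseteq N_\R^k\).

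\emph{Step 1: the degeneration.} Let \(\lambda_{\bm v}\) be the one-parameter subgroup of \(T^k\) given by the integral tuple \((-v_1,\ldots,-v_k)\), with the sign chosen so that the translated tropicalization is
\[
\Lambda_{\bm v}=\{(p-v_1,\ldots,p-v_k):p\in N_\R\}.
\]
Translating \(\delta_k(X_E)\) by \(\lambda_{\bm v}(t)\) and taking the flat limit as \(t\to0\) gives a flat family over \(\mathbb A^1\). The family is invariant under the diagonal \(T\)-action, because \(\lambda_{\bm v}\) commutes with \(T\). Hence \((\delta_k)_*[\mathcal O_{X_E}]=[\mathcal O_{Z_0}]\) in \(K_T(X_E^k)\), where \(Z_0\) is the special fiber.

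\emph{Step 2: the special fiber.} I will show that \(Z_0\) is the reduced union of the orbit closures \(V(\bm\sigma)\) with \(\bm\sigma\cap\Lambda_{\bm v}\neq\varnothing\). Equivalently, these are the \(\bm\sigma\) with \(C(\bm\sigma;\bm v)\neq\varnothing\), since \(\bm\sigma\cap\Lambda_{\bm v}\) is identified with \(C(\bm\sigma;\bm v)\) via \(p\).

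The underlying set follows from the standard tropical description of limits of torus translates of a subtorus closure, the \(k\)-fold analogue of \cite[Lemma~5.3]{wang2026grothendieckweights}. The intersections are transverse by joint genericity and \eqref{eq:multi-expected-dimension}.

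For reducedness, I will compute the initial ideal on each affine chart \(U_{\bm\sigma}\) with \(C(\bm\sigma;\bm v)\) a vertex. Here the relevant multiplicity is the lattice index \([M^k:M_\Lambda^\perp+M(\bm\sigma)]\), which should equal \(1\). This is exactly where \cref{lem:braid-multi-cone-index} enters: at a vertex one has \(\bigcap_i\operatorname{span}(\sigma_i)=\{0\}\), so \(\sum_iM(\sigma_i)=M\). This is the \(k\)-fold replacement of strong unimodularity, and it shows that the initial ideal is the reduced monomial ideal of the union of the relevant coordinate strata.

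\emph{Step 3: computing \([\mathcal O_{Z_0}]\).} The locus \(Z_0\) is a reduced union of orbit closures indexed by the polyhedral complex \(\mathcal C=\Lambda_{\bm v}\cap|\Sigma_E^k|\). This complex is a cellular copy of \(N_\R\) cut by the translated fans \(v_i+\Sigma_E\) up to sign, and each cell \(C(\bm\sigma;\bm v)^\circ\) indexes an orbit \(O(\bm\sigma)\subseteq Z_0\).

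I will write \([\mathcal O_{Z_0}]=\sum_{\bm\sigma}a_{\bm\sigma}x_{\bm\sigma}\), with \(a_{\bm\sigma}\) determined by inclusion–exclusion over the face poset. Locally, on \(U_{\bm\sigma}\), \(Z_0\) is a Stanley–Reisner scheme whose complex is the link of the cell \(C(\bm\sigma;\bm v)\) in \(\mathcal C\). Möbius inversion then gives
\[
a_{\bm\sigma}=(-1)^{\dim C(\bm\sigma;\bm v)}\chi_c\bigl(C(\bm\sigma;\bm v)^\circ\bigr),
\]
which is \((-1)^{\dim C}\) for bounded cells and \(0\) for unbounded ones. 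The vanishing for unbounded cells holds because the compactly supported Euler characteristic of an open polyhedron with nonzero recession cone is \(0\); this is the same mechanism as in the two-factor case. By \eqref{eq:multi-expected-dimension}, the dimension is \(\sum_i\dim\sigma_i-(k-1)d\), which yields the stated formula.

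I expect Step 2 to be the main obstacle, specifically the reducedness of \(Z_0\) via the lattice-index computation. The Möbius bookkeeping in Step 3 should transcribe directly from the \(k=2\) argument once \(Z_0\) is known to be the reduced union described above.
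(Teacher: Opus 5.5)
Your overall route is the paper's: degenerate the small diagonal along the one-parameter subgroup determined by \(\bm v\), use \cref{lem:braid-multi-cone-index} to get unimodularity at the vertices of the common refinement, then run inclusion--exclusion over the resulting reduced union of orbit closures. Two steps, however, fail as written.

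\emph{Reducedness.} Your lattice index is the right one: since \(M^k/M_\Lambda^\perp\simeq M\) via summation, \([M^k:M_\Lambda^\perp+M(\bm\sigma)]=[M:\sum_iM(\sigma_i)]\). But computing on the charts \(U_{\bm\sigma}\) of vertex tuples proves too little. If \(\bm\tau\leq\bm\sigma\) are both vertex tuples, joint genericity forces \(\bm\tau=\bm\sigma\). So \(Z_0\cap U_{\bm\sigma}\) is only the open orbit \(O(\bm\sigma)\) of a single component, and multiplicity one there gives only generic reducedness. It says nothing about embedded components along the loci where several components meet (compare two skew lines degenerating to two meeting lines plus an embedded point). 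Such components would change \([\mathcal O_{Z_0}]\) and break Step~3.

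The paper instead works on products of maximal charts \(U_{\tau_1}\times\cdots\times U_{\tau_k}\), which do cover \(Z_0\). There the small diagonal is cut out by the toric ideal of the configuration \(\bigsqcup_i\mathcal A_i\subseteq M\). The weights \(\langle u,v_i\rangle\) induce a regular subdivision whose maximal cells are indexed by vertex tuples and consist of \(\bigsqcup_i(\mathcal A_i\cap M(\sigma_i))\). Your index computation shows these cells are unimodular simplices. A regular subdivision into unimodular simplices gives a square-free initial ideal (Sturmfels' criterion), hence reducedness at every point.

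\emph{Step 3.} On \(U_{\bm\sigma}\) the only orbits are \(O(\bm\tau)\) with \(\bm\tau\leq\bm\sigma\), i.e.\ the cells contained in \(C=C(\bm\sigma;\bm v)\). So the local Stanley--Reisner complex is the order-dual of the face poset of \(C\) (like the complexes \(\Delta_\tau\) in the appendix), not the link of \(C\) in \(\mathcal C\). M\"obius inversion then gives \(a_{\bm\sigma}=\sum_G(-1)^{\dim C-\dim G}=(-1)^{\dim C}\chi_c(C)\), with the sum over nonempty faces \(G\) of the \emph{closed} cell.

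Your \(\chi_c(C^\circ)\) equals \((-1)^{\dim C}\) for every nonempty cell, bounded or not, because a relatively open polyhedron is homeomorphic to \(\R^{\dim C}\). So your formula gives \(a_{\bm\sigma}=1\) for every cell, and the assertion that an open polyhedron with nonzero recession cone has \(\chi_c=0\) is false. Concretely, take \(n=k=2\), \(N\simeq\Z\) via \(\bar e_1\mapsto1\), and \(\bm v=(0,1)\). The special fiber is \(\PP^1\times\{\mathrm{pt}\}\cup\{\mathrm{pt}\}\times\PP^1\). The bounded edge \([0,1]\) must receive \(-1\) and the unbounded edges \((-\infty,0]\), \([1,\infty)\) must receive \(0\), but your formula assigns \(+1\) to all three.

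The correct vanishing is \(\chi_c(C)=0\) for a closed unbounded polyhedron with pointed recession cone (here \(\bigcap_i\sigma_i\)), versus \(\chi_c(C)=1\) for a polytope. With these corrections your local bookkeeping agrees with the paper's closed-cover sequence and crosscut computation, \(\mu(\hat0,\bm\sigma)=(-1)^{\dim C+1}\) for bounded cells and \(0\) for unbounded ones.
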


\begin{proof}
  The proof is essentially the same as \cite[Section~5]{wang2026grothendieckweights}; we only sketch the steps here.

  Consider the closure in \(\PP^1\times X_E^k\) of
  \[
    \Gamma
    =
    \bigl\{
      (t,\lambda_{-v_1}(t)x,\ldots,\lambda_{-v_k}(t)x)
      :t\in\mathbb G_m,\ x\in X_E
    \bigr\}.
  \]
  This closure is integral and flat over \(\PP^1\).
  It is invariant under the diagonal torus, so its fibers at \(1\) and
  \(0\) have the same equivariant class.  The fiber at \(1\) is the small
  diagonal.

  We identify the fiber at \(0\) on a product of maximal affine charts.  Let
  \(\mathcal A_i\) be the character basis dual to the ray basis of the
  maximal cone defining the \(i\)-th chart.  The small diagonal is defined
  by the toric ideal of the labeled configuration
  \[
    \bigsqcup_{i=1}^k\mathcal A_i\subseteq M.
  \]
  The family gives this configuration the weights
  \[
    \eta(u)=\langle u,v_i\rangle
    \qquad
    (u\in\mathcal A_i,\ 1\leq i\leq k).
  \]
  Repeating the local initial-ideal calculation of
  \cite[Section~5]{wang2026grothendieckweights}, we find that the cell
  determined by \(p\in N_\R\) has zero set
  \[
    \bigsqcup_{i=1}^k
    (\mathcal A_i\cap M(\sigma_i)),
  \]
  where the \(\sigma_i\) are the unique cones such that
  \[
    p-v_i\in\sigma_i^\circ
    \qquad
    \text{for }1\leq i\leq k.
  \]
  The inclusion of cells is the same as the reverse inclusion of the cones.
  Joint genericity implies that the maximal cells are exactly the vertices
  of the common refinement.  For such a cell,
  \(\sum_i\dim\sigma_i=(k-1)d\), so its zero set consists of exactly
  \[
    \sum_{i=1}^k(d-\dim\sigma_i)=d
  \]
  vectors.  Moreover, since \(\cap_i (\sigma_i+v_i)\) is a point, taking
  affine hulls shows that
  \[
    \bigcap_{i=1}^k\operatorname{span}_\R(\sigma_i)=\{0\}.
  \]
  For every \(i\), the vectors
  \(\mathcal A_i\cap M(\sigma_i)\) form a lattice basis of
  \(M(\sigma_i)\).  By \cref{lem:braid-multi-cone-index},
  \(\sum_i M(\sigma_i)=M\).  Hence the union of these bases generates
  \(M\).  Since it consists of \(d\) vectors, it is a
  \(\Z\)-basis of \(M\).  Thus every maximal cell of the regular subdivision
  is a unimodular simplex. The local initial ideals are thus square-free, and the
  special fiber is the reduced union
  \[
    \bigcup_{C(\bm\sigma;\bm v)\text{ a vertex}}
    \prod_{i=1}^kV(\sigma_i).
  \]

  Write \(Z\) for this special fiber.  Let \(P_{\bm v}\) be the poset of
  tuples \(\bm\sigma\) with \(C(\bm\sigma;\bm v)\neq\varnothing\), ordered
  componentwise, after adjoining a minimum \(\hat 0\) and a maximum
  \(\hat 1\).  Joint genericity identifies \(P_{\bm v}\) with the augmented
  face lattice of the common refinement via
  \(\bm\sigma\mapsto C(\bm\sigma;\bm v)\).  In particular, it is a lattice.
  Let \(L_{\bm v}\subseteq P_{\bm v}\) be the sublattice generated by the
  vertex tuples, which are precisely the atoms indexing the components of
  \(Z\).  Scheme-theoretic intersections of these products of orbit closures
  are given by componentwise joins.  Hence the closed-cover exact sequence
  and the crosscut theorem, in the form used in
  \cite[Section~5]{wang2026grothendieckweights}, give
  \[
    [\mathcal O_Z]
    =
    \sum_{\bm\sigma\in L_{\bm v}\setminus\{\hat0,\hat1\}}
    -\mu_{L_{\bm v}}(\hat0,\bm\sigma)
    x_{\sigma_1}\boxtimes\cdots\boxtimes x_{\sigma_k}.
  \]
  For \(P=C(\bm\sigma;\bm v)\), the interval
  \([\hat0,\bm\sigma]\) in \(L_{\bm v}\) is the vertex-facet lattice of \(P\).  The
  vertex-facet M\"obius formula recalled in the same reference therefore gives
  \[
    \mu_{L_{\bm v}}(\hat0,\bm\sigma)
    =
    \begin{cases}
      (-1)^{\dim P+1}, & P\text{ is bounded},\\
      0, & P\text{ is unbounded}.
    \end{cases}
  \]
  Every bounded \(P\) is the join of its vertices, so its tuple belongs to
  \(L_{\bm v}\).  Thus the lattice condition can be dropped, and it follows
  that
  \[
    (\delta_k)_*x_{\{0\}}
    =
    \sum_{\substack{
        C(\bm\sigma;\bm v)\neq\varnothing\\
    C(\bm\sigma;\bm v)\text{ bounded}}}
    (-1)^{\dim C(\bm\sigma;\bm v)}
    x_{\sigma_1}\boxtimes\cdots\boxtimes x_{\sigma_k}.
  \]
  By \eqref{eq:multi-expected-dimension},
  \(\dim C(\bm\sigma;\bm v)
  =\sum_i\dim\sigma_i-(k-1)d\), proving the formula.
\end{proof}

The same proof applies to a product of braid fans, using the product version
of \cref{lem:braid-multi-cone-index}.

\begin{lemma}\label{lem:quotient-commutes-with-intersection}
  Let \(A,B\) be polyhedra in a real vector space \(V\), and let \(C\) be a
  polyhedral cone contained in
  \(\operatorname{rec}(A)\cap\operatorname{rec}(B)\).  For the quotient map
  \[
    \pi:V\longrightarrow V/\operatorname{span}_\R(C),
  \]
  one has
  \[
    \pi(A\cap B)=\pi(A)\cap\pi(B).
  \]
\end{lemma}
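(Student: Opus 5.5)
The inclusion \(\pi(A\cap B)\subseteq\pi(A)\cap\pi(B)\) is immediate, since \(A\cap B\) lies in both \(A\) and \(B\). The work is in the reverse inclusion. Write \(W=\operatorname{span}_\R(C)\), so that \(\pi\) is the quotient by \(W\), and note that \(C-C=W\) because \(C\) is a convex cone.

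Let \(\bar y\in\pi(A)\cap\pi(B)\). Choose \(a\in A\) and \(b\in B\) with \(\pi(a)=\pi(b)=\bar y\). Then \(a-b\in W\), so we may write \(a-b=c_1-c_2\) with \(c_1,c_2\in C\). Set
\[
  p=a+c_2=b+c_1 .
\]
Since \(C\subseteq\operatorname{rec}(A)\), adding \(c_2\) to \(a\) keeps us in \(A\), so \(p\in A\). Symmetrically, \(C\subseteq\operatorname{rec}(B)\) gives \(p=b+c_1\in B\). Moreover \(\pi(p)=\pi(a)=\bar y\), because \(c_2\in W\). Hence \(\bar y\in\pi(A\cap B)\), which proves the equality.

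No step here is a genuine obstacle. The only point needing care is that the decomposition \(a-b=c_1-c_2\) uses that \(C\) spans \(W\) as a difference set. This holds because every element of \(\operatorname{span}_\R(C)\) is a difference of two elements of the convex cone \(C\): group the positive and negative coefficients of a linear combination of generators of \(C\).
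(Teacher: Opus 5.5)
Your proof is correct and follows the paper's: you write \(a-b\in\operatorname{span}_\R(C)=C-C\) as a difference of two elements of \(C\), then translate \(a\) and \(b\) along their recession cones to a common point of \(A\cap B\) with the same image under \(\pi\). Your justification that \(C-C=\operatorname{span}_\R(C)\) supplies the one step the paper states without comment.
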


\begin{proof}
  Only the reverse inclusion requires proof.  Let
  \(\bar p\in\pi(A)\cap\pi(B)\), and choose \(a\in A\) and \(b\in B\) mapping
  to \(\bar p\).  Since
  \(a-b\in\operatorname{span}_\R(C)=C-C\), there are \(c_A,c_B\in C\) such
  that
  \[
    a-b=c_B-c_A.
  \]
  Hence
  \[
    a+c_A=b+c_B.
  \]
  The left-hand side belongs to \(A\), and the right-hand side belongs to
  \(B\), because \(C\) is contained in both recession cones.  This common
  point maps to \(\bar p\).
\end{proof}

\begin{proof}[Proof of \cref{thm:equivariant-multi-product-rule}]
  First suppose that \(\gamma=\{0\}\) and that \(\bm v\) is integral.  Let
  \(p_i:X_E^k\to X_E\) be the \(i\)-th projection.  After multiplying the
  identity in \cref{lem:small-multi-diagonal-degeneration} by
  \(\prod_i p_i^*\alpha_i\), the projection formula gives
  \[
    \chi_T\left(
      \left(\prod_{i=1}^k p_i^*\alpha_i\right)
      (\delta_k)_*[\mathcal O_{X_E}]
    \right)
    =
    \chi_T\left(
      \delta_k^*\left(\prod_{i=1}^k p_i^*\alpha_i\right)
    \right)=\chi_T\left(\prod_{i=1}^k \alpha_i\right).
  \]
  For each cone tuple on the right-hand side of that identity, the
  equivariant K\"unneth formula gives
  \[
    \chi_T\left(
      \left(\prod_{i=1}^k p_i^*\alpha_i\right)
      (x_{\sigma_1}\boxtimes\cdots\boxtimes x_{\sigma_k})
    \right)
    =
    \prod_{i=1}^k\chi_T(\alpha_i x_{\sigma_i})
    =
    \prod_{i=1}^k g_i(\sigma_i).
  \]
  Applying \(\chi_T\) to \cref{lem:small-multi-diagonal-degeneration}
  therefore yields
  \[
    \left(\prod_{i=1}^k g_i\right)(\{0\})
    =
    \sum_{\substack{
        \sigma_1,\ldots,\sigma_k\in\Sigma_E\\
        C(\bm\sigma;\bm v)\neq\varnothing\\
    C(\bm\sigma;\bm v)\text{ bounded}}}
    (-1)^{\sum_i\dim\sigma_i-(k-1)d}
    \prod_{i=1}^k g_i(\sigma_i).
  \]

  For a real jointly generic \(\bm v\), choose an integral jointly generic
  tuple \(\bm v'\) in the same chamber.  The two tuples determine the same
  nonempty cells, while boundedness depends only on their recession cones.
  Thus the displayed formula for \(\bm v'\) is also the formula for
  \(\bm v\), proving the origin case.

  Now let \(\gamma\in\Sigma_E\) be arbitrary.  Write \(r=\dim\gamma\), and
  let \(B_1,\ldots,B_s\) be the blocks of its ordered partition.  The orbit
  closure \(V(\gamma)\) is the toric variety of
  \[
    \Star_{\Sigma_E}(\gamma)
    \simeq
    \prod_{j=1}^s\Sigma_{B_j},
  \]
  whose dimension is \(\bar d=d-r\).  Let
  \(\iota_\gamma:V(\gamma)\hookrightarrow X_E\) be the inclusion and set
  \(\bar\alpha_i=\iota_\gamma^*\alpha_i\).  The diagonal identity for the
  effective torus of \(V(\gamma)\) remains valid after pullback to \(T\), so
  the origin-case argument applies to \(T\)-equivariant classes on
  \(V(\gamma)\).  For every
  \(\sigma_i\supseteq\gamma\), the weight induced by \(\bar\alpha_i\) on the
  star fan satisfies
  \[
    \bar g_i\bigl(\pi_\gamma(\sigma_i)\bigr)
    =
    \chi_T\left(V(\gamma),
    \bar\alpha_i[\mathcal O_{V(\sigma_i)}]\right)
    =
    \chi_T(X_E,\alpha_i x_{\sigma_i})
    =g_i(\sigma_i).
  \]
  The projection formula also gives
  \[
    \left(\prod_{i=1}^k\bar g_i\right)(\{0\})
    =
    \chi_T\left(V(\gamma),\prod_{i=1}^k\bar\alpha_i\right)
    =
    \chi_T\left(X_E,\left(\prod_{i=1}^k\alpha_i\right)x_\gamma\right)
    =
    \left(\prod_{i=1}^k g_i\right)(\gamma).
  \]

  Let \(\pi_\gamma:N_\R\to N(\gamma)_\R\), and set \(\bar v_i=\pi_\gamma(v_i)\) and \(\bar\sigma_i=\pi_\gamma(\sigma_i)\).  Since \(\gamma\subseteq\operatorname{rec}(\sigma_i+v_i)=\sigma_i\) for every \(i\), repeated application of \cref{lem:quotient-commutes-with-intersection} gives
  \begin{equation}\label{eq:star-cell-quotient}
    \pi_\gamma\bigl(C(\bm\sigma;\bm v)\bigr)
    =
    \bigcap_{i=1}^k(\bar\sigma_i+\bar v_i)
    =
    C(\bar{\bm\sigma};\bar{\bm v}).
  \end{equation}
  Here the induction is valid because every nonempty partial intersection
  still has \(\gamma\) in its recession cone.  In particular, the cell on the
  star fan is nonempty if and only if the original cell is nonempty.

  We next verify that \(\bar{\bm v}\) is jointly generic.  If these cells are nonempty, joint genericity of \(\bm v\) gives a point of \(C(\bm\sigma;\bm v)\) in all the relative interiors \(\sigma_i^\circ+v_i\).  Its image lies in all \(\bar\sigma_i^\circ+\bar v_i\), since a linear map sends the relative interior of a polyhedron onto the relative interior of its image.  Moreover, the same relative-interior property gives
  \[
    \operatorname{aff} C(\bm\sigma;\bm v)
    =
    \bigcap_{i=1}^k
    \left(\operatorname{span}_\R(\sigma_i)+v_i\right).
  \]
  The direction space on the right contains
  \(\operatorname{span}_\R(\gamma)\), which is the kernel of
  \(\pi_\gamma\).  Therefore
  \[
    \begin{aligned}
      \dim C(\bar{\bm\sigma};\bar{\bm v})
      & =\dim C(\bm\sigma;\bm v)-r                 \\
      & =\sum_{i=1}^k\dim\sigma_i-(k-1)d-r         \\
      & =\sum_{i=1}^k\dim\bar\sigma_i-(k-1)\bar d.
    \end{aligned}
  \]
  Thus \(\bar{\bm v}\) has both properties required for joint genericity on
  the star fan.

  By \eqref{eq:star-cell-quotient},
  \[
    C(\bar{\bm\sigma};\bar{\bm v})\text{ is bounded}
    \quad\Longleftrightarrow\quad
    C(\bm\sigma;\bm v)
    \text{ is bounded modulo }(N_\gamma)_\R.
  \]
  Finally, the exponent in the origin formula for the star fan is
  \[
    \begin{aligned}
      \sum_{i=1}^k\dim\bar\sigma_i-(k-1)\bar d
      & =\sum_{i=1}^k(\dim\sigma_i-r)-(k-1)(d-r) \\
      & =\sum_{i=1}^k\dim\sigma_i-(k-1)d-r.
    \end{aligned}
  \]
  Replacing \(-r\) by \(+r\) does not change its parity.  Applying the
  origin formula to the product of braid fans above, and using the preceding
  identifications of cells and weights, gives exactly the formula in the
  statement.
\end{proof}

\printbibliography

\end{document}